\title{Rigid and Schurian modules over cluster-tilted algebras of tame
type}
\date{26 January 2016}
\author[Marsh]{Robert J. Marsh}
\address{
School of Mathematics \\
University of Leeds \\
Leeds \\
LS2 9JT \\
United Kingdom
\vskip 0.05cm
Guest Professor at Department of Mathematical Sciences \\
NTNU \\
NO-7491 Trondheim \\
Norway, Aug-Dec 2014}
\email{marsh@maths.leeds.ac.uk}
\author[Reiten]{Idun Reiten}
\address{Department of Mathematical Sciences \\
NTNU \\
NO-7491 Trondheim \\
Norway}
\email{Idun.Reiten@math.ntnu.no}
\thanks{This work was supported by the Engineering and Physical Sciences Research Council [grant number EP/G007497/1], the Mathematical Sciences Research Institute, Berkeley and NFR FriNat
[grant number 231000].}
\keywords{Almost split sequences, cluster algebras, cluster categories, cluster-tilted algebras, $c$-vectors, $d$-vectors, $Q$-coloured quivers, tame hereditary algebras.}
\theoremstyle{plain}
\newtheorem{lemma}{Lemma}[section]
\newtheorem{theorem}[lemma]{Theorem}
\newtheorem{corollary}[lemma]{Corollary}
\newtheorem{proposition}[lemma]{Proposition}
\theoremstyle{definition}
\newtheorem{definition}[lemma]{Definition}
\newtheorem{remark}[lemma]{Remark}
\newtheorem{example}[lemma]{Example}
\newtheorem{assumption}[lemma]{Assumption}
\numberwithin{equation}{section}
\newcommand{\coker}{\operatorname{coker}}
\newcommand{\End}{\operatorname{End}}
\newcommand{\Hom}{\operatorname{Hom}}
\newcommand{\Ext}{\operatorname{Ext}}
\newcommand{\ind}{\operatorname{ind}}
\newcommand{\add}{\operatorname{add}}
\newcommand{\Top}{\operatorname{Top}}
\newcommand{\im}{\operatorname{im}}
\newcommand{\ql}{\operatorname{ql}}
\newcommand{\rad}{\operatorname{rad}}
\newcommand{\opp}{\operatorname{opp}}
\newcommand{\id}{\operatorname{id}}
\newcommand{\C}{\mathcal{C}}
\newcommand{\T}{\mathcal{T}}
\newcommand{\W}{\mathcal{W}}
\newcommand{\X}{\mathcal{X}}
\newcommand{\XX}{\clu{X}}
\newcommand{\R}{\mathcal{R}}
\newcommand{\HH}{\mathcal{H}}
\newcommand{\M}{\mathcal{M}}
\newcommand{\I}{\mathcal{I}}
\newcommand{\Z}{\mathbb{Z}}
\newcommand{\HomH}[1]{\Hom_{#1}^H}
\newcommand{\HomF}[1]{\Hom_{#1}^F}
\newcommand{\PP}{P^{\Lambda}}
\newcommand{\II}{I^{\Lambda}}
\newcommand{\SL}{S^{\Lambda}}
\newcommand{\A}{\mathcal{A}}
\newcommand{\dimv}{d'}
\newcommand{\clu}[1]{\widetilde{#1}}
\newcommand{\wideR}[1]{\mathcal{S}_{#1}}
\newcommand{\kerphi}{L}
\newcommand{\field}{K}
\colorlet{shadecolor}{gray!70}
\newcommand{\qhole}{\times}
\newcommand{\taurigid}{\bullet}
\newcommand{\schuriannotrigid}{\circ}
\newcommand{\notschuriantaurigid}{\scriptscriptstyle \blacksquare}
\newcommand{\schurianrigidnottaurigid}{{\color{shadecolor} \bullet}}
\newcommand{\notschuriannotrigid}{\scriptscriptstyle \square}
\newcommand{\Dual}{D}
\begin{document}

\begin{abstract}
We give an example of a cluster-tilted algebra $\Lambda$ with quiver $Q$, such that
the associated cluster algebra $\A(Q)$ has a denominator vector which is not the dimension
vector of any indecomposable $\Lambda$-module. This answers a question posed by T. Nakanishi.
The relevant example is a cluster-tilted algebra associated with a tame hereditary algebra.
We show that for such a cluster-tilted algebra $\Lambda$, we can write any denominator
vector as a sum of the dimension vectors of at most three indecomposable rigid $\Lambda$-modules.
In order to do this it is necessary, and of independent interest, to first classify
the indecomposable rigid $\Lambda$-modules in this case.
\end{abstract}

\subjclass[2010]{Primary: 13F60, 16G20, 16G70; Secondary: 18E30. }

\maketitle

\section*{Introduction}
In the theory of cluster algebras initiated by Fomin 
and Zelevinsky, the authors introduced some important kinds of vectors, 
amongst them the $d$-vectors (denominator vectors)~
\cite{FZ02} and the $c$-vectors~\cite{FZ07}.
These vectors have played an important role in the theory. In particular, they have been
important for establishing connections with the representation theory of finite
dimensional algebras.

Let $Q$ be a finite quiver with $n$ vertices, without loops or two-cycles, and
let $\A(Q)$ be the associated cluster algebra with initial cluster $\{x_1,\ldots ,x_n\}$.
Each non-initial cluster variable is known to be of the form $f/m$, where $m=x_1^{d_1}\cdots x_n^{d_n}$
for nonnegative integers $d_i$ and $f$ is not divisible by any $x_i$. Then the associated
$d$-vector is $(d_1,\ldots ,d_n)$. For the definition of $c$-vector we refer to~\cite{FZ07}.
On the other hand, we have the dimension vectors of the finite
dimensional rigid indecomposable $\field Q$-modules.

Assume first that $Q$ is acyclic. Then there are known interesting connections between
the $d$-vectors and the $c$-vectors on the one hand and the dimension vectors of the
indecomposable rigid $\field Q$-modules on the other hand. More specifically, there is a bijection
between the non-initial cluster variables and the indecomposable rigid $\field Q$-modules such that
the $d$-vector of a cluster variable coincides with the dimension vector of the corresponding
module (see~\cite{BMRT07,BCKMRT07,CK06}). Furthermore, the (positive) $c$-vectors of $\A(Q)$ and
the dimension vectors of the indecomposable rigid $\field Q$-modules coincide (see~\cite{chavez,ST13}).

However, when the initial quiver $Q$ is not acyclic, we do not have such nice connections
(see~\cite{AD13,BM10,BMR09} for work in this direction). Answering a question posed to us by
Nakanishi, we found an example showing the following:

(*) There is a cluster-tilted algebra $\Lambda$ with quiver $Q$ such that
$\A(Q)$ has a denominator vector which is not the dimension vector of any indecomposable
$\Lambda$-module.

Since we know that there are denominator vectors which are not dimension vectors, it is natural
to ask if the denominator vectors can be written as a sum of a small number of dimension vectors of
indecomposable rigid $\Lambda$-modules. We consider this question for cluster-tilted algebras associated
to tame hereditary algebras.
Note that by~\cite[Theorem 3.6]{GLS}, using~\cite[Theorem 5.2]{BIRS11} and~\cite[Theorem 5.1]{BMR08}, such cluster-tilted algebras are exactly the cluster-tilted algebras of tame representation type (noting that the cluster-tilted algebras of finite representation type are those arising from hereditary algebras of finite representation type by~\cite[Theorem A]{BMR07}).
In this case we show that it is possible to use at most $3$ summands. We do not know if it is always possible with $2$ summands.

In order to prove the results discussed in the previous paragraph we need to locate the indecomposable
rigid $\Lambda$-modules in the AR-quiver of $\Lambda$-mod. This investigation should be interesting
in itself. Closely related is the class of indecomposable Schurian modules, which we also describe.
If $H$ is a hereditary algebra, then every indecomposable rigid (equivalently, $\tau$-rigid) module
is Schurian. So one might ask what the relationships are
between the rigid, $\tau$-rigid and Schurian $\Lambda$-modules. In general there are $\tau$-rigid (hence rigid)
$\Lambda$-modules which are not Schurian. However, it
turns out that every indecomposable $\Lambda$-module which
is rigid, but not $\tau$-rigid, is Schurian.

In Section~\ref{s:setup}, we recall some basic definitions and results relating to cluster 
categories. In Section~\ref{s:tube} we discuss tubes in general. In Section~\ref{s:tubeproperties} we 
fix a cluster-tilting object $T$ in a cluster category associated to a tame hereditary algebra and 
investigate its properties in relation to a tube. Section~\ref{s:rigid} is devoted to identifying the 
rigid and Schurian $\End_{\C}(T)^{\opp}$-modules. In Section~\ref{s:wild}, we investigate
an example in the wild case which appears to behave in a
similar way to the tame case.
In Section~\ref{s:counterexample}, we give the example providing a negative answer to the
question of Nakanishi. Finally, in Section~\ref{s:three}, we also show that for
cluster-tilted algebras associated to tame hereditary algebras each denominator vector is
a sum of at most $3$ dimension vectors of indecomposable rigid $\Lambda$-modules.

We refer to~\cite{ASS06,ARS97} for standard facts from representation theory.
We would like to thank Otto Kerner for helpful conversations about wild hereditary algebras.

\section{Setup} \label{s:setup}
In this section we recall some definitions and results related to
cluster categories and rigid and $\tau$-rigid objects. We also include
some lemmas which are useful for showing that a module is Schurian or rigid.

For a modulus $N$, we choose representatives
$\mathbb{Z}_N=\{0,1,\ldots ,N-1\}$, writing
$[a]_N$ for the reduction of an integer $a$ mod $N$.
If $N=0$, we take $\mathbb{Z}_N$ to be the empty set.

We fix an algebraically closed field $\field$; all categories
considered will be assumed to be $\field$-additive.
For an object $X$ in a category $\X$, we denote
by $\add(X)$ the additive subcategory generated by $X$.
Suppose that $\X$ is a module category with AR-translate
$\tau$. Then we say that $X$ is \emph{rigid} if $\Ext^1(X,X)=0$,
$\tau$-\emph{rigid} if $\Hom(X,\tau X)=0$, \emph{Schurian} if 
$\End(X)\cong \field$, or \emph{strongly Schurian} if the multiplicity
of each simple module as a composition factor is at most one. 
Note that any strongly Schurian module is necessarily Schurian.

If $\X$ is a triangulated category with shift $[1]$ and AR-translate $\tau$,
we define rigid, $\tau$-rigid and Schurian objects similarly, where we
write $\Ext^1(X,Y)$ for $\Hom(X,Y[1])$.
For both module categories and triangulated categories, we shall consider
objects of the category up to isomorphism.

For modules $X,Y$ in a module category over a finite dimensional algebra,
we write $\overline{\Hom}(X,Y)$ for the injectively stable 
morphisms from $X$ to $Y$, i.e. the quotient of
$\Hom(X,Y)$ by the morphisms from $X$ to $Y$ which factorize 
through an injective module. We similarly write
$\underline{\Hom}(X,Y)$ for the projectively stable morphisms.
Then we have the AR-formula:
\begin{equation}
\label{e:ARformula}
D\overline{\Hom}(X,\tau Y)\cong \Ext^1(X,Y)\cong
D\underline{\Hom}(\tau^{-1}X,Y),
\end{equation}
where $D$ denotes the functor $\Hom(-,\field)$.

Let $Q=(Q_0,Q_1)$ and $Q'=(Q'_0,Q'_1)$ be quivers with vertices
$Q_0,Q'_0$ and arrows $Q_1,Q'_1$. Recall that a morphism
of quivers from $Q$ to $Q'$ is a pair of maps $f_i:Q_i\rightarrow Q'_i$, $i=0,1$,
such that whenever $\alpha:i\rightarrow j$ is an arrow in $Q$, we have that
$f_1(\alpha)$ starts at $f_0(i)$ and ends at $f_0(j)$.
In order to describe the modules we are working with, it is convenient
to use notation from~\cite{ringel98}, which we now recall.

\begin{definition} \label{d:colouredquiver}
Let $Q$ be a quiver with vertices $Q_0$.
A $Q$-\emph{coloured quiver} is a pair $(\Gamma,\pi)$,
where $\Gamma$ is a quiver and $\pi:\Gamma\rightarrow Q$ is a
morphism of quivers. We shall always assume that $\Gamma$ is a tree.
\end{definition}

As Ringel points out, a $Q$-coloured quiver $(\Gamma,\pi)$ can be regarded as a quiver $\Gamma$ in which each vertex is coloured by a vertex of $Q$ and each arrow is coloured by an arrow of $Q$.
In addition, if an arrow $\gamma:v\rightarrow w$ in $\Gamma$ is coloured by an arrow $\alpha:i\rightarrow j$ in $Q$ then $v$ must be coloured with $i$ and $w$ must be coloured with $j$.
We shall draw $Q$-coloured quivers in this way.
Thus each vertex $v$ of $\Gamma$ will be labelled with
its image $\pi(v)\in Q_0$, and each arrow $a$ of $\Gamma$ will
be labelled with its image $\pi(a)$ in $Q_1$.
But note that if $Q$ has no multiple arrows then we can omit the arrow labels, since the label of an arrow in $\Gamma$ is determined by the labels of its endpoints.

We shall also omit the orientation of the arrows in $\Gamma$, adopting the convention that the arrows always point down the page.

As in~\cite[Remark 4]{ringel98}, a $Q$-coloured quiver
$(\Gamma,\pi)$ determines a representation
$V=V(\Gamma,\pi)$ of $Q$ over $\field$ (and hence a $\field Q$-module) in the following way. For each
$i\in Q_0$, let $V_i$ be the vector space with basis given by
$B_i=\pi^{-1}(i)\subseteq \Gamma_0$. Given an arrow
$\alpha:i\rightarrow j$ in $Q$ and $b\in \pi^{-1}(i)$, we define
\begin{equation}
\label{e:linearmap}
\varphi_{\alpha}(b)=\sum_{b\xrightarrow{\alpha} c \text{ in }\Gamma} c,
\end{equation}
extending linearly.

If $A=\field Q/I$, where $I$ is an admissible ideal, and
$V$ satisfies the relations coming from the elements of $I$
then it is an $A$-module.
Note that, in general, not every $A$-module will arise in this way
(for example, over the Kronecker algebra).
Also, a given module may be definable using more than one $Q$-coloured quiver (by changing basis).

As an example of a coloured quiver, consider the quiver $Q$:
\vskip 0.05cm
\begin{equation}
\label{e:examplequiver}
\xymatrix{
1 \ar[r] \ar@/^1pc/[rrr] & 2 \ar[r] & 3 \ar[r] & 4.
}
\end{equation}
Then we have the following $Q$-coloured quivers and corresponding representations:
\begin{equation}
\label{e:T2}
T_2=
\begin{tikzpicture} [baseline,xscale=0.7,yscale=1,ext/.style={black,shorten <=-1pt, shorten >=-1pt}]
  \draw (0,0.5) node (A1) {$\scriptstyle 1$};
  \draw (0,0) node (A4) {$\scriptstyle 4$};
\draw[ext] (A1) -- (A4);
 \end{tikzpicture}, \quad\quad
\xymatrix{
\field \ar[r] \ar@/^1pc/[rrr]^{\id} & 0 \ar[r] & 0 \ar[r] & \field.
}
\end{equation}
\begin{equation}
\label{e:T3}
T_3=\begin{tikzpicture} [baseline,xscale=0.7,yscale=1,ext/.style={black,shorten <=-1pt, shorten >=-1pt}]
  \draw (0,0) node (A2) {$\scriptstyle 2$};
  \draw (0.25,0.5) node (A1) {$\scriptstyle 1$};
  \draw (0.5,0) node (A4) {$\scriptstyle 4$};
\draw[ext] (A1) -- (A2);
\draw[ext] (A1) -- (A4);
 \end{tikzpicture},\quad \quad
\xymatrix{
\field \ar[r]_{\id} \ar@/^1pc/[rrr]^{\id} & \field \ar[r] & 0 \ar[r] & \field.
}
\end{equation}

\begin{remark} \label{r:redraw}
We will sometimes label the vertices of a $Q_{\Lambda}$-coloured quiver $(\Gamma,\pi)$ by writing
$$\pi^{-1}(i)=\{b_{ik}\,:\,k=1,2,\ldots \}$$
for $i$ a vertex of $Q$.
Then, if $\alpha$ is an arrow from $i$ to $j$ in $Q$,~\eqref{e:linearmap} becomes:
$$\varphi_{\alpha}(b_{ik})=\sum_{l,\ b_{ik}\xrightarrow{\alpha} b_{jl}\text{ in }\Gamma} b_{jl}.$$
To aid with calculations, we may also redraw $\Gamma$, placing all of the basis elements
$b_{ij}$ (for fixed $i$) close together (according to a fixed embedding of $Q$ in the plane).
In this case, we must include the arrowheads on the arrows so that this information is not lost.
For an example, see Figure~\ref{f:redrawn}.
\end{remark}

\begin{figure}
$$
\begin{tikzpicture}[xscale=0.8,yscale=1,baseline=(bb.base),quivarrow/.style={black, -latex},translate/.style={black, dotted},relation/.style={black, dotted},ext/.style={black,shorten <=-1pt, shorten >=-1pt},exta/.style={black,->,shorten <=-1pt, shorten >=-1pt}]

\path (0,0) node (bb) {}; 

\draw (0,0) node (A1) {$\scriptstyle 1$};
\draw (1,1) node (A2) {$\scriptstyle 2$};
\draw (2,0) node (A3) {$\scriptstyle 3$};
\draw[quivarrow] (A1) -- (A2);
\draw[quivarrow] (A2) -- (A3);
\draw[quivarrow] (A3) -- (A1);

\begin{scope}[shift={(3.5,-0.7)}]
\draw (0,0) node (V2A) {$\scriptstyle 2$};
\draw (0,0.7) node (V1A) {$\scriptstyle 1$};
\draw (0,1.4) node (V3) {$\scriptstyle 3$};
\draw (0,2.1) node (V2B) {$\scriptstyle 2$};
\draw (0,2.8) node (V1B) {$\scriptstyle 1$};
\draw[ext] (V1B) -- (V2B);
\draw[ext] (V2B) -- (V3);
\draw[ext] (V3) -- (V1A);
\draw[ext] (V1A) -- (V2A);
\end{scope}

\begin{scope}[shift={(5,-0.7)}]
\draw (0,0) node (V2A) {$\scriptstyle b_{22}$};
\draw (0,0.7) node (V1A) {$\scriptstyle b_{12}$};
\draw (0,1.4) node (V3) {$\scriptstyle b_{31}$};
\draw (0,2.1) node (V2B) {$\scriptstyle b_{21}$};
\draw (0,2.8) node (V1B) {$\scriptstyle b_{11}$};
\draw[exta] (V1B) -- (V2B);
\draw[exta] (V2B) -- (V3);
\draw[exta] (V3) -- (V1A);
\draw[exta] (V1A) -- (V2A);
\end{scope}

\begin{scope}[shift={(6.8,0)}]
\draw (1,1) node (V2B) {$\scriptstyle b_{22}$};
\draw (0,0) node (V1B) {$\scriptstyle b_{12}$};
\draw (2,0) node (V3) {$\scriptstyle b_{31}$};
\draw (1,1.5) node (V2A) {$\scriptstyle b_{21}$};
\draw (0,0.5) node (V1A) {$\scriptstyle b_{11}$};
\draw[exta] (0.1,0.7) -- (0.7,1.3);
\draw[exta] (1.3,1.4) -- (2,0.2);
\draw[exta] (1.7,0) -- (0.32,0);
\draw[exta] (0.27,0.13) -- (0.95,0.8);
\end{scope}

\begin{scope}[shift={(10,0)}]
\draw (0,0) node (R1) {$\scriptstyle \field^2$};
\draw (1,1) node (R2) {$\scriptstyle \field^2$};
\draw (2,0) node (R3) {$\scriptstyle \field$};
\draw (0.1,0.75) node {$\left(\begin{smallmatrix} 1 & 0 \\ 0 & 1 \end{smallmatrix}\right)$};
\draw (2.12,0.55) node {$\left(\begin{smallmatrix} 1 & 0 \end{smallmatrix}\right)$};
\draw (1,-0.32) node {$\left(\begin{smallmatrix} 0 \\ 1 \end{smallmatrix}\right)$};
\draw[exta] (R1) -- (R2);
\draw[exta] (R2) -- (R3);
\draw[exta] (R3) -- (R1);
\end{scope}
\end{tikzpicture}
$$
\caption{A quiver $Q$, a $Q$-coloured quiver, together with the redrawing according to Remark~\ref{r:redraw} and the corresponding representation of $Q$.}
\label{f:redrawn}
\end{figure}
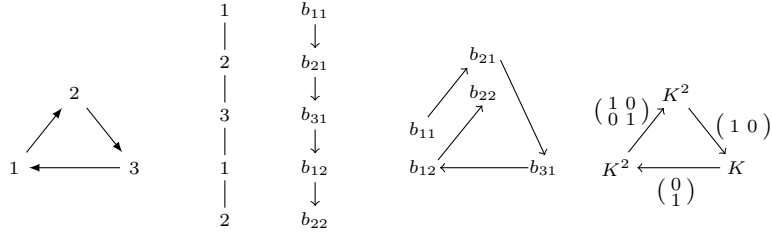

\begin{definition}
If $(\Gamma,\pi)$, $(\Gamma',\pi')$ are $Q$-coloured quivers then we call a map
$\varphi:\Gamma\rightarrow \Gamma'$ a morphism of $Q$-coloured quivers if it is
a morphism of quivers and $\pi=\pi'\varphi$.

If $\Gamma''$ is a full subquiver of $\Gamma$ and $\pi''$ is the restriction of $\pi$
to $\Gamma$ then $\Gamma''$ is called a $Q$-coloured subquiver of $(\Gamma,\pi)$; note that it is
again a $Q$-coloured quiver.
\end{definition}

\begin{remark} \label{r:colouredquivermorphisms}
If $(\Gamma',\pi')$ is a $Q$-coloured subquiver of $(\Gamma,\pi)$ with the property
that every arrow between a vertex in $\Gamma'$ and a vertex in $\Gamma$ not in $\Gamma'$
points towards the vertex in $\Gamma'$, then it is easy to see that there is a corresponding
embedding of modules $V(\Gamma',\pi')\hookrightarrow V(\Gamma,\pi)$. Similarly, if every such arrow
points towards $\Gamma'$, there is a corresponding quotient map
$V(\Gamma,\pi)\twoheadrightarrow V(\Gamma',\pi')$.

Let $(\Gamma(1),\pi(1))$ and $(\Gamma(2),\pi(2))$ be $Q$-coloured quivers.
Suppose that there is a $Q$-coloured quiver $(\Gamma,\pi)$ which is isomorphic to a $Q$-coloured
subquiver of $(\Gamma(1),\pi(1))$ with the second property above. Suppose in addition that
it is isomorphic to a $Q$-coloured subquiver of $(\Gamma(2),\pi(2))$ with the first property above.
Then there is a $\field Q$-module homomorphism $V(\Gamma(1),\pi(1))\rightarrow V(\Gamma(2),\pi(2))$ given by
the composition of the quotient map and the embedding given above.
\end{remark}

We fix a quiver $Q$ such that the path algebra $\field Q$ has
tame representation type. For example, we could take $Q$ to be
the quiver~\eqref{e:examplequiver}.
We denote by $\field Q$-mod the category of finite-dimensional
$\field Q$-modules, with AR-translate $\tau$.

We denote by $D^b(\field Q)$ the bounded derived category of $\field Q$-mod,
with AR-translate also denoted by $\tau$.
For objects $X$ and $Y$ in
$D^b(\field Q)$, we write $\Hom(X,Y)$ for $\Hom_{D^b(\field Q)}(X,Y)$ and
$\Ext(X,Y)$ for $\Ext_{D^b(\field Q)}(X,Y)$.
Note that if $X,Y$ are modules, these coincide with
$\Hom_{\field Q}(X,Y)$ and $\Ext_{\field Q}(X,Y)$ respectively.

The category $D^b(\field Q)$ is triangulated.
Let $\C=\C_Q$ denote the cluster category corresponding to
$Q$, i.e.\ the orbit category $\C_Q=D^b(\field Q)/F$, where
$F$ denotes the autoequivalence $\tau^{-1}[1]$ (see~\cite{BMRRT06}).
The category $\C$ is triangulated by~\cite[\S4]{keller05}.
Note that an object in $D^b(\field Q)$-mod can be regarded as an 
object in $\C$; in particular this applies to $\field Q$-modules,
which can be identified with complexes in $D^b(\field Q)$ 
concentrated in degree zero.

If $X,Y$ are ${\field}Q$-modules
regarded as objects in $\C$, then
$$\Hom_{\C}(X,Y)=\Hom(X,Y)\oplus \Hom(X,FY)$$
by~\cite[Prop.\ 1.5]{BMRRT06}.
We write $\HomH{\C}(X,Y)=\Hom(X,Y)$ and refer to elements of this space as $H$-\emph{maps} from $X$ to $Y$, and
we write $\Hom(X,FY)=\HomF{\C}(X,Y)$ and refer to elements of this space
as $F$-\emph{maps} from $X$ to $Y$. So, we have:
$$\Hom_{\C}(X,Y)=\HomH{\C}(X,Y)\oplus \HomF{\C}(X,Y).$$
Note that
\begin{equation}
\label{e:tau2}
\begin{split}
\HomF{\C}(X,Y) &=\Hom(X,FY)= \Hom(X,\tau^{-1} Y[1]) \\
&\cong \Ext(X,\tau^{-1}Y) \cong \Dual\Hom(\tau^{-1}Y,\tau X) \cong \Dual\Hom(Y,\tau^2 X),
\end{split}
\end{equation}
where $\Dual=\Hom(-,\field)$.
If $\chi$ is an additive subcategory of $\C$, we write:
$$\HomH{\C/\chi}(X,Y),\quad \HomF{\C/\chi}(X,Y)$$
for the quotients of $\HomH{\C}(X,Y)$ and $\HomF{\C}(X,Y)$ by the morphisms in
$\C$ factoring through $\chi$.

A rigid object $T$ in $\C$ is said to be
\emph{cluster-tilting} if, for any object
$X$ in $\C$, we have $\Ext_{\C}^1(T,X)=0$ if and only
if $X$ lies in $\add(T)$.

We fix a cluster-tilting object $T$ in $\C$.
We make the following assumption.
As explained in the proof of Theorem~\ref{t:classification}, to find the rigid
and Schurian modules for any cluster-tilted algebra
arising from $\C$, it is enough to find the rigid and Schurian modules in this case.
\begin{assumption} \label{a:preprojective}
The cluster-tilting object $T$ is induced by a $\field Q$-module (which we also denote by $T$). Furthermore, $T$ is
of the form $U\oplus T'$, where $U$ is preprojective and
$T'$ is regular. Note that the module $T$ is a tilting
module by~\cite{BMRRT06}.
\end{assumption}

\begin{example}
\label{ex:running}
For example, if $Q$ is the quiver in~\eqref{e:examplequiver}, we could take $T$ to be the tilting module:
\begin{equation}
T=P_1\oplus T_2\oplus T_3\oplus P_4,
\end{equation}
where $T_2$ and $T_3$ are the $\field Q$-modules defined in~\eqref{e:T2},~\eqref{e:T3}.
Note that $T$ can be obtained from
$P_1\oplus P_2\oplus P_3\oplus P_4$ by mutating (in the sense of~\cite{HU89,RS90}) first at
$P_2$ and then at $P_3$.
The modules $T_2$ and $T_3$ lie in a tube of rank $3$ in $\field Q$-mod; see Figure~\ref{f:tamehereditaryAR}.
\end{example}

\begin{figure}
$$
\begin{tikzpicture}[xscale=0.33,yscale=0.22,baseline=(bb.base),quivarrow/.style={black, -latex},translate/.style={black, dotted}]

\path (0,0) node (bb) {}; 

\draw (0,0) node (X1) { \small
 \begin{tikzpicture} [scale=0.7,yscale=1,ext/.style={black,shorten <=-1pt, shorten >=-1pt}]
  \draw (0,0) node (X14) {$\scriptstyle 4$};
  \draw (0,0.5) node (X11) {$\scriptstyle 1$};
\draw[ext] (X11) -- (X14);
 \end{tikzpicture}
};

\draw (10,0) node (X2) { \small
 \begin{tikzpicture} [scale=0.7,yscale=1,ext/.style={black,shorten <=-1pt, shorten >=-1pt}]
  \draw (0,0) node (X23) {$\scriptstyle 3$};
 \end{tikzpicture}
};

\draw (20,0) node (X3) { \small
 \begin{tikzpicture} [scale=0.7,yscale=1,ext/.style={black,shorten <=-1pt, shorten >=-1pt}]
  \draw (0,0) node (X32) {$\scriptstyle 2$};
 \end{tikzpicture}
};

\draw (30,0) node (XX1) { \small
 \begin{tikzpicture} [scale=0.7,yscale=1,ext/.style={black,shorten <=-1pt, shorten >=-1pt}]
  \draw (0,0) node (XX14) {$\scriptstyle 4$};
  \draw (0,0.5) node (XX11) {$\scriptstyle 1$};
\draw[ext] (XX11) -- (XX14);
 \end{tikzpicture}
};

\draw (5,7) node (X4) { \small
 \begin{tikzpicture} [scale=0.7,yscale=1,ext/.style={black,shorten <=-1pt, shorten >=-1pt}]
  \draw (0,0) node (X44) {$\scriptstyle 4$};
  \draw (-0.25,0.5) node (X41) {$\scriptstyle 1$};
  \draw (0.25,0.5) node (X43) {$\scriptstyle 3$};
\draw[ext] (X41) -- (X44);
\draw[ext] (X44) -- (X43);
 \end{tikzpicture}
};

\draw (15,7) node (X5) { \small
 \begin{tikzpicture} [scale=0.7,yscale=1,ext/.style={black,shorten <=-1pt, shorten >=-1pt}]
  \draw (0,0) node (X53) {$\scriptstyle 3$};
  \draw (0,0.5) node (X52) {$\scriptstyle 2$};
\draw[ext] (X53) -- (X52);
 \end{tikzpicture}
};

\draw (25,7) node (X6) { \small
 \begin{tikzpicture} [scale=0.7,yscale=1,ext/.style={black,shorten <=-1pt, shorten >=-1pt}]
  \draw (0,0) node (X62) {$\scriptstyle 2$};
  \draw (0.5,0) node (X64) {$\scriptstyle 4$};
  \draw (0.25,0.5) node (X61) {$\scriptstyle 1$};
\draw[ext] (X62) -- (X61);
\draw[ext] (X64) -- (X61);
 \end{tikzpicture}
};

\draw (0,14) node (X7) { \small
 \begin{tikzpicture} [scale=0.7,yscale=1,ext/.style={black,shorten <=-1pt, shorten >=-1pt}]
  \draw (0,0) node (X74) {$\scriptstyle 4$};
  \draw (0.5,0) node (X72) {$\scriptstyle 2$};
  \draw (-0.25,0.5) node (X73) {$\scriptstyle 3$};
  \draw (0.25,0.5) node (X71) {$\scriptstyle 1$};
\draw[ext] (X74) -- (X73);
\draw[ext] (X74) -- (X71);
\draw[ext] (X72) -- (X71);
 \end{tikzpicture}
};

\draw (10,14) node (X8) { \small
 \begin{tikzpicture} [scale=0.7,yscale=1,ext/.style={black,shorten <=-1pt, shorten >=-1pt}]
  \draw (0,0) node (X84) {$\scriptstyle 4$};
  \draw (-0.25,0.5) node (X81) {$\scriptstyle 1$};
  \draw (0.25,0.5) node (X83) {$\scriptstyle 3$};
  \draw (0.5,1) node (X82) {$\scriptstyle 2$};
\draw[ext] (X84) -- (X81);
\draw[ext] (X84) -- (X83);
\draw[ext] (X83) -- (X82);
 \end{tikzpicture}
};

\draw (20,14) node (X9) { \small
 \begin{tikzpicture} [scale=0.7,yscale=1,ext/.style={black,shorten <=-1pt, shorten >=-1pt}]
  \draw (0,0) node (X93) {$\scriptstyle 3$};
  \draw (0.25,0.5) node (X92) {$\scriptstyle 2$};
  \draw (0.5,1) node (X91) {$\scriptstyle 1$};
  \draw (0.75,0.5) node (X94) {$\scriptstyle 4$};
\draw[ext] (X93) -- (X92);
\draw[ext] (X92) -- (X91);
\draw[ext] (X91) -- (X94);
 \end{tikzpicture}
};

\draw (30,14) node (XX7) { \small
 \begin{tikzpicture} [scale=0.7,yscale=1,ext/.style={black,shorten <=-1pt, shorten >=-1pt}]
  \draw (0,0) node (XX74) {$\scriptstyle 4$};
  \draw (0.5,0) node (XX72) {$\scriptstyle 2$};
  \draw (-0.25,0.5) node (XX73) {$\scriptstyle 3$};
  \draw (0.25,0.5) node (XX71) {$\scriptstyle 1$};
\draw[ext] (XX74) -- (XX73);
\draw[ext] (XX74) -- (XX71);
\draw[ext] (XX72) -- (XX71);
 \end{tikzpicture}
};

\draw (X1.south west) node {$\scriptstyle T_2$};
\draw (XX1.south west) node {$\scriptstyle T_2$};
\draw ($(X6.south)+(-0.5,-0.3)$) node {$\scriptstyle T_3$};

\draw[dashed] (X1.north) -- (X7.south);
\draw[dashed] (X7.north) -- ($(X7.north)+(0,3)$);
\draw[dashed] (XX1.north) -- (XX7.south);
\draw[dashed] (XX7.north) -- ($(XX7.north)+(0,3)$);

\draw[quivarrow] (X1.30) -- (X4.210);
\draw[quivarrow] (X4.330) -- (X2.150);
\draw[quivarrow] (X2.30) -- (X5.210);
\draw[quivarrow] (X5.330) -- (X3.150);
\draw[quivarrow] (X3.30) -- (X6.210);
\draw[quivarrow] (X6.330) -- (XX1.150);
\draw[quivarrow] (X7.330) -- (X4.150);
\draw[quivarrow] (X4.30) -- (X8.210);
\draw[quivarrow] (X8.330) -- (X5.150);
\draw[quivarrow] (X5.30) -- (X9.210);
\draw[quivarrow] (X9.330) -- (X6.150);
\draw[quivarrow] (X6.30) -- (XX7.210);

\draw (0,7) node (R2) {};
\draw (30,7) node (S2) {};

\draw[translate] (X1.east) -- (X2.west);
\draw[translate] (X2.east) -- (X3.west);
\draw[translate] (X3.east) -- (XX1.west);
\draw[translate] (R2) -- (X4.west);
\draw[translate] (X4.east) -- (X5.west);
\draw[translate] (X5.east) -- (X6.west);
\draw[translate] (X6.east) -- (S2);
\draw[translate] (X7.east) -- (X8.west);
\draw[translate] (X8.east) -- (X9.west);
\draw[translate] (X9.east) -- (XX7.west);

\end{tikzpicture}
$$
\caption{Part of the AR-quiver of $\field Q$-mod, where $Q$ is the quiver in~\eqref{e:examplequiver}.}
\label{f:tamehereditaryAR}
\end{figure}

We define $\Lambda=\Lambda_T=\End_{\C_Q}(T)$ to be the corresponding cluster-tilted algebra. For 
Example~\ref{ex:running}, $\Lambda$ is given by the quiver with relations shown
in Figure~\ref{f:tametiltedquiver} (we indicate how to compute such a quiver with relations explicitly for a similar 
example in Section~\ref{s:wild}). Note that this quiver can be obtained from $Q$ by mutating (in the sense 
of~\cite{FZ02}) first at $2$ and then at $3$.

\begin{figure}
$$
\begin{tikzpicture}[scale=0.8,baseline=(bb.base),quivarrow/.style={black, -latex},translate/.style={black, dotted},relation/.style={black, dotted, thick=2pt}]

\path (0,0) node (bb) {}; 

\draw (0,0) node (X1) {\small $1$};
\draw (2,0) node (X3) {\small $3$};
\draw (2,1.5) node (X2) {\small $2$};
\draw (1,-1.5) node (X4) {\small $4$};

\draw[quivarrow] (X2.south) -- (X3.north);
\draw[quivarrow] (X3.west) -- (X1.east);
\draw[quivarrow] (X4.north east) -- (X3.south west);
\draw[quivarrow] (0.15,-0.2) -- (0.85,-1.3);
\draw[quivarrow] (0,-0.34) -- (0.7,-1.44);

\draw[relation] (0.7,-0.2) to[out=200,in=135] (0.6,-0.6);
\draw[relation] (1.2,-0.2) to[out=340,in=45] (1.4,-0.6);
\draw[relation] (0.8,-0.95) to[out=305,in=240] (1.32,-0.85);

\draw (0.7,-0.68) node {\small $\ast$};

\end{tikzpicture}
$$
\caption{The endomorphism algebra $\End_{\C}(T)^{\opp}$ for the tilting module in Example~\ref{ex:running}.}
\label{f:tametiltedquiver}
\end{figure}
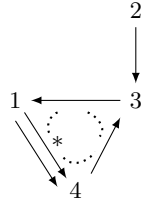

There is a natural functor $\Hom_{\C}(T,-)$ from $\C$ to $\Lambda$-mod. We have:

\begin{theorem} \cite[Thm.\ A]{BMR07} \label{t:equivalence}
The functor $\Hom_{\C}(T,-)$ induces
an equivalence from the additive quotient $\C/\add(\tau T)$ to
$\Lambda$-mod.
\end{theorem}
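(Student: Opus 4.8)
The plan is to prove that the functor $G:=\Hom_{\C}(T,-)$ is dense, that it annihilates exactly the morphisms factoring through $\add(\tau T)$, and that it is full; the claimed equivalence $\C/\add(\tau T)\xrightarrow{\sim}\Lambda\text{-mod}$ then follows formally. Two structural features of $\C$ will be used repeatedly: first, $\C$ is $2$-Calabi--Yau with $\tau$ equal to the shift $[1]$, so $\add(\tau T)=\add(T[1])$ and $\Ext^1_{\C}(T,T)=\Hom_{\C}(T,T[1])=0$ since $T$ is rigid; second, from the cluster-tilting property, $\Ext^1_{\C}(T,X)=0$ iff $X\in\add(T)$, and rewriting $\Ext^1_{\C}(T,X)=\Hom_{\C}(T,\tau X)$ gives the key vanishing criterion $G(Y)=0$ iff $Y\in\add(\tau T)$. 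I would also record two preliminary facts. \emph{$T$-presentations:} for any $X\in\C$, choose a right $\add(T)$-approximation $v\colon T_0\to X$ and complete it to a triangle $T_1\xrightarrow{u}T_0\xrightarrow{v}X\xrightarrow{w}T_1[1]$; applying $G$ and using that $G(v)$ is epi together with $\Hom_{\C}(T,T_0[1])=\Ext^1_{\C}(T,T_0)=0$ forces $G(T_1[1])=0$, hence $T_1\in\add(T)$, and the sequence $G(T_1)\xrightarrow{G(u)}G(T_0)\xrightarrow{G(v)}G(X)\to 0$ is then a projective presentation of the $\Lambda$-module $G(X)$. \emph{Projectivity isomorphism:} for $T'\in\add(T)$ and any $Y\in\C$, the map $G\colon\Hom_{\C}(T',Y)\to\Hom_{\Lambda}(G(T'),G(Y))$ is an isomorphism, since $G(T')$ is a projective $\Lambda$-module and the statement reduces additively to the tautological case $T'=T$.

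For density, given a $\Lambda$-module $M$ take a projective presentation $P_1\to P_0\to M\to 0$, write $P_i=G(T_i)$ with $T_i\in\add(T)$, realise the map $P_1\to P_0$ as $G(f)$ for some $f\colon T_1\to T_0$ using the projectivity isomorphism, complete $f$ to a triangle $T_1\xrightarrow{f}T_0\to X\to T_1[1]$, and apply $G$; since $G(T_1[1])=\Ext^1_{\C}(T,T_1)=0$ we obtain $G(X)\cong\coker G(f)\cong M$.

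For the kernel on morphisms: if $g\colon X\to Y$ factors through $\add(\tau T)$ then $G(g)$ factors through $G(\tau T)=\Ext^1_{\C}(T,T)=0$, so $G(g)=0$. Conversely, suppose $G(g)=0$; taking a $T$-presentation $T_1\xrightarrow{u}T_0\xrightarrow{v}X\xrightarrow{w}T_1[1]$ of $X$, the composite $gv\colon T_0\to Y$ satisfies $G(gv)=0$, hence $gv=0$ by the projectivity isomorphism applied to $T_0\in\add(T)$, so $g$ factors through $w\colon X\to T_1[1]\in\add(\tau T)$. This gives both that $G$ descends to $\C/\add(\tau T)$ and that the descended functor is faithful. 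For fullness, let $\phi\colon G(X)\to G(Y)$ be arbitrary and fix a $T$-presentation $T_1\xrightarrow{u}T_0\xrightarrow{v}X\xrightarrow{w}T_1[1]$. The composite $\phi\circ G(v)\colon G(T_0)\to G(Y)$ equals $G(\alpha)$ for a unique $\alpha\colon T_0\to Y$, and $G(\alpha u)=\phi\circ G(v)\circ G(u)=0$ because $vu=0$, so $\alpha u=0$; by exactness of $\Hom_{\C}(-,Y)$ on the triangle, $\alpha=\beta v$ for some $\beta\colon X\to Y$, and then $G(\beta)\circ G(v)=G(\alpha)=\phi\circ G(v)$ with $G(v)$ epi yields $G(\beta)=\phi$. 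Assembling these, $G$ induces a well-defined, dense, faithful and full functor $\C/\add(\tau T)\to\Lambda\text{-mod}$, hence an equivalence.

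I expect the only genuinely delicate ingredients to be the existence of the $T$-presentation triangles and the identity $\{Y:G(Y)=0\}=\add(\tau T)$: both rely essentially on the maximality of the rigid object $T$ (i.e. the full strength of the cluster-tilting property and the functorial finiteness of $\add(T)$ in $\C$) and on the $2$-Calabi--Yau identification $\tau\simeq[1]$. Once those are in place, the remaining steps are a formal diagram chase using only the octahedral axiom and the long exact sequences obtained by applying $\Hom_{\C}(T,-)$ and $\Hom_{\C}(-,Y)$ to the approximation triangles.
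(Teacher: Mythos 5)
Your proof is correct. The paper cites \cite[Thm.\ A]{BMR07} without reproducing the argument, but your scheme --- right $\add(T)$-approximation triangles, the projectivity isomorphism $\Hom_{\C}(T',Y)\cong\Hom_{\Lambda}(G(T'),G(Y))$ for $T'\in\add(T)$, the $2$-Calabi--Yau identification $\tau\simeq[1]$, and the cluster-tilting vanishing criterion $G(Y)=0\Leftrightarrow Y\in\add(\tau T)$, assembled into density, fullness, and identification of the kernel ideal --- is precisely the standard proof found there, and coincides with the Keller--Reiten treatment for $2$-Calabi--Yau triangulated categories with cluster-tilting objects.
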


We denote the image of an object $X$ in $\C$ under
the functor $\Hom_{\C}(T,-)$ by $\clu{X}$.
We note the following:

\begin{proposition} \label{p:rigid}
Let $X$ be an object in $\C$ and $\XX$ the corresponding
$\Lambda$-module. Then
\begin{enumerate}[(a)]
\item
$\XX$ is Schurian if and only if
$$\Hom_{C/\add(\tau T)}(X,X)\cong \field.$$
\item
$\XX$ is rigid if and only if
$$\Hom_{\C/\add(\tau T\oplus \tau^2 T)}(X,\tau X)=0.$$
\end{enumerate}
\end{proposition}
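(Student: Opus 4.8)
\emph{Part (a)} is immediate from Theorem~\ref{t:equivalence}. Since $\Hom_\C(T,-)$ induces an equivalence $\C/\add(\tau T)\to\Lambda$-mod sending $X$ to $\XX$, it induces a $\field$-algebra isomorphism $\End_\Lambda(\XX)\cong\Hom_{\C/\add(\tau T)}(X,X)$; so $\XX$ is Schurian (i.e. $\End_\Lambda(\XX)\cong\field$) if and only if $\Hom_{\C/\add(\tau T)}(X,X)\cong\field$.

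For \emph{part (b)} the plan is to transport the condition $\Ext^1_\Lambda(\XX,\XX)=0$ into the cluster category by combining the Auslander--Reiten formula~\eqref{e:ARformula} with the cluster-categorical description of the projective $\Lambda$-modules and of $\tau_\Lambda$. First dispose of the degenerate cases: if $\XX$ is $0$, projective, or injective then $\Ext^1_\Lambda(\XX,\XX)=0$, while the right-hand side also vanishes, because $X$ then lies in $\add(\tau T)$, in $\add T$ (so that $\tau X\in\add(\tau T)$), or in $\add(\tau^2 T)$ respectively; so assume $\XX$ is indecomposable, non-projective and non-injective. I would use two facts about $\Lambda=\End_\C(T)$: (i) the indecomposable projective $\Lambda$-modules are the images $\clu{T_i}$ of the indecomposable summands $T_i$ of $T$, so that a morphism of $\Lambda$-modules factors through a projective exactly when the corresponding morphism in $\C/\add(\tau T)$ factors through $\add T$; and (ii) $\tau_\Lambda^{-1}\XX\cong\clu{\tau^{-1}X}$ for $\XX$ non-injective. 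Fact (ii) would be proved by applying $\Hom_\C(T,-)$ to the Auslander--Reiten triangle of $X$ in $\C$ and checking, using $X\notin\add T$ and the defining property of that triangle, that the result is an almost split sequence in $\Lambda$-mod; it is also available in the literature.

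Granting this, \eqref{e:ARformula} gives $\Ext^1_\Lambda(\XX,\XX)\cong D\underline{\Hom}_\Lambda(\tau_\Lambda^{-1}\XX,\XX)\cong D\underline{\Hom}_\Lambda(\clu{\tau^{-1}X},\XX)$, so $\XX$ is rigid if and only if $\underline{\Hom}_\Lambda(\clu{\tau^{-1}X},\XX)=0$. By Theorem~\ref{t:equivalence} and fact (i), this projectively stable Hom-space is $\Hom_{\C/\add(\tau T)}(\tau^{-1}X,X)$ with the morphisms $\tau^{-1}X\to X$ that factor in $\C$ through $\add(T\oplus\tau T)$ further divided out; that is, $\underline{\Hom}_\Lambda(\clu{\tau^{-1}X},\XX)\cong\Hom_{\C/\add(T\oplus\tau T)}(\tau^{-1}X,X)$. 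Applying the autoequivalence $\tau$ of $\C$, which carries $\tau^{-1}X$ to $X$ and $\add(T\oplus\tau T)$ to $\add(\tau T\oplus\tau^2 T)$, identifies this with $\Hom_{\C/\add(\tau T\oplus\tau^2 T)}(X,\tau X)$, completing the proof. The step carrying the real content is fact (ii), the comparison of $\tau_\Lambda$ with $\tau$ (together with the description of the projectives); everything after that is formal manipulation of the quotient categories and of the autoequivalence $\tau$. The reason the summand $\tau^2 T$ appears, rather than just $\tau T$, is that applying $\tau$ at the end turns the $\add(\tau T)$ built into the equivalence of Theorem~\ref{t:equivalence} into $\add(\tau^2 T)$, while the $\add T$ coming from ``modulo projectives'' becomes $\add(\tau T)$.
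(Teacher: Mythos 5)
Your part (a) matches the paper exactly. For part (b) you arrive at the same formula by a valid but mirror-image route: the paper invokes the injectively stable form of the AR-formula, $\Ext^1_\Lambda(\XX,\XX)\cong D\overline{\Hom}_\Lambda(\XX,\tau_\Lambda\XX)$, and uses the fact that the injective $\Lambda$-modules are the objects of $\add\Hom_\C(T,\tau^2 T)$; dividing $\Hom_{\C/\add(\tau T)}(X,\tau X)$ by morphisms through $\add(\tau^2 T)$ then gives the quotient $\Hom_{\C/\add(\tau T\oplus\tau^2 T)}(X,\tau X)$ directly. You instead use the projectively stable form $D\underline{\Hom}_\Lambda(\tau_\Lambda^{-1}\XX,\XX)$, the identification of the projectives with $\add\Hom_\C(T,T)$, and then one further application of the autoequivalence $\tau$ to shift $\Hom_{\C/\add(T\oplus\tau T)}(\tau^{-1}X,X)$ into the desired form. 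Both approaches rely on the same core input -- the compatibility of $\tau_\Lambda$ with $\tau$ on $\C$ and the cluster-categorical descriptions of the projectives/injectives -- so the arguments are dual; the paper's is marginally shorter because it avoids the final $\tau$-shift, while yours makes that shift explicit and explains why $\tau^2 T$ appears. Your handling of the degenerate cases is fine, though note the proposition is stated for arbitrary (not necessarily indecomposable) $X$, so one should apply the AR-formula to pairs of indecomposable summands of $\XX$ rather than to $\XX$ itself; this is routine and does not affect the argument.
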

\begin{proof}
Part (a) follows from the equivalence in Theorem~\ref{t:equivalence}. Part (b) follows from this combined
with the AR-formula~\eqref{e:ARformula}, noting that the
injective modules in $\Lambda$-mod are the objects in the
subcategory $\add \Hom_{\C}(T,\tau^2 T)$
(see~\cite{BMR07},~\cite[\S2]{kellerreiten07}).
\end{proof}

The following statement follows from~\cite[Thm.\ 4.1]{AIR14}.

\begin{theorem} \label{t:taurigid} \cite{AIR14}
The functor $\Hom_{\C}(T,-)$ induces a bijection between
isomorphism classes of indecomposable rigid objects in $\C$ which
are not summands of $\tau T$ and isomorphism classes of indecomposable
$\tau$-rigid $\Lambda$-modules.
\end{theorem}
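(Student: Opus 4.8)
The plan is to deduce this from the $\tau$-tilting theory of Adachi--Iyama--Reiten, specifically the bijection of~\cite[Thm.\ 4.1]{AIR14}, together with the equivalence of Theorem~\ref{t:equivalence}. I would begin with two preliminary observations about $\C$. First, since $\C=D^b(\field Q)/F$ with $F=\tau^{-1}[1]$, the functor $F$ is isomorphic to $\id_{\C}$, so $\tau\cong[1]$ on $\C$; in particular $\add(\tau T)=\add(T[1])$, so that ``summand of $\tau T$'' in the statement coincides with ``summand of $T[1]$''. Second, $\Hom_{\C}(T,\tau T')=\Ext^1_{\C}(T,T')=0$ for every direct summand $T'$ of $T$ because $T$ is rigid; thus $\Hom_{\C}(T,-)$ sends the objects of $\add(\tau T)$ to $0$, and for a rigid object $U$ with no direct summand in $\add(\tau T)$ we simply have $\clu U=\Hom_{\C}(T,U)$. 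Now $\C$ is a Hom-finite Krull--Schmidt $2$-Calabi--Yau triangulated category (see~\cite{BMRRT06,keller05}) with basic cluster-tilting object $T$ and $\Lambda=\End_{\C}(T)$, so~\cite[Thm.\ 4.1]{AIR14} applies and gives a bijection between isomorphism classes of basic rigid objects of $\C$ and isomorphism classes of basic $\tau$-rigid pairs $(M,P)$ for $\Lambda$: writing a basic rigid object as $U=U_0\oplus T'[1]$ with $T'[1]$ the maximal direct summand of $U$ in $\add(T[1])=\add(\tau T)$, it is sent to $(\Hom_{\C}(T,U_0),\,\Hom_{\C}(T,T'))$, whose second component is the projective $\Lambda$-module corresponding to $T'\in\add T$.

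Next I would restrict this bijection to indecomposables. Let $U$ be an indecomposable rigid object of $\C$. If $U$ is a summand of $\tau T$, then $U_0=0$, $T'$ is indecomposable, and $U$ corresponds to the pair $(0,\Hom_{\C}(T,T'))$, with $\Hom_{\C}(T,T')$ an indecomposable projective $\Lambda$-module. If $U$ is not a summand of $\tau T$, then, being indecomposable, it has no summand in $\add(\tau T)$, so $T'=0$, $U_0=U$, and $U$ corresponds to $(\clu U,0)$. Conversely, given an indecomposable $\tau$-rigid $\Lambda$-module $M$, the pair $(M,0)$ corresponds under~\cite[Thm.\ 4.1]{AIR14} to a basic rigid object $U=U_0\oplus T'[1]$ with $\Hom_{\C}(T,T')=0$, forcing $T'=0$, and with $\Hom_{\C}(T,U)=\Hom_{\C}(T,U_0)\cong M$; since $\Hom_{\C}(T,-)$ restricts to an equivalence $\C/\add(\tau T)\xrightarrow{\ \sim\ }\Lambda\text{-mod}$ by Theorem~\ref{t:equivalence}, the object $U$ (which has no summand in $\add(\tau T)$) is indecomposable, and non-isomorphic such objects have non-isomorphic images. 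Combining the two directions, the AIR bijection restricts to a bijection $U\mapsto\clu U=\Hom_{\C}(T,U)$ between indecomposable rigid objects of $\C$ that are not summands of $\tau T$ and indecomposable $\tau$-rigid $\Lambda$-modules, which is the assertion.

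The only steps requiring genuine care are bookkeeping ones, and I do not regard either as a substantive obstacle. One must match the variance convention for $\Lambda$-modules used in~\cite{AIR14} with that of Theorem~\ref{t:equivalence}, so that ``$\Hom_{\C}(T,-)$'' denotes literally the same functor in both, and one must confirm that the direct summands of a rigid object contributing to the projective part of the associated $\tau$-rigid pair are precisely the summands lying in $\add(\tau T)$ --- which is exactly the content of the decomposition $U=U_0\oplus T'[1]$ above --- so that the condition ``$U$ is not a summand of $\tau T$'' translates precisely into ``the projective part of the pair is zero''. Once these are settled, the theorem is immediate from~\cite[Thm.\ 4.1]{AIR14}.
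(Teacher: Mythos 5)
Your proof is correct and proceeds exactly as the paper does: the paper states only that the result ``follows from~\cite[Thm.\ 4.1]{AIR14}'' without spelling out a derivation, and your proposal supplies precisely the expected unpacking --- identifying $\tau\cong[1]$ on $\C$ so that $\add(\tau T)=\add(T[1])$, applying the Adachi--Iyama--Reiten bijection between basic rigid objects of $\C$ and basic $\tau$-rigid pairs for $\Lambda$, and restricting to indecomposables, where the condition ``not a summand of $\tau T$'' corresponds to the projective part of the pair vanishing. The only loose end worth a sentence in a full write-up is the Krull--Schmidt bookkeeping you already flag: that the equivalence $\C/\add(\tau T)\simeq\Lambda\text{-mod}$ reflects indecomposability for objects without summands in $\add(\tau T)$, which is standard and can alternatively be sidestepped by noting that the AIR bijection is additive and therefore matches indecomposables to indecomposables directly.
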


Since a $\field Q$-module is rigid if and only if the induced object of
$\C$ is rigid (by~\cite[Prop.\ 1.7]{BMRRT06}), we have:

\begin{corollary} \label{c:taurigid}
If $X$ is a $\field Q$-module not in $\add(\tau T)$
then $X$ is rigid in $\field Q$-mod if and only if
$\clu{X}$ is $\tau$-rigid in $\Lambda$-mod.
\end{corollary}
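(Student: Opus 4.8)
The plan is to deduce this immediately from the two ingredients cited just before the statement: the cited fact that a $\field Q$-module $X$ is rigid in $\field Q$-mod if and only if the induced object of $\C$ is rigid in $\C$ (\cite[Prop.\ 1.7]{BMRRT06}), and Theorem~\ref{t:taurigid}, which says that $\Hom_{\C}(T,-)$ sets up a bijection between indecomposable rigid objects of $\C$ not in $\add(\tau T)$ and indecomposable $\tau$-rigid $\Lambda$-modules. So the argument is essentially a matter of chasing definitions and checking that the hypotheses line up.

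Concretely, I would argue as follows. Let $X$ be a $\field Q$-module with $X \notin \add(\tau T)$. First suppose $X$ is rigid in $\field Q$-mod. Then, viewing $X$ as an object of $\C$, it is rigid in $\C$ by \cite[Prop.\ 1.7]{BMRRT06}. Since $X \notin \add(\tau T)$, the hypothesis of Theorem~\ref{t:taurigid} applies (one should note here that, as an object of $\C$, $X$ being a summand of $\tau T$ is the same as $X \in \add(\tau T)$ in $\field Q$-mod, using that $\tau$ on $\C$ restricts appropriately — but in fact both $X$ and $\tau T$ are modules here, so this is unproblematic), and hence $\clu{X} = \Hom_{\C}(T,X)$ is $\tau$-rigid in $\Lambda$-mod. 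Conversely, suppose $\clu{X}$ is $\tau$-rigid in $\Lambda$-mod. By the bijection in Theorem~\ref{t:taurigid}, $\clu{X}$ is the image of an indecomposable rigid object of $\C$ not in $\add(\tau T)$; since $\Hom_{\C}(T,-)$ induces the equivalence $\C/\add(\tau T) \to \Lambda\text{-mod}$ of Theorem~\ref{t:equivalence} and $X \notin \add(\tau T)$, that object must be $X$ itself (up to isomorphism). Thus $X$ is rigid in $\C$, and therefore rigid in $\field Q$-mod again by \cite[Prop.\ 1.7]{BMRRT06}.

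One small point that needs care, rather than a genuine obstacle, is the passage between "indecomposable rigid" (the form in which Theorem~\ref{t:taurigid} is phrased) and the plain "rigid" in the Corollary: a $\field Q$-module $X$ not in $\add(\tau T)$ is rigid in $\field Q$-mod iff each of its indecomposable summands is rigid and there are no extensions between distinct summands, and similarly $\clu{X}$ is $\tau$-rigid iff the analogous conditions hold for $\clu{X}$; the equivalence of Theorem~\ref{t:equivalence} and the rigidity criterion of \cite[Prop.\ 1.7]{BMRRT06} transport these summand-wise and pairwise conditions faithfully, so one can reduce to the indecomposable case and quote Theorem~\ref{t:taurigid} there. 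The main thing to be careful about is therefore just the bookkeeping of summands and the observation that "$X \notin \add(\tau T)$" correctly excludes precisely the objects that Theorem~\ref{t:equivalence} sends to zero; no hard new argument is required, so I expect no serious difficulty.
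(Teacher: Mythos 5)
Your proposal is correct and follows exactly the route the paper takes implicitly: the paper simply states the corollary immediately after noting that a $\field Q$-module is rigid if and only if it is rigid when viewed in $\C$ (citing Prop.\ 1.7 of BMRRT), together with Theorem~\ref{t:taurigid}. Your additional care about the indecomposable-versus-general case is reasonable but not really needed, since the paper's usage of the corollary is for indecomposable modules, and in any case the full version of the cited result from AIR handles summand-wise and pairwise conditions directly.
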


Since (for modules over any finite-dimensional algebra) every $\tau$-rigid
module is rigid, we have that $\clu{X}$ is a rigid $\Lambda$-module
for any rigid $\field Q$-module $X$.

\begin{remark} \label{r:transjective}
Suppose that $X$ is an indecomposable object of $D^b(\field Q)$ which is either a preprojective
$\field Q$-module, a preinjective $\field Q$-module or the shift of a projective $\field Q$-module.
Assume also that $X$ is not a direct summand of $\tau T$.
Then $X$ is rigid in $D^b(\field Q)$, hence (by~\cite[Prop.\ 1.7]{BMRRT06}) rigid in $\C$.
By Theorem~\ref{t:taurigid}, $\clu{X}$ is $\tau$-rigid in $\Lambda$-mod.
Furthermore, $X$ is Schurian in $D^b(\field Q)$. We also have $\HomF{\C}(X,X)\cong \Dual\Hom(X,\tau^2 X)=0$
by~\eqref{e:tau2}, so $X$ is a Schurian object of $\C$. It follows that $\clu{X}$ is a Schurian
$\Lambda$-module by Proposition~\ref{p:rigid}(a). Thus we see that, for any
indecomposable transjective object of $\C$ (not a summand of $\tau T$),
the corresponding $\Lambda$-module is Schurian and $\tau$-rigid,
hence rigid. Thus the main work in classifying indecomposable Schurian and
($\tau$-)rigid $\Lambda$-modules concerns those which arise from tubes in $\field Q$-mod.
\end{remark}

Finally, we include some lemmas which will be useful for
checking whether a given $\Lambda$-module is Schurian or rigid.

\begin{lemma}
\label{l:Ccombine}
Let $X,Y,Z$ be ${\field}Q$-modules,
regarded as objects in $\C$.
Let $f\in \HomF{\C}(X,Y)=\Hom(X,FY)$.
Then $f$ factorizes in $\C$ through $Z$ if and only if
it factorizes in $D^b(\field Q)$ through $Z$ or $F(Z)$.
\end{lemma}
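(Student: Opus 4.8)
The plan is to work directly with the description of $\C=D^b(\field Q)/F$ as an orbit category and compute compositions there explicitly. Recall that $\Hom_\C(A,B)=\bigoplus_{i\in\Z}\Hom_{D^b(\field Q)}(A,F^iB)$, with composition given component-wise by $(\psi\circ\varphi)_k=\sum_{i+j=k}F^i(\psi_j)\circ\varphi_i$. The first thing I would record is that, since $\field Q$ is hereditary and $X,Y,Z$ are modules, $\Hom_{D^b(\field Q)}(M,F^iN)=\Ext^i_{\field Q}(M,\tau^{-i}N)$ vanishes for $i<0$ and for $i\geq 2$; hence for any $W\in\add(Z)$ one has $\Hom_\C(X,W)=\Hom_{D^b(\field Q)}(X,W)\oplus\Hom_{D^b(\field Q)}(X,FW)$ and $\Hom_\C(W,Y)=\Hom_{D^b(\field Q)}(W,Y)\oplus\Hom_{D^b(\field Q)}(W,FY)$, and also $\Hom_{D^b(\field Q)}(X,F^2Y)=0$.

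Next I would unwind a $\C$-factorization. Suppose $f=h\circ g$ in $\C$ with $g\in\Hom_\C(X,W)$ and $h\in\Hom_\C(W,Y)$ for some $W\in\add(Z)$; we may take $W=Z^n$. Write $g=g_0+g_1$ with $g_0\colon X\to Z^n$ and $g_1\colon X\to F(Z^n)$ in $D^b(\field Q)$, and $h=h_0+h_1$ with $h_0\colon Z^n\to Y$ and $h_1\colon Z^n\to FY$. By the composition rule and the vanishing just noted, the $F^0$-component of $h\circ g$ is $h_0g_0$ and its $F^1$-component is $h_1g_0+F(h_0)g_1$ (the putative $F^2$-component $F(h_1)g_1$ lies in $\Hom_{D^b(\field Q)}(X,F^2Y)=0$). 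Since $f$ lies in $\HomF{\C}(X,Y)=\Hom_{D^b(\field Q)}(X,FY)$, the equality $f=h\circ g$ in $\C$ amounts exactly to the two identities $h_0g_0=0$ and $f=h_1g_0+F(h_0)g_1$ in $D^b(\field Q)$. Now $h_1g_0$ factors in $D^b(\field Q)$ through $Z^n$ and $F(h_0)g_1$ factors through $F(Z^n)$, which proves the forward implication, exhibiting $f$ as a sum of a morphism through a power of $Z$ and one through a power of $F(Z)$. For the converse, if $f=\beta\alpha$ in $D^b(\field Q)$ with $\alpha\colon X\to Z^n$ and $\beta\colon Z^n\to FY$, set $g_0=\alpha$, $h_1=\beta$, $g_1=0$, $h_0=0$; if instead $f=\delta\gamma$ with $\gamma\colon X\to F(Z^n)$ and $\delta\colon F(Z^n)\to FY$, use that $F$ is an autoequivalence to write $\delta=F(h_0)$ for a unique $h_0\colon Z^n\to Y$, and set $g_1=\gamma$, $g_0=0$, $h_1=0$. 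In both cases the two identities hold, so $f=h\circ g$ in $\C$ and $f$ factors through $Z^n\in\add(Z)$.

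I expect the only delicate point to be the bookkeeping: getting the orbit-category composition formula exactly right — in particular the twist by $F^i$ applied to the components of the second morphism — and verifying that every component of $h\circ g$ other than the one in $\Hom_{D^b(\field Q)}(X,FY)$ is forced to vanish. That is precisely where the hypothesis that $X,Y,Z$ are genuine modules and $\field Q$ is hereditary is used; for arbitrary objects of $\C$ one would need to track a longer, shifted range of components. Everything else is routine.
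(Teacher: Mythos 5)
Your proof is correct and follows the same underlying idea as the paper (decompose morphisms in $\C$ into their $H$- and $F$-components and see how they compose), but you carry it out explicitly via the orbit-category composition formula, which makes the argument considerably more rigorous than the paper's two-sentence proof. In doing so you surface a mild imprecision in the statement and in the paper's proof: since both $g_0,g_1$ and $h_0,h_1$ may be nonzero simultaneously, what one actually proves in the forward direction is that $f=h_1 g_0 + F(h_0) g_1$ is a \emph{sum} of a map factoring through $Z^n$ and a map factoring through $F(Z^n)$ in $D^b(\field Q)$ --- equivalently, $f$ factors through an object of $\add(Z\oplus FZ)$ --- rather than the stated disjunction ``factors through $Z$ \emph{or} through $F(Z)$''. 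The paper's phrase ``as an $H$-map followed by an $F$-map or an $F$-map followed by an $H$-map'' glosses over the same point. The discrepancy is harmless in the paper's applications (Lemmas~\ref{l:belownotrigid} and~\ref{l:belownotschurian}), because there $\Hom(X,FY)\cong\field$, so in the sum $f=f_1+f_2$ one of the two terms is either zero or a nonzero scalar multiple of $f$, collapsing the sum to the disjunction. You are honest in stating exactly what your computation shows; it would strengthen your write-up to explicitly note that this is a refinement of the literal statement, and to observe that it specializes to the statement whenever $\Hom(X,FY)$ is one-dimensional, which is the only situation in which the paper invokes the lemma.
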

\begin{proof}
Since $f$ is an $F$-map, it can only factorize through
$Z$ in $\C$ as an $H$-map followed by an $F$-map or an
$F$-map followed by an $H$-map.
The former case corresponds to factorizing through $Z$
in $D^b(\field Q)$ and the latter case corresponds to factorizing
through $F(Z)$ in $D^b(\field Q)$.
\end{proof}

\begin{proposition}
\label{p:dual}
Let $A,B,C$ be objects in $D^b(\field Q)$.
\begin{enumerate}[(a)]
\item
Let $\alpha:A\rightarrow C$ and
$$\Hom(B,\tau \alpha):\Hom(B,\tau A)\rightarrow \Hom(B,\tau C)$$
and
$$\Hom(\alpha,B[1]):\Hom(C,B[1])\rightarrow \Hom(A,B[1])$$
Then $\Hom(B,\tau \alpha)$ is nonzero
(respectively, injective, surjective, or an isomorphism)
if and only if $\Hom(\alpha,B[1])$ is nonzero (respectively,
surjective, injective or an isomorphism). We illustrate the
maps $\Hom(B,\tau \alpha)$ and $\Hom(\alpha,B[1])$ below for
ease of reference.
$$
\xymatrix{
B \ar[dr]_{\gamma} \ar[rr]^{\Hom(B,\tau\alpha)(\gamma)} && \tau C \\
& \tau A \ar[ur]_{\tau\alpha}
}
\quad\quad
\xymatrix{
A \ar[dr]_{\alpha} \ar[rr]^{\Hom(\alpha,B[1])(\delta)} && B[1] \\
& C \ar[ur]_{\delta}
}
$$
\item
Let $\beta:C\rightarrow B$ and consider the induced maps:
$$\Hom(\beta,\tau A):\Hom(B,\tau A)\rightarrow \Hom(C,\tau A)$$
and
$$\Hom(A,\beta[1]):\Hom(A,C[1])\rightarrow \Hom(A,B[1])$$
Then $\Hom(\beta,\tau A)$ is nonzero
(respectively, injective, surjective, or an isomorphism)
if and only if $\Hom(A,\beta[1])$ is nonzero (respectively,
surjective, injective or an isomorphism).
We illustrate the maps $\Hom(\beta,\tau A)$ and $\Hom(A,\beta[1])$ below for ease of
reference.
$$
\xymatrix{
C \ar[dr]_{\beta} \ar^{\Hom(\beta,\tau A)(\gamma)}[rr] && \tau A \\
& B \ar[ur]_{\gamma}
}
\quad\quad
\xymatrix{
A \ar[dr]_{\delta} \ar[rr]^{\Hom(A,\beta[1])(\delta)} && B[1] \\
& C[1] \ar[ur]_{\beta[1]}
}
$$

\end{enumerate}
\end{proposition}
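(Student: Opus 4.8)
The plan is to derive the whole proposition from Serre duality in $D^b(\field Q)$, in the form already used in~\eqref{e:tau2}. Since $\field Q$ is hereditary, $D^b(\field Q)$ has a Serre functor, namely $\tau[1]$, so for all objects $X,Y$ of $D^b(\field Q)$ there is an isomorphism
$$
\eta_{X,Y}\colon \Hom(X,Y[1])\xrightarrow{\sim}\Dual\Hom(Y,\tau X),
$$
natural in both $X$ and $Y$. Each $\Hom$ space in the proposition is finite-dimensional over $\field$ (as $\field Q$ is finite-dimensional), so I can also use the elementary self-duality of finite-dimensional vector spaces: a $\field$-linear map $f\colon V\to W$ between finite-dimensional spaces is nonzero (respectively injective, surjective, bijective) if and only if its transpose $\Dual f\colon \Dual W\to\Dual V$ is nonzero (respectively surjective, injective, bijective).

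For part (a), I would feed $\alpha\colon A\to C$ into the naturality of $\eta$ in its first argument, with the second argument held equal to $B$. This gives a commutative square
$$
\xymatrix{
\Hom(C,B[1]) \ar[r]^{\eta_{C,B}} \ar[d]_{\Hom(\alpha,B[1])} & \Dual\Hom(B,\tau C) \ar[d]^{\Dual\Hom(B,\tau\alpha)} \\
\Hom(A,B[1]) \ar[r]^{\eta_{A,B}} & \Dual\Hom(B,\tau A)
}
$$
whose horizontal maps are isomorphisms, so $\Hom(\alpha,B[1])$ is identified with the transpose of $\Hom(B,\tau\alpha)$. The linear algebra fact above then yields exactly the stated equivalences: nonzero with nonzero, injective with surjective, surjective with injective, isomorphism with isomorphism. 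Part (b) goes the same way, using instead naturality of $\eta$ in its \emph{second} argument, applied to $\beta\colon C\to B$ with first argument $A$: one obtains a commutative square identifying $\Hom(A,\beta[1])\colon\Hom(A,C[1])\to\Hom(A,B[1])$ with the transpose $\Dual\Hom(\beta,\tau A)\colon\Dual\Hom(C,\tau A)\to\Dual\Hom(B,\tau A)$, and the same observation finishes it.

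The one place needing genuine care is the bookkeeping around the two variances: one must check that $\eta$ is natural simultaneously in both arguments, and track carefully how each induced map is oriented — precomposition versus postcomposition, together with the order-reversal introduced by $\Dual$ — so that, for example, "injective" on one side of $\eta$ is correctly matched with "surjective" on the other. Once the two naturality squares are set up with the right orientations, the proposition is a formal consequence of Serre duality together with the self-duality of finite-dimensional vector spaces, and I do not anticipate a substantive obstacle beyond this.
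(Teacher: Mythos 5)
Your argument is correct and is essentially identical to the paper's proof: the paper writes down exactly the two commutative squares you describe (with $\Ext(C,B)$ and $\Ext(A,C)$ appearing explicitly as intermediate terms, but otherwise the same), obtained from the AR/Serre duality isomorphism $\Hom(X,Y[1])\cong\Dual\Hom(Y,\tau X)$ and its bifunctoriality, and then implicitly invokes the same elementary duality of linear maps between finite-dimensional vector spaces to convert injective/surjective into surjective/injective. Your remarks on tracking the variances are a sound clarification of the step the paper leaves to the reader, not a departure from its method.
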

\begin{proof}
Part (a) follows from the commutative diagram:
$$
\xymatrix{
\Hom(C,B[1]) \ar[r]_(0.55){\sim} \ar[d]_{\Hom(\alpha,B[1])} & \Ext(C,B) \ar_(0.45){\sim}[r] & D\Hom(B,\tau C) \ar[d]^{D\Hom(B,\tau\alpha)} \\
\Hom(A,B[1]) \ar[r]_(0.55){\sim} & \Ext(A,B) \ar[r]_(0.45){\sim} &
D\Hom(B,\tau A).
}
$$
Part (b) follows from the commutative
diagram:
$$
\xymatrix{
\Hom(A,C[1]) \ar[r]_(0.55){\sim} \ar[d]_{\Hom(A,\beta[1])} & \Ext(A,C) \ar_(0.45){\sim}[r] & D\Hom(C,\tau A) \ar[d]^{D\Hom(\beta,\tau A)} \\
\Hom(A,B[1]) \ar[r]_(0.55){\sim} & \Ext(A,B) \ar[r]_(0.45){\sim} &
D\Hom(B,\tau A)
}
$$
\end{proof}

\begin{proposition} \label{p:Cfactor}
Let $A,B$ and $C$ be indecomposable $\field Q$-modules and suppose that \linebreak $\Hom(A,B[1])\cong \field$.
Let $\varepsilon:A\rightarrow B[1]$ be a nonzero map.
\begin{itemize}
\item[(a)]
The map $\varepsilon$ factors through $C$ if and only if
there is a map $\alpha\in \Hom(A,C)$ such that $\Hom(B,\tau \alpha)\not=0$.
\item[(b)]
The map $\varepsilon$ factors through $C[1]$ if and only if
there is a map $\beta\in\Hom(C,B)$ such that $\Hom(\beta,\tau A)\not=0$.
\end{itemize}
\end{proposition}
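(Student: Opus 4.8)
The plan is to reduce each of (a) and (b) to a statement about the image of an induced map on $\Hom$-spaces in $D^b(\field Q)$ and then to quote Proposition~\ref{p:dual}. The elementary point that makes everything work is the hypothesis $\Hom(A,B[1])\cong\field$: since $\varepsilon$ is a nonzero element of this one-dimensional space, a subspace of $\Hom(A,B[1])$ contains $\varepsilon$ if and only if it is the whole space; equivalently, a linear map with target $\Hom(A,B[1])$ has $\varepsilon$ in its image if and only if it is surjective, if and only if it is nonzero. (This also shows that ``factors through $C$'' and ``factors through $\add C$'' give the same condition, since a finite sum of subspaces of a one-dimensional space is nonzero exactly when one of the summands is.)

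\textbf{Part (a).} A morphism $A\to B[1]$ factors through $C$ precisely when it can be written as $\delta\alpha$ with $\alpha\in\Hom(A,C)$ and $\delta\in\Hom(C,B[1])$, i.e.\ when it lies in $\im\bigl(\Hom(\alpha,B[1])\bigr)$ for some $\alpha\in\Hom(A,C)$, where $\Hom(\alpha,B[1])\colon\Hom(C,B[1])\to\Hom(A,B[1])$ is the map of Proposition~\ref{p:dual}(a). By the observation above, $\varepsilon\in\im\bigl(\Hom(\alpha,B[1])\bigr)$ if and only if $\Hom(\alpha,B[1])\neq 0$, and by Proposition~\ref{p:dual}(a) this holds if and only if $\Hom(B,\tau\alpha)\neq 0$. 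Hence $\varepsilon$ factors through $C$ if and only if there is some $\alpha\in\Hom(A,C)$ with $\Hom(B,\tau\alpha)\neq 0$.

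\textbf{Part (b).} Since the shift $[1]$ is an autoequivalence of $D^b(\field Q)$, every morphism $C[1]\to B[1]$ is of the form $\beta[1]$ for a unique $\beta\in\Hom(C,B)$. Thus a morphism $A\to B[1]$ factors through $C[1]$ precisely when it lies in $\im\bigl(\Hom(A,\beta[1])\bigr)$ for some $\beta\in\Hom(C,B)$, where $\Hom(A,\beta[1])\colon\Hom(A,C[1])\to\Hom(A,B[1])$ is the map of Proposition~\ref{p:dual}(b). As before, $\varepsilon$ lies in this image if and only if $\Hom(A,\beta[1])\neq 0$, which by Proposition~\ref{p:dual}(b) holds if and only if $\Hom(\beta,\tau A)\neq 0$. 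Therefore $\varepsilon$ factors through $C[1]$ if and only if there is some $\beta\in\Hom(C,B)$ with $\Hom(\beta,\tau A)\neq 0$.

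\textbf{Main obstacle.} The argument is essentially formal, so I do not anticipate a substantial difficulty; the only points needing care are the bookkeeping with the shift functor in part (b) (identifying $\Hom(C[1],B[1])$ with $\Hom(C,B)$) and checking that the relevant equivalence extracted from Proposition~\ref{p:dual} is the one I want here, namely ``nonzero if and only if nonzero'' (the injective/surjective refinements are not needed). This proposition is then combined with Lemma~\ref{l:Ccombine} to decide factorizations of $F$-maps in $\C$.
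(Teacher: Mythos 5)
Your proof is correct and follows essentially the same route as the paper: both reduce factorization through $C$ (resp.\ $C[1]$) to nonvanishing of $\Hom(\alpha,B[1])$ (resp.\ $\Hom(A,\beta[1])$) using one-dimensionality of $\Hom(A,B[1])$, then invoke Proposition~\ref{p:dual}. You have simply spelled out the bookkeeping, including the identification of $\Hom(C[1],B[1])$ with $\Hom(C,B)$ via the shift, which the paper leaves implicit.
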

\begin{proof}
Since $\Hom(A,B[1])\cong \field$, the map $\varepsilon$ factors through
$C$ if and only if $\Hom(\alpha,B[1])\not=0$ for some
$\alpha\in \Hom(A,C)$. Part (a) then follows from Proposition~\ref{p:dual}(a).
Similarly, $\varepsilon$ factors through $C[1]$ if and only
if $\Hom(A,\beta[1])\not=0$ for some $\beta\in \Hom(C,B)$.
Part (b) then follows from Proposition~\ref{p:dual}(b).
\end{proof}

\section{Tubes}
\label{s:tube}
In this section we recall some facts concerning tubes in $\field Q$-mod.
We fix such a tube $\T$, of rank $r$.
Note that $\T$ is standard, i.e.\ the subcategory
of $\T$ consisting of the indecomposable objects is equivalent to the
mesh category of the AR-quiver of $\T$.

Let $Q_i$, for $i\in \mathbb{Z}_r$ be the quasisimple modules in $\T$.
Then, for each $i\in \mathbb{Z}_r$ and $l\in \mathbb{N}$, there
is an indecomposable module $M_{i,l}$ in $\T$ with socle
$Q_i$ and quasilength $l$; these modules exhaust the
indecomposable modules in $\T$. For $i\in \mathbb{Z}$ and $l\in \mathbb{N}$,
we define $Q_i=Q_{[i]_r}$ and $M_{i,l}=M_{[i]_r,l}$. Note that the socle of
$M_{i,l}$ is $M_{i,1}$. We denote the quasilength
$l$ of a module $M=M_{i,l}$ by $\ql(M)$.
The AR-quiver of $\T$
is shown in Figure~\ref{f:arquiver3} (for the case $r=3$).

\begin{figure}
$$
\begin{tikzpicture}[scale=1,baseline=(bb.base),quivarrow/.style={black, -latex, shorten >=10pt,shorten <=10pt},translate/.style={black, dotted, shorten >=13pt, shorten <=13pt}]
  
\path (0,0) node (bb) {}; 

\draw (0,0) node {$M_{0,1}$};
\draw (2,0) node {$M_{1,1}$};
\draw (4,0) node {$M_{2,1}$};
\draw (6,0) node {$M_{0,1}$};
\draw (1,1) node {$M_{0,2}$};
\draw (3,1) node {$M_{1,2}$};
\draw (5,1) node {$M_{2,2}$};
\draw (0,2) node {$M_{2,3}$};
\draw (2,2) node {$M_{0,3}$};
\draw (4,2) node {$M_{1,3}$};
\draw (6,2) node {$M_{2,3}$};

\draw[quivarrow] (0,0) -- (1,1);
\draw[quivarrow] (2,0) -- (3,1);
\draw[quivarrow] (4,0) -- (5,1);
\draw[quivarrow] (1,1) -- (2,0);
\draw[quivarrow] (3,1) -- (4,0);
\draw[quivarrow] (5,1) -- (6,0);
\draw[quivarrow] (1,1) -- (2,2);
\draw[quivarrow] (3,1) -- (4,2);
\draw[quivarrow] (5,1) -- (6,2);
\draw[quivarrow] (0,2) -- (1,1);
\draw[quivarrow] (2,2) -- (3,1);
\draw[quivarrow] (4,2) -- (5,1);

\draw[translate] (0,0) -- (2,0);
\draw[translate] (2,0) -- (4,0);
\draw[translate] (4,0) -- (6,0);
\draw[translate] (1,1) -- (3,1);
\draw[translate] (3,1) -- (5,1);
\draw[translate] (0,2) -- (2,2);
\draw[translate] (2,2) -- (4,2);
\draw[translate] (4,2) -- (6,2);

\draw[dotted,shorten >=13pt] (0,1) -- (1,1);
\draw[dotted,shorten <=13pt] (5,1) -- (6,1);

\draw[dashed,shorten <=10pt, shorten >=10pt] (0,0) -- (0,2);
\draw[dashed,shorten <=10pt, shorten >=10pt] (6,0) -- (6,2);
\draw[dashed,shorten <=10pt, shorten >=10pt] (0,2) -- (0,4);
\draw[dashed,shorten <=10pt, shorten >=10pt] (6,2) -- (6,4);

\end{tikzpicture}
$$
\caption{The AR-quiver of a tube of rank $3$}
\label{f:arquiver3}
\end{figure}
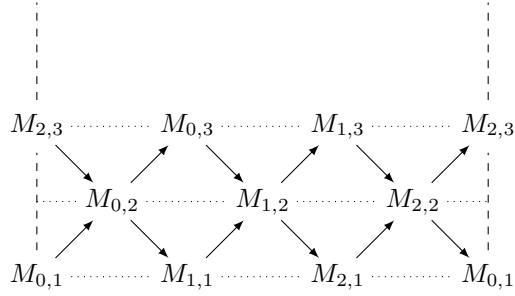

\begin{lemma}
\label{l:downwardarrows}
Let $X$ be an object in $\T$ of quasilength $\ell$.
Then any path in the AR-quiver of $\T$ with at least $\ell$ downward
arrows must be zero in $\field Q$-mod.
\end{lemma}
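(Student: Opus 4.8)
The plan is to use the standardness of the tube $\T$, so that every morphism between indecomposables in $\T$ is a $\field$-linear combination of paths in the mesh category of the AR-quiver of $\T$, with the mesh relations as the only relations. The key observation is that in a tube of rank $r$, the indecomposable $M_{i,l}$ of quasilength $\ell$ sits at ``height'' $\ell$ above the mouth, and the two arrows leaving $M_{i,l}$ (one going up-left, one going down-right in the standard drawing, or vice versa) point to modules of quasilength $\ell+1$ and $\ell-1$. So a path in the AR-quiver, read as a sequence of irreducible maps, moves up one level along an ``upward'' arrow and down one level along a ``downward'' arrow. Starting from $X=M_{i,\ell}$, a path with at least $\ell$ downward arrows must at some point attempt to pass through quasilength $0$.

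First I would set up the bookkeeping: declare that, in the standard drawing of the AR-quiver of $\T$ (as in Figure~\ref{f:arquiver3}), the irreducible maps $M_{i,l}\to M_{i-1,l-1}$ (lowering quasilength) are the \emph{downward} arrows and the maps $M_{i,l}\to M_{i,l+1}$ (raising quasilength) are the \emph{upward} arrows, and note that $M_{i,l}$ has an upward arrow into it and out of it for every $l$, but a downward arrow out of it only for $l\geq 2$ (there is no module of quasilength $0$). Then, given a path $p$ in the AR-quiver starting at $X$ of quasilength $\ell$, I would track the function $l(\cdot)$ recording the quasilength of the vertex reached after each arrow of $p$; it starts at $\ell$, increases by $1$ on each upward arrow and decreases by $1$ on each downward arrow. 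Let $d$ be the number of downward arrows and $u$ the number of upward arrows among the first however-many steps; as soon as the number of downward arrows used exceeds the number of upward arrows used by $\ell$, the running quasilength would have to drop to $0$, which is impossible for a genuine arrow of the AR-quiver — so in fact the path, as a path of irreducible morphisms, cannot be traversed that far \emph{unless} the corresponding composite is forced to be zero by the mesh relations.

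The heart of the argument, and the step I expect to be the main obstacle to state cleanly, is translating ``the path wants to go below quasilength $1$'' into ``the composite of the corresponding irreducible maps is zero in $\field Q$-mod.'' For this I would argue by induction on $\ell$. If $\ell = 1$ then $X=M_{i,1}$ is quasisimple, it has no downward arrow out of it, so any path with $\geq 1$ downward arrow must first take an upward arrow $M_{i,1}\to M_{i,2}$ and only later a downward one; one then uses a mesh relation at the first ``peak'' of the path (a vertex where an upward arrow is immediately followed by a downward arrow) to rewrite the composite $M_{i,l+1}\to M_{i,l}\to \cdots$: by the mesh relation at $M_{i,l}$, the composite of the up-arrow $M_{i,l}\to M_{i,l+1}$ followed by the down-arrow $M_{i,l+1}\to M_{i,l}$ equals (up to sign) the composite of the other up/down pair through $M_{i-1,l}$, \emph{plus} a term that factors through a module of strictly smaller quasilength on which the inductive hypothesis applies; alternatively one observes directly that $M_{i,l}\to M_{i,l+1}\to M_{i,l}$ around a mesh is (up to sign and the parallel mesh term) the ``loop'' $M_{i,l}\xrightarrow{\ } \operatorname{rad} M_{i,l}\hookrightarrow M_{i,l}$ — and iterating, after $\ell$ downward arrows the composite lands, as a submodule, in something of quasilength $\le 0$, forcing it to be $0$. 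For $\ell>1$, a path from $M_{i,\ell}$ either starts with a downward arrow, in which case we land on $M_{i-1,\ell-1}$ and invoke the inductive hypothesis with a path having at least $\ell-1$ downward arrows, or starts with an upward arrow, in which case the running quasilength is $\ell+1$ and we need $\ell+1$ further downward-excess steps; threading the ``peak'' rewriting above reduces this back to the case of a strictly shorter quasilength or a strictly shorter path, so the induction closes.

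Finally I would remark that an alternative, perhaps more transparent, route is purely homological: a path with $\geq \ell$ downward arrows, if nonzero, would give a nonzero morphism $X\to Y$ in $\T$ that factors through $\soc$-layers enough times to force $Y$ to have a composition factor from below the mouth, contradicting the structure of $\T$; concretely, since $\T$ is standard and closed under submodules and quotients in the obvious layered sense, the image of such a composite would be a module of quasilength $\le 0$. I would present the mesh-relation induction as the main proof since it is self-contained given the standardness of $\T$, and use the structural remark only as motivation.
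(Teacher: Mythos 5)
Your proposal rests on exactly the same mechanism as the paper's proof: the standardness of $\T$ together with the mesh relations, and in particular the fact that the mesh ending at a quasisimple $M_{i,1}$ has a single middle term, so the composite ``up then down'' through the mouth is zero. The paper phrases this as a one-step normal-form argument: use mesh relations to slide $\ell-1$ downward arrows to the front of the path (the maximum possible), after which the path sits on the border and must continue with an upward arrow followed by a downward one, and that pair vanishes by the mesh relation at the border. Your version repackages the same sliding process as an induction on $\ell$, which is a genuinely different organization but not a different idea; it is more verbose without being more elementary.

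A few concrete issues. There is a systematic index slip: the down-arrow out of $M_{i,l}$ lands on $M_{i+1,l-1}$ (not $M_{i-1,l-1}$), and the peak $M_{i,l}\to M_{i,l+1}\to M_{i+1,l}$ is the mesh ending at $M_{i+1,l}$, not at $M_{i,l}$. More importantly, the inductive step for $\ell>1$ in the ``path starts with an upward arrow'' branch is not closed: after one upward arrow you are at quasilength $\ell+1$ with only $\ge\ell$ remaining downward arrows, so the inductive hypothesis (which would ask for $\ge\ell+1$ downs) does not apply to that suffix. What is actually needed there is to observe that the first down-arrow can be pushed all the way to position one, since in the all-ups prefix the running quasilength stays $\ge\ell\ge 2$ so every mesh used has two middle terms; this reduces to the ``starts with a down-arrow'' branch. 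Your phrase ``threading the peak rewriting ... reduces this back to a strictly shorter quasilength or path'' asserts this reduction without exhibiting a decreasing quantity, and as written the induction does not obviously terminate. (The paper's normal-form formulation avoids this bookkeeping.) Finally, the closing ``purely homological'' remark — that the image of the composite would have quasilength $\le 0$ — is not a proof as stated: there are no such modules, and the correct conclusion is precisely that the map is zero, which is what is to be shown; keeping it as motivation only, as you do, is the right call.
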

\begin{proof}
By applying mesh relations if necessary, we can rewrite
the path as a product of $\ell-1$ downward arrows (the
maximum number possible) followed by an upward arrow and
a downward arrow (followed, possibly, by more arrows). Hence the path is zero.
\end{proof}

The following is well-known.

\begin{lemma} \label{l:homlow}
Let $M_{i,l}$, with $0\leq l\leq r-1$, and
$M_{j,m}$ be objects in $\T$.
Then we have the following: (see Figure~\ref{f:hammock} for
an example).

\begin{itemize}
\renewcommand*{\arraystretch}{0.2}
\item[(a)]
$$\Hom(M_{i,l},M_{j,m})\cong \begin{cases}
\field, & \parbox{10cm}{if $1\leq m\leq l-1$ and $j$ is congruent to a member of \\ $[i+l-m,i+l-1]$ mod $r$;} \\[0.6em]
\field, & \text{if }m\geq l\text{ and $j$ is congruent to a member of $[i,i+l-1]$ mod $r$;} \\[0.2em]
0, & otherwise.
\end{cases}
$$
\item[(b)]
$$\Hom(M_{j,m},M_{i,l})\cong \begin{cases}
\field, & \parbox{10cm}{if $1\leq m\leq l-1$ and $j$ is congruent to a member of $[i-m+1,i]$ mod $r$;} \\[0.8em]
\field, & \parbox{10cm}{if $m\geq l$ and $j$ is congruent to a member of $[i-m+1,i-m+l]$ mod $r$;} \\[0.3em]
0, & otherwise.
\end{cases}
$$
\end{itemize}
\end{lemma}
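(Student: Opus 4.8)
The plan is to exploit the fact that the tube $\T$ is standard, so that computing $\Hom$-spaces reduces to counting paths in the mesh category of the AR-quiver of $\T$ (Figure~\ref{f:arquiver3}). Since each mesh in a tube has exactly one upward and one downward arrow at each vertex, a nonzero morphism in the mesh category between two given indecomposables, when it exists, is unique up to scalar; this will give the ``$\cong \field$ or $0$'' dichotomy for free, and the whole content is to determine \emph{when} a nonzero path exists. First I would set up coordinates: the module $M_{i,l}$ sits at height $l$ (quasilength) and ``position'' $i$ along the mouth, and the AR-quiver is the strip $\{(i,l): i\in\Z_r,\ l\geq 1\}$ wrapped cylindrically, with arrows $M_{i,l}\to M_{i+1,l-1}$ (downward) and $M_{i,l}\to M_{i,l+1}$ (upward, reading socles appropriately). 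The support of $\Hom(M_{i,l},-)$ is the usual ``hammock'' emanating from $(i,l)$: a region bounded on one side by the ray of downward arrows and on the other by the ray of upward arrows, truncated by the relations.

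The key step is Lemma~\ref{l:downwardarrows}, which says any path with $\geq \ell$ downward arrows vanishes, where $\ell=\ql(M_{i,l})=l$. Because we are assuming $0\leq l\leq r-1$, the module $M_{i,l}$ is ``short'' relative to the rank of the tube: its support region does not wrap all the way around the cylinder, so there is no interference between the two boundary rays and the description stays clean. Concretely, for part~(a): starting at $(i,l)$, the composite of $a$ downward arrows and $b$ upward arrows (in the mesh category, after reducing via mesh relations to a normal form) lands at $M_{i+a,\,l-a+b}$, and it is nonzero precisely when $a\leq l-1$ (by Lemma~\ref{l:downwardarrows}) and $b$ is unrestricted. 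Writing $m=l-a+b$ and $j\equiv i+a$, the condition $0\leq a\leq l-1$ together with $b\geq 0$ translates exactly into: either $m\leq l-1$ (forcing $b<a$) with $a\in[l-m,l-1]$, i.e.\ $j\in[i+l-m,i+l-1]$; or $m\geq l$ (so $b\geq a$, no upper constraint from that side) with $a\in[0,l-1]$, i.e.\ $j\in[i,i+l-1]$. Part~(b) is the mirror image, obtained either by running the same argument with arrows reversed or, more cleanly, by applying the AR-duality/Serre functor on $\T$: $\Hom(M_{j,m},M_{i,l})\cong D\Hom(M_{i,l},\tau M_{j,m})$, and $\tau M_{j,m}=M_{j-1,m}$ in a tube of rank $r$, so part~(b) follows formally from part~(a) after reindexing $j\mapsto j-m$ (tracking how $\tau$ shifts the socle label by $1$, applied $m$ times along the relevant composite). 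Alternatively one checks it directly by the same path count, which is what I would write out.

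The main obstacle is purely bookkeeping: getting the congruence-class intervals (the $[i+l-m,i+l-1]$ versus $[i,i+l-1]$, and their analogues in~(b)) exactly right, including the boundary cases $m=l$ and $m=l-1$ where the two branches of the formula meet, and making sure the hypothesis $l\leq r-1$ is used precisely where needed (it guarantees the interval of length $l$ has distinct residues mod $r$, so ``congruent to a member of'' is unambiguous and there is no double-counting). I would verify the formula against Figure~\ref{f:hammock} for a small case (e.g.\ $r=3$, $l=2$) to pin down the indexing conventions before committing to the general statement. Everything else --- uniqueness of the morphism up to scalar, reduction to path-counting --- is immediate from standardness of $\T$ and the mesh relations.
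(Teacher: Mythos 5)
Your proposal is correct and follows essentially the same approach as the paper: reduce to counting paths in the mesh category of $\T$ via standardness, use Lemma~\ref{l:downwardarrows} to identify which paths vanish, and use $l\leq r-1$ to ensure the relevant rays do not wrap around and overlap. You spell out the interval bookkeeping more explicitly than the paper's terse argument, but the underlying reasoning is identical.
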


\begin{proof}
We first consider part (a). Note that, since the quasilength of $M_{i,l}$ is assumed to be at most
$r$, the rays starting at $M_{i+p,l-p}$ for $0\leq p\leq l-1$ do not
intersect each other. It is then easy to see that, up to mesh relations, there is exactly one
path in the AR-quiver of $\T$ from $M_{i,l}$ to the objects in these rays
and no path to any other object in $\T$. The result then follows from
the fact that $\T$ is standard. A similar proof gives part (b).
\end{proof}

Let $M_{i,l}$ be an indecomposable module in $\T$. The \emph{wing}
$\W_{M_{i,l}}$ of $M_{i,l}$ is given by:
$$\W_{M_{i,l}}=\{M_{j,m}\,:\, i\leq j\leq i+l-1,\ 1\leq m\leq l+i-j\}.$$
Now fix $M_{i,l}\in \T$ with $l\leq r$.  It follows from Lemma~\ref{l:homlow} that 
if the quasisocle of $X\in \T$ does not lie in $\W_{M_{i,l}}$ then $\Hom(M_{i,l},X)=0$.
Similarly, if the quasitop of $X$ does not lie in $\W_{M_{i,l}}$ then
$\Hom(X,M_{i,l})=0$.
This implies the following, which we state here as we shall use it often.

\begin{corollary} \label{c:wingzero}
Let $M,N,X$ be indecomposable objects in $\T$, and suppose that
$M$ has quasilength at most $r$, and $M\in \W_N$.
\begin{itemize}
\item[(a)] If the quasisocle of $X$ does not lie in $\W_N$ then $\Hom(M,X)=0$.
\item[(b)] If the quasitop of $X$ does not lie in $\W_N$ then $\Hom(X,M)=0$.
\end{itemize}
\end{corollary}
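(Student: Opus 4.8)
The plan is to reduce both parts to the remark stated immediately before the corollary, which is exactly the case $N=M$, together with one elementary observation: wings are nested, in the sense that $M\in\W_N$ forces $\W_M\subseteq\W_N$.

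First I would verify this nesting directly from the definition of a wing. Write $M=M_{i,l}$ and $N=M_{a,b}$; the hypothesis $M\in\W_N$ says $a\leq i\leq a+b-1$ and $1\leq l\leq a+b-i$. For any $M_{j,m}\in\W_M$ we have $i\leq j\leq i+l-1$ and $1\leq m\leq l+i-j$, and then $a\leq i\leq j$, $j\leq i+l-1\leq i+(a+b-i)-1=a+b-1$, and $m\leq l+i-j\leq(a+b-i)+i-j=a+b-j$, so $M_{j,m}\in\W_N$. Note that no bound on $\ql(N)$ is needed for this.

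Then I would apply the two observations recorded just before the corollary, which follow from Lemma~\ref{l:homlow}: since $\ql(M)\leq r$ by hypothesis, one has $\Hom(M,X)=0$ whenever the quasisocle of $X$ is not in $\W_M$, and $\Hom(X,M)=0$ whenever the quasitop of $X$ is not in $\W_M$. For part (a), if the quasisocle of $X$ does not lie in $\W_N$ then, by $\W_M\subseteq\W_N$, it does not lie in $\W_M$ either, whence $\Hom(M,X)=0$; part (b) is identical with quasitops in place of quasisocles. I do not expect any genuine obstacle: the only points requiring a little care are getting the wing inequalities right in the nesting step, and keeping track of the fact that the quasilength bound is imposed on $M$ and not on $N$, which is precisely what makes the preceding remark (hence Lemma~\ref{l:homlow}) applicable.
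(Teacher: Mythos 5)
Your proposal is correct and matches the paper's own (largely implicit) reasoning: the paper derives the corollary directly from the two statements recorded just before it, and the step it leaves unstated is exactly the wing nesting $\W_M\subseteq\W_N$ that you verify from the definition. The inequality chase in your nesting step checks out, and you correctly observe that the quasilength bound is needed only on $M$.
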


\begin{figure}
$$
\begin{tikzpicture}[scale=0.4,baseline=(bb.base),quivarrow/.style={black, -latex, shorten >=10pt,shorten <=10pt},translate/.style={black, dotted, shorten >=13pt, shorten <=13pt}]

\newcommand{\rank}{5}
\newcommand{\height}{5}

\pgfmathparse{\rank-1}\let\rankm\pgfmathresult;
\pgfmathparse{\height-1}\let\heightm\pgfmathresult;

\path (0,0) node (bb) {}; 

\foreach \i in {0,...,\rank}{
\foreach \j in {0,...,\height}{
\node [fill=white,inner sep=0pt] at (\i*2,\j*2) {$\circ$};}}

\foreach \i in {0,...,\rankm}{
\foreach \j in {0,...,\heightm}{
\draw[->,shorten <=4pt,shorten >=4pt] (\i*2,\j*2) -- (\i*2+1,\j*2+1);
}}

\foreach \i in {0,...,\rankm}{
\foreach \j in {1,...,\height}{
\draw[->,shorten <=4pt,shorten >=4pt] (\i*2,\j*2) -- (\i*2+1,\j*2-1);
}}

\foreach \i in {0,...,\rankm}{
\foreach \j in {0,...,\heightm}{
\node [fill=white,inner sep=0pt] at (\i*2+1,\j*2+1) {$\circ$};}}

\foreach \i in {0,...,\rankm}{
\foreach \j in {0,...,\heightm}{
\draw[->,shorten <=4pt, shorten >=4pt] (\i*2+1,\j*2+1) -- (\i*2+2,\j*2+2);
\draw[->,shorten <=4pt, shorten >=4pt] (\i*2+1,\j*2+1) -- (\i*2+2,\j*2);
}}

\begin{scope}[on background layer]
\draw[dashed] (0,0) -- (0,2*\height+2);
\draw[dashed] (2*\rank,0) -- (2*\rank,2*\height+2);

\draw[draw=none,fill=gray!30,opacity=0.4] (4,2) -- (6,0) -- (10,4) -- (10,8) -- (4,2);

\draw[draw=none,fill=gray!30,opacity=0.4] (0,4) -- (6,10) -- (2,10) -- (0,8) -- (0,4);
\end{scope}

\draw (4,2) node {$\bullet$};

\draw (7,-1) node {$\scriptstyle M_{i+l-1,1}$};

\end{tikzpicture}
\quad\quad\quad\quad
\begin{tikzpicture}[scale=0.4,baseline=(bb.base),quivarrow/.style={black, -latex, shorten >=10pt,shorten <=10pt},translate/.style={black, dotted, shorten >=13pt, shorten <=13pt}]

\path (0,0) node (bb) {}; 

\newcommand{\rank}{5}
\newcommand{\height}{5}

\pgfmathparse{\rank-1}\let\rankm\pgfmathresult;
\pgfmathparse{\height-1}\let\heightm\pgfmathresult;

\foreach \i in {0,...,\rank}{
\foreach \j in {0,...,\height}{
\node [fill=white,inner sep=0pt] at (\i*2,\j*2) {$\circ$};}}

\foreach \i in {0,...,\rankm}{
\foreach \j in {0,...,\heightm}{
\draw[->,shorten <=4pt,shorten >=4pt] (\i*2,\j*2) -- (\i*2+1,\j*2+1);
}}

\foreach \i in {0,...,\rankm}{
\foreach \j in {1,...,\height}{
\draw[->,shorten <=4pt,shorten >=4pt] (\i*2,\j*2) -- (\i*2+1,\j*2-1);
}}

\foreach \i in {0,...,\rankm}{
\foreach \j in {0,...,\heightm}{
\node [fill=white,inner sep=0pt] at (\i*2+1,\j*2+1) {$\circ$};}}

\foreach \i in {0,...,\rankm}{
\foreach \j in {0,...,\heightm}{
\draw[->,shorten <=4pt, shorten >=4pt] (\i*2+1,\j*2+1) -- (\i*2+2,\j*2+2);
\draw[->,shorten <=4pt, shorten >=4pt] (\i*2+1,\j*2+1) -- (\i*2+2,\j*2);
}}

\begin{scope}[on background layer]
\draw[dashed] (0,0) -- (0,2*\height+2);
\draw[dashed] (2*\rank,0) -- (2*\rank,2*\height+2);

\draw[draw=none,fill=gray!30,opacity=0.4] (4,2) -- (2,0) -- (0,2) -- (0,6) -- (4,2);

\draw[draw=none,fill=gray!30,opacity=0.4] (10,6) -- (6,10) -- (10,10) -- (10,6);
\end{scope}

\draw (4,2) node {$\bullet$};

\draw (3,-1) node {$\scriptstyle M_{i-l+1,1}$};

\end{tikzpicture}
$$
\caption{The left hand figure shows the modules $X$ in $\T$ for which $\Hom(M_{i,l},X)\not=0$ (in the shaded region), for the
case $r=5$. The module $M_{i,l}$ is denoted by a filled-in circle. The right hand figure shows the modules $X$ with $\Hom(X,M_{i,l})\not=0$.}
\label{f:hammock}
\end{figure}
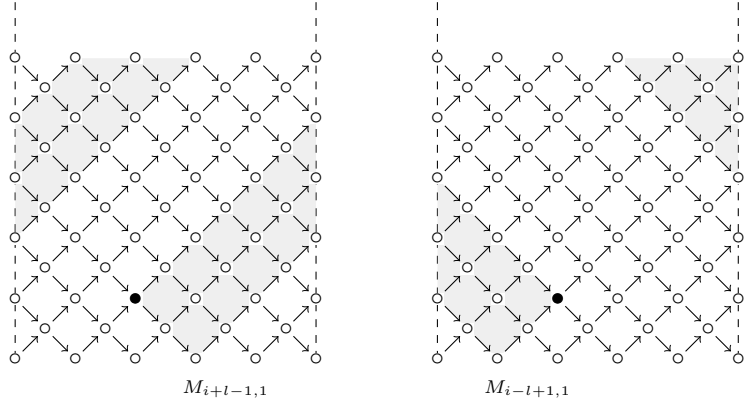

\begin{lemma} \label{l:dimhom}
Let $M_{i,l}$ be an indecomposable module in $\T$. Then we have:
$$\dim \End(M_{i,l})=
\begin{cases} 1, & 1\leq l\leq r; \\
2, & r+1\leq l\leq 2r;
\end{cases}
\quad\quad
\dim \Hom(M_{i,l},\tau M_{i,l})=
\begin{cases} 0, & 1\leq l\leq r-1; \\
1, & r\leq l\leq 2r-1;
\end{cases}
$$
$$
\dim \Hom(M_{i,l},\tau^2 M_{i,l})=
\begin{cases} 0, & 1\leq l\leq r-2; \\
1, & r-1\leq l\leq 2r-2.
\end{cases}
$$
\end{lemma}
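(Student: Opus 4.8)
The plan is to reduce all three assertions to counting, modulo mesh relations, the paths in the AR-quiver of $\T$ between the relevant indecomposables (legitimate since $\T$ is standard), and to organise the count by passing to the universal cover. Let $\widetilde\T$ denote the mesh category of $\mathbb{Z}A_{\infty}$ and $\pi\colon\widetilde\T\to\T$ the Galois covering, with group $\langle\tau^{r}\rangle\cong\mathbb{Z}$, arising from the identification of the AR-quiver of $\T$ with $\mathbb{Z}A_{\infty}$ modulo $\tau^{r}$. Write $\widetilde M_{a,l}$, for $a\in\mathbb{Z}$ and $l\geq 1$, for the indecomposable of $\widetilde\T$ of quasilength $l$ whose quasi-socle is the quasisimple $\widetilde M_{a,1}$, with the quasisimples indexed so that $\tau\widetilde M_{a,1}=\widetilde M_{a-1,1}$; then $\pi(\widetilde M_{a,l})=M_{[a]_{r},l}$ and $\tau\widetilde M_{a,l}=\widetilde M_{a-1,l}$. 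The standard push-down identity for Galois coverings gives
$$
\dim_{\field}\Hom(M_{i,l},M_{j,m})=\sum_{k\in\mathbb{Z}}\dim_{\field}\Hom_{\widetilde\T}(\widetilde M_{i,l},\widetilde M_{j+kr,m}),
$$
so it is enough to know the summands on the right and to count the $k$ for which they are nonzero.

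First I would record the morphism spaces of $\mathbb{Z}A_{\infty}$. It is standard that the mesh category of $\mathbb{Z}A_{\infty}$ has all morphism spaces of dimension at most one, so each summand above is $0$ or $1$. In the coordinates of Figure~\ref{f:arquiver3}, every path in $\mathbb{Z}A_{\infty}$ from $\widetilde M_{a,l}$ to $\widetilde M_{b,m}$ has exactly $b-a$ downward arrows and $m-l+(b-a)$ upward ones; hence by the argument of Lemma~\ref{l:downwardarrows} such a path vanishes once $b-a\geq l$, whereas if $0\leq b-a\leq l-1$ and $m\geq l-(b-a)$ the evident path (descend $b-a$ steps, then ascend) is nonzero. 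Therefore $\Hom_{\widetilde\T}(\widetilde M_{a,l},\widetilde M_{b,m})\cong\field$ exactly when $0\leq b-a\leq l-1$ and $m\geq l-(b-a)$, and is $0$ otherwise. (For $l\leq r-1$ at most one $k$ is ever admissible, which is how one recovers Lemma~\ref{l:homlow}.)

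It then remains to count the admissible $k$; since throughout $m=l$, the condition $m\geq l-(b-a)$ is automatic, leaving only $0\leq (j+kr)-i\leq l-1$. For $\End(M_{i,l})$, take $j=i$: the condition is $0\leq kr\leq l-1$, giving $\lfloor(l-1)/r\rfloor+1$ values of $k$, i.e.\ $1$ for $1\leq l\leq r$ and $2$ for $r+1\leq l\leq 2r$. For $\Hom(M_{i,l},\tau M_{i,l})=\Hom(M_{i,l},M_{i-1,l})$, take $j=i-1$: the condition is $1\leq kr\leq l$, giving $\lfloor l/r\rfloor$ values, i.e.\ $0$ for $1\leq l\leq r-1$ and $1$ for $r\leq l\leq 2r-1$. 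For $\Hom(M_{i,l},\tau^{2}M_{i,l})=\Hom(M_{i,l},M_{i-2,l})$, take $j=i-2$: for $r\geq 2$ the condition is $2\leq kr\leq l+1$, giving $\lfloor(l+1)/r\rfloor$ values, i.e.\ $0$ for $1\leq l\leq r-2$ and $1$ for $r-1\leq l\leq 2r-2$. (When the stated ranges are empty, and in the homogeneous case $r=1$, there is nothing to prove.) This yields the lemma.

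The one delicate ingredient is the displayed push-down identity, equivalently the statement that paths which wind different numbers of times around $\T$ are linearly independent: as soon as $l>r$ the modules in play have $\dim\End>1$, so one cannot argue merely that ``a morphism is determined by its image''. If one wishes to avoid covering theory, the same count can be run inside $\T$: arguing as in the proof of Lemma~\ref{l:homlow}, but now using Lemma~\ref{l:downwardarrows} to bound the number of downward arrows of a nonzero path by $l-1$, one finds that the nonzero paths $M_{i,l}\to M_{j,m}$ modulo mesh relations are indexed by the integers $t$ with $\max(0,l-m)\leq t\leq l-1$ and $t\equiv j-i\pmod r$, and one counts these exactly as above; the residual point is then to check that these classes are linearly independent, which is again the heart of the matter.
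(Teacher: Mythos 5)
Your argument is correct, and it is essentially an unpacking of what the paper's one-line proof (``easily checked using the fact that $\T$ is standard'') leaves implicit. The covering-theoretic formulation via the Galois cover $\mathbb{Z}A_\infty\to\mathbb{Z}A_\infty/\langle\tau^r\rangle$ is a clean way to organise the path count, and your characterisation of $\Hom_{\widetilde\T}(\widetilde M_{a,l},\widetilde M_{b,m})$ and the resulting floor-function counts all check out, including the boundary cases and the homogeneous case $r=1$. The ``delicate ingredient'' you correctly flag at the end --- that classes of paths with different winding numbers are linearly independent, equivalently the push-down identity --- is precisely the content of the paper's Lemma~\ref{l:basis}, which is likewise asserted from standardness together with homogeneity of the mesh relations. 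So rather than leaving this as a ``residual point'', you could close the loop simply by invoking Lemma~\ref{l:basis} (or repeating its one-line justification), at which point your final paragraph's alternative in-tube count becomes a complete and self-contained proof requiring no covering theory at all. In short: same mathematical content as the paper, presented with considerably more detail, and correct.
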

\begin{proof}
The formulas are easily checked using the fact that $\T$ is standard.
\end{proof}

The last lemma in this section also follows from the fact that $\T$ is standard
(since the mesh relations are homogeneous).

\begin{lemma} \label{l:basis}
Let $X,Y$ be indecomposable objects in $\T$, and let
$\pi_1(X,Y),\ldots ,\pi_t(X,Y)$ be representatives
for the paths in $\T$ from $X$ to $Y$ up to equivalence
via the mesh relations. Then the corresponding maps
$f_1(X,Y),\ldots ,f_t(X,Y)$ form a basis for $\Hom(X,Y)$.
\end{lemma}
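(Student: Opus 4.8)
The plan is to reduce the statement to elementary linear algebra in the mesh category of the AR-quiver of $\T$, exploiting that $\T$ is standard. Write $\Gamma$ for the AR-quiver of $\T$, a stable translation quiver with translation $\tau$, and let $\sigma$ be the associated bijection on its arrows; for each vertex $Z$ of $\Gamma$ let $m_Z=\sum_{\alpha\colon W\to Z}\alpha\,\sigma(\alpha)$ be the mesh relation at $Z$, an element of degree $2$ in the path algebra of $\Gamma$. Since $\T$ is standard, the full subcategory of $\field Q$-mod on the indecomposable objects of $\T$ is equivalent to the mesh category of $\Gamma$; under this equivalence $\Hom(X,Y)$ is identified with the span of all paths in $\Gamma$ from $X$ to $Y$, modulo the subspace $\mathcal{I}(X,Y)$ spanned by the elements $p\,m_Z\,q$ for $p$ a path from $X$ to $\tau Z$ and $q$ a path from $Z$ to $Y$. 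In particular the residues of the paths span $\Hom(X,Y)$; as the $\pi_i(X,Y)$ form a complete set of representatives for the paths from $X$ to $Y$ under the equivalence generated by the mesh relations (omitting those equivalent to $0$), the maps $f_i(X,Y)$ span $\Hom(X,Y)$, and it remains only to prove their linear independence.

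This is where homogeneity enters. Grading the path algebra of $\Gamma$ by path length, each $m_Z$ is homogeneous of degree $2$, so $\mathcal{I}(X,Y)$ is a graded subspace and $\Hom(X,Y)=\bigoplus_{d\geq 0}\Hom(X,Y)_d$, where $\Hom(X,Y)_d$ is the span of the length-$d$ paths from $X$ to $Y$ modulo the degree-$d$ part of $\mathcal{I}(X,Y)$. The mesh equivalence preserves path length, so it suffices to fix $d$ and show that the residues of the length-$d$ representatives among the $\pi_i(X,Y)$ form a basis of $\Hom(X,Y)_d$. The degree-$d$ part of $\mathcal{I}(X,Y)$ is spanned by the elements $p\,m_Z\,q$ with $\ell(p)+2+\ell(q)=d$; such an element is a linear combination, with nonzero coefficients, of the two length-$d$ paths from $X$ to $Y$ obtained by inserting between $p$ and $q$ one of the two length-$2$ paths through a predecessor of $Z$, when $Z$ is an interior vertex of $\Gamma$, and a nonzero scalar multiple of a single length-$d$ path when $Z=M_{j,1}$ lies at the mouth of the tube.

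The conclusion is then a standard graph-theoretic count. After an appropriate rescaling of the basis of length-$d$ paths — possible because $\T$ is a tube and the signs in the $m_Z$ are consistent — each interior mesh relation in degree $d$ becomes a genuine difference $p-p'$ of basis paths. Form the graph $G_d$ on the length-$d$ paths from $X$ to $Y$ with these differences as edges, and call a vertex \emph{dead} if the corresponding path lies in $\mathcal{I}(X,Y)$; deadness is constant on connected components, since a dead path joined by an edge to a path $p'$ forces $p'\in\mathcal{I}(X,Y)$. In a component with no dead vertex the edge-differences span precisely a codimension-one subspace, so the component contributes one dimension to $\Hom(X,Y)_d$, spanned by the residue of any of its vertices; a component with a dead vertex contributes nothing. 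Hence $\dim\Hom(X,Y)_d$ equals the number of dead-free components of $G_d$, which is exactly the number of length-$d$ representatives among the $\pi_i(X,Y)$; together with the spanning already established, this shows the $f_i(X,Y)$ of length $d$ form a basis of $\Hom(X,Y)_d$, and summing over $d$ gives the lemma.

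I expect the only genuine work beyond this outline to be bookkeeping: verifying the sign normalization that turns interior mesh relations into honest path differences, and handling the degenerate meshes at the mouth of $\T$ (and the objects of quasilength $1$) correctly. Both are routine once one uses that $\T$ is standard. As a consistency check, in the range of quasilengths covered by Lemmas~\ref{l:homlow} and~\ref{l:dimhom} the count of representatives produced by this argument agrees with the $\Hom$-dimensions recorded there.
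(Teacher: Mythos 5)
Your proof is correct and spells out exactly what the paper's one-line justification intends: use standardness to pass to the mesh category, then homogeneity of the mesh relations to grade $\Hom(X,Y)$ by path length, and then a per-degree count identifying $\dim\Hom(X,Y)_d$ with the number of nonzero mesh-equivalence classes of length-$d$ paths. The sign normalization you defer to ``bookkeeping'' does hold --- the swap graph in each degree is bipartite, since an elementary swap changes the parity of the number of (down, up) inversions in the up/down word of the path --- so no genuine gap remains.
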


\section{Properties of $T$ with respect to a tube}
\label{s:tubeproperties}
In this section, we collect together some useful facts
that we shall use in Section~\ref{s:rigid} to determine
the rigid and Schurian $\Lambda$-modules.

Recall that we have fixed a tube $\T$ in $\field Q$-mod of rank $r$.
Let $T_{\T}$ be the direct sum of the indecomposable summands of $T$ lying in
$\T$ (we include the case $T_{\T}=0$).
Let $T_k$, $k\in \mathbb{Z}_s$ be the indecomposable summands of
$T_{\T}$ which are not contained in the wing of any other 
indecomposable summand of $T_{\T}$,
numbered in order cyclically around $\T$.
The indecomposable summands of $T_{\T}$
are contained in $\cup_{k\in \mathbb{Z}_s} \W_{T_k}$, where $\W_{T_k}$ 
denotes the wing of $T_k$.
Note that if $T_{\T}=0$ then $s=0$ and $\mathbb{Z}_s$ is the empty set.

A key role is played by the modules $\tau T_k$.
Let $i_k\in \{0,1,\ldots ,r-1\}$ and $l_k\in \mathbb{N}$ be integers such that
$$\tau T_k\cong M_{i_k,l_k}.$$
Note that $l_k\leq r-1$, since $T_k$ is rigid.
Then we have the wings
\begin{align}
\label{e:wings}
\W_{T_k} &=\{M_{i,l}\,:\,i_k+1\leq i\leq i_k+l_k,1\leq l\leq l_k+i_k+1-i\}, \\
\W_{\tau T_k} &=\{M_{i,l}\,:\,i_k\leq i\leq i_k+l_k-1,1\leq l\leq l_k+i_k-i\}, \\
\W_{\tau^2 T_k} &=\{M_{i,l}\,:\,i_k-1\leq i\leq i_k+l_k-2,1\leq l\leq l_k+i_k-1-i\},
\end{align}
For $k\in \mathbb{Z}_s$, the quasisimple objects in $\W_{\tau T_k}$ are
the $Q_i$ for $i_k\leq i\leq i_k+l_k-1$.
Note that, since $\Ext^1(T,T)=0$, we have $[i_{k+1}-(i_k+l_k-1)]_r\not=0,1$
(by Lemma~\ref{l:homlow} and the AR-formula).
In other words, two successive wings $\W_{\tau T_k}$ and $\W_{\tau T_{k+1}}$
are always separated by at least one quasisimple module.

For $k\in \mathbb{Z}_s$, we define $\Top_k$ to be the module $M_{i_k,r+l_k}$.
Note that $\Top_k$ is the module of smallest quasilength in the intersection
of the ray through the injective objects in $\W_{\tau T_k}$ and the coray
through the projective objects in $\W_{\tau T_k}$. Let $\HH_k$ be the
part of $\W_{\Top_k}$ consisting of injective or projective objects in
$\W_{\Top_k}$ of quasilength at least $r$. So
\begin{equation}
\HH_k=\{M_{i_k,l}\,:\,r\leq l\leq r+l_k\}\cup \{M_{i_k+p,r+l_k-p}\,:\,0\leq p\leq l_k\}.
\label{e:hk}
\end{equation}
The unique object in both of these sets is $\Top_k=M_{i_k,r+l_k}$, the unique projective-injective
object in $\W_{\Top_k}$.

Let $\R_k$ (respectively, $\wideR{k}$) be the part of $\W_{\Top_k}$ consisting of
non-projective, non-injective objects in $\W_{\Top_k}$ of quasilength at least $r$ (respectively, at least
$r-1$). Note that $\R_k\subseteq \wideR{k}$.
We have:
\begin{align}
\label{e:defrk}
\R_k &= \{M_{i,l}\,:\,i_k+1\leq i\leq i_k+l_k-1,\,r\leq l\leq r+l_k+i_k-i-1\}; \\
\wideR{k} &= \{M_{i,l}\,:\,i_k+1\leq i\leq i_k+l_k,\,r-1\leq l\leq r+l_k+i_k-i-1\}.
\end{align}
An example is shown in Figure~\ref{f:wings}.

\begin{lemma} \label{l:rktop}
The quasisocles of the indecomposable objects in $\R_k$
(respectively, $\wideR{k}$) are the $Q_i$ where
$i_k+1\leq i\leq i_k+l_k-1$ (respectively, $i_k+1\leq i\leq i_k+l_k$)
and the quasitops are the $Q_i$ where $i_k\leq i\leq i_k+l_k-2$ (respectively,
$i_k-1\leq i\leq i_k+l_k-2$).
In particular, the quasisocle of an indecomposable object in $\R_k$ lies in
$\W_{T_k}\cap \W_{\tau T_k}$ (respectively, in $\W_{T_k}$).
The quasitop of an indecomposable object in $\R_k$ (respectively, $\wideR{k}$)
lies in $\W_{\tau T_k}\cap \W_{\tau^2 T_k}$ (respectively, $\W_{\tau^2 T_k}$).
\end{lemma}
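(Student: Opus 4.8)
The plan is to compute everything explicitly from the combinatorial description of the wings in~\eqref{e:wings} and the formula for $\R_k$ and $\wideR{k}$ in~\eqref{e:defrk}, using only the fact that the quasisocle of $M_{i,l}$ is $Q_i$ and the quasitop of $M_{i,l}$ is $Q_{[i+l-1]_r}$. First I would record that an indecomposable object $M_{i,l}$ of $\wideR{k}$ has $i_k+1\leq i\leq i_k+l_k$ and $r-1\leq l\leq r+l_k+i_k-i-1$, so its quasisocle is $Q_i$ with $i$ ranging exactly over $\{i_k+1,\dots,i_k+l_k\}$; restricting to $\R_k$ forces $l\geq r$, hence $i\leq i_k+l_k-1$ as well, giving the stated socle range $\{i_k+1,\dots,i_k+l_k-1\}$. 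For the quasitops, the quasitop of $M_{i,l}$ is $Q_{[i+l-1]_r}$; since $l$ runs up to $r+l_k+i_k-i-1$, the maximal value of $i+l-1$ (at fixed $i$) is $i_k+l_k-2+r$, which reduces mod $r$ to $i_k+l_k-2$, and the minimal value is $i+(r-1)-1 = i+r-2$, which for the smallest admissible $i$ gives $i_k+1+r-2 = i_k-1+r \equiv i_k-1$ in the $\wideR{k}$ case and $i_k+2+r-2 = i_k+r \equiv i_k$ in the $\R_k$ case. One must also check that every intermediate residue is attained; this follows because for each $i$ the admissible $l$ form a nonempty interval of integers (of length $l_k+i_k-i$ for $\wideR{k}$), so $i+l-1$ sweeps out a block of consecutive residues, and consecutive values of $i$ overlap, so the union is the full interval claimed — a short interval-chasing argument.

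Next I would deduce the containment statements by comparing with~\eqref{e:wings}. The quasisimple objects of $\W_{T_k}$ are the $Q_i$ for $i_k+1\leq i\leq i_k+l_k$, those of $\W_{\tau T_k}$ are the $Q_i$ for $i_k\leq i\leq i_k+l_k-1$, and those of $\W_{\tau^2 T_k}$ are the $Q_i$ for $i_k-1\leq i\leq i_k+l_k-2$ (read off directly from the three displayed wings, taking $l=1$). Since the quasisocle of an object of $\R_k$ is $Q_i$ with $i_k+1\leq i\leq i_k+l_k-1$, this lies in the intersection of $\{i_k+1,\dots,i_k+l_k\}$ and $\{i_k,\dots,i_k+l_k-1\}$, i.e.\ in $\W_{T_k}\cap\W_{\tau T_k}$; for $\wideR{k}$ the socle index range is $\{i_k+1,\dots,i_k+l_k\}$, which is exactly the quasisimple range of $\W_{T_k}$. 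Similarly, the quasitop index range for $\R_k$ is $\{i_k,\dots,i_k+l_k-2\}$, which is the intersection of $\{i_k,\dots,i_k+l_k-1\}$ and $\{i_k-1,\dots,i_k+l_k-2\}$, hence lies in $\W_{\tau T_k}\cap\W_{\tau^2 T_k}$; for $\wideR{k}$ the quasitop range $\{i_k-1,\dots,i_k+l_k-2\}$ is precisely the quasisimple range of $\W_{\tau^2 T_k}$. One small point to handle is that ``lies in $\W_N$'' for a quasisimple $Q_j$ should be understood as $Q_j$ being one of the quasisimple objects of the wing $\W_N$, which is what the displayed descriptions of the wings provide; I would state this identification at the outset. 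Another boundary case worth a remark is when $l_k=1$, so that $\R_k$ is empty (the index interval $i_k+1\leq i\leq i_k+l_k-1$ collapses) and the statement about $\R_k$ is vacuous, while $\wideR{k}$ is still nonempty.

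I do not expect any genuine obstacle here: the whole lemma is bookkeeping with the explicit index intervals, and the only thing requiring care is the reduction mod $r$ for the quasitops (making sure the residues wrap correctly and that no residue in the claimed interval is skipped). The cleanest way to organize this is to fix $i$ in its range, describe the set of quasitops $\{[i+l-1]_r : l \text{ admissible}\}$ as a consecutive block, and then take the union over $i$, noting telescoping overlap of successive blocks; I would present that as the main (short) computation and leave the rest as direct comparison with~\eqref{e:wings}.
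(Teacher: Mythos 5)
Your approach is essentially the same as the paper's: read off the quasisocle range directly from~\eqref{e:defrk}, compute the range of $i+l-1$ (reducing mod $r$) for the quasitops, and then compare with the quasisimple ranges of the three wings. The paper also computes the minimal and maximal values of $i+l-1$ over the admissible pairs $(i,l)$ and observes that the result is a full interval; your remark that the per-$i$ intervals all share the same upper endpoint $r+l_k+i_k-2$ and hence nest (so the union is the interval at $i=i_k+1$) is a clean way of making the ``every residue is attained'' point explicit, which the paper treats as obvious. One small slip to fix: in your computation of the minimal quasitop index for $\R_k$ you use the lower bound $l\geq r-1$ and then substitute $i=i_k+2$, arriving at $i_k+2+r-2=i_k+r$; but the correct input for $\R_k$ is $l\geq r$ with smallest $i=i_k+1$, giving $(i_k+1)+r-1=i_k+r$ directly. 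The final value is the same, but the intermediate step as written uses the wrong bounds. The rest of the bookkeeping, including the vacuity remark when $l_k=1$, is fine.
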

\begin{proof}
The first statement follows from~\eqref{e:defrk}. The quasitop of $M_{i,l}$ is
$Q_{i+l-1}$. Hence, the quasitops of the indecomposable objects in $\R_k$ are the
$Q_i$ with
$$(i_k+1)+r-1\leq i\leq (i_k+l_k-1)+(r+l_k+i_k-(i_k+l_k-1)-1)-1,$$
i.e.
$$i_k+r\leq i\leq r+l_k+i_k-2.$$
i.e.\ the $Q_i$ with $i_k\leq i\leq i_k+l_k-2$, since we are working mod $r$.
Similarly, the quasitops of the indecomposable objects in $\wideR{k}$ are the
$Q_i$ with
$$(i_k+1)+(r-1)-1\leq i\leq (i_k+l_k)+(r+l_k+i_k-(i_k+l_k)-1)-1,$$
i.e.
$$i_k+r-1\leq i\leq r+l_k+i_k-2,$$
i.e.\ the $Q_i$ with
$i_k-1\leq i\leq i_k+l_k-2$.
The last statements follow from the descriptions of the wings
$\W_{T_k}$, $\W_{\tau T_k}$ and $\W_{\tau^2 T_k}$ above~\eqref{e:wings}.
It is easy to observe the result in this lemma in Figure~\ref{f:wings}, where
the regions $\R_k$ and $\wideR{k}$ are indicated.
\end{proof}

\begin{figure}
$$
\begin{tikzpicture}[xscale=0.3,yscale=0.4,baseline=(bb.base),quivarrow/.style={black, -latex, shorten >=10pt,shorten <=10pt},translate/.style={black, dotted, shorten >=13pt, shorten <=13pt}]

\newcommand{\rank}{11}
\newcommand{\height}{7}
\newcommand{\copies}{2}

\pgfmathparse{\rank-1}\let\rankm\pgfmathresult;
\pgfmathparse{\height-1}\let\heightm\pgfmathresult;

\pgfmathparse{\rank*\copies}\let\totwidth\pgfmathresult;
\pgfmathparse{\rank*\copies-1}\let\totwidthm\pgfmathresult;

\path (0,0) node (bb) {}; 

\foreach \i in {0,...,\totwidth}{
\foreach \j in {0,...,\height}{
\node [fill=white,inner sep=0pt] at (\i*2,\j*2) {$\circ$};}}

\foreach \i in {0,...,\totwidthm}{
\foreach \j in {0,...,\heightm}{
\node [fill=white,inner sep=0pt] at (\i*2+1,\j*2+1) {$\circ$};}}


\begin{scope}[on background layer]
\draw[dashed] (0,0) -- (0,2*\height+2);
\draw[dashed] (2*\rank,0) -- (2*\rank,2*\height+2);
\draw[dashed] (2*\totwidth,0) -- (2*\totwidth,2*\height+2);
\end{scope}


\draw[draw=none,fill=gray!30,opacity=0.4] (4,0) -- (7,3) -- (10,0) -- (4,0);
\draw[draw=none,fill=gray!30,opacity=0.4] (14,0) -- (16,2) -- (18,0) -- (14,0);

\draw[draw=none,fill=gray!30,opacity=0.4] (4+2*\rank,0) -- (7+2*\rank,3) -- (10+2*\rank,0) -- (4+2*\rank,0);
\draw[draw=none,fill=gray!30,opacity=0.4] (14+2*\rank,0) -- (16+2*\rank,2) -- (18+2*\rank,0) -- (14+2*\rank,0);



\draw (1.5,8.5) -- (5,12) -- (8.5,8.5) -- (1.5,8.5);
\draw (2.5,9.5) -- (7.5,9.5);

\draw (23.5,8.5) -- (27,12) -- (30.5,8.5) -- (23.5,8.5);
\draw (24.5,9.5) -- (29.5,9.5);


\draw (13.5,8.5) -- (18,13) -- (22.5,8.5) -- (13.5,8.5);
\draw (14.5,9.5) -- (21.5,9.5);

\draw (35.5,8.5) -- (40,13) -- (44.5,8.5) -- (35.5,8.5);
\draw (36.5,9.5) -- (43.5,9.5);

\begin{scope}[on background layer]

\draw[dotted] (4,0) -- (18,14) -- (32,0);
\draw[dotted] (14,0) -- (27,13) -- (40,0);

\draw[dotted] (0,8) -- (5,13) -- (18,0);
\draw[dotted] (26,0) -- (40,14) -- (44,10);

\draw[dotted] (10,0) -- (0,10);
\draw[dotted] (36,0) -- (44,8);
\end{scope}


\draw (7,3.5) node[black] {$\scriptstyle \tau T_0$};
\draw (29,3.5) node[black] {$\scriptstyle \tau T_0$};

\draw (16,2.5) node[black] {$\scriptstyle \tau T_1$};
\draw (38,2.5) node[black] {$\scriptstyle \tau T_1$};

\draw (5,13.5) node[black] {$\scriptstyle \text{Top}_1$};
\draw (27,13.5) node[black] {$\scriptstyle \text{Top}_1$};

\draw (18,14.5) node[black] {$\scriptstyle \text{Top}_0$};
\draw (40,14.5) node[black] {$\scriptstyle \text{Top}_0$};



\foreach \i/\j in {2/10,3/11,4/12,5/13,6/12,7/11,8/10}
{\draw (\i,\j) node {$\bullet$};}


\foreach \i/\j in {14/10,15/11,16/12,17/13,18/14,19/13,20/12,21/11,22/10}
{\draw (\i,\j) node {$\bullet$};}


\foreach \i/\j in {24/10,25/11,26/12,27/13,28/12,29/11,30/10}
{\draw (\i,\j) node {$\bullet$};}


\foreach \i/\j in {36/10,37/11,38/12,39/13,40/14,41/13,42/12,43/11,44/10}
{\draw (\i,\j) node {$\bullet$};}

\end{tikzpicture}
$$
\caption{The wings $\W_{\tau T_k}$ shown as shaded regions in two copies of $\T$ in the
case $r=11$. The elements in the $\HH_k$ are drawn as filled dots.
The elements in the regions $\R_k$ and $\wideR{k}$ are enclosed in
triangles.}
\label{f:wings}
\end{figure}
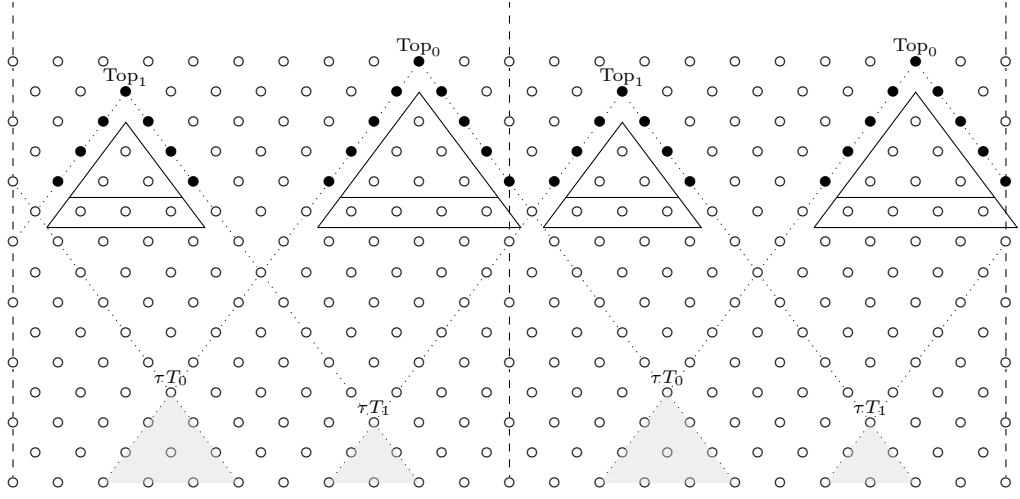

Recall that $U$ denotes the maximal preprojective direct summand of $T$.

\begin{lemma} \label{l:additive}
Let $k\in \mathbb{Z}_s$ and let $X$ be an indecomposable object in $\W_{\tau T_k}$.
Then $\Hom(U,X)=0$.
\end{lemma}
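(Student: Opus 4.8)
The plan is to use the structure of the cluster-tilting object $T = U \oplus T'$, where $U$ is preprojective and $T' = T_{\T} \oplus (\text{other regular summands})$ is regular, together with the rigidity of $T$ which forces $\Ext^1(T,T) = 0$. The key observation is that $X \in \W_{\tau T_k}$ means $X$ is a regular module sitting inside the wing of $\tau T_k$, and we want to rule out nonzero homomorphisms from the preprojective summand $U$.

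First I would recall that for a tame hereditary algebra, $\Hom(P, R) $ can certainly be nonzero for $P$ preprojective and $R$ regular, so the statement genuinely uses the hypothesis $X \in \W_{\tau T_k}$ rather than merely ``$X$ regular''. The route I would take is via the AR-formula: $\Hom(U, X) \cong D\,\overline{\Hom}(X, \tau U)$, but since $\tau U$ is again preprojective (as $U$ is preprojective and not injective — injectives are preinjective here), and there are no nonzero maps from a regular module to a preprojective module, we get $\overline{\Hom}(X,\tau U) = \Hom(X,\tau U) = 0$. Wait — that would prove $\Hom(U,X) = 0$ for \emph{every} regular $X$, which contradicts the remark above. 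So the correct approach must instead exploit that $\Hom(U, \tau T_k) $ relates to $\Ext^1$ of summands of $T$. Let me reconsider: the right tool is that $\Ext^1(T, T) = 0$, in particular $\Ext^1(U, T_k) = 0$ and $\Ext^1(T_k, U) = 0$. Via the AR-formula, $\Ext^1(U, T_k) \cong D\,\overline{\Hom}(T_k, \tau U) $ and $\Ext^1(T_k, U) \cong D\,\overline{\Hom}(U, \tau T_k)$; the latter gives $\overline{\Hom}(U, \tau T_k) = 0$, and since $\tau T_k$ is regular (hence not injective), $\overline{\Hom}(U,\tau T_k) = \Hom(U,\tau T_k)$, so $\Hom(U, \tau T_k) = 0$. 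That handles $X = \tau T_k$ itself.

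For a general $X \in \W_{\tau T_k}$, I would then argue that any indecomposable $X$ in the wing $\W_{\tau T_k}$ has its quasitop among the quasisimples $Q_i$ with $i_k \le i \le i_k + l_k - 1$, which are exactly the quasisimples in $\W_{\tau T_k}$ (stated in the excerpt just after~\eqref{e:wings}). By Corollary~\ref{c:wingzero}(b) applied with $M = X$, $N = \tau T_k$: actually the cleaner statement is that any homomorphism $U \to X$ factors through the projective cover structure — no. The right dual statement: since $\tau T_k \in \W_{\tau T_k}$ has quasilength $l_k \le r-1 \le r$ and $X$ lies in its wing, Lemma~\ref{l:homlow} and standardness of $\T$ give that every indecomposable $X$ in $\W_{\tau T_k}$ receives a surjection from some power of $\tau T_k$ along rays in $\T$ — more precisely, $X$ is a quotient of $\tau T_k$ or there is a nonzero map $\tau T_k \to X$; but to push $\Hom(U,-) = 0$ down the wing I want $X$ to be \emph{generated} by $\tau T_k$. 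So I would instead use the ray/coray structure: any $X \in \W_{\tau T_k}$ fits in a short exact sequence $0 \to X' \to \tau T_k^{\oplus ?} \to X \to 0$ or, better, $X$ is a quotient of a direct sum of copies of $\tau T_k$ because all indecomposables in the wing of a module of quasilength $\le r$ are generated by that module (a standard fact about wings in a tube, provable from Lemma~\ref{l:homlow}). Then applying $\Hom(U,-)$ to the surjection $\tau T_k^{\oplus m} \twoheadrightarrow X$ and using right-exactness is wrong direction; instead apply $\Hom(U,-)$ to an injection built from the \emph{coray}: $X$ embeds into a direct sum of copies of the injective-in-the-wing module, hmm.

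The cleanest correct argument, and the one I expect the paper uses, is: every indecomposable $X \in \W_{\tau T_k}$ admits a surjection from a direct sum of copies of the quasisimples lying in $\W_{\tau T_k}$ after composing with maps inside the wing — equivalently, $\Hom(U, X) \neq 0$ would, by composing with a nonzero map $X \to \tau T_k$ (which exists when $X$ has its quasisocle appropriately placed) or $\tau T_k \to X$, contradict $\Hom(U,\tau T_k) = 0$ \emph{provided} the composite is nonzero. Making that composite nonzero is the main obstacle, and it is handled by standardness of $\T$: I would pick the nonzero path in $\T$ realizing $U \to X \to $ (appropriate translate) and check via mesh relations that it does not vanish. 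Concretely, the key steps in order are: (1) from $\Ext^1(T,T)=0$ and the AR-formula deduce $\Hom(U, \tau T_k) = 0$; (2) observe (Lemma~\ref{l:rktop}-style reasoning, or directly~\eqref{e:wings}) that for $X = M_{j,m} \in \W_{\tau T_k}$ there is a nonzero map $g: X \to \tau T_k$ or from $\tau T_k$ onto a module covering $X$; (3) show any $f: U \to X$ composes to a nonzero map $U \to \tau T_k$, contradicting (1), unless $f = 0$; and (4) conclude $\Hom(U,X) = 0$. The main obstacle is step (3): ensuring the composition $g \circ f$ is nonzero, for which I would reduce to $X$ quasisimple (every indecomposable in $\W_{\tau T_k}$ has a quasisimple quotient or sub inside $\W_{\tau T_k}$, by~\eqref{e:wings} and Lemma~\ref{l:homlow}), handle that case by standardness, and lift.
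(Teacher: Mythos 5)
Your proposal correctly lands on the starting observation that the paper also uses: from $\Ext^1(T_k,U)=0$ (both $T_k$ and $U$ being summands of the tilting module $T$) and the AR-formula, one deduces $\Hom(U,\tau T_k)=0$. But from there the paths diverge, and your plan has a genuine unresolved gap that the paper's proof sidesteps entirely.

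The paper does not compose maps and check nonvanishing via mesh relations (the ``main obstacle'' you flag in step (3)). Instead it observes that, because $U$ is preprojective, $\Ext^1(U,R)=0$ for every regular $R$ (as $\Ext^1(U,R)\cong D\Hom(R,\tau U)$ vanishes: there are no maps from a regular to a preprojective). Hence $\Hom(U,-)$ is \emph{exact} on short exact sequences of modules in $\T$, so $\dim\Hom(U,-)$ is \emph{additive} on them — in particular on almost split sequences. Iterating this additivity up the wing gives the clean formula
\[
\dim\Hom(U,X)=\sum_{\substack{Y\in \W_X\\ Y\text{ quasisimple}}}\dim\Hom(U,Y),
\]
valid for any $X$ in $\T$. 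Applying it to $X=\tau T_k$ and using $\Hom(U,\tau T_k)=0$ forces $\Hom(U,Y)=0$ for every quasisimple $Y\in\W_{\tau T_k}$; applying it again to an arbitrary $X\in\W_{\tau T_k}$, whose quasisimples all lie in $\W_{\tau T_k}$, gives $\Hom(U,X)=0$. This completely avoids the need to produce a nonzero composite $U\to X\to \tau T_k$ and argue it survives mesh relations — which, as you note, is delicate and which you don't actually resolve; indeed for $X$ of small quasilength near the bottom right of the wing there need not be a nonzero map $X\to\tau T_k$ at all, only $\tau T_k\to X$, and in the latter direction a nonzero composite $U\to X$ is what you're trying to rule out, not a tool you can use.

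One smaller point: in your first paragraph you write ``$\Hom(U,X)\cong D\,\overline{\Hom}(X,\tau U)$'' and attribute it to the AR-formula. That isomorphism is not the AR-formula — the AR-formula relates $\Ext^1$ to a stable $\Hom$, not $\Hom$ to $\overline{\Hom}$. You correctly sense something is wrong because the conclusion would be too strong, but the diagnosis should be that the formula itself was misquoted, not that a correct formula was giving an absurd answer.
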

\begin{proof}
Since $U$ is preprojective, $\Hom(U,-)$ is exact on short
exact sequences of modules in $\T$, so $\dim \Hom(U,-)$ is
additive on such sequences. This includes, in particular,
almost split sequences in $\T$, and it follows that:
\begin{equation}
\label{e:additive}
\dim \Hom(U,X)=\sum_{Y\in \W_X,Y\text{ quasisimple}}\dim \Hom(U,Y).
\end{equation}
Since $\Hom(U,\tau T_k)=0$, we must have
$\Hom(U,Y)=0$ for all quasisimple modules in $\W_{\tau T_k}$,
and the result now follows from~\eqref{e:additive}.
\end{proof}

\begin{lemma} \label{l:outsidehelp}
Suppose that $Y\in \ind(\T)$ satisfies $\Hom_{\C}(T_{\T},Y)=0$ and
$\Hom(U,Y)=0$. Then $Y\in \cup_{k\in \Z_s} \W_{\tau T_k}$.
\end{lemma}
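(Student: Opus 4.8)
The plan is to show that the two hypotheses together force the $\Lambda$-module $\clu{Y}=\Hom_{\C}(T,Y)$ to vanish, and then to read off the position of $Y$ from Theorem~\ref{t:equivalence} together with the structure of $T$ recorded in Assumption~\ref{a:preprojective}.

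First I would decompose $T=U\oplus T_{\T}\oplus T''$, where $T''$ is the direct sum of those indecomposable summands of the regular part $T'$ which lie in tubes other than $\T$ (so $T'=T_{\T}\oplus T''$). Since $T''$ and $Y$ lie in different tubes, $\Hom(T'',Y)=0$, and $\HomF{\C}(T'',Y)\cong \Dual\Hom(Y,\tau^{2}T'')=0$ by~\eqref{e:tau2}, because $\tau^{2}T''$ still lies in tubes other than $\T$; hence $\Hom_{\C}(T'',Y)=0$. For the summand $U$: the second hypothesis gives $\Hom(U,Y)=0$, while $\HomF{\C}(U,Y)\cong \Dual\Hom(\tau^{-1}Y,\tau U)=0$ by~\eqref{e:tau2}, since $\tau^{-1}Y$ is a regular module in $\T$ and $\tau U$ is preprojective (a shift of an injective object, should $U$ have a projective summand), hence receives no nonzero homomorphism from $\tau^{-1}Y$; so $\Hom_{\C}(U,Y)=0$. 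Combining these with the first hypothesis $\Hom_{\C}(T_{\T},Y)=0$ yields $\Hom_{\C}(T,Y)=0$, i.e.\ $\clu{Y}=0$.

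By Theorem~\ref{t:equivalence}, $\clu{Y}=0$ forces $Y\in\add(\tau T)$. As $Y$ is indecomposable and lies in $\T$, while $\tau U$ lies in no tube and the indecomposable summands of $\tau T''$ lie in tubes other than $\T$, the object $Y$ must be an indecomposable summand of $\tau T_{\T}$; equivalently $\tau^{-1}Y$ is an indecomposable summand of $T_{\T}$. Every such summand lies in $\bigcup_{k\in\Z_s}\W_{T_k}$ by the discussion preceding the lemma, and comparing the formulas in~\eqref{e:wings} shows that $\tau$ maps $\W_{T_k}$ bijectively onto $\W_{\tau T_k}$; therefore $Y\in\bigcup_{k\in\Z_s}\W_{\tau T_k}$, as required. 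In the degenerate case $T_{\T}=0$ the same argument shows no such $Y$ exists, which matches the assertion since then $\bigcup_{k\in\Z_s}\W_{\tau T_k}=\emptyset$. The only step that needs real care is the vanishing of the $F$-maps $\HomF{\C}(U,Y)$ and $\HomF{\C}(T'',Y)$ in the first paragraph above: this is precisely what lets the two module-theoretic hypotheses be assembled into the single cluster-category statement $\Hom_{\C}(T,Y)=0$, after which the conclusion is immediate from Theorem~\ref{t:equivalence} and the wing combinatorics of Section~\ref{s:tubeproperties}.
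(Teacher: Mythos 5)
Your proof is correct and follows essentially the same route as the paper: decompose $T$, show the $F$-part of $\Hom_{\C}(V,Y)$ vanishes for each remaining summand $V$ via the AR-formula and the fact that regular modules admit no maps to preprojectives (or to shifted objects), assemble this into $\Hom_{\C}(T,Y)=0$, and then deduce $Y\in\add(\tau T)$. The only cosmetic difference is that you invoke Theorem~\ref{t:equivalence} to pass from $\clu{Y}=0$ to $Y\in\add(\tau T)$, whereas the paper rewrites $\Hom_{\C}(T,Y)=0$ as $\Ext^1_{\C}(Y,\tau T)=0$ and applies the defining property of the cluster-tilting object $\tau T$ directly; both are immediate.
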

\begin{proof}
Suppose $Y$ satisfies the assumptions above.
Then, if $V$ is an indecomposable summand of $T$ in a tube distinct from $\T$,
we have $\Hom(V,Y)=0$ and $\Hom(Y,\tau^2 V)=0$, so $\Hom_{\C}(V,Y)=0$.
We also have that $\Hom(Y,\tau^2 U)=0$, since $\tau^2 U$ is preprojective,
so $\Hom_{\C}(U,Y)=0$. Hence, we have $\Hom_{\C}(T,Y)=0$, so $\Ext_{\C}(Y,\tau T)=0$.
Since $T$ (and hence $\tau T$) is a cluster-tilting object
in $\C$, this implies that $Y$ lies in $\add \tau T$ and therefore in
$\cup_{k\in \mathbb{Z}_s}\W_{\tau T_k}$ as required.
\end{proof}

\begin{proposition} \label{p:outsidenonzero}
Let $X$ be an indecomposable object in $\T$ not lying in $\cup_{k\in \mathbb{Z}_s}\W_{\tau T_k}$.
Then $\Hom(U,X)\not=0$.
\end{proposition}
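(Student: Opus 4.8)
The plan is to prove the contrapositive in a slightly disguised form: assuming $X=M_{j,m}\in\ind(\T)$ satisfies $\Hom(U,X)=0$, I want to deduce $X\in\cup_{k\in\Z_s}\W_{\tau T_k}$, and then invoke Lemma~\ref{l:outsidehelp}. By that lemma it suffices to show $\Hom_{\C}(T_{\T},X)=0$, i.e.\ $\Hom(T_{\T},X)=0$ and $\Hom(X,\tau^2 T_{\T})=0$ (using $\Hom_{\C}(V,X)=\Hom(V,X)\oplus\Dual\Hom(X,\tau^2 V)$ from~\eqref{e:tau2}), since all summands of $T_{\T}$ lie in $\T$. So the real content is: \emph{if $\Hom(U,X)=0$ then $X$ has no nonzero map to or from the summands of $T_{\T}$ in the relevant twisted sense.} I would actually run the argument the other way: suppose $X\notin\cup_k\W_{\tau T_k}$; I must produce a nonzero map out of $U$.

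First I would use additivity of $\dim\Hom(U,-)$ on almost split sequences in $\T$ (the mechanism already used in Lemma~\ref{l:additive}, valid because $U$ is preprojective so $\Hom(U,-)$ is exact on short exact sequences within $\T$): as in~\eqref{e:additive}, $\dim\Hom(U,X)=\sum_{Q_i\in\W_X}\dim\Hom(U,Q_i)$, the sum over quasisimples in the wing of $X$. Hence $\Hom(U,X)=0$ forces $\Hom(U,Q_i)=0$ for \emph{every} quasisimple $Q_i$ in $\W_X$; in particular it forces this for the quasisocle of $X$. So it is enough to show that the quasisimple modules $Q_i$ with $\Hom(U,Q_i)=0$ are exactly those lying in some wing $\W_{\tau T_k}$ — equivalently, that for every $i$ \emph{not} of the form $i_k\le i\le i_k+l_k-1$ we have $\Hom(U,Q_i)\neq 0$ — and then observe that if $X\notin\cup_k\W_{\tau T_k}$ then some quasisimple in $\W_X$ (in fact the quasisocle, or if that fails, some quasisimple at the "edge" of the wing) avoids all the $\W_{\tau T_k}$, giving $\Hom(U,X)\neq 0$ by additivity again. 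The combinatorial point here is that the complement of $\cup_k\W_{\tau T_k}$ inside $\T$ is "ray-and-coray connected" to the exposed quasisimples, using that successive wings $\W_{\tau T_k}$, $\W_{\tau T_{k+1}}$ are separated by at least one quasisimple (noted just above~\eqref{e:wings}); I would make this precise by taking the quasisocle $Q_j$ of $X$ and checking that $Q_j\in\W_{\tau T_k}$ for some $k$ iff $X$ itself lies in $\W_{\tau T_k}$ — wait, that is false in general — so instead I would argue that if $X\notin\cup_k\W_{\tau T_k}$ then $\W_X$ contains a quasisimple $Q_i$ with $i$ not in any interval $[i_k,i_k+l_k-1]$, which follows because the intervals $[i_k,i_k+l_k-1]$ cannot cover a set of consecutive residues large enough to contain $\W_X$ unless $X$ sits inside one wing (here the separation-by-one-quasisimple condition is what blocks two adjacent wings from combining to swallow $\W_X$).

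The crux, then, is the quasisimple case: \emph{for each quasisimple $Q_i$ with $i\notin[i_k,i_k+l_k-1]$ for all $k$, show $\Hom(U,Q_i)\neq 0$.} Since $T=U\oplus T'$ is a tilting module, every regular simple $Q_i$ either lies in $\add\tau T$ (i.e.\ in one of the wings $\W_{\tau T_k}$, excluded by hypothesis) or else is not $\Ext$-orthogonal to $T$; in the latter case I would use that $T$ being tilting forces $\Hom_{\C}(T,Q_i)\neq 0$ — more precisely, $\Ext_{\C}(Q_i,\tau T)\neq 0$ would contradict nothing by itself, so instead: since $Q_i\notin\add\tau T$ and $\tau T$ is cluster-tilting, $\Ext^1_{\C}(\tau T,Q_i)\neq 0$, i.e.\ $\Hom_{\C}(T,Q_i[-1])\neq 0$ in the shifted sense; translating via~\eqref{e:tau2} and using that the only summands of $T$ that can hit a regular module of $\T$ in the relevant degrees are $U$ and the summands of $T_{\T}$ (summands in other tubes contribute $0$ both ways, as in the proof of Lemma~\ref{l:outsidehelp}), and that $\Hom(T_{\T},Q_i)=0$ and $\Hom(Q_i,\tau^2 T_{\T})=0$ because $Q_i$'s quasisocle $Q_i$ is not in $\cup_k\W_{\tau T_k}\supseteq$ the relevant wings by Corollary~\ref{c:wingzero}, we are forced into $\Hom(U,Q_i)\neq 0$. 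I expect \textbf{this last translation to be the main obstacle}: bookkeeping the various $\Hom$/$\Ext$ groups in $\C$ and $D^b(\field Q)$ for all summands of $T$, and correctly ruling out the contributions of $T_{\T}$-summands and of summands in other tubes, so that the nonvanishing coming from the cluster-tilting property of $T$ lands precisely on the $U$-component. Once the quasisimple case is in hand, the passage to general $X$ is the purely combinatorial additivity argument sketched above and should be routine.
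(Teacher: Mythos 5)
Your reduction to the quasisimple case via additivity of $\dim\Hom(U,-)$ is exactly the paper's first step, and your plan to invoke Lemma~\ref{l:outsidehelp} (the cluster-tilting property of $\tau T$) to force $\Hom(U,-)\neq 0$ is also the right mechanism. However, there is a genuine gap in the crucial step, and you correctly flagged it yourself as "the main obstacle": the claim that $\Hom_{\C}(T_{\T},Q_i)=0$ whenever $Q_i\notin\cup_k\W_{\tau T_k}$ is false. By Corollary~\ref{c:wingzero}, to kill $\Hom(T_{\T},Q_i)$ you need $Q_i$ to avoid $\cup_k\W_{T_k}$, and to kill $\Hom(Q_i,\tau^2 T_{\T})$ you need $Q_i$ to avoid $\cup_k\W_{\tau^2 T_k}$. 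These are shifted by one position from $\W_{\tau T_k}$. So if $Q_i\cong Q_{i_k+l_k}$ (the quasisimple immediately adjacent to $\W_{\tau T_k}$ on the right), then $Q_i$ lies in $\W_{T_k}$, and in fact $\Hom(T_k,Q_i)\cong\field\neq 0$; symmetrically, $Q_i\cong Q_{i_k-1}$ lies in $\W_{\tau^2 T_k}$ and $\Hom(Q_i,\tau^2 T_k)\neq 0$. In both boundary situations $\Hom_{\C}(T_{\T},Q_i)\neq 0$, so the nonvanishing of $\Hom_{\C}(T,Q_i)$ coming from the cluster-tilting property need not land on the $U$-component at all, and your argument stalls precisely there.

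This is why the paper's proof does not apply Lemma~\ref{l:outsidehelp} directly to the quasisimple $X$. Instead, for a contradiction it \emph{constructs} a larger module $Y$ (of higher quasilength) depending on $X$'s position relative to the wings: $Y=X$ only in the "interior" case where $X$ is not adjacent to any $\W_{\tau T_k}$; when $X$ is adjacent to exactly one wing $\W_{\tau T_k}$ one takes $Y=M_{i_k,l_k+1}$ (swallowing the wing so that the quasisocle and quasitop both escape the forbidden regions); and when $X$ is squeezed between two wings separated by a single quasisimple one takes $Y=M_{i_k,l_k+l_{k+1}+1}$. Additivity plus $\Hom(U,X)=0$ then shows $\Hom(U,Y)=0$ as well, and only for this fattened $Y$ do the two vanishing conditions $\Hom(T_{\T},Y)=0$ and $\Hom(Y,\tau^2 T_{\T})=0$ actually hold, so Lemma~\ref{l:outsidehelp} applies and yields a contradiction. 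To repair your argument you would need to introduce this thickening step; without it the boundary quasisimples defeat the $\Hom$-bookkeeping you were hoping would "land precisely on the $U$-component."
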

\begin{proof}
Since $\dim\Hom(U,-)$ is additive on $\T$, we can assume that $X$ is
quasisimple. We assume, for a contradiction, that $\Hom(U,X)=0$.
If we can find a module $Y\in \T\setminus \cup_{k\in \mathbb{Z}_s}\W_{\tau T_k}$
such that $\Hom_{\C}(T_{\T},Y)=0$ and $\Hom(U,Y)=0$ then, by Lemma~\ref{l:outsidehelp},
we have a contradiction. We now construct such a module $Y$, considering various cases
for $X$.

\noindent \textbf{Case 1}: Assume that
$X\not\cong Q_{i_k-1}$ and $X\not\cong Q_{i_k+l_k}$ for any $k\in \mathbb{Z}_s$, i.e.
that $X$ is not immediately adjacent to any of the wings $\W_{\tau T_k}$, $k\in \Z_s$.
There is a single module of this kind in the example in Figure~\ref{f:wings};
this is denoted by $X_1$ in Figure~\ref{f:outsidenonzero}. In this case we take $Y=X$.

If $V$ is an indecomposable summand of $T_{\T}$, then $V\in \W_{T_k}$ for some
$k\in \mathbb{Z}_s$. Since the quasisimple module $X$
does not lie in $\cup_{k\in \mathbb{Z}_s} \W_{T_k}$, we have $\Hom(V,X)=0$
by Corollary~\ref{c:wingzero}.
Similarly, $\tau^2 V\in \W_{\tau^2 T_k}$ for some $k\in \mathbb{Z}_s$. Since
the quasitop of $X$ (i.e.\ $X$) does not lie in $\cup_{k\in \mathbb{Z}_s} \W_{T_k}$,
we have $\Hom(X,\tau^2 V)=0$ by Corollary~\ref{c:wingzero}.
Hence $\Hom_{\C}(T_{\T},X)=0$, completing this case.

We next suppose that $X\cong M_{i_k+l_k,1}$ for some
$k\in \mathbb{Z}_s$ (the case $X\cong M_{i_k-1,1}$ is similar).
Recall that there is always at least one quasisimple module between two
wings $\W_{\tau T_k}$ and $\W_{\tau T_{k\pm 1}}$.

\noindent \textbf{Case 2}:
Assume first that there are at least two quasisimple
modules between the wings $\W_{\tau T_k}$ and $\W_{\tau T_{k+1}}$, so
that $X$ is not adjacent to the wing $\W_{\tau T_{k+1}}$.
In the example in Figure~\ref{f:outsidenonzero}, the object $X_2$
is an example of this type (with $k=1$). In this case, we take $Y=M_{i_k,l_k+1}$
(indicated by $Y_2$ in Figure~\ref{f:outsidenonzero}).

By Lemma~\ref{l:additive}, $\Hom(U,M_{i_k+l_k-1,1})=0$.
By assumption, $\Hom(U,X)=0$.
Since $\dim \Hom(U,-)$ is additive on $\T$, we have $\Hom(U,M_{i_k+l_k-1,2})=0$.
If $l_k=1$ then $Y=M_{i_k+l_k-1,2}$ and $\Hom(U,Y)=0$.
If $l_k>1$ then, since $\dim \Hom(U,-)$ is additive on the short exact sequence:
$$0\rightarrow \tau T_k\rightarrow T'_k\oplus M_{i_k+l_k-1,1}\rightarrow M_{i_k+l_k-1,2}\rightarrow 0,$$
it follows that $\Hom(U,Y)=0$ in this case also.

Since the quasisocle $Q_{i_k}$ of $Y$ does not lie in
$\cup_{k'\in \mathbb{Z}_s} \W_{T_{k'}}$, we have $\Hom(V,Y)=0$ for
all indecomposable summands $V$ of $T_{\T}$ by Corollary~\ref{c:wingzero}.
Since there are at least two quasisimple modules between the wings
$\W_{\tau T_k}$ and $\W_{\tau T_{k+1}}$, the quasitop of $Y$ does not lie
in $\cup_{k'\in \mathbb{Z}_s} \W_{\tau^2 T_{k'}}$.
Hence $\Hom(Y,\tau^2 V)=0$ for all summands $V$ of $T_{\T}$,
by Corollary~\ref{c:wingzero}.
So $\Hom_{\C}(T_{\T},Y)=0$, completing this case.

\textbf{Case 3}:
We finally consider the case where
there is exactly one quasisimple module between the wings
$\W_{\tau T_k}$ and $\W_{\tau T_{k+1}}$.
In the example in Figure~\ref{f:outsidenonzero}, the object
$X_3$ is an example of this type. In this case, we take $Y=M_{i_k,l_k+l_{k+1}+1}$
(indicated by $Y_3$ in Figure~\ref{f:outsidenonzero}).

The quasisimples in $\W_Y$ are
the quasisimples in $\W_{\tau T_k}$, the quasisimples in $\W_{\tau T_{k+1}}$ and $X$.
For a quasisimple module $Q$ in one of the first two sets,
$\Hom(U,Q)=0$ by Lemma~\ref{l:additive}.
By assumption, $\Hom(U,X)=0$. Hence, arguing as in Lemma~\ref{l:additive}
and using the additivity of $\dim\Hom(U,-)$ on $\T$, we have $\Hom(U,Y)=0$.

Since the quasisocle of $Y$ is $Q_{i_k}$, which does not lie in
$\cup_{k'\in \mathbb{Z}_s}\W_{T_{k'}}$, we see that $\Hom(V,Y)=0$ for any
indecomposable summand of $\T_{\T}$ by Corollary~\ref{c:wingzero}.
Similarly, the quasitop of $Y$ is $Q_{i_{k+1}+l_{k+1}-1}$,
which does not lie in $\cup_{k'\in \mathbb{Z}_s}\W_{\tau^2 T_{k'}}$.
Hence $\Hom(Y,\tau^2 V)=0$ for any indecomposable summand of $T_{\T}$
by Corollary~\ref{c:wingzero}. So $\Hom_{\C}(T_{\T},Y)=0$, completing
this case.
\end{proof}

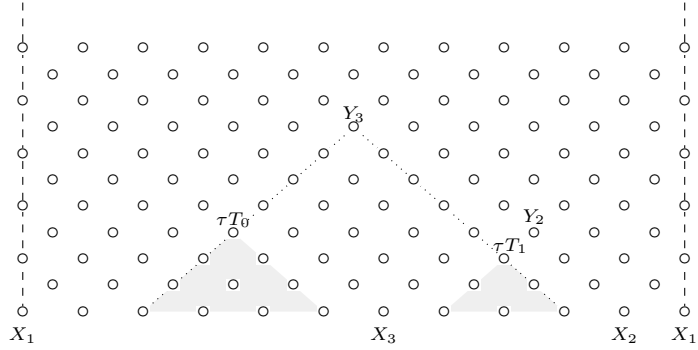
\begin{figure}
$$
\begin{tikzpicture}[xscale=0.4,yscale=0.35,baseline=(bb.base),quivarrow/.style={black, -latex, shorten >=10pt,shorten <=10pt},translate/.style={black, dotted, shorten >=13pt, shorten <=13pt}]

\newcommand{\rank}{11}
\newcommand{\height}{5}
\newcommand{\copies}{1}

\pgfmathparse{\rank-1}\let\rankm\pgfmathresult;
\pgfmathparse{\height-1}\let\heightm\pgfmathresult;

\pgfmathparse{\rank*\copies}\let\totwidth\pgfmathresult;
\pgfmathparse{\rank*\copies-1}\let\totwidthm\pgfmathresult;

\path (0,0) node (bb) {}; 

\foreach \i in {0,...,\totwidth}{
\foreach \j in {0,...,\height}{
\node [fill=white,inner sep=0pt] at (\i*2,\j*2) {$\circ$};}}

\foreach \i in {0,...,\totwidthm}{
\foreach \j in {0,...,\heightm}{
\node [fill=white,inner sep=0pt] at (\i*2+1,\j*2+1) {$\circ$};}}

\begin{scope}[on background layer]

\draw[dashed] (0,0) -- (0,2*\height+2);
\draw[dashed] (2*\rank,0) -- (2*\rank,2*\height+2);

\draw[draw=none,fill=gray!30,opacity=0.4] (4,0) -- (7,3) -- (10,0) -- (4,0);
\draw[draw=none,fill=gray!30,opacity=0.4] (14,0) -- (16,2) -- (18,0) -- (14,0);

\draw[dotted] (4,0) -- (11,7) -- (18,0);
\end{scope}

\draw (7,3.5) node[black] {$\scriptstyle \tau T_0$};
\draw (16.2,2.5) node[black] {$\scriptstyle \tau T_1$};

\draw (11,7.5) node[black] {$\scriptstyle Y_3$};
\draw (17,3.6) node[black] {$\scriptstyle Y_2$};

\draw (0,-0.9) node [black] {$\scriptstyle X_1$};
\draw (22,-0.9) node [black] {$\scriptstyle X_1$};
\draw (20,-0.9) node [black] {$\scriptstyle X_2$};
\draw (12,-0.9) node [black] {$\scriptstyle X_3$};

\end{tikzpicture}
$$
\caption{Proof of Proposition~\ref{p:outsidenonzero}. For the quasisimple module $X_1$, we take $Y=X_1$;
for the module $X_2$, we take $Y=Y_2$, and for the module $X_3$, we take $Y=Y_3$.}
\label{f:outsidenonzero}
\end{figure}

\begin{lemma} \label{l:onezero}
Let $P$ be an indecomposable projective $\field Q$-module, and
suppose that we have $\Hom(P,X_0)=0$ for some indecomposable module $X_0$ on the border of
$\T$. Then $\dim \Hom(P,X)\leq 1$ for all indecomposable modules $X$ on the border
of $\T$.
Furthermore, if there is some indecomposable module $X_1$ on the border of $\T$
such that $\Hom(P,X)=0$ for all indecomposable modules $X\not\cong X_1$ on the border of $\T$, then
$\dim \Hom(P,X_1)=1$.
\end{lemma}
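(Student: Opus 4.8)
The plan is to work with the quasisimple modules of $\T$---which are precisely the indecomposable modules on the border of $\T$---and to argue by induction on the number of vertices of $Q$ using perpendicular categories. Write $P=P_k$, the indecomposable projective at a vertex $k$ of $Q$, and recall that $\dim\Hom(P_k,Y)=\dim Y_k$ for every $\field Q$-module $Y$. The ``furthermore'' assertion follows immediately from the first one: if $X_1$ is the unique module on the border of $\T$ with $\Hom(P,X_1)\ne 0$, then $\dim\Hom(P,X_1)$ is a nonzero nonnegative integer that is at most $1$, hence equal to $1$. It is convenient to also note that, $P$ being projective, $\Hom(P,-)$ is exact, so $\dim\Hom(P,-)$ is additive on short exact sequences of modules in $\T$; since $M_{i,l}$ has a filtration with successive quotients $Q_i,\dots,Q_{i+l-1}$ (as $\T$ is standard), this yields $\dim\Hom(P,M_{i,l})=\sum_{t=0}^{l-1}\dim\Hom(P,Q_{[i+t]_r})$, and in particular $\sum_{j\in\Z_r}\dim\Hom(P,Q_j)=\dim\Hom(P,M_{i,r})$, a quantity independent of $i$.

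For the first assertion we may assume $r\ge 2$: if $r=1$ the border of $\T$ is a single module of dimension vector the minimal positive imaginary root $\delta$, and $\Hom(P,Q_0)=0$ would force $\delta_k=0$, impossible since $\delta$ is sincere. We induct on $|Q_0|$. For $r\ge 2$ the quasisimple module $Q_{i_0}$ is exceptional, and since $P_k$ is preprojective while $Q_{i_0}$ is regular we have $\Hom(Q_{i_0},P_k)=0=\Ext^1(Q_{i_0},P_k)$, so $P_k$ lies in the right perpendicular category $Q_{i_0}^{\perp}$. By the theory of perpendicular categories, $Q_{i_0}^{\perp}$ is equivalent to the module category of a tame hereditary algebra $\field Q'$ with $|Q'_0|=|Q_0|-1$; under this equivalence $P_k$ goes to a preprojective module and $\T$ gives rise to a tube $\T'$ of rank $r-1$ whose modules on the border are $Q_{i_0+1},\dots,Q_{i_0-2}$ together with $M_{i_0-1,2}$, the module of dimension vector $\underline{\dim}\,Q_{i_0-1}+\underline{\dim}\,Q_{i_0}$. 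Since $\dim\Hom(P_k,M_{i_0-1,2})=\dim\Hom(P_k,Q_{i_0-1})+\dim\Hom(P_k,Q_{i_0})=\dim\Hom(P_k,Q_{i_0-1})$, the values of $\dim\Hom(P_k,-)$ on the border of $\T'$ are exactly the $\dim\Hom(P_k,Q_j)$ for $j\ne i_0$. If one of these vanishes, the inductive hypothesis applied to $(\field Q',\T')$ bounds all of them by $1$, which together with $\dim\Hom(P_k,Q_{i_0})=0$ completes the induction step.

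The case that remains---where $\dim\Hom(P_k,Q_j)\ge 1$ for every $j\ne i_0$---is the main obstacle, and the point at which the tameness of $Q$ is used essentially. Here one must show that each such value is exactly $1$; equivalently, with $\delta=\underline{\dim}\,M_{i,r}$, that $\delta_k=r-1$. I would expect to settle this either by iterating the perpendicular construction, now at an arbitrary quasisimple of the reduced tube and tracking dimension vectors back to $Q$, until the situation is forced, or by invoking the explicit classification of the quasisimple modules in the tubes of a tame hereditary algebra, together with the combinatorial fact that if a vertex $k$ lies outside the support of some quasisimple of a rank-$r$ tube then $S_k$ occurs as a composition factor of each quasisimple of that tube at most once. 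Proving this last statement about the supports of the quasisimples relative to $\delta$ is the crux of the argument.
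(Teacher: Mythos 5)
Your approach — reducing by perpendicular categories and inducting on the number of vertices — is genuinely different from the paper, which proves this lemma by explicit inspection of the dimension-vector tables in \cite[XIII.2]{simsonskowronski07}. A conceptual argument would be nicer, but as written the proposal has two real problems, one of which is fatal.

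First, the passage to the perpendicular category is set up incorrectly. You claim $\Ext^1(Q_{i_0},P_k)=0$ ``since $P_k$ is preprojective while $Q_{i_0}$ is regular,'' and conclude $P_k\in Q_{i_0}^{\perp}$. But by the AR formula $\Ext^1(Q_{i_0},P_k)\cong D\overline{\Hom}(P_k,\tau Q_{i_0})\cong D\Hom(P_k,\tau Q_{i_0})$, and $\Hom(\text{preprojective},\text{regular})$ is certainly not zero in general (its dimension at a projective $P_k$ is exactly $(\underline{\dim}\,\tau Q_{i_0})_k$). So $P_k$ need not lie in the right perpendicular category. What is true is that $P_k$ lies in the left perpendicular category ${}^{\perp}Q_{i_0}$, because $\Hom(P_k,Q_{i_0})=0$ by hypothesis and $\Ext^1(P_k,Q_{i_0})\cong D\Hom(Q_{i_0},\tau P_k)=0$ as $\tau P_k$ is preprojective (or zero). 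With this correction the reduction does go through, with $M_{i_0,2}$ replacing $M_{i_0-1,2}$ as the new quasisimple, and one must also phrase the inductive hypothesis for preprojective (not just projective) $P$ since $P_k$ is only preprojective over the smaller algebra.

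Second, and decisively, you explicitly leave the remaining case — $\dim\Hom(P_k,Q_j)\ge 1$ for every $j\ne i_0$ — unproved, calling the needed combinatorial fact about supports of quasisimples ``the crux of the argument.'' This case cannot be dispatched by further perpendicular reduction inside $\T$, since by assumption no other quasisimple of $\T'$ (or of the subsequent tubes) has vanishing $\Hom$. The alternative you offer, ``invoking the explicit classification of the quasisimple modules in the tubes of a tame hereditary algebra,'' is exactly what the paper does with the tables in Simson--Skowro\'nski, so without carrying it out the proposal does not actually prove the lemma. In short: the strategy is interesting and, suitably corrected, reduces the lemma to a clean combinatorial claim about $\delta$ and the supports of the regular simples, but that claim is precisely the content of the lemma and is left unverified.
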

\begin{proof}
This can be checked using the tables in~\cite[XIII.2]{simsonskowronski07}.
\end{proof}

\begin{proposition} \label{p:leq1}
Suppose that $T_{\T}\not=0$ and
let $X\in \T\setminus \cup_{k\in \mathbb{Z}_s}\W_{\tau T_k}$ be an
indecomposable module on
the border of $\T$ and $V$ an indecomposable summand of $U$. Then
$\dim\Hom(V,X)\leq 1$.
Furthermore, if $k=0$ and $T_0$ has quasilength $r-1$, then $\dim\Hom(V,X)=1$.
\end{proposition}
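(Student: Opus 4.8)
The plan is to reduce the whole statement to the case that $V$ is projective and then invoke Lemma~\ref{l:onezero}. Since $V$ is an indecomposable preprojective $\field Q$-module, I would write $V\cong \tau^{-m}P$ for a unique indecomposable projective $\field Q$-module $P$ and some integer $m\geq 0$. As $\tau$ is an autoequivalence of $D^b(\field Q)$ and the tube $\T$ is $\tau$-stable inside $\module\field Q$, for every indecomposable module $Y$ in $\T$ the object $\tau^m Y$ is again a regular module and $\Hom(V,Y)\cong \Hom(P,\tau^m Y)$ as spaces of $\field Q$-module homomorphisms; moreover $\tau$ permutes the quasisimples of $\T$ cyclically, so $\tau^m Y$ lies on the border of $\T$ exactly when $Y$ does. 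Hence it suffices to prove that $\dim\Hom(P,Y)\leq 1$ for all indecomposable $Y$ on the border of $\T$, together with the refined statement used for the last sentence.

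Next I would supply the hypothesis required by Lemma~\ref{l:onezero}. Since $T_{\T}\neq 0$ there is at least one wing $\W_{\tau T_k}$, and its quasisimple module $Q_{i_k}$ lies on the border of $\T$; by Lemma~\ref{l:additive} we have $\Hom(U,Q_{i_k})=0$, so $\Hom(V,Q_{i_k})=0$ and therefore $\Hom(P,\tau^m Q_{i_k})=0$, with $\tau^m Q_{i_k}$ on the border. Lemma~\ref{l:onezero} then gives $\dim\Hom(P,Y)\leq 1$ for every indecomposable $Y$ on the border, and transporting this back through the isomorphism above yields $\dim\Hom(V,X)\leq 1$ (in fact for every indecomposable $X$ on the border, not only those outside the wings).

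For the last sentence I would first unwind what the hypothesis forces. If $\ql(T_0)=r-1$ (the condition ``$k=0$'' only fixes which summand we call $T_0$), then $\W_{\tau T_0}$ contains the quasisimples $Q_{i_0},Q_{i_0+1},\ldots,Q_{i_0+r-2}$, i.e.\ every quasisimple of $\T$ except $Q_{i_0-1}$. Since two consecutive wings $\W_{\tau T_k}$ and $\W_{\tau T_{k+1}}$ must be separated by at least one quasisimple (the remark following~\eqref{e:wings}), there is no room for a further wing, so $s=1$ and $Q_{i_0-1}$ is the unique indecomposable module on the border of $\T$ not lying in $\bigcup_{k\in\Z_s}\W_{\tau T_k}$; thus $X\cong Q_{i_0-1}$. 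Every other border module lies in $\W_{\tau T_0}$ and hence is annihilated by $\Hom(V,-)$ by Lemma~\ref{l:additive}, so after transport $\Hom(P,Y)=0$ for every indecomposable module $Y$ on the border with $Y\not\cong\tau^m X$. The second part of Lemma~\ref{l:onezero} then gives $\dim\Hom(P,\tau^m X)=1$, i.e.\ $\dim\Hom(V,X)=1$.

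The step I expect to need the most care is the reduction in the first paragraph: one must make sure that replacing $V$ by $\tau^{-m}P$ stays within the module category and that the derived-category isomorphism $\Hom(V,Y)\cong\Hom(P,\tau^m Y)$ is genuinely an isomorphism of $\Hom$-spaces of modules — this works precisely because the regular modules form a $\tau$-stable family, so $V$, $P$, $Y$ and $\tau^m Y$ are all honest $\field Q$-modules. Everything after that is bookkeeping with Lemmas~\ref{l:additive} and~\ref{l:onezero}, together with the elementary observation that a wing occupying $r-1$ of the $r$ quasisimples pins down both $s$ and $X$. (In that case one could alternatively extract $\dim\Hom(V,X)=1$ from Proposition~\ref{p:outsidenonzero} if $U$ were known to be indecomposable, but it is Lemma~\ref{l:onezero} that makes the argument go through for an arbitrary indecomposable summand $V$ of $U$.)
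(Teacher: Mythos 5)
Your proof is correct and follows essentially the same route as the paper's: both reduce to the case of projective $V$ by applying a power of $\tau$, use Lemma~\ref{l:additive} to find a border module annihilated by $\Hom(V,-)$, and then invoke Lemma~\ref{l:onezero} for the bound and (in the quasilength-$(r-1)$ case) the equality. Your version simply spells out the $\tau$-transport and the identification $s=1$, $X\cong Q_{i_0-1}$ more explicitly than the paper does.
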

\begin{proof}
By applying a power of $\tau$ if necessary, we can assume that $V$ is projective.
By assumption, $\T$ contains a summand of $T$, so there is at least one
quasisimple module $X_0$ in $\cup_{k\in \mathbb{Z}_s} \W_{\tau T_k}$.
By Lemma~\ref{l:additive}, we have that $\Hom(V,X_0)=0$.
The first part of the lemma then follows from
Lemma~\ref{l:onezero}. If $k=0$ and $T_0$ has quasilength $r-1$, then
$\Hom(V,X_0)=0$ for every quasisimple in $\W_{\tau T_0}$. Since $X$ is the
unique quasisimple in $\T$ not in $\W_{\tau T_0}$, the second part now follows
from Lemma~\ref{l:onezero} also.
\end{proof}

Abusing notation, we denote the down-arrows in $\T$ by $x$ and the up-arrows by
$y$. So, for example, $x^r$ means the composition of $r$ down-arrows from
a given vertex.

\begin{proposition} \label{p:Hfactor}
Let $X=M_{i,l}$ be an indecomposable module in $\T$.
\begin{itemize}
\item[(a)] Suppose that $r+1\leq l$. Let $u_X=y^rx^r:X\rightarrow X$.
Then $u_X$ factors through $\add(\tau T_{\T})$ if and only
if $X\in \cup_{k\in \mathbb{Z}_s} \HH_k\cup \R_k$.
\item[(b)] Suppose that $r\leq l$. Let $v_X=y^{r-1}x^{r-1}:X\rightarrow \tau X$ be
the unique nonzero map (up to a scalar), as in Lemma~\ref{l:dimhom}.
Then $v_X$ factors through $\add(\tau T_{\T}\oplus \tau^2 T_{\T})$
if and only if $X\in \cup_{k\in \mathbb{Z}_s} (\HH_k\cup \R_k\setminus \{\Top_k\})$. Furthermore, $v_X$ factors through both
$\add(\tau T_{\T})$ and $\add(\tau^2 T_{\T})$ if and only if $X\in \cup_{k\in \mathbb{Z}_s} \R_k$.
\item[(c)] Suppose that $r\leq l$. Let $w_X=y^{r-2}x^{r-2}:\tau^{-1} X\rightarrow \tau X$.
Then $w_X$ factors through $\add(\tau T_{\T})$
if and only if $X\in \cup_{k\in \mathbb{Z}_s} \wideR{k}$.
\end{itemize}
\end{proposition}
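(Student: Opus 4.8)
The plan is to analyze each of the three maps $u_X$, $v_X$, $w_X$ in turn by decomposing them through the tube-structure, using standardness of $\T$ and the explicit hammock descriptions from Lemma~\ref{l:homlow} and Corollary~\ref{c:wingzero}. Since $\T$ is standard, any factorization of these maps through $\add(\tau T_{\T})$ (or $\add(\tau T_{\T}\oplus\tau^2 T_{\T})$) can be tested summand by summand: $u_X$ factors through $\tau T_k = M_{i_k,l_k}$ iff there exist nonzero maps $X\xrightarrow{a} \tau T_k \xrightarrow{b} X$ with $b\circ a = u_X$ (up to scalar — and here I should note that $u_X = y^r x^r$ is, up to scalar, the unique map in the one-dimensional space spanned appropriately, so that the composite is a nonzero scalar multiple of $u_X$ exactly when it is nonzero). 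So the first step is to translate ``$u_X$ factors through $\tau T_k$'' into the combinatorial condition that there is a path in the AR-quiver of $\T$ from $X$ to $\tau T_k$ and back to $X$ whose total number of down-arrows is $r$, equivalently (by Lemma~\ref{l:downwardarrows} and the mesh relations) that the obvious path $X\to \tau T_k\to X$ is nonzero. By Corollary~\ref{c:wingzero}, a nonzero map $X\to \tau T_k$ forces the quasitop of $X$ to lie in $\W_{\tau T_k}$ and a nonzero map $\tau T_k\to X$ forces the quasisocle of $X$ to lie in $\W_{\tau T_k}$; and conversely, when both hold and the quasilength of $X$ is large enough ($l\ge r+1$ for part (a)), the composite through $\tau T_k$ realizes $y^rx^r$ up to a nonzero scalar. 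This is essentially a bookkeeping exercise with the interval descriptions in~\eqref{e:wings} and~\eqref{e:hk},~\eqref{e:defrk}.

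For part (a): having reduced to ``quasitop and quasisocle of $X$ both lie in some single $\W_{\tau T_k}$'', I would match this against the definition~\eqref{e:hk} of $\HH_k$ and~\eqref{e:defrk} of $\R_k$. An indecomposable $X=M_{i,l}$ with $l\ge r+1$ has quasisocle $Q_i$ and quasitop $Q_{i+l-1}$; requiring both to lie in $\W_{\tau T_k}$ means $i_k\le i\le i_k+l_k-1$ and $i_k\le i+l-1\le i_k+l_k-1$ (mod $r$). Working out which $(i,l)$ with $l\ge r+1$ satisfy both congruence constraints should yield exactly $\HH_k\cup\R_k$ (the projective/injective boundary cases landing in $\HH_k$, the interior ones in $\R_k$). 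One subtlety: I must make sure the factorization can only happen through a single wing $\W_{\tau T_k}$ — this uses the fact, recorded just before~\eqref{e:wings}, that consecutive wings $\W_{\tau T_k},\W_{\tau T_{k+1}}$ are separated by at least one quasisimple, so the quasitop and quasisocle intervals of $X$ cannot straddle two wings while $X$ itself is indecomposable in the tube.

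For parts (b) and (c) the same strategy applies but with shifted targets. In (b), $v_X = y^{r-1}x^{r-1}: X\to \tau X$ factors through $\tau T_k$ iff quasitop$(X)\in\W_{\tau T_k}$ and quasisocle$(X)\in\W_{\tau T_k}$ but now with the length-count adjusted by one, which excludes the projective-injective object $\Top_k$ (where the relevant composite vanishes by Lemma~\ref{l:downwardarrows} applied to the wing of $\Top_k$) — hence $\HH_k\cup\R_k\setminus\{\Top_k\}$; while factoring through $\tau^2 T_k$ needs quasitop$(X)\in\W_{\tau^2 T_k}$ and quasisocle$(X)\in\W_{\tau^2 T_k}$, and the ``both'' condition simultaneously forces quasisocle$(X)\in\W_{\tau T_k}\cap\W_{\tau^2 T_k}$ and quasitop$(X)\in\W_{\tau T_k}\cap\W_{\tau^2 T_k}$, which by Lemma~\ref{l:rktop} is precisely the characterization of $\R_k$. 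For (c), $w_X = y^{r-2}x^{r-2}:\tau^{-1}X\to\tau X$; here the relevant hammock condition, again via Corollary~\ref{c:wingzero}, is quasitop$(\tau^{-1}X)\in\W_{\tau T_k}$ and quasisocle$(\tau^{-1}X)\in\W_{\tau T_k}$, i.e.\ (shifting) quasitop$(X)\in\W_{\tau^2 T_k}$ and quasisocle$(X)\in\W_{T_k}$, which by Lemma~\ref{l:rktop} is exactly membership in $\wideR{k}$.

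The main obstacle I anticipate is not the ``if'' direction (constructing the factorization, which is a direct path-composition using standardness) but the ``only if'' direction: ruling out factorizations through the \emph{other} summands of $\tau T_{\T}$ lying inside the same wing $\W_{T_k}$, and — more delicately — through summands in \emph{other} wings, and confirming that factoring through $\add(\tau T_{\T})$ (a direct sum) cannot occur via a ``spread-out'' map hitting several summands at once when it could not occur through any one of them. This last point is handled because each of $u_X, v_X, w_X$ spans (up to scalar) a one-dimensional Hom-space by Lemma~\ref{l:dimhom}, so a map factoring through a direct sum factors through one of the indecomposable summands; and ruling out the wrong single summands is exactly the wing-separation/hammock bookkeeping above. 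I would organize the write-up as three short arguments (one per part), each splitting into the factorization construction and the obstruction-to-factorization analysis, and in each case reduce the obstruction analysis to a statement about the quasisocle and quasitop of $X$ lying (or not) in the prescribed intersections of wings, citing Corollary~\ref{c:wingzero} and Lemma~\ref{l:rktop}.
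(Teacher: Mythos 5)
Your plan to reduce to individual summands and then translate ``$u_X$ factors through $V\in\W_{\tau T_k}$'' via Corollary~\ref{c:wingzero} has a genuine gap. The conditions ``quasitop$(X)\in\W_{\tau T_k}$'' and ``quasisocle$(X)\in\W_{\tau T_k}$'' are exactly what give nonzero maps $a\colon X\to V$ and $b\colon V\to X$, so they are necessary for factoring; they are not sufficient, because $a,b$ both nonzero does not force $ba\ne 0$. Concretely, take $X=M_{i,2r}$ with $i\in[i_k+1,i_k+l_k-1]$ (possible when $l_k\ge 2$): then quasisocle$(X)=Q_i$ and quasitop$(X)=Q_{i+2r-1}=Q_{i-1}$ both lie in $\W_{\tau T_k}$, yet $M_{i,2r}\notin\HH_k\cup\R_k$ (the quasilengths there are at most $r+l_k\le 2r-1$). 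And indeed the minimal path $X\to\tau T_k$ has $2r-(i-i_k)>r$ down arrows, so every composite $X\to\tau T_k\to X$ has at least $2r=\ql(X)$ of them and is zero by Lemma~\ref{l:downwardarrows}. The root cause is that the quasitop/quasisocle constraints are only mod-$r$ congruences on the two endpoints and carry no upper bound on $l$, while $\HH_k\cup\R_k$ sits in a bounded quasilength band; your asserted ``bookkeeping'' equivalence is false, not merely unproved.

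The paper's proof instead introduces the diamond-shaped region $D_X$ bounded by the two paths $x^ry^r$ and $y^rx^r$ out of $X$, and shows (via Lemmas~\ref{l:basis} and~\ref{l:downwardarrows}) that $u_X$ factors through an indecomposable $Y$ precisely when $Y\in D_X$. Since the summands of $\tau T_{\T}$ lie in $\cup_k\W_{\tau T_k}$ and $D_X$ is an upward-opening diamond with lowest vertex $M_{i+r,l-r}$, the factorization condition collapses to $M_{i+r,l-r}\in\cup_k\W_{\tau T_k}$ --- a single wing-membership which bundles the congruence data together with the missing quasilength bound --- and a coordinate calculation then identifies the answer with $\HH_k\cup\R_k$. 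Analogous diamonds $E_X$ and $F_{\tau^{-1}X}$ handle (b) and (c). To fix your argument you would need to replace the quasitop/quasisocle conditions with a condition controlling the total down-arrow count of the round trip, which in effect recreates $D_X$. Two smaller side-issues: $u_X$ does \emph{not} span a one-dimensional Hom-space (Lemma~\ref{l:dimhom} gives $\dim\End(X)=2$ for $r+1\le l\le 2r$; it is the radical that is one-dimensional there), and in part~(c) the clause ``quasisocle$(\tau^{-1}X)\in\W_{\tau T_k}$'' should read quasisocle$(\tau X)$, since the target of $w_X$ is $\tau X$; your ensuing translation to quasisocle$(X)\in\W_{T_k}$ is the correct one, and follows from the corrected clause rather than the one you wrote.
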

\begin{proof}
We start with part (a). Note that $u_X$ lies in the basis for $\Hom(X,X)$
given in Lemma~\ref{l:basis}. Also, by the mesh relations, $u_X=x^ry^r$.
Let $D_X$ be the diamond-shaped region in $\T$ bounded by the paths
$x^ry^r$ and $y^rx^r$ starting at $X$. It is clear that
$u_X$ factors through any indecomposable module in $D_X$.
For an example, see Figure~\ref{f:DX}, where part of one copy of $D_X$ has been drawn.

If $Y$ lies outside $D_X$, then any path from
$X$ to $X$ in $\T$ via $Y$ must contain more than $r$ downward
arrows. By Lemma~\ref{l:basis} it is a linear combination of basis elements
distinct from $u$. So $u$ cannot factor through the direct sum of any
collection of objects outside this region.

Hence $u_X$ factors through $\add(\tau T_{\T})$ if and only if
some indecomposable summand of $\tau T_{\T}$ lies in $D_X$.
Since the indecomposable summands of $\tau T_{\T}$ lie in
$\cup_{k\in \mathbb{Z}_s} \W_{\tau T_k}$, we see that $u_X$
factors through $\tau T_{\T}$ if and only if $M_{i+r,l-r}$ (the
module in $D_X$ with minimal quasilength) lies in
$\cup_{k\in \mathbb{Z}_s} \W_{\tau T_k}$.

The corners of the triangular region $\HH_k\cup \R_k$ are
$M_{i_k,r}$, $\Top_k=M_{i_k,r+l_k}$ and $M_{i_k+l_k,r}$.
The part of $\HH_k\cup \R_k$ consisting of modules with
quasilength at least $r+1$ is the triangle with corners
$M_{i_k,r+1}$, $M_{i_k,r+l_k}$ and $M_{i_k+l_k-1,r+1}$.
Hence, $X=M_{i,l}$ lies in $\HH_k\cup \R_k$ if and only if $M_{i+r,l-r}$
lies in the triangular region of $\T$ with corners
$M_{i_k,1}$, $M_{i_k,l_k}$ and $M_{i_k+l_k-1,1}$, i.e.
$\W_{\tau T_k}$. The result follows.

For part (b), we consider the diamond-shaped region $E_X$ bounded by the paths
$y^{r-1}x^{r-1}$ and $x^{r-1}y^{r-1}$ starting at $X$.
We have, using an argument similar to the above, that $v_X$ factors
through $\add(\tau T_{\T}\oplus \tau^2 T_{\T})$ if and only if some indecomposable direct
summand of $\tau T_{\T}\oplus \tau^2 T_{\T}$ lies in $E_X$.
Hence, $v_X$ factors through $\add(\tau T_{\T}\oplus \tau^2 T_{\T})$ if and only
if $M_{i+r-1,l-r+1}$ lies in $\cup_{k\in \mathbb{Z}_s} (\W_{\tau T_k}\cup
\W_{\tau^2 T_k})$.
The corners of the trapezoidal region $\HH_k\cup \R_k\setminus \{\Top_k\}$ are
$M_{i_k,r}$, $M_{i_k,r+l_k-1}$, $M_{i_k+1,r+l_k-1}$, $M_{i_k+l_k,r}$.
Hence $X\in \HH_k\cup \R_k\setminus \{\Top_k\}$ if and only if $M_{i+r-1,l-r+1}$
lies in the trapezoidal region with corners
$M_{i_k+r-1,1}$, $M_{i_k+r-1,l_k}$, $M_{i_k+r,l_k}$, $M_{i_k+l_k+r-1,1}$, i.e.
$M_{i_k-1,1}$, $M_{i_k-1,l_k}$, $M_{i_k,l_k}$, $M_{i_k+l_k-1,1}$ which is the
union $\W_{\tau T_k}\cup \W_{\tau^2 T_k}$.
This gives the first part of (b).

We have that $v_X$ factors through $\add (\tau T_{\T})$
(respectively, $\add(\tau^2 T_{\T})$) if and only if $M_{i+r-1,l-r+1}$
lies in
$\cup_{k\in \mathbb{Z}_s} \W_{\tau T_k}$ (respectively,
$\cup_{k\in \mathbb{Z}_s} \W_{\tau^2 T_k}$).
Hence $v_X$ factors through both $\add(\tau T_{\T})$
and $\add(\tau^2 T_{\T})$ if and only if $M_{i+r-1,l-r+1}$ lies in
$\cup_{k\in \mathbb{Z}_s} (\W_{\tau T_k}\cap
\W_{\tau^2 T_k})$.
The corners of the triangular region $\R_k$ are
$M_{i_k+1,r}$, $M_{i_k+1,r+l_k-2}$ and $M_{i_k+l_k-1,r}$.
Hence $X\in \R_k$ if and only if $M_{i+r-1,l-r+1}$
lies in the triangular region with corners
$M_{i_k+r,1}$, $M_{i_k+r,l_k-1}$ and $M_{i_k+l_k+r-2,1}$, i.e.
$M_{i_k,1}$, $M_{i_k,l_k-1}$ and $M_{i_k+l_k-2,1}$, which is the
intersection $\W_{\tau T_k}\cap \W_{\tau^2 T_k}$. This
gives the second part of (b).

For part (c), we consider the diamond-shaped region $F_{\tau^{-1}X}$ bounded by the
paths $y^{r-2}x^{r-2}$ and $x^{r-2}y^{r-2}$ starting at $\tau^{-1}X$.
We have, using an argument similar to the above, that $w_X$ factors
through $\add(\tau T_{\T})$ if and only if some indecomposable direct
summand of $\tau T_{\T}$ lies in $F_{\tau^{-1}X}$.
Hence, $w_X$ factors through $\add(\tau T_{\T})$ if and only
if $M_{i+1+r-2,l-r+2}=M_{i+r-1,l-r+2}$ lies in $\cup_{k\in \mathbb{Z}_s} \W_{\tau T_k}$.
The corners of the triangular region $\wideR{k}$ are
$M_{i_k+1,r-1}$, $M_{i_k+1,r+l_k-2}$ and $M_{i_k+l_k,r-1}$.
Hence $X\in \wideR{k}$ if and only if $M_{i+r-1,l-r+2}$ lies in the
the triangular region with corners
$M_{i_k+1+r-1,1}$, $M_{i_k+1+r-1,l_k}$ and $M_{i_k+l_k+r-1,1}$,
i.e.
$M_{i_k,1}$, $M_{i_k,l_k}$ and $M_{i_k+l_k-1,1}$, which is the wing
$\W_{\tau T_k}$. Part (c) follows.
\end{proof}

\begin{figure}
$$
\begin{tikzpicture}[xscale=0.3,yscale=0.35,baseline=(bb.base),quivarrow/.style={black, -latex, shorten >=10pt,shorten <=10pt},translate/.style={black, dotted, shorten >=13pt, shorten <=13pt}]

\newcommand{\rank}{11}
\newcommand{\height}{8}
\newcommand{\copies}{2}

\pgfmathparse{\rank-1}\let\rankm\pgfmathresult;
\pgfmathparse{\height-1}\let\heightm\pgfmathresult;

\pgfmathparse{\rank*\copies}\let\totwidth\pgfmathresult;
\pgfmathparse{\rank*\copies-1}\let\totwidthm\pgfmathresult;

\path (0,0) node (bb) {}; 

\foreach \i in {0,...,\totwidth}{
\foreach \j in {0,...,\height}{
\node [fill=white,inner sep=0pt] at (\i*2,\j*2) {$\circ$};}}

\foreach \i in {0,...,\totwidthm}{
\foreach \j in {0,...,\heightm}{
\node [fill=white,inner sep=0pt] at (\i*2+1,\j*2+1) {$\circ$};}}

\begin{scope}[on background layer]
\draw[dashed] (0,0) -- (0,2*\height+2);
\draw[dashed] (2*\rank,0) -- (2*\rank,2*\height+2);
\draw[dashed] (2*\totwidth,0) -- (2*\totwidth,2*\height+2);

\draw[draw=none,fill=gray!30,opacity=0.4] (4,0) -- (7,3) -- (10,0) -- (4,0);
\draw[draw=none,fill=gray!30,opacity=0.4] (14,0) -- (16,2) -- (18,0) -- (14,0);

\draw[draw=none,fill=gray!30,opacity=0.4] (4+2*\rank,0) -- (7+2*\rank,3) -- (10+2*\rank,0) -- (4+2*\rank,0);
\draw[draw=none,fill=gray!30,opacity=0.4] (14+2*\rank,0) -- (16+2*\rank,2) -- (18+2*\rank,0) -- (14+2*\rank,0);

\draw[dotted] (4,16) -- (1,13) -- (12,2) -- (23,13) -- (20,16);
\draw[draw=none,fill=gray!30,opacity=0.4] (4,16) -- (1,13) -- (12,2) -- (23,13) -- (20,16);
\end{scope}

\draw (1,13.6) node[black] {$\scriptstyle X$};
\draw (23,13.6) node[black] {$\scriptstyle X$};

\draw (7,3.5) node[black] {$\scriptstyle \tau T_0$};
\draw (29,3.5) node[black] {$\scriptstyle \tau T_0$};

\draw (16,2.5) node[black] {$\scriptstyle \tau T_1$};
\draw (38,2.5) node[black] {$\scriptstyle \tau T_1$};

\end{tikzpicture}
$$
\caption{Proof of Proposition~\ref{p:Hfactor}: the shaded region indicates part of (one copy of) the
diamond-shaped region $D_X$. In this case, $u_X$ does not factor through $\add(\tau T_{\T})$.}
\label{f:DX}
\end{figure}
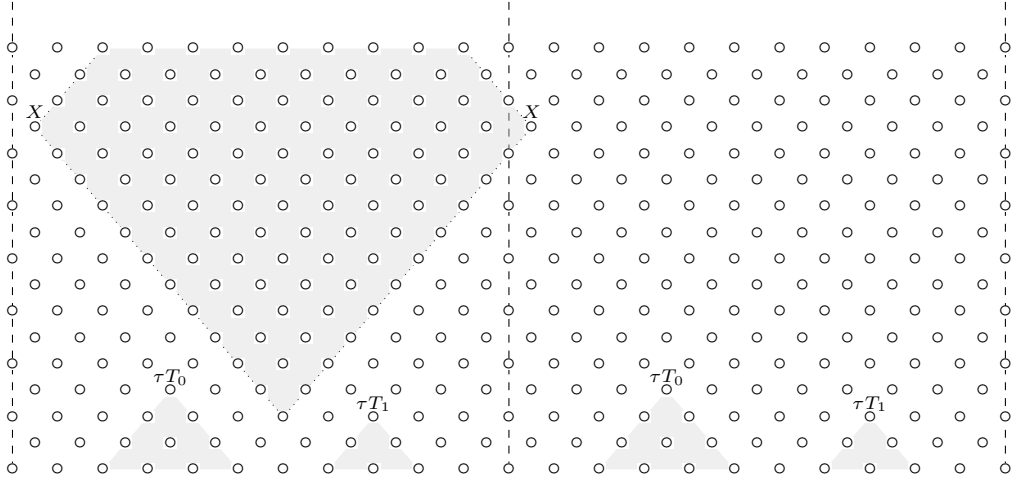

\section{Rigid and Schurian $\Lambda$-modules}
\label{s:rigid}
We determine which objects $X$ in $\T$ give rise to Schurian and rigid $\Lambda$-modules $\XX$.

\begin{lemma} \label{l:not}
Let $X=M_{i,l}$ be an indecomposable module in $\T$. Then:
\begin{itemize}
\item[(a)] If $r+1\leq l$ and $X\not\in \cup_{k\in \mathbb{Z}_s} \HH_k\cup \R_k$
then $\XX$ is not Schurian.
\item[(b)] If $r\leq l$ and $X\not \in \cup_{k\in \mathbb{Z}_s} \HH_k\cup \R_k\setminus \{\Top_k\}$
then $\XX$ is not rigid.
\end{itemize}
\end{lemma}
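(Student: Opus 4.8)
The plan is to apply Proposition~\ref{p:rigid} in both parts: for~(a) I would exhibit two $\field$-linearly independent elements of $\Hom_{\C/\add(\tau T)}(X,X)$, and for~(b) a single nonzero element of $\Hom_{\C/\add(\tau T\oplus \tau^2 T)}(X,\tau X)$. The candidate maps are exactly the $H$-maps studied in Proposition~\ref{p:Hfactor}: for~(a), the identity $\id_X$ together with $u_X=y^rx^r$ (distinct basis elements of $\End(X)$ in the sense of Lemma~\ref{l:basis}, since $l\geq r+1$); for~(b), the nonzero $H$-map $v_X=y^{r-1}x^{r-1}\colon X\to\tau X$ of Lemma~\ref{l:dimhom} (using $l\geq r$).

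The technical tool I would set up first is a reduction principle: if $M$ denotes $X$ or $\tau X$ and $h\in\HomH{\C}(X,M)=\Hom(X,M)$ factors through $\add(\tau T)$, respectively $\add(\tau T\oplus\tau^2 T)$, in $\C$, then $h$ already factors through $\add(\tau T_{\T})$, respectively $\add(\tau T_{\T}\oplus\tau^2 T_{\T})$, as an $H$-map in $D^b(\field Q)$. Two ingredients enter. First, writing a factorisation $X\to Z\to M$ (with $Z$ a summand of $\tau T$ or of $\tau T\oplus\tau^2 T$) in terms of $H$- and $F$-parts, the only contribution to the $H$-part of the composite is the composite of the $H$-parts, because a composite of two $F$-maps lands in $\Hom(X,F^2M)$, a second extension group of $\field Q$-modules, which vanishes since $\field Q$ is hereditary; hence $h$ factors through $Z$ as an $H$-map in $D^b(\field Q)$. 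Second, $\Hom(X,-)$ vanishes on every indecomposable summand of $\tau T$ or $\tau^2 T$ not lying in $\T$: such a summand is either a preprojective $\field Q$-module (no map from the regular module $X$), or a shift into a strictly positive degree of an injective or preinjective $\field Q$-module (so $\Hom_{D^b(\field Q)}(X,-)=0$ by degree reasons, as the relevant Ext groups are negative), or an indecomposable module in a tube different from $\T$; so the $H$-factorisation may be taken through $\add(\tau T_{\T})$, respectively $\add(\tau T_{\T}\oplus\tau^2 T_{\T})$.

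For part~(a): since $\End(X)$ is local and $u_X$ is a non-isomorphism (it factors through an indecomposable module of quasilength $l-r\neq l$), we have $u_X\in\rad\End(X)$. Suppose some nonzero combination $a\id_X+bu_X$ factors through $\add(\tau T)$ in $\C$; by the reduction it factors through $\add(\tau T_{\T})$ as an $H$-map. If $a\neq 0$, then $a\id_X+bu_X$ is a unit of $\End(X)$, so $X$ is a direct summand of $\tau T_{\T}$ — impossible, since $\ql(X)=l\geq r+1$ while every summand of $\tau T_{\T}$ has quasilength $l_k\leq r-1$. If $a=0$, then $u_X$ factors through $\add(\tau T_{\T})$, contradicting Proposition~\ref{p:Hfactor}(a) together with the hypothesis $X\notin\bigcup_k \HH_k\cup\R_k$. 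Hence the classes of $\id_X$ and $u_X$ are linearly independent in $\Hom_{\C/\add(\tau T)}(X,X)$, so this space is not isomorphic to $\field$ and $\XX$ is not Schurian by Proposition~\ref{p:rigid}(a).

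For part~(b): by Proposition~\ref{p:Hfactor}(b) and the hypothesis $X\notin\bigcup_k(\HH_k\cup\R_k\setminus\{\Top_k\})$, the map $v_X$ does not factor through $\add(\tau T_{\T}\oplus\tau^2 T_{\T})$ as an $H$-map; by the reduction principle it therefore does not factor through $\add(\tau T\oplus\tau^2 T)$ in $\C$ at all, so its class in $\Hom_{\C/\add(\tau T\oplus\tau^2 T)}(X,\tau X)$ is nonzero and $\XX$ is not rigid by Proposition~\ref{p:rigid}(b). The step I expect to be the main obstacle is the reduction principle itself: one must carefully pin down which indecomposable summands of $\tau T$ and $\tau^2 T$ can receive a nonzero map from $X$ (in particular the summands of $\tau U$ and $\tau^2 U$, which need not be $\field Q$-modules but shifts of injective or preinjective modules), and check that passing to $H$-parts of a factorisation in $\C$ genuinely produces a factorisation in $D^b(\field Q)$, using that $\field Q$ is hereditary to kill the $F$-by-$F$ composites.
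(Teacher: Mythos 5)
Your proof is correct and follows essentially the same strategy as the paper's (terse) argument: apply Proposition~\ref{p:rigid} by comparing $u_X$ (resp.\ $v_X$) against Proposition~\ref{p:Hfactor}, after reducing factorizations in $\C$ through $\add(\tau T)$ (resp.\ $\add(\tau T\oplus\tau^2T)$) to $H$-factorizations through $\add(\tau T_{\T})$ (resp.\ $\add(\tau T_{\T}\oplus\tau^2T_{\T})$) in $D^b(\field Q)$. Your ``reduction principle'' is a careful fleshing-out of the step the paper compresses into one sentence and a citation of Lemma~\ref{l:Ccombine} (which there is only stated for $F$-maps): you correctly supply the $H$-map analogue, killing the $F$-by-$F$ composites via vanishing of $\Ext^2$ over the hereditary algebra, and noting $\Hom(X,-)=0$ on the transjective and shifted summands of $\tau T$ and $\tau^2 T$ as well as on summands in other tubes.
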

\begin{proof}
Firstly note that in both (a) and (b), $X$ cannot be a summand of $\tau T$.
For part (a), let $u_X=y^rx^r:X\rightarrow X$. Since $U$ is preprojective, any
composition of maps in $\C$ from $X$ to $X$ factoring through $U$ is zero.
By Proposition~\ref{p:Hfactor}(a) and Lemma~\ref{l:Ccombine}, $u_X$ does not
factor through $\add(\tau T_{\T})$. It follows that $u_X$ does not factor
through $\add(\tau T)$ and hence $\HomH{\C/\add(\tau T)}(X,X)\not\cong \field$,
so $\XX$ is not Schurian.
A similar argument, using Proposition~\ref{p:Hfactor}(b), gives part (b).
\end{proof}

\begin{lemma} \label{l:lowquasilength}
Let $X$ be an indecomposable object in $\T$ which is not a summand of $\tau T$.
Then:
\begin{itemize}
\item[(a)] $\XX$ is a $\tau$-rigid $\Lambda$-module if and only if
the quasilength of $X$ is at most $r-1$;
\item[(b)] If the quasilength of $X$ is at most $r-2$,
then $\XX$ is Schurian.
\end{itemize}
\end{lemma}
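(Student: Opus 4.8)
The plan is to derive both parts directly from the dimension formulas in Lemma~\ref{l:dimhom}, using Corollary~\ref{c:taurigid} for part~(a) and Proposition~\ref{p:rigid}(a) for part~(b). Throughout I write $X = M_{i,l}$.

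For part~(a): since $X$ lies in $\T$ it may be regarded as a $\field Q$-module, and by hypothesis it is not a summand of $\tau T$, so Corollary~\ref{c:taurigid} applies and reduces the claim to showing that the indecomposable regular module $M_{i,l}$ is rigid in $\field Q$-mod precisely when $l \leq r-1$. I would establish this by regarding $X$ as an object of $D^b(\field Q)$ concentrated in degree zero: the formula $\Ext_{D^b(\field Q)}(A,B) \cong \Dual\Hom(B,\tau A)$ underlying~\eqref{e:tau2} gives $\Ext^1_{\field Q}(X,X) \cong \Ext_{D^b(\field Q)}(X,X) \cong \Dual\Hom(X,\tau X)$, and since $\tau X$ is again a module this is $\Hom_{\field Q}(M_{i,l},\tau M_{i,l})$. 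By Lemma~\ref{l:dimhom} this vanishes exactly when $1 \leq l \leq r-1$; it is nonzero for $r \leq l \leq 2r-1$, and for $l \geq 2r$ the module is again non-rigid (by the same reasoning, or by the standard fact that an indecomposable regular module of quasilength at least the rank of its tube is never rigid). Hence $X$ is rigid if and only if $\ql(X) \leq r-1$, proving~(a).

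For part~(b): suppose $\ql(X) = l \leq r-2$. By Proposition~\ref{p:rigid}(a) it suffices to show $\Hom_{\C/\add(\tau T)}(X,X) \cong \field$. First, $\Hom_{\C}(X,X) = \HomH{\C}(X,X) \oplus \HomF{\C}(X,X) = \End_{\field Q}(X) \oplus \HomF{\C}(X,X)$, and by~\eqref{e:tau2} the second summand is $\Dual\Hom_{\field Q}(X,\tau^2 X)$, which vanishes when $l \leq r-2$ by Lemma~\ref{l:dimhom}; moreover $\End_{\field Q}(M_{i,l}) \cong \field$ for $l \leq r$ by the same lemma. So $\Hom_{\C}(X,X) \cong \field$, spanned by $\id_X$, and the quotient $\Hom_{\C/\add(\tau T)}(X,X)$ is therefore $0$ or $\field$. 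It is $\field$, because $\id_X$ cannot factor through $\add(\tau T)$: if it did, then as $\C$ is Krull--Schmidt and $X$ is indecomposable, $X$ would be a direct summand of $\tau T$, contradicting the hypothesis. Hence $\Hom_{\C/\add(\tau T)}(X,X) \cong \field$ and $\XX$ is Schurian.

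Both parts are essentially bookkeeping once Lemma~\ref{l:dimhom}, Corollary~\ref{c:taurigid} and Proposition~\ref{p:rigid}(a) are in hand. The step I expect to need the most care is the final one in part~(b) — verifying that the identity morphism of $X$ survives in the quotient category $\C/\add(\tau T)$ — since this is the only place the hypothesis that $X$ is not a summand of $\tau T$ is used, and it is also the reason part~(b) is only an implication: at quasilengths $r-1$ and $r$ one can have $\End_{\field Q}(X) \cong \field$ while $\HomF{\C}(X,X) \neq 0$, so such an $X$ need not give a Schurian module even though (for quasilength $r-1$) it still gives a $\tau$-rigid one.
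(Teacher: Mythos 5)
Your proof is correct and follows essentially the same route as the paper: part~(a) reduces via Corollary~\ref{c:taurigid} to the standard fact that an indecomposable object in a rank-$r$ tube is rigid iff its quasilength is at most $r-1$, and part~(b) uses Proposition~\ref{p:rigid}(a) together with the dimension count $\Hom_{\C}(X,X)\cong\Hom(X,X)\oplus \Dual\Hom(X,\tau^2 X)\cong\field$ from Lemma~\ref{l:dimhom}. The only difference is that you spell out the small step the paper leaves implicit — that $\id_X$ survives in the quotient $\C/\add(\tau T)$ because $X$ is not a summand of $\tau T$ — which is a correct and worthwhile observation but not a new idea.
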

\begin{proof}
It is well-known (and follows from the fact that $\T$ is
standard) that $X$ is a rigid $\field Q$-module if and only if its
quasilength is at most $r-1$, so part (a) follows from
Corollary~\ref{c:taurigid}.
If the quasilength of $X$ is at most $r-2$, then $\Hom(X,X)\cong \field$
and $\Hom(X,\tau^2 X)=0$ by Lemma~\ref{l:dimhom},
so
$$\Hom_{\C}(X,X)\cong \Hom(X,X)\oplus \Dual\Hom(X,\tau^2 X)\cong \field,$$
giving part (b).
\end{proof}

We need the following.

\begin{lemma} \label{l:sesargument}
Let $A,B$
be indecomposable $\field Q$-modules, and assume that
$\Hom(A,B[1])\cong \field$.
Let $\varepsilon\in \Hom(A,B[1])$ be nonzero.
\begin{itemize}
\item[(a)] If there is a map $\varphi\in \Hom(B,\tau A)$
such that $\im(\varphi)$ has an indecomposable direct summand which
does not lie in $\cup_{k\in \mathbb{Z}_s}\W_{\tau T_k}$
then $\varepsilon$ factors in $D^b(\field Q)$ through $U[1]$.
\item[(b)] If there is a map $\varphi\in \Hom(B,\tau A)$
such that $\im(\varphi)$ has an indecomposable direct summand which
does not lie in $\cup_{k\in \mathbb{Z}_s}\W_{\tau^2 T_k}$
then $\varepsilon$ factors in $D^b(\field Q)$ through $\tau U[1]$.
\end{itemize}
\end{lemma}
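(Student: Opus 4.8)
The plan is to realise the nonzero extension $\varepsilon\in\Hom(A,B[1])\cong\Ext^1(A,B)$ as a short exact sequence $0\to B\to E\to A\to 0$ of $\field Q$-modules, and then to use the given map $\varphi\in\Hom(B,\tau A)$ to produce an explicit factorisation of $\varepsilon$ through $U[1]$ (resp.\ $\tau U[1]$). By the AR-formula~\eqref{e:ARformula} and~\eqref{e:tau2} applied in $D^b(\field Q)$, the hypothesis $\Hom(A,B[1])\cong\field$ is dual to $\overline{\Hom}(B,\tau A)\cong\field$, so $\varphi$ is, up to injectively stable morphisms, the (essentially unique) map detecting $\varepsilon$. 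The key point is that, by Proposition~\ref{p:Cfactor}(a) together with Proposition~\ref{p:dual}(a), the factorisation behaviour of $\varepsilon$ through an object $C$ is governed by whether $\Hom(B,\tau\alpha)\neq 0$ for some $\alpha\in\Hom(A,C)$; dually, composing $\varphi$ with the natural projection $\tau A\to$ (an indecomposable summand of $\im\varphi$) encodes a nonzero such test map, once one checks the target summand supports a map from $A$.

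The steps I would carry out, for part (a): first, write $\varphi:B\to\tau A$ and let $N$ be an indecomposable direct summand of $\im(\varphi)$ with $N\notin\cup_{k\in\Z_s}\W_{\tau T_k}$. Second, invoke Proposition~\ref{p:outsidenonzero}: since $N$ lies in $\T$ but in none of the wings $\W_{\tau T_k}$, we get $\Hom(U,N)\neq 0$, so there is a nonzero $g:U\to N$. (Here $U$ is the preprojective summand of $T$; note $N$ is regular, and a nonzero $\Hom(U,N)$ exists.) Third, I would factor $\varphi$ through the inclusion $\im(\varphi)\hookrightarrow\tau A$ and the projection $B\twoheadrightarrow\im(\varphi)\twoheadrightarrow N$, and then use $g$ to reinterpret the composite $B\to N$: the point is to build from $g$ and $\varphi$ a nonzero element of $\Hom(A,U[1])$ whose image under $\Hom(A,-)$ applied to a suitable map $U[1]\to B[1]$ is $\varepsilon$. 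Concretely, $\varphi$ corresponds under~\eqref{e:tau2} to an element $\psi\in\Hom(A,\tau^{-1}B[1])\cong\Ext^1(A,\tau^{-1}B)$; pulling back along $g:U\to N$ (after translating) gives a map factoring $\varepsilon$ (up to the standard identifications) through $U[1]$. I would phrase this last step using Proposition~\ref{p:Cfactor}: $\varepsilon$ factors through $C:=U[1]$ iff some $\beta\in\Hom(U,B)$ has $\Hom(\beta,\tau A)\neq 0$, and the composite $U\xrightarrow{g}N\hookrightarrow\im\varphi$ lifted against $\varphi$ produces exactly such a $\beta$ with $\Hom(\beta,\tau A)(\gamma)=\varphi\circ\text{(lift)}\neq 0$ for a suitable $\gamma$. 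Part (b) is identical, replacing $\W_{\tau T_k}$ by $\W_{\tau^2 T_k}$, $U$ by $\tau U$, Proposition~\ref{p:outsidenonzero} by the observation that $\Hom(\tau U,N)\neq 0$ whenever $N\notin\cup_k\W_{\tau^2 T_k}$ (apply $\tau$ to Proposition~\ref{p:outsidenonzero}), and $U[1]$ by $\tau U[1]$.

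The main obstacle I anticipate is the bookkeeping in step three: tracking the extension $\varepsilon$ through the chain of dualities~\eqref{e:ARformula} and~\eqref{e:tau2} while simultaneously splitting off the indecomposable summand $N$ of $\im(\varphi)$ and verifying that the resulting test map $\beta\in\Hom(U,B)$ genuinely satisfies $\Hom(\beta,\tau A)\neq 0$ — i.e.\ that composing $\tau\beta$ (or the appropriate shift) with $\varphi$ does not vanish. The subtlety is that $\im(\varphi)$ may have several summands and the dualities are only canonical up to the Serre-functor identification, so some care is needed to ensure the summand detecting $\varepsilon$ is the one lying outside the wings; this is exactly where the hypothesis $\Hom(A,B[1])\cong\field$ is used, as it forces $\varphi$ (mod injectively stable maps) to be nonzero on at least one such summand. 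Once this is set up, the factorisation through $U[1]$ (resp.\ $\tau U[1]$) follows formally from Propositions~\ref{p:dual} and~\ref{p:Cfactor}.
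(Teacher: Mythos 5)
Your approach is essentially the paper's: factor $\varphi=\varphi_2\varphi_1$ through its image, use Proposition~\ref{p:outsidenonzero} to see $\Hom(U,\im\varphi)\ne 0$, and then conclude via Proposition~\ref{p:Cfactor}(b) with $C=U$. However, there is one step you flag as an anticipated obstacle but never actually close, and it is the decisive one: you need to know that a nonzero map $U\to\im(\varphi)$ lifts along the surjection $\varphi_1:B\twoheadrightarrow\im(\varphi)$ to a map $\beta:U\to B$. This lift is exactly what produces the required $\beta$ with $\varphi\beta=\varphi_2(\varphi_1\beta)\ne0$ (using that $\varphi_2$ is a monomorphism). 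The paper supplies the missing justification: $U$ is preprojective, so $\Hom(U,-)$ is exact on the short exact sequence $0\to\ker\varphi\to B\to\im\varphi\to 0$ (all terms regular, so $\Ext^1(U,\ker\varphi)=0$), hence $\Hom(U,\varphi_1)$ is surjective. Without stating this you cannot assert the lift exists, and the proof does not go through as written.

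Two smaller remarks. First, the opening move of realising $\varepsilon$ as a short exact sequence $0\to B\to E\to A\to 0$ plays no role in the paper's argument and is a red herring here; it is cleaner to work entirely through Proposition~\ref{p:Cfactor}(b), which reduces everything to exhibiting one $\beta\in\Hom(U,B)$ with $\varphi\beta\ne0$. Second, the worry about the Serre functor identification and about which summand of $\im\varphi$ ``detects'' $\varepsilon$ is not a genuine issue: the hypothesis $\Hom(A,B[1])\cong\field$ is invoked precisely so that ``$\varepsilon$ factors through $U[1]$'' is a property of the one-dimensional space, and a single $\beta$ with $\Hom(\beta,\tau A)\ne0$ suffices via Proposition~\ref{p:Cfactor}(b) --- no further matching-up of summands against dualities is needed. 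For part (b), your suggestion to apply $\tau$ to Proposition~\ref{p:outsidenonzero} is correct and matches the paper's treatment ($\Hom(\tau U,\im\varphi)\cong\Hom(U,\tau^{-1}\im\varphi)\ne0$).
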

\begin{proof}
We write $\varphi$ as $\varphi_2\varphi_1$ where $\varphi_1:B\rightarrow \im(\varphi)$
and $\varphi_2:\im(\varphi)\rightarrow \tau A$. We have the short exact sequence:
\begin{equation}
\label{e:phi}
\xymatrix{
0 \ar[r] & \ker(\varphi) \ar[r] & B \ar[r]^(0.4){\varphi_1} & \im(\varphi) \ar[r] & 0
}
\end{equation}
For part (a), we apply $\Hom(U,-)$ to this sequence (noting that, since $U$ is
preprojective, it is exact on $\T$), to obtain the exact sequence:
\begin{equation*}
\label{e:Uphi}
\xymatrix{
0 \ar[r] & \Hom(U,\ker(\varphi)) \ar[r] & \Hom(U,B) \ar[rr]^(0.45){\Hom(U,\varphi_1)} && \Hom(U,\im(\varphi)) \ar[r] & 0
}
\end{equation*}
Since $\im\varphi$ has an indecomposable direct summand which does
not lie in $\cup_{k\in \mathbb{Z}_s}\W_{\tau T_k}$, it follows from
Proposition~\ref{p:outsidenonzero} that $\Hom(U,\im\varphi)\not=0$.
Hence, the epimorphism $\Hom(U,\varphi_1)$ is nonzero. Since $\varphi_2$ is a monomorphism,
$\Hom(U,\varphi)\not=0$, so there is a map $\beta\in \Hom(U,B)$ such that
$\Hom(U,\varphi)(\beta)=\varphi\beta\not=0$.
Hence $\Hom(\beta,\tau A)(\varphi)=\varphi\beta\not=0$,
so $\Hom(\beta,\tau A)\not=0$. Part (a) now follows from Proposition~\ref{p:Cfactor}(b), taking
$C=U$.

For part (b), we apply $\Hom(\tau U,-)$ to the sequence~\eqref{e:phi}. Note that
$\Hom(\tau U,\im(\varphi))\cong \Hom(U,\tau^{-1}\im(\varphi))\not=0$ by
Proposition~\ref{p:outsidenonzero}, and the argument goes through as in part (a).
\end{proof}

\begin{lemma}
\label{l:hatrigid}
Fix $k\in \mathbb{Z}_s$ and let $X\in \HH_k\setminus \{\Top_k\}$.
Then $\XX$ is rigid.
\end{lemma}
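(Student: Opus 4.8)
The plan is to verify the rigidity criterion of Proposition~\ref{p:rigid}(b), i.e.\ that every map $X\to\tau X$ in $\C$ factors through $\add(\tau T\oplus\tau^2 T)$. Writing $X=M_{i,l}$, the hypothesis $X\in\HH_k\setminus\{\Top_k\}$ forces $r\leq l\leq r+l_k-1\leq 2r-2$ (as $l_k\leq r-1$). By Lemma~\ref{l:dimhom} the $H$-part $\HomH{\C}(X,\tau X)=\Hom_{\field Q}(X,\tau X)$ is spanned by the single map $v_X=y^{r-1}x^{r-1}$, and, since $F=\tau^{-1}[1]$, the $F$-part $\HomF{\C}(X,\tau X)=\Hom_{D^b(\field Q)}(X,F\tau X)=\Hom_{D^b(\field Q)}(X,X[1])=\Ext^1_{\field Q}(X,X)$ is one-dimensional by the AR-formula~\eqref{e:ARformula} and Lemma~\ref{l:dimhom}; fix a nonzero $\varepsilon$ spanning it. Since $\Hom_{\C}(X,\tau X)=\HomH{\C}(X,\tau X)\oplus\HomF{\C}(X,\tau X)$, it is enough to show that $v_X$ and $\varepsilon$ each factor through $\add(\tau T\oplus\tau^2 T)$ in $\C$.

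For $v_X$ this is immediate from Proposition~\ref{p:Hfactor}(b): as $X\in\HH_k\setminus\{\Top_k\}\subseteq\cup_{k'\in\mathbb{Z}_s}(\HH_{k'}\cup\R_{k'}\setminus\{\Top_{k'}\})$, that proposition gives a factorization of $v_X$ through $\add(\tau T_{\T}\oplus\tau^2 T_{\T})$ in $\field Q$-mod, which is a factorization by $H$-maps and hence valid in $\C$. For $\varepsilon$ I will apply Lemma~\ref{l:sesargument} with $A=B=X$ and $\varphi=v_X$ (the only map $X\to\tau X$, up to a scalar). Since every down-arrow of $\T$ is an epimorphism and every up-arrow a monomorphism, $v_X=y^{r-1}x^{r-1}$ factors as an epimorphism onto $M_{i-1,\,l-r+1}$ followed by a monomorphism, so $\im(v_X)\cong M_{i-1,\,l-r+1}$. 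By Lemma~\ref{l:sesargument}, if this module lies outside $\cup_{k'}\W_{\tau T_{k'}}$ then $\varepsilon$ factors in $D^b(\field Q)$ through $U[1]$, and if it lies outside $\cup_{k'}\W_{\tau^2 T_{k'}}$ then $\varepsilon$ factors through $\tau U[1]$. In $\C$ one has $U[1]\cong\tau U$ and $\tau U[1]\cong\tau^2 U$, summands of $\tau T$ and $\tau^2 T$ respectively, so by Lemma~\ref{l:Ccombine} (with $Z=\tau U$, resp.\ $Z=\tau^2 U$) either option yields the desired factorization of $\varepsilon$.

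So the remaining task is to check that $M_{i-1,l-r+1}$ misses the relevant family of wings. I would split $\HH_k\setminus\{\Top_k\}$ into its ray part $\{M_{i_k,l}:r\leq l\leq r+l_k-1\}$ and its coray part $\{M_{i_k+p,\,r+l_k-p}:1\leq p\leq l_k\}$. In the ray case, $\im(v_X)\cong M_{i_k-1,\,l-r+1}$ (with $1\leq l-r+1\leq l_k$) has quasisocle $Q_{i_k-1}$; since $\Ext^1(T,T)=0$ forces consecutive wings $\W_{\tau T_{k'}}$ to be separated by at least one quasisimple, $Q_{i_k-1}$ lies strictly between $\W_{\tau T_{k-1}}$ and $\W_{\tau T_k}$, hence in no $\W_{\tau T_{k'}}$, so Lemma~\ref{l:sesargument}(a) applies. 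In the coray case, $\im(v_X)\cong M_{i_k+p-1,\,l_k-p+1}$; here the quasisocle $Q_{i_k+p-1}$ \emph{does} lie in $\W_{\tau T_k}$, so part~(a) is not available, but from~\eqref{e:wings} the modules of $\W_{\tau^2 T_k}$ with that quasisocle have quasilength at most $l_k-p<l_k-p+1$ (and for $p=l_k$ the index $i_k+p-1$ is already out of range for $\W_{\tau^2 T_k}$), while the separation property again excludes the other $\W_{\tau^2 T_{k'}}$; so $\im(v_X)\notin\cup_{k'}\W_{\tau^2 T_{k'}}$ and Lemma~\ref{l:sesargument}(b) applies. This gives the factorization of $\varepsilon$ in both cases and completes the proof.

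The heart of the argument — and the main obstacle — is the wing bookkeeping in the last paragraph: one has to identify $\im(v_X)$ exactly and then play off the explicit wing descriptions~\eqref{e:wings} against the ``at least one quasisimple between consecutive wings'' property. The key phenomenon is that along corays $\HH_k$ sits one step higher than $\W_{\tau^2 T_k}$, which is exactly why the coray case is handled by Lemma~\ref{l:sesargument}(b) rather than~(a).
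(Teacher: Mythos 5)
Your proof is correct and follows essentially the same route as the paper's: show via Proposition~\ref{p:Hfactor}(b) that the $H$-part of $\Hom_{\C}(X,\tau X)$ factors through $\add(\tau T_{\T}\oplus\tau^2 T_{\T})$, then split $\HH_k\setminus\{\Top_k\}$ into its ray and coray parts and apply Lemma~\ref{l:sesargument}(a), respectively~(b), to the one-dimensional $F$-part, translating back to $\C$ via Lemma~\ref{l:Ccombine}. Your careful wing bookkeeping — in particular verifying $\im(v_X)=M_{i_k+p-1,\,l_k-p+1}\notin\cup_{k'}\W_{\tau^2 T_{k'}}$ in the coray case, which is the hypothesis that Lemma~\ref{l:sesargument}(b) actually requires — is a correct and welcome elaboration of details the paper leaves implicit.
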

\begin{proof}
Firstly note that $X$ cannot be a direct summand of $\tau T$.
By the assumption, the quasilength of $X$ lies in the interval $[r,2r-1]$, so,
by Lemma~\ref{l:dimhom}, $\Hom(X,\tau X)\cong \field$.
Let $u=y^{r-1}x^{r-1}$ be a nonzero element of $\Hom(X,\tau X)$.
Then by Proposition~\ref{p:Hfactor}(b), $u$ factors through
$\add(\tau T_{\T}\oplus \tau^2 T_{\T})$, so
$\HomH{\C/\add(\tau T\oplus \tau^2 T)}(X,\tau X)=0$.

Suppose that $X\cong M_{i_k,l}$ where $r\leq l\leq r+l_k-1$.
We have $$\HomF{\C}(X,\tau X)=\Hom(X,X[1])\cong  \Dual\Hom(X,\tau X)\cong \field.$$
We apply Lemma~\ref{l:sesargument}(a) in the case $A=X$, $B=X$.
We take $\varphi=u$ and $\varepsilon$
to be a nonzero element of $\Hom(X,X[1])$.
Then $\im(\varphi)\cong M_{i_k-1,l-r+1}\not\in \cup_{k\in \mathbb{Z}_s}\W_{\tau T_k}$.
By Lemma~\ref{l:sesargument}(a), we have that $\varepsilon$ factors through $U[1]$.
Hence, regarded as an $F$-map in $\C$, $\varepsilon$ factors through $\tau U$. It follows that
$$\HomF{\C/\add(\tau T\oplus \tau ^2 T)}(X,\tau X)=0.$$

Suppose that $X\cong M_{i_k+p,r+l_k-p}$ where $1\leq p\leq l_k$.
We have
$$\HomF{\C}(X,\tau X)=\Hom(X,X[1])\cong \Dual\Hom(X,\tau X)\cong \field.$$
We apply Lemma~\ref{l:sesargument}(b) in the case $A=X$, $B=X$.
We take $\varphi=u$ and $\varepsilon$ to be a nonzero element of $\Hom(X,X[1])$.
Then $\im(\varphi)=M_{i_k+p-1,l_k-p-1}\not\in \cup_{k\in \mathbb{Z}_s}\W_{\tau T_k}$.
By Lemma~\ref{l:sesargument}(b), $\varepsilon$ factors through $\tau U[1]$. Hence, regarded
as an $F$-map in $\C$, $\varepsilon$ factors through $\tau^2 U$. It follows that
$$\HomF{\C/\add(\tau T\oplus \tau ^2 T)}(X,\tau X)=0.$$

In either case, we have shown that $\Hom_{\C/\add(\tau T\oplus \tau^2 T)}(X,\tau X)=0$,
and it follows that $\clu{X}$ is rigid by Proposition~\ref{p:rigid}(b).
\end{proof}

If $T_{\T}$ contains an indecomposable direct summand of quasilength $r-1$ then
$s=1$, $l_0=r-1$ and, by~\eqref{e:hk},
\begin{equation}
\HH_0=\{M_{i_0,l}\,:\,r\leq l\leq 2r-1\}\cup \{M_{i_0+p,2r-1-p}\,:,0\leq p\leq r-1\}.
\label{e:hkmax}
\end{equation}
In particular, $\Top_k=M_{i_0,2r-1}$ has quasilength $2r-1$. In all other cases,
$\Top_k$ has smaller quasilength.

\begin{lemma} \label{l:schurian}
Fix $k\in \mathbb{Z}_s$. Suppose that $X$ is an indecomposable object of $\T$
which is not a summand of $\tau T$ and satisfies either
\begin{itemize}
\item[(a)] $X\in \HH_k$ and $\ql(X)\leq 2r-2$, or
\item[(b)] $\ql(X)\in \{r-1,r\}$ and $X\not\in \cup_{k\in \mathbb{Z}_s} \HH_k\cup \wideR{k}$.
\end{itemize}
Then $\XX$ is Schurian.
\end{lemma}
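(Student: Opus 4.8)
The plan is to verify the criterion of Proposition~\ref{p:rigid}(a): $\XX$ is Schurian if and only if $\Hom_{\C/\add(\tau T)}(X,X)\cong \field$. Write $X=M_{i,l}$; in both cases (a) and (b) one has $r-1\leq l\leq 2r-2$, and $X$ is not a summand of $\tau T$ by hypothesis. I would split $\Hom_{\C}(X,X)=\End_{\field Q}(X)\oplus \HomF{\C}(X,X)$, using $\HomF{\C}(X,X)\cong \Dual\Hom(X,\tau^2 X)$ from~\eqref{e:tau2}. By Lemma~\ref{l:dimhom} the $F$-part is one-dimensional, while $\dim\End_{\field Q}(X)$ is $1$ if $l\leq r$ and $2$ if $l\geq r+1$. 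Since $X$ is indecomposable and not a summand of $\tau T$, its identity does not factor through $\add(\tau T)$, so it will be enough to show that the remaining basis elements of $\Hom_{\C}(X,X)$ do factor through $\add(\tau T)$; this forces $\Hom_{\C/\add(\tau T)}(X,X)$ to be one-dimensional.

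For the $H$-part, if $\dim\End_{\field Q}(X)=2$ (which occurs only in case (a), as case (b) forces $l\leq r$) then, by Lemma~\ref{l:basis}, a complement to $\field\cdot\id_X$ is spanned by $u_X=y^rx^r$. Since $X\in \HH_k\subseteq \bigcup_{k'}(\HH_{k'}\cup \R_{k'})$, Proposition~\ref{p:Hfactor}(a) shows $u_X$ factors through $\add(\tau T_{\T})$, hence through $\add(\tau T)$ in $\C$.

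The substantive part is the $F$-part: I must show that a nonzero $\varepsilon\in \HomF{\C}(X,X)=\Hom(X,\tau^{-1}X[1])$ factors through $\add(\tau T)$ in $\C$. I would set $A=X$ and $B=\tau^{-1}X$, so that $\Hom(A,B[1])=\Ext(X,\tau^{-1}X)\cong \Dual\Hom(\tau^{-1}X,\tau X)\cong \field$ by~\eqref{e:tau2} and Lemma~\ref{l:dimhom}, and take $\varphi=w_X=y^{r-2}x^{r-2}\colon \tau^{-1}X\to \tau X$, a generator of $\Hom(B,\tau A)$. Writing $w_X$ as a surjection followed by an injection identifies $\im(w_X)$ with $M_{i-1,l-r+2}$, which (for $l\geq r$) is exactly the module $M_{i+r-1,l-r+2}$ analysed in the proof of Proposition~\ref{p:Hfactor}(c): there it is shown that it lies in $\bigcup_{k'}\W_{\tau T_{k'}}$ if and only if $X\in \bigcup_{k'}\wideR{k'}$. (For $l=r-1$ one computes directly that $\im(w_X)=Q_{i-1}$, which lies in $\W_{\tau T_{k'}}$ if and only if $M_{i,r-1}\in \wideR{k'}$.) So what I need is $X\notin \bigcup_{k'}\wideR{k'}$: this is a hypothesis in case (b), and in case (a) it amounts to the disjointness $\HH_k\cap \wideR{k'}=\emptyset$ for all $k'$. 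Granting this, $\im(w_X)\notin \bigcup_{k'}\W_{\tau T_{k'}}$, so Lemma~\ref{l:sesargument}(a) (applied with this $\varphi$) gives that $\varepsilon$ factors in $D^b(\field Q)$ through $U[1]=F(\tau U)$; by Lemma~\ref{l:Ccombine}, $\varepsilon$ then factors through $\tau U\in \add(\tau T)$ in $\C$. Combined with the $H$-part, the only class surviving in $\Hom_{\C/\add(\tau T)}(X,X)$ is that of $\id_X$, so $\XX$ is Schurian.

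The step I expect to be the main obstacle is the disjointness $\HH_k\cap \wideR{k'}=\emptyset$. For $k'=k$ this is immediate, because $\HH_k$ consists of the projective and injective objects of the wing $\W_{\Top_k}$ of quasilength at least $r$, whereas $\wideR{k}$ consists of non-projective, non-injective ones. For $k'\neq k$, an object of $\HH_k$ lies either on the ray of projectives of $\W_{\Top_k}$, and so has quasisocle $Q_{i_k}$, or on the coray of injectives, and so has quasitop $Q_{i_k+l_k-1}$; comparing with the descriptions of the quasisocles and quasitops of objects of $\wideR{k'}$ in Lemma~\ref{l:rktop}, membership of such an object in $\wideR{k'}$ would force $Q_{i_k}$ to lie in $\{Q_{i_{k'}+1},\dots ,Q_{i_{k'}+l_{k'}}\}$, or $Q_{i_k+l_k}$ to lie in $\W_{\tau T_{k'}}$ --- each of which puts the wings $\W_{\tau T_k}$ and $\W_{\tau T_{k'}}$ too close together, contradicting the fact (noted after~\eqref{e:wings}) that distinct such wings are separated by at least one quasisimple. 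The remaining subtlety is the boundary case $l=r-1$, which must be handled by the direct computation above since Proposition~\ref{p:Hfactor}(c) is only stated for $l\geq r$.
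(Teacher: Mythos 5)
Your proof is correct and follows essentially the same route as the paper's: the same split into $H$- and $F$-parts, the same use of Proposition~\ref{p:Hfactor}(a) and Lemma~\ref{l:basis} for the $H$-part, and the same choice of $\varphi=w_X$ with Lemmas~\ref{l:sesargument}(a) and~\ref{l:Ccombine} for the $F$-part. The one place you depart from the paper is in verifying that $\im(w_X)\notin\bigcup_{k'}\W_{\tau T_{k'}}$: the paper computes $\im(\varphi)$ explicitly in each of the four subcases and checks it directly against the wings, whereas you reuse the proof of Proposition~\ref{p:Hfactor}(c) to convert the condition into ``$X\notin\bigcup_{k'}\wideR{k'}$'' and then establish the disjointness $\HH_k\cap\wideR{k'}=\emptyset$ (clear for $k'=k$ via the projective/injective dichotomy, and for $k'\neq k$ via the quasisocle/quasitop comparison and wing-separation). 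Both your reduction and your disjointness argument are sound, and you correctly flag and handle the $\ql(X)=r-1$ boundary case, where Proposition~\ref{p:Hfactor}(c) does not apply and a direct computation of $\im(w_X)=Q_{i-1}$ is needed.
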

\begin{proof}
In case (a), $r\leq \ql(X)\leq 2r-2$, and in case (b), $r-1\leq \ql(X)\leq r$.
If $\ql(X)\leq r$ then $\Hom(X,X)\cong \field$ by Lemma~\ref{l:dimhom}.
If $\ql(X)>r$ then $\Hom(X,X)\cong \field^2$. A basis is given by
the identity map and the map $u_X$ in Proposition~\ref{p:Hfactor}(a).
By Proposition~\ref{p:Hfactor}(a), $u_X$ factors through $\add(\tau T)$.
Hence, in either case, $\HomH{\C/\add(\tau T)}(X,X)\cong \field$.

Since the quasilength of $X$ lies in $[r-1,2r-2]$, we have, by Lemma~\ref{l:dimhom}, that
$$\HomF{\C}(X,X)=\Hom(X,\tau^{-1}X[1])\cong \Ext(X,\tau^{-1}X)\cong D\Hom(\tau^{-1}X,\tau X)\cong \field.$$
We apply Lemma~\ref{l:sesargument}(a) in the case $A=X$, $B=\tau^{-1}X$.
We take $\varphi$ to be the map $w_{\tau^{-1}X}$ in Proposition~\ref{p:Hfactor}(c),
the unique nonzero element of $\Hom(\tau^{-1}X,\tau X)$ up to a scalar, and $\varepsilon$
to be a nonzero element of $\Hom(X,\tau^{-1}X[1])$.

In case (a), there are two possibilities. If $X\cong M_{i_k,l}$ where $r\leq l\leq r+l_k-1$,
then $\im(\varphi)\cong M_{i_k+r-1,l-r+2}\not\in \cup_{k\in \mathbb{Z}_s}\W_{\tau T_k}$.
If $X\cong M_{i_k+p,r+l_k-p}$ where $1\leq p\leq l_k$, then
$\im(\varphi)\cong M_{i_k+p+r-1,2+l_k-p}\not\in \cup_{k\in \mathbb{Z}_s}\W_{\tau T_k}$.
In case (b), there are also two possibilities. If $\ql(X)=r-1$, then $X\cong M_{i,r-1}$
where $i\not\in \cup_{k\in \mathbb{Z}_s} [i_k+1,i_k+l_k]$. Then
$$\im(\varphi)\cong M_{i+1+(r-2),r-1-(r-2)}=M_{i-1,1}\not\in \cup_{k\in \mathbb{Z}_s} \W_{\tau T_k}.$$
If $\ql(X)=r$, then $X\cong M_{i,r}$ where $i\not\in \cup_{k\in \mathbb{Z}_s} [i_k,i_k+l_k]$. Then
$$\im(\varphi)\cong M_{i+1+(r-2),r-(r-2)}=M_{i-1,2}\not\in \cup_{k\in \mathbb{Z}_s} \W_{\tau T_k}.$$

Applying Lemma~\ref{l:sesargument}(a), we see that $\varepsilon$ factors through $U[1]$.
Hence, regarded as an $F$-map in $\C$, $\varepsilon$ factors through $\tau U$. It follows that
$$\HomF{\C/\add \tau T}(X,X)=0.$$
We have shown that $\Hom_{\C/\add(\tau T)}(X,X)\cong \field$,
and it follows that $\XX$ is Schurian by Proposition~\ref{p:rigid}(a).
\end{proof}

\begin{lemma} \label{l:belownotrigid}
Fix $k\in \mathbb{Z}_s$, and let $X\in \R_k$. Then $\XX$ is not rigid.
\end{lemma}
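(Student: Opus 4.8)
The plan is to use Proposition~\ref{p:rigid}(b): $\XX$ is rigid if and only if $\Hom_{\C/\add(\tau T\oplus\tau^2 T)}(X,\tau X)=0$, so I would show this quotient is nonzero. Write $X=M_{i,l}$; from $X\in\R_k$ and~\eqref{e:defrk}, with $l_k\le r-1$, we get $i_k+1\le i\le i_k+l_k-1$ and $r\le l\le r+l_k+i_k-i-1\le 2r-3$, so by Lemma~\ref{l:dimhom} both $\HomH{\C}(X,\tau X)=\Hom(X,\tau X)$ and $\HomF{\C}(X,\tau X)\cong\Dual\Hom(X,\tau X)$ are one-dimensional, and $\Hom_{\C}(X,\tau X)$ is two-dimensional. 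Let $v_X=y^{r-1}x^{r-1}\colon X\to\tau X$ span the $H$-part. Since $X\in\bigcup_k\R_k$, Proposition~\ref{p:Hfactor}(b) shows $v_X$ factors through $\add(\tau T_{\T})$, hence through $\add(\tau T)$, so $v_X$ already vanishes in the quotient. It therefore suffices to prove that a nonzero element $\varepsilon$ of $\HomF{\C}(X,\tau X)=\Hom(X,X[1])\cong\Ext(X,X)$ does \emph{not} factor through $\add(\tau T\oplus\tau^2 T)$ in $\C$.

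The structural input is that $v_X$ factors as an epimorphism $x^{r-1}\colon X\twoheadrightarrow\im(v_X)$ followed by a monomorphism $y^{r-1}\colon\im(v_X)\hookrightarrow\tau X$, with $\im(v_X)=M_{i-1,\,l-r+1}$, and one checks directly from~\eqref{e:defrk} that, because $X\in\R_k$, both $\im(v_X)$ and $\tau^{-1}\im(v_X)=M_{i,\,l-r+1}$ lie in the wing $\W_{\tau T_k}$ (of quasilength $l-r+1\le l_k-1\le r-2$). This is precisely what fails for $X\in\HH_k$, where $\im(v_X)$ escapes $\bigcup_j\W_{\tau T_j}$ and $\varepsilon$ does factor --- compare Lemma~\ref{l:hatrigid}. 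Since $\Ext(X,X)\cong\field$ is one-dimensional, $\varepsilon$ factors in $\C$ through $\add$ of a finite collection of objects if and only if it factors through a single member; combining this with Lemma~\ref{l:Ccombine} (and a direct computation for those summands of $\tau T\oplus\tau^2 T$ which, as objects of $\C$, are shifts $P_i[1]$ of projectives rather than modules), it is enough to rule out, for each indecomposable summand $W$ of $T=U\oplus T_{\T}\oplus T''$ (with $T''$ the part of $T$ in tubes other than $\T$), a factorization of $\varepsilon\colon X\to X[1]$ in $D^b(\field Q)$ through $\tau W$, $\tau^2 W$, $W[1]$ or $\tau W[1]$.

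Several of these are immediate: a factorization through a module $N$ forces $\Hom(X,N)\ne0$, impossible when $N$ is preprojective or lies in a tube $\ne\T$; a factorization through $N'[1]$ forces $\Hom(N',X)\ne0$, impossible when $N'$ lies in a tube $\ne\T$; and for $N'=W$ an indecomposable summand of $U$ (which handles $W[1]$ with $W\mid U$, including the shifts $P_i[1]$ coming from projective summands of $U$), Proposition~\ref{p:Cfactor}(b) reduces a factorization through $W[1]$ to the existence of $\beta\in\Hom(W,X)$ with $v_X\beta\ne0$, i.e.\ (as $\Hom(X,\tau X)=\langle v_X\rangle$ and $y^{r-1}$ is mono) with $x^{r-1}\beta\colon W\to\im(v_X)$ nonzero --- but $\Hom(U,\im(v_X))=0$ by Lemma~\ref{l:additive} since $\im(v_X)\in\W_{\tau T_k}$, so no such $\beta$ exists.

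The remaining factorizations --- through $\tau W$, $\tau^2 W$ or $\tau W[1]$ with $W$ an indecomposable summand of $U\oplus T_{\T}$, i.e.\ through various modules in $\T$ of quasilength $\le r-1$ together with transjective objects (preprojective modules, the injectives $\tau^2 P_i$, the shifts $P_i[1]$) arising from the summands of $U$ --- are the main obstacle. By Proposition~\ref{p:Cfactor}, each such factorization amounts to the non-vanishing of a composite of the form $(\tau\alpha)v_X$ or $v_X\beta$; since $v_X$ factors through the short module $\im(v_X)\in\W_{\tau T_k}$ as an epimorphism followed by a monomorphism, each such composite factors through $\im(v_X)$, and I would bound the Hom-spaces involving $\im(v_X)$ using Lemma~\ref{l:homlow} and Corollary~\ref{c:wingzero}. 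Non-vanishing would force the wing of the module being factored through to meet $\W_{\tau T_k}$; using that consecutive wings $\W_{\tau T_j}$ are separated by at least one quasisimple and that $i_k+1\le i\le i_k+l_k-1$, this forces the index $j=k$, and a direct inspection inside $\W_{\Top_k}$ (cf.\ Figure~\ref{f:wings}), using that the quasisocle of $X$ is strictly interior to $\W_{\tau T_k}$ and the quasitop strictly interior to $\W_{\tau^2 T_k}$, then shows the composite still vanishes. Carrying out this case check cleanly --- tracking $\im(v_X)$ simultaneously against all of the wings $\W_{T_j}$, $\W_{\tau T_j}$, $\W_{\tau^2 T_j}$ and against the maps into the transjective objects involved --- is the fiddly part I would expect to require the most care. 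Once it is complete, $\varepsilon$ does not factor through $\add(\tau T\oplus\tau^2 T)$, so $\Hom_{\C/\add(\tau T\oplus\tau^2 T)}(X,\tau X)\ne0$ and $\XX$ is not rigid.
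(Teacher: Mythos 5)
Your strategy is the same as the paper's: via Proposition~\ref{p:rigid}(b), it suffices to exhibit a nonzero $\varepsilon\in\HomF{\C}(X,\tau X)\cong\field$ that does not factor in $\C$ through $\add(\tau T\oplus\tau^2 T)$, and Lemma~\ref{l:Ccombine} reduces this to ruling out factorizations in $D^b(\field Q)$ through $V$ and through $\tau^{-1}V[1]$ for each indecomposable summand $V$ of $\tau T\oplus\tau^2 T$. (Your preliminary step, showing that the $H$-map $v_X$ dies in the quotient via Proposition~\ref{p:Hfactor}(b), is unnecessary: to show $\Hom_{\C/\add(\tau T\oplus\tau^2 T)}(X,\tau X)\ne 0$ you need only one surviving element, and $\varepsilon$ is it.) You correctly dispose of summands that are preprojective or in other tubes, and your argument for $W[1]$ with $W\mid U$ --- reducing via Proposition~\ref{p:Cfactor}(b) to $v_X\beta=0$, then to $\Hom(W,\im(v_X))=0$ because $\im(v_X)=M_{i-1,l-r+1}\in\W_{\tau T_k}$ and $W$ is preprojective --- is sound.

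But the core of the lemma is ruling out the summands of $\tau T_{\T}$ and $\tau^2 T_{\T}$ lying in $\T$ (and the corresponding shifts such as $W[1]$ and $\tau W[1]$ for $W\mid T_{\T}$, and $\tau W[1]$ for $W\mid U$), and you leave this explicitly unfinished, flagging it as ``the fiddly part.'' The sketch you give --- bounding Hom-spaces involving $\im(v_X)$ by Lemma~\ref{l:homlow} and Corollary~\ref{c:wingzero}, then arguing via wings --- will not close the case: these tools tell you when $\Hom(\im(v_X),\tau V)$ is zero or one-dimensional, but when it is nonzero you still need to show that the \emph{particular} composite $(\tau f)\circ v_X$ (or $v_X\circ g$) vanishes. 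The paper's mechanism is a count of downward arrows via Lemmas~\ref{l:basis} and~\ref{l:downwardarrows}: for $V\in\T$ with $\ql(V)\le r-1$ and $f\colon X\to V$ nonzero, a path for $\tau f$ has at least $\ql(X)-r+1$ downward arrows while $v_X=y^{r-1}x^{r-1}$ has $r-1$, so $(\tau f)\circ v_X$ has at least $\ql(X)$ downward arrows out of $X$ and vanishes; similarly, for $g\colon\tau^{-1}V\to X$, the path for $v_X g$ out of $\tau^{-1}V$ (of quasilength $\le r-1$) already has at least $r-1$ downward arrows from $v_X$ alone and vanishes. Your factorization of $v_X$ as epi-then-mono through $\im(v_X)$, which has quasilength $\le l_k-1\le r-2$, would reproduce this bound if you combined it with Lemma~\ref{l:downwardarrows}, but as written the argument has a genuine gap: the central cases are asserted, not proved.
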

\begin{proof}
Since $X\in \R_k$, we have $r\leq \ql(X)\leq r+l_k-2\leq 2r-3$. In particular,
this implies that $X$ is not a direct summand of $\tau T$.
By Lemma~\ref{l:dimhom}, we have $\Hom(X,\tau X)\cong \field$.
Let $u$ be a nonzero map in $\Hom(X,\tau X)$, unique up to a
nonzero scalar. We have
$$\HomF{\C}(X,\tau X)=\Hom(X,X[1])\cong \Dual\Hom(X,\tau X)\cong \field.$$
Let $v\in \Hom(X,X[1])$ be a nonzero map, unique up to a nonzero scalar.

We show first that $v$ cannot factor through $V$ for any indecomposable summand
$V$ of $\tau T$ or $\tau^2 T$. If $\Hom(X,V)=0$ then we are done,
so we may assume that $\Hom(X,V)\not=0$.
Hence, $V$ lies in $\T$ and $\ql(V)\leq r-1$.

By Lemma~\ref{l:homlow}, we have that $\Hom(X,V)\cong \field$. Let $f\in \Hom(X,V)$ be any nonzero map.
Then the number of downward arrows in a path for $f$
(and hence for $\tau f$) is at least $\ql(X)-\ql(V)\geq \ql(X)-r+1$.
The number of downward arrows in a path for $u$ is $r-1$, so the number of downward arrows
in a path for $\tau f\circ u$ is at least $\ql(X)$, so $\tau f\circ u=0$ by Lemma~\ref{l:downwardarrows}.
Since $\{u\}$ is a basis for $\Hom(X,\tau X)$, it follows that $\Hom(X,\tau f)=0$. Therefore,
by Proposition~\ref{p:Cfactor}(a), $v$ cannot factor through $V$.

We next show that $v$ cannot factor through $\tau^{-1}V[1]$ for any indecomposable summand $V$ of
$\tau T$. If $\Hom(\tau^{-1}V,X)=0$ then $\Hom(\tau^{-1}V[1],X[1])=0$ and we are done.
Therefore, we may assume that
$\Hom(\tau^{-1}V,X)\not=0$.

Suppose first that $V\in \T$, so $\ql(V)\leq r-1$.
By Lemma~\ref{l:homlow}, we have that $\Hom(\tau^{-1}V,X)\cong \field$. Let $g\in \Hom(\tau^{-1}V,X)$ be any nonzero map. The number of downward arrows in a path for $u$ is $r-1$, hence
the number of downward arrows in a path for $ug$ is at least $r-1$. As $\ql(\tau^{-1}V)\leq r-1$,
it follows from Lemma~\ref{l:downwardarrows}
that $ug=0$. Since $\{u\}$ is a basis for $\Hom(X,\tau X)$, it follows that $\Hom(g,\tau X)=0$.

Secondly, suppose that $V$ is an indecomposable direct summand of $\tau U$ or $\tau^2 U$.
Let $h\in \Hom(\tau^{-1}V,X)$.
By Proposition~\ref{p:Hfactor}(b), we have that $u$ factors through both $\add(\tau T_{\T})$ and $\add(\tau^2 T_{\T})$,
so $uh=0$ as $\tau^{-1}V$ is a direct
summand of $T\oplus \tau T$. Since $\{u\}$ is a basis for $\Hom(X,\tau X)$, it follows that $\Hom(h,\tau X)=0$.

Applying Proposition~\ref{p:Cfactor}(b) to the triple $A=B=X$, $C=\tau^{-1}V$ and $\beta=g$ or $h$, we obtain
that $v$ does not factor through $\tau^{-1}V[1]$.

We have shown that $v$ does not factor in $D^b(\field Q)$
through $V$ or $\tau^{-1}V[1]$ for
any indecomposable summand $V$ of $\tau T\oplus \tau^2 T$.
Since $\Hom(X,X[1])\cong \field$, it follows that $v$ does not factor
in $D^b(\field Q)$ through
$\add(\tau T\oplus \tau^2 T)$ or
$\add(\tau^{-1}(\tau T\oplus \tau^2 T)[1])$.
By Lemma~\ref{l:Ccombine}, the morphism $v$, regarded as a morphism in $\Hom_{\C}(X,\tau X)$, does not factor in $\C$
through $\add(\tau T\oplus \tau^2 T)$.
Hence: $$\Hom_{\C/\add(\tau T\oplus \tau^2 T)}(X,\tau X)\not=0.$$
Therefore $\XX$ is not rigid by Proposition~\ref{p:rigid}.
\end{proof}

Note that the objects in $\wideR{k}$ (see~\eqref{e:defrk}) have quasilength
at least $r-1$, so if $T$ has no indecomposable direct summand in $\T$ of quasilength
$r-1$, the objects in $\wideR{k}$ are not summands of $\tau T$. It is easy to check
directly that this holds in the case where $T$ has an indecomposable direct summand $T_0$
in $\T$ of quasilength $r-1$, since all the indecomposable direct summands of $\tau T$ in
$\T$ lie in $\W_{\tau T_0}$ (see Figure~\ref{f:regionD}).

\begin{lemma} \label{l:belownotschurian}
Fix $k\in \mathbb{Z}_s$, and let $X\in \wideR{k}$. Then $\XX$ is not Schurian.
\end{lemma}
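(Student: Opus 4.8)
The goal is to show that $\clu{X}$ is not Schurian whenever $X\in\wideR{k}$, i.e.\ that $\Hom_{\C/\add(\tau T)}(X,X)\not\cong\field$ by Proposition~\ref{p:rigid}(a). The strategy parallels the proof of Lemma~\ref{l:belownotrigid}, but now we must produce a non-scalar endomorphism of $\clu{X}$ rather than a nonzero self-extension. The natural candidate is the composite $w=y^{r-2}x^{r-2}$ viewed as giving, via the coray/ray structure, a nonzero non-identity element of $\Hom_{\C}(X,X)$. Concretely, for $X$ of quasilength in $[r-1,2r-2]$ we have $\HomF{\C}(X,X)=\Hom(X,\tau^{-1}X[1])\cong\Dual\Hom(\tau^{-1}X,\tau X)\cong\field$ by Lemma~\ref{l:dimhom}, and I would let $v\in\HomF{\C}(X,X)$ be a nonzero such $F$-map. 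The plan is to show that $v$ survives in the quotient $\C/\add(\tau T)$, so that $\HomH{\C/\add(\tau T)}(X,X)$ (which contains the identity) together with this nonzero $F$-map forces $\dim\Hom_{\C/\add(\tau T)}(X,X)\geq 2$.

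First I would reduce, exactly as in Lemma~\ref{l:belownotrigid}, to checking that $v$ does not factor in $D^b(\field Q)$ through $V$ or through $\tau^{-1}V[1]$ for any indecomposable summand $V$ of $\tau T$, using Lemma~\ref{l:Ccombine} to pass between $\C$ and $D^b(\field Q)$. The factorization through a module $V$ is controlled by Proposition~\ref{p:Cfactor}(a): $v$ factors through $V$ iff there is $\alpha\in\Hom(X,V)$ with $\Hom(\tau^{-1}V? ,\ldots)$—more precisely, applying Proposition~\ref{p:Cfactor}(a) with $A=\tau^{-1}X$, $B=\tau X$, $C=V$, one reduces to showing $\Hom(\tau X,\tau\alpha)=0$ for all $\alpha\in\Hom(\tau^{-1}X,V)$, which in turn follows from a downward-arrow count via Lemma~\ref{l:downwardarrows}: the number of down-arrows in a path for $w_{\tau^{-1}X}$ is $r-2$, and any nonzero map from $X$ (or $\tau^{-1}X$) into an indecomposable $V$ of quasilength $\leq r-1$ in $\T$ uses at least $\ql(X)-r+1$ down-arrows, so the composite uses $\geq\ql(X)-1\geq r-2$... here one has to be slightly careful with the exact bound, but since $\ql(X)\geq r-1$ the composite has $\geq \ql(X)$ down-arrows relative to the appropriate quasilength and hence vanishes. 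For $V$ a summand of $\tau U$ (i.e.\ preprojective), I would instead invoke Proposition~\ref{p:Hfactor}(c): since $X\in\wideR{k}$, the map $w_X$ factors through $\add(\tau T_{\T})$, and combined with Lemma~\ref{l:additive} this kills the relevant Hom-spaces to preprojective summands.

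The symmetric direction—that $v$ does not factor through $\tau^{-1}V[1]$—is handled by Proposition~\ref{p:Cfactor}(b), again splitting into the case $V\in\T$ (down-arrow count) and the case $V$ a summand of $\tau U$ or $\tau^2 U$ (apply Proposition~\ref{p:Hfactor}(c), noting $w_X$ factors through $\add(\tau T_{\T})$ since $X\in\wideR{k}$, so any composite $w_X\circ h$ with $h$ landing in a summand of $T\oplus\tau T$ vanishes). Assembling these, $v$ does not factor in $D^b(\field Q)$ through $\add(\tau T)$ nor through $\add(\tau^{-1}(\tau T)[1])$, so by Lemma~\ref{l:Ccombine} it is nonzero in $\C/\add(\tau T)$. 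Hence $\Hom_{\C/\add(\tau T)}(X,X)$ strictly contains $\field$, and $\clu{X}$ is not Schurian by Proposition~\ref{p:rigid}(a).

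**Main obstacle.** The delicate point is the precise bookkeeping with quasilengths in the down-arrow counting arguments: for $X\in\wideR{k}$ the quasilength ranges over $[r-1,2r-2]$ (so $X$ may genuinely be a summand of $\tau T$ when $T_0$ has quasilength $r-1$, which is why the remark preceding the lemma is needed to exclude that possibility), and one must verify that the bound "$\geq\ql(X)$ down-arrows" is actually met rather than merely "$\geq\ql(X)-1$," which is exactly the margin needed for Lemma~\ref{l:downwardarrows} to apply. Getting the image of $w_X$ explicitly as some $M_{i',l'}\notin\bigcup_k\W_{\tau T_k}$ (as was done for $\R_k$ in the previous proofs) and checking it lands outside the relevant wings is the concrete version of this obstacle; everything else is a routine transcription of the Lemma~\ref{l:belownotrigid} argument.
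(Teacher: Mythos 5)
Your overall strategy matches the paper's: let $v$ be a generator of $\HomF{\C}(X,X)=\Hom(X,\tau^{-1}X[1])\cong\field$, show via Proposition~\ref{p:Cfactor} and a downward-arrow count that $v$ does not factor in $D^b(\field Q)$ through $V$ or through $\tau^{-1}V[1]$ for any indecomposable summand $V$ of $\tau T$, and conclude via Lemma~\ref{l:Ccombine} and Proposition~\ref{p:rigid}(a). Two small slips: for $v\in\Hom(X,\tau^{-1}X[1])$ one applies Proposition~\ref{p:Cfactor} with $A=X$, $B=\tau^{-1}X$, not with $A=\tau^{-1}X$, $B=\tau X$; and the Schurian criterion is $\Hom_{\C/\add(\tau T)}(X,X)\cong\field$, so the appearance of $\tau^2 U$ and of $T\oplus\tau T$ in your sketch is spurious (those belong to the rigidity criterion).

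The genuine gap is precisely the one you flag but do not close. Take $V$ a regular summand of $\tau T$. If $\ql(V)\leq r-2$, a path for a nonzero $f\in\Hom(X,V)$ has at least $\ql(X)-\ql(V)\geq\ql(X)-r+2$ downward arrows, and composing with $u=y^{r-2}x^{r-2}$ gives at least $\ql(X)$, so Lemma~\ref{l:downwardarrows} applies. But when $\ql(V)=r-1$, which happens exactly when $s=1$ and $T_0$ has quasilength $r-1$ so that $V=\tau T_0$, the same estimate yields only $\ql(X)-1$ downward arrows, and the lemma does not apply. This is not a bookkeeping nuisance that one can wave away by citing $\ql(X)\geq r-1$: the naive count really does fall short by exactly one at this boundary case. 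The paper supplies the missing downward arrow by noting that no object of $\wideR{0}$ lies on the coray through $\tau T_0$, so a path for $f:X\to V$ cannot travel along that coray and must use at least one extra downward arrow, giving $\ql(X)-\ql(V)+1\geq\ql(X)-r+2$ again; the dual observation, that no object of $\tau^{-1}\wideR{0}$ lies on the ray through $T_0$, handles the $\tau^{-1}V[1]$ direction. Your proposed fix, namely computing $\im(w_X)$ explicitly and checking it avoids $\cup_k\W_{\tau T_k}$, is the technique of Lemma~\ref{l:schurian} (via Lemma~\ref{l:sesargument}) for \emph{producing} a factorization of an $F$-map through $U[1]$, i.e.\ for proving a module \emph{is} Schurian. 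It cannot substitute for the missing downward arrow in an argument that a factorization does \emph{not} exist.
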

\begin{proof}
Firstly note that, by the above, $X$ is not an indecomposable direct summand of $\tau T$.
Since $X\in \wideR{k}$, we have
$r-1\leq \ql(X)\leq r+l_k-2\leq 2r-3$, so by Lemma~\ref{l:dimhom}, we have
$\Hom(\tau^{-1}X,\tau X)\cong \field$. Let $u$ be a nonzero map in
$\Hom(\tau^{-1}X,\tau X)$, unique up to a nonzero scalar. We have
$$\HomF{\C}(X,X)=\Hom(X,\tau^{-1}X[1])\cong \Dual\Hom(\tau^{-1}X,\tau X)\cong \field.$$
Let $v\in \Hom(X,\tau^{-1}X[1])$ be a nonzero map, unique up to a nonzero scalar.

We will first show that $v$ cannot factor through $V$ for any indecomposable summand $V$ of $\tau T$.
If $\Hom(X,V)=0$ then we are done,
so we may assume that $\Hom(X,V)\not=0$. In particular, we may assume that $V$ lies in $\T$.
By Lemma~\ref{l:homlow}, $\Hom(X,V)\cong \field$. Let $f\in \Hom(X,V)$ be a nonzero map,
unique up to a nonzero scalar.

If $\ql(V)\leq r-2$ then the number of downward arrows in a 
path for $f$ (and hence for $\tau f$) is at least $\ql(X)-\ql(V)\geq 
\ql(X)-r+2$.
If $\ql(V)=r-1$ then $s=k=1$ and $V=\tau T_1$.
Then, since no object in $\wideR{1}$ is
in the coray through $\tau T_1$, the number of downward arrows
in a path for $f$ (and hence for $\tau f$) is at least
$\ql(X)-\ql(V)+1\geq \ql(X)-r+2$.
The number of downward arrows in a path for $u$ is $r-2$.
Hence in either case the number of downward arrows
in a path for $\tau f\circ u$ is at least $\ql(X)$, so
$\tau f\circ u=0$ by Lemma~\ref{l:downwardarrows}.

Since $\{u\}$ is a basis for $\Hom(X,\tau X)$, it follows that
$\Hom(X,\tau f)=0$. Therefore, by Proposition~\ref{p:Cfactor}(a),
$v$ cannot factor through $V$.

We next show that $v$ cannot factor through $\tau^{-1}V[1]$ for 
any indecomposable summand $V$ of
$\tau T$. If $\Hom(\tau^{-1}V,\tau^{-1}X)=0$ then
$\Hom(\tau^{-1}V[1],\tau^{-1}X[1])=0$ and we are done,
so we may assume that $\Hom(\tau^{-1}V,\tau^{-1}X)\not=0$.

Suppose first that $V\in \T$.
By Lemma~\ref{l:homlow}, $\Hom(\tau^{-1}V,\tau^{-1}X)\cong \field$.
Let $g$ be a non-zero map in $\Hom(\tau^{-1}V,\tau^{-1}X)$, unique
up to a nonzero scalar.

If $\ql(V)\leq r-2$, then the number of downward arrows in a path
for $g$ is at least $\ql(X)-\ql(V)\geq \ql(X)-r+2$.
Since the number of downward arrows in a path for $u$ is $r-2$,
the number of downward arrows in a path for $ug$ is at least
$\ql(X)\geq r-1>\ql(\tau^{-1}V)$, so $ug=0$ by Lemma~\ref{l:downwardarrows}.

If $\ql(V)=r-1$ then $s=k=1$ and $V=\tau T_1$.
Since no element of $\tau^{-1}\wideR{1}$ lies in the ray through
$\tau^{-1} V\cong T_1$, a path for $g$ has at least one
downward arrow.
It follows that a path for $ug$ has at least $r-1=\ql(\tau^{-1}V)$ 
downward arrows, so $ug=0$ in this case also. 
Since $\{u\}$ is a basis for $\Hom(X,\tau X)$, 
it follows that, in either case, $\Hom(g,\tau X)=0$.

Secondly, suppose that $V$ is an indecomposable direct summand of
$\tau U$, and let $h\in \Hom(\tau^{-1}V,\tau^{-1}X)$.
By Proposition~\ref{p:Hfactor}(c), $u$ factors through $\tau T_k$, since $X\in \wideR{k}$.
Hence, $uh=0$ as $\tau^{-1}V$ is a direct summand of $T$.
Since $\{u\}$ is a basis for $\Hom(X,\tau X)$, it follows that $\Hom(h,\tau X)=0$.

Applying Proposition~\ref{p:Cfactor}(b) to the triple $A=B=X$, $C=\tau^{-1}V$, we obtain
that $v$ does not factor through $\tau^{-1}V[1]$.

We have shown that $v$ does not factor in $D^b(\field Q)$
through $V$ or $\tau^{-1}V[1]$ for any indecomposable summand 
$V$ of $\tau T$. Since $\Hom(X,\tau^{-1}X[1])\cong \field$, it follows 
that $v$ does not factor in $D^b(\field Q)$ through $\add(\tau T)$
or $\add(T[1])$.
By Lemma~\ref{l:Ccombine}, the morphism $v$, regarded as a morphism in $\HomF{\C}(X,X)$, does not factor through
$\add(\tau T)$.
Hence $\Hom_{\C/\add(\tau T)}(X,X)\not \cong \field$.
Therefore $\XX$ is not Schurian by Proposition~\ref{p:rigid}.
\end{proof}

Recall (equation~\ref{e:hkmax}) that if $T_{\T}$ contains an indecomposable direct
summand of quasilength $r-1$ then
$$\HH_0=\{M_{i_0,l}\,:\,r\leq l\leq 2r-1\}\cup \{M_{i_0+p,2r-1-p}\,:,0\leq p \leq r-1\}.$$
and $\Top_k=M_{i_0,2r-1}$. The following lemma shows, in particular, that
$\clu{\Top_k}$ is Schurian.

\begin{lemma} \label{l:maximalschurian}
Suppose that $T_{\T}$ contains an indecomposable direct summand $T_0$ of
quasilength $r-1$. Let $X\in \HH_0$. Then $\XX$ is a strongly Schurian,
and hence Schurian, $\Lambda$-module.
\end{lemma}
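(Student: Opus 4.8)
The plan is to prove, more precisely, that the dimension vector of $\XX$ has all of its entries at most $1$; this is exactly the assertion that $\XX$ is strongly Schurian, whence Schurian. Index the simple $\Lambda$-modules $S_i$ by the indecomposable summands $T_i$ of $T$, so that $P_i=\Hom_\C(T,T_i)$ is the projective cover of $S_i$. By Theorem~\ref{t:equivalence},
\[
[\XX:S_i]=\dim_\field e_i\XX=\dim_\field\Hom_\Lambda(P_i,\XX)=\dim_\field\Hom_{\C/\add(\tau T)}(T_i,X),
\]
and the right-hand space is a quotient of $\Hom_\C(T_i,X)$; so it suffices to show $\dim\Hom_\C(T_i,X)\le 1$ for every indecomposable summand $T_i$ of $T$. (Here $X\notin\add(\tau T)$: since $T_\T$ has a summand of quasilength $r-1$ we have $s=1$ and $l_0=r-1$, so every summand of $\tau T$ lying in $\T$ is in $\W_{\tau T_0}$, of quasilength at most $r-1$, while $\ql(X)\ge r$; this is the observation recorded before Lemma~\ref{l:belownotschurian}.) Throughout I would use $\Hom_\C(T_i,X)=\Hom(T_i,X)\oplus\HomF{\C}(T_i,X)$ together with $\HomF{\C}(T_i,X)\cong\Dual\Hom(X,\tau^2 T_i)$ from~\eqref{e:tau2}, and would split into three cases according to where $T_i$ sits.

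If $T_i$ lies in a tube different from $\T$, then $\Hom(T_i,X)=0$ and $\Hom(X,\tau^2 T_i)=0$, since objects of distinct tubes have no maps between them, so $\Hom_\C(T_i,X)=0$. If $T_i$ is a summand of $U$, then $\tau^2 T_i$ is preprojective and $X$ regular, so $\HomF{\C}(T_i,X)=0$ and $\Hom_\C(T_i,X)=\Hom(T_i,X)$; as $T_i$ is preprojective, $\dim\Hom(T_i,-)$ is additive on almost split sequences in $\T$, so $\dim\Hom(T_i,X)=\sum_Q\dim\Hom(T_i,Q)$ over the quasisimples $Q$ of $\W_X$ counted with multiplicity. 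By Lemma~\ref{l:additive} this vanishes for every quasisimple in $\W_{\tau T_0}$; and since $l_0=r-1$, the \emph{only} quasisimple outside $\W_{\tau T_0}$ is $Q_{i_0-1}$, which (because $\ql(X)\le 2r-1$) occurs exactly once among the quasisimples of $\W_X$ — these being $Q_{i_0},\dots,Q_{i_0+l-1}$ if $X=M_{i_0,l}$, and $Q_{i_0+p},\dots,Q_{i_0+2r-2}$ if $X=M_{i_0+p,2r-1-p}$. Hence $\dim\Hom(T_i,X)=\dim\Hom(T_i,Q_{i_0-1})\le 1$, the last bound by Proposition~\ref{p:leq1} applied to the border module $Q_{i_0-1}\notin\W_{\tau T_0}$ and $V=T_i$.

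The remaining case is $T_i$ a summand of $T_\T$. Then $T_i\in\W_{T_0}$, so $\ql(T_i)\le r-1$, the quasisimples of $\W_{T_0}$ are the $Q_j$ with $j\in[i_0+1,i_0+r-1]$ (so $Q_{i_0}\notin\W_{T_0}$), and $\tau^2 T_i\in\W_{\tau^2 T_0}$, whose quasisimples omit $Q_{i_0-2}$. Now $X\in\HH_0$ is either $M_{i_0,l}$ with $r\le l\le 2r-1$, or $M_{i_0+p,2r-1-p}$ with $0\le p\le r-1$. In the first subcase the quasisocle $Q_{i_0}$ of $X$ is not in $\W_{T_0}$, so $\Hom(T_i,X)=0$ by Corollary~\ref{c:wingzero}(a) and $\Hom_\C(T_i,X)\cong\Dual\Hom(X,\tau^2 T_i)$, which is at most $1$-dimensional by Lemma~\ref{l:homlow}(b) (with $\tau^2 T_i$ in the role of the short module). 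In the second subcase the quasitop $Q_{i_0-2}$ of $X$ is not in $\W_{\tau^2 T_0}$, so $\Hom(X,\tau^2 T_i)=0$ by Corollary~\ref{c:wingzero}(b) and $\Hom_\C(T_i,X)=\Hom(T_i,X)$, at most $1$-dimensional by Lemma~\ref{l:homlow}(a). (For $X=\Top_0$, which lies in both families, both summands vanish.) Either way $\dim\Hom_\C(T_i,X)\le 1$.

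Having dealt with all summands of $T$, one concludes $[\XX:S_i]\le 1$ for every $i$, so $\XX$ is strongly Schurian, hence Schurian. The substantive point — and the only place the hypothesis $\ql(T_0)=r-1$ is genuinely used — is that then $\W_{\tau T_0}$ omits only the single quasisimple $Q_{i_0-1}$: this keeps the preprojective contributions down to dimension $1$, and, together with the fact that in $\HH_0$ either the quasisocle $Q_{i_0}$ or the quasitop $Q_{i_0-2}$ of $X$ lies outside the relevant wing, does the same for the regular contributions. I expect the only slightly delicate part to be exactly this bookkeeping — tracking which of $\Hom(T_i,X)$ and $\Hom(X,\tau^2 T_i)$ is forced to vanish, and the multiplicity count of $Q_{i_0-1}$ in $\W_X$ — but it is routine once the shape of $\HH_0$ inside $\W_{\Top_0}$ is drawn out.
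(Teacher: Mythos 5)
Your proof is correct and follows essentially the same route as the paper's: it reduces to bounding $\dim\Hom_{\C}(V,X)$ for each indecomposable summand $V$ of $T$, then splits into the same three cases (preprojective, in a different tube, in $\W_{T_0}$), using Lemma~\ref{l:additive}, Proposition~\ref{p:leq1}, Corollary~\ref{c:wingzero} and Lemma~\ref{l:homlow} exactly as the paper does, with the only addition being your more explicit verification that $Q_{i_0-1}$ occurs with multiplicity exactly one among the quasi-composition factors of $X$.
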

\begin{proof}
Firstly note that $\ql(X)\geq r$, so $X$ is not a summand of $\tau T$.
Let $V$ be an indecomposable direct summand of $T$.
Note that the entry in the dimension vector of $\XX$ corresponding
to $V$ is equal to $\dim\Hom_{\C}(V,X)$.

Suppose first that $V$ is an indecomposable summand of $U$.
Then by Lemma~\ref{l:additive}, we have that $\Hom(V,Y)=0$ for all objects
$Y$ in $\W_{\tau T_0}$. By Proposition~\ref{p:leq1}, $\dim\Hom(V,Y)\leq 1$
if $Y=M_{i_0-1,1}$ is the unique object on the border of $\T$ not in $\W_{\tau T_0}$.
Using the additivity of $\dim\Hom(V,-)$ on $\T$, we see that
$\dim\Hom(V,X)\leq 1$. Since $V$ is preprojective, $\dim\Hom(X,\tau^2 V)=0$,
so, since
$$\Hom_{\C}(V,X)\cong \Hom(V,X)\oplus \Dual\Hom(X,\tau^2 V),$$
we have $\dim\Hom_{\C}(V,X)\leq 1$.
If $V$ lies in a tube other than $\T$ then $\Hom_{\C}(V,X)=0$.
So we are left with the case where $V$ lies in $\T$.

If $X\cong M_{i_0,l}$ for some $l$ with $r\leq l\leq 2r-1$ then the quasisocle
of $X$ is $Q_{i_0}$, which does not lie in $\W_T$.
So, by Corollary~\ref{c:wingzero}, $\Hom(V,X)=0$.
Since $\ql(V)\leq r-1$, it follows from
Lemma~\ref{l:homlow} that $\dim\Hom(X,\tau^2 V)\leq 1$.
Hence $\dim\Hom_{\C}(V,X)\leq 1$.

If $X\cong M_{i_0+p,2r-1-p}$ for some $p$ with $0\leq p\leq r-1$ then
the quasitop of $X$ is $Q_{i_0+p+2r-1-p-1}=Q_{i_0-2}$,
which does not lie in $\W_{\tau^2 T}$. So, by Corollary~\ref{c:wingzero},
we have that $\Hom(X,\tau^2 V)=0$. Since $\ql(V)\leq r-1$, it follows from
Lemma~\ref{l:homlow} that $\dim\Hom(V,X)\leq 1$.
Hence $\dim\Hom_{\C}(V,X)\leq 1$.

We have shown that $\XX$ is strongly Schurian as required.
Since any strongly Schurian module is Schurian, we are done.
\end{proof}

\begin{corollary} \label{c:schurian}
Let $X\in \cup_{k\in \mathbb{Z}_s}\HH_k$. Then $\clu{X}$ is Schurian.
\end{corollary}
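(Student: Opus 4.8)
The plan is to deduce this corollary by combining the two Schurianness results already proved, namely Lemma~\ref{l:schurian}(a) and Lemma~\ref{l:maximalschurian}, splitting according to the quasilength of $X$.

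First I would fix $k\in\mathbb{Z}_s$ with $X\in\HH_k$ and record the elementary bounds $r\leq\ql(X)\leq r+l_k\leq 2r-1$. Here the lower bound and the middle upper bound are immediate from the description~\eqref{e:hk} of $\HH_k$, whose element of largest quasilength is $\Top_k=M_{i_k,r+l_k}$, and the final inequality uses $l_k\leq r-1$. In particular $\ql(X)\geq r$, so $X$ is not an indecomposable direct summand of $\tau T$: every indecomposable summand of $\tau T$ lying in $\T$ is contained in some $\W_{\tau T_{k'}}$ and so has quasilength at most $l_{k'}\leq r-1$.

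Next I would split into cases according to whether $\ql(X)\leq 2r-2$ or $\ql(X)=2r-1$. In the first case, $X$ satisfies hypothesis (a) of Lemma~\ref{l:schurian} (it is an indecomposable object of $\T$, not a summand of $\tau T$, lying in $\HH_k$ with $\ql(X)\leq 2r-2$), so that lemma gives directly that $\clu{X}$ is Schurian. In the second case, the bound $\ql(X)\leq r+l_k$ forces $l_k=r-1$; by the discussion preceding Lemma~\ref{l:schurian} (see~\eqref{e:hkmax}) this means $s=1$, $k=0$, and $T_{\T}$ contains an indecomposable direct summand $T_0$ of quasilength $r-1$, with $X\in\HH_0$. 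Lemma~\ref{l:maximalschurian} then shows that $\clu{X}$ is strongly Schurian, hence Schurian.

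I do not expect any genuine obstacle here: the only delicate point is the behaviour at the top object $\Top_k$ in the case $l_k=r-1$, where $\ql(\Top_k)=2r-1$ lies just beyond the range allowed by Lemma~\ref{l:schurian}(a), and this case has already been isolated and settled in Lemma~\ref{l:maximalschurian}. So the proof is simply an assembly of these two lemmas together with the quasilength bookkeeping $r\leq\ql(X)\leq r+l_k\leq 2r-1$ above.
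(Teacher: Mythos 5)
Your proof is correct and follows essentially the same route as the paper's: note $\ql(X)\geq r$ so $X$ is not a summand of $\tau T$, then dispatch the case $\ql(X)\leq 2r-2$ via Lemma~\ref{l:schurian}(a) and the remaining case $\ql(X)=2r-1$ (which forces $l_k=r-1$, $s=1$, $k=0$) via Lemma~\ref{l:maximalschurian}.
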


\begin{proof}
Firstly note that, since $\ql(X)\geq r$, $X$ is not a direct summand of $\tau T$.
Suppose $k\in \mathbb{Z}_s$ and $X\in \HH_k$.
If $\ql(X)\leq 2r-2$ then this follows from Lemma~\ref{l:schurian}.
The maximal quasilength of an object in $\HH_k$ is $\ql(\Top_k)=\ql(M_{i_k,r+l_k})=r+l_k$.
This is only greater than $2r-2$ when $l_k$ is maximal, i.e. equal to $r-1$. Then $s=1$
(i.e. there is only one indecomposable direct summand of $T_{\T}$ not contained in the wing of
another indecomposable direct summand of $T_{\T}$).
We must have $k=0$ and the result follows from Lemma~\ref{l:maximalschurian}.
\end{proof}

We have now determined whether $\clu{X}$ is rigid or Schurian for
all indecomposable modules $X$ in $\T$ which are not direct summands of $\tau T$.
We summarize this with the following theorem.
Note that, by Theorem~\ref{t:equivalence}, every indecomposable $\Lambda$-module
is of the form $\clu{X}$ for $X$ an indecomposable object in $\C$ which is not a direct summand
of $\tau T$.
Note also that part (a) of the following is a consequence of Lemma~\ref{l:lowquasilength}(a),
which was shown using~\cite{AIR14}.

\begin{theorem} \label{t:classification}
Let $Q$ be a quiver of tame representation type, and $\C$ the corresponding
cluster category. Let $T$ be an arbitrary cluster-tilting object in $\C$.
Let $X$ be an indecomposable object of $\C$ which is not a summand of $\tau T$
and let $\clu{X}$ the corresponding $\Lambda$-module.
\begin{itemize}
\item[(a)] The $\Lambda$-module $\clu{X}$ is $\tau$-rigid if and only if
$X$ is transjective or $X$ is regular and $\ql(X)\leq r-1$.
\item[(b)] The $\Lambda$-module $\clu{X}$ is rigid if and only if
either
\begin{itemize}
\item[(i)] $X$ is transjective, or
\item[(ii)] $X$ is regular and $\ql(X)\leq r-1$ or
\item[(iii)] $X$ is regular and $X\in \cup_{k\in\mathbb{Z}_s} \HH_k\setminus \{\Top_k\}$.
\end{itemize}
\item[(c)] The $\Lambda$-module $\clu{X}$ is Schurian if and only if
either
\begin{itemize}
\item[(i)] $X$ is transjective, or
\item[(ii)] $X$ is regular and $\ql(X)\leq r-2$, or
\item[(iii)] $X$ is regular, $\ql(X)\in \{r-1,r\}$ and $X\not\in \cup_{k\in \mathbb{Z}_s}\wideR{k}$, or
\item[(iv)] $X$ is regular, $\ql(X)\geq r+1$ and $X\in \cup_{k\in \mathbb{Z}_s} \HH_k$.
\end{itemize}
\end{itemize}
\end{theorem}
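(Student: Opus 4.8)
The plan is to reduce to the situation of Assumption~\ref{a:preprojective} and then assemble the lemmas of this section, organising the argument by the quasilength of the regular objects that occur.

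I would begin with the reduction. The cluster-tilted algebra $\End_{\C}(\Phi T)$ is isomorphic to $\Lambda=\End_{\C}(T)$ for any triangle auto-equivalence $\Phi$ of $\C$, and $\Phi$ preserves transjectivity, regularity, the tubes of $\C$ and their ranks, quasilengths, and wings; hence $\Phi$ sends $\tau T$ to $\tau(\Phi T)$ and carries the data $\W_{\tau T_k}$, $\HH_k$, $\R_k$, $\wideR{k}$, $\Top_k$ attached to a tube to the corresponding data for $\Phi T$, so it is enough to prove the theorem for $\Phi T$. Taking $\Phi=\tau^{-n}$ for $n$ large, where $\tau$ is the AR-translate of $\C$, each regular summand of $T$ stays regular, moving within its tube, while each transjective summand is carried to a preprojective $\field Q$-module (using that, by Remark~\ref{r:transjective}, every transjective object of $\C$ is a preprojective module, a preinjective module, or a shift of a projective, and that $[1]\cong\tau$ in $\C$). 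Thus $\tau^{-n}T=U\oplus T'$ with $U$ preprojective and $T'$ regular; here $U\neq 0$, since a cluster-tilting object of $\C$ has $|Q_0|$ indecomposable summands whereas the tubes can contribute at most $\sum_{\T}(\operatorname{rank}\T-1)<|Q_0|$ of them, and $U\oplus T'$ is a tilting module by~\cite{BMRRT06}. Hence $\tau^{-n}T$ satisfies Assumption~\ref{a:preprojective}, and we may assume $T$ does.

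By Theorem~\ref{t:equivalence}, every indecomposable $\Lambda$-module is $\clu{X}$ for a unique indecomposable $X$ in $\C$ that is not a summand of $\tau T$, and such an $X$ is either transjective or regular. If $X$ is transjective then, by Remark~\ref{r:transjective}, $\clu{X}$ is Schurian and $\tau$-rigid, covering clause (i) of each of (a), (b), (c). If $X$ is regular then part (a) is immediate from Lemma~\ref{l:lowquasilength}(a) and Corollary~\ref{c:taurigid}. It remains to prove (b) and (c) for $X$ regular, lying in a tube $\T$ of rank $r$; I would do this by cases on $\ell=\ql(X)$, using throughout that the indecomposable summands of $\tau T$ lying in $\T$ have quasilength at most $r-1$ (so $X$ is automatically not such a summand once $\ell\geq r$), that objects of $\HH_k$ or $\R_k$ have quasilength $\geq r$ and those of $\wideR{k}$ quasilength $\geq r-1$, that $\Top_k$ has quasilength $r+l_k\geq r+1$, and that by the separation of the wings $\W_{\tau T_k}$ recorded after~\eqref{e:wings} a regular object cannot lie in $\HH_k$ (or in $\wideR{k}$) for two distinct $k$. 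For $\ell\leq r-2$, $\clu{X}$ is $\tau$-rigid, hence rigid, and Schurian by Lemma~\ref{l:lowquasilength}: clauses (b)(ii), (c)(ii). For $\ell\in\{r-1,r\}$, $\clu{X}$ is rigid exactly when $\ell=r-1$ or $X\in\bigcup_k\HH_k$ --- by Lemma~\ref{l:lowquasilength}(a) if $\ell=r-1$, and if $\ell=r$ by Lemma~\ref{l:hatrigid} when $X\in\HH_k$, Lemma~\ref{l:belownotrigid} when $X\in\R_k$, Lemma~\ref{l:not}(b) otherwise --- giving (b)(ii)--(iii); and $\clu{X}$ is Schurian exactly when $X\notin\bigcup_k\wideR{k}$ --- by Lemma~\ref{l:belownotschurian} in the negative direction, and in the positive direction by Lemma~\ref{l:schurian}(a) if $X\in\bigcup_k\HH_k$ (note $\ell\leq r\leq 2r-2$) and Lemma~\ref{l:schurian}(b) if $X\notin\bigcup_k(\HH_k\cup\wideR{k})$ --- giving (c)(iii). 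For $\ell\geq r+1$, $\clu{X}$ is not $\tau$-rigid; it is rigid exactly when $X\in\bigcup_k(\HH_k\setminus\{\Top_k\})$ --- Lemma~\ref{l:hatrigid} for ``if'', and for ``only if'' Lemma~\ref{l:belownotrigid} when $X\in\bigcup_k\R_k$ and Lemma~\ref{l:not}(b) otherwise (in particular when $X=\Top_k$) --- giving (b)(iii); and it is Schurian exactly when $X\in\bigcup_k\HH_k$ --- Corollary~\ref{c:schurian} for ``if'', and for ``only if'' Lemma~\ref{l:belownotschurian} when $X\in\bigcup_k\R_k$ (as $\R_k\subseteq\wideR{k}$) and Lemma~\ref{l:not}(a) otherwise --- giving (c)(iv).

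With the lemmas available, the remaining work is essentially bookkeeping: one must check that the regions $\bigcup_k(\HH_k\setminus\{\Top_k\})$ and $\bigcup_k\R_k$ (for rigidity), and $\bigcup_k\HH_k$ and $\bigcup_k\wideR{k}$ (for the Schurian property), fit together so that among regular objects of quasilength $\geq r$ (respectively $\geq r-1$) the ``is/is not rigid'' (respectively Schurian) lemmas cover every object exactly once. I expect the one genuinely delicate point to be the maximal configuration $l_0=r-1$ (so $s=1$ and $\Top_0=M_{i_0,2r-1}$): there one needs the reduction in the proof of Corollary~\ref{c:schurian} to Lemma~\ref{l:maximalschurian} for $\clu{\Top_0}$, and one must note that the objects of $\wideR{0}$ of quasilength $r-1$ are still not summands of $\tau T$ (all of those in $\T$ lying in $\W_{\tau T_0}$), so that Lemma~\ref{l:belownotschurian} applies to them. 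The wing-separation property makes these checks, and the handling of the boundary quasilengths $r-1$ and $r$, routine.
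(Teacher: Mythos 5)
Your proof is correct and follows essentially the same route as the paper's: reduce to Assumption~\ref{a:preprojective} using that $\tau$ is an autoequivalence of $\C$, dispose of the transjective case via Remark~\ref{r:transjective}, and then combine Lemmas~\ref{l:not}, \ref{l:lowquasilength}, \ref{l:hatrigid}, \ref{l:schurian}, \ref{l:belownotrigid}, \ref{l:belownotschurian} and Corollary~\ref{c:schurian}. The only stylistic differences are that you organise the regular case by quasilength (${}\leq r-2$, $\{r-1,r\}$, ${}\geq r+1$) whereas the paper organises by property (rigidity first, then Schurian), and your reduction applies $\tau^{-n}$ to both $T$ and $X$ simultaneously while the paper fixes $X$ in its tube of rank $r$ and replaces $T$ by $\tau^{mr}T$ (which fixes that tube pointwise, so $\clu{X}$ and the regions $\HH_k,\R_k,\wideR{k}$ are literally unchanged rather than merely transported); both are fine, and you correctly flag the one delicate point, the case $l_0=r-1$, which the paper handles just before Lemma~\ref{l:belownotschurian} and in Lemma~\ref{l:maximalschurian}.
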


\begin{proof}
If $X$ is transjective, the result follows from Remark~\ref{r:transjective}, so we may assume
that $X$ lies in a tube $\T$. Let $r$ be the rank of $\T$.
Replacing $T$ with $\tau^{mr}T$ for some $m\in\mathbb{Z}$ if necessary, we may assume that
$T$ is of the form $U\oplus T'$ where $U$ is a preprojective module and $T'$ is regular, i.e.
that Assumption~\ref{a:preprojective} holds (note that $\tau$ is an autoequivalence of $\C$).
For part (b), note that if $\ql(X)\leq r-1$, then $\XX$ is $\tau$-rigid by (a), 
hence rigid. If $\ql(X)\geq r$ and $X\not\in \cup_{k\in\mathbb{Z}_s} \HH_k\cup \R_k\setminus \{\Top_k\}$
then $\clu{X}$ is not rigid by Lemma~\ref{l:not}. If $\ql(X)\geq r$ and $X\in \R_k$ then $\clu{X}$
is not rigid by Lemma~\ref{l:belownotrigid}. And if $\ql(X)\geq r$ and $X\in \cup_{k\in\mathbb{Z}_s} \HH_k\setminus \{\Top_k\}$ then $\clu{X}$ is rigid by Lemma~\ref{l:hatrigid}.

For part (c), note that if $\ql(X)\leq r-2$ then $\clu{X}$ is Schurian by Lemma~\ref{l:lowquasilength}.
If $\ql(X)\geq r+1$ and $X\not\in \cup_{k\in \mathbb{Z}_s} \HH_k\cup R_k$ then $\clu{X}$ is not
Schurian by Lemma~\ref{l:not}. If $\ql(X)\geq r+1$ and $X\in R_k$ then $X\in \wideR{k}$
so $\clu{X}$ is not Schurian by Lemma~\ref{l:belownotschurian}. If $\ql(X)\geq r+1$ and
$X\in \HH_k$ then $\clu{X}$ is Schurian by Corollary~\ref{c:schurian}.

If $\ql(X)\in \{r-1,r\}$ and $X\not\in \cup_{k\in \mathbb{Z}_s} \HH_k\cup \wideR{k}$ then
$\clu{X}$ is Schurian by Lemma~\ref{l:schurian}. If $\ql(X)\in \{r-1,r\}$ and $X\in \HH_k$ then
$\clu{X}$ is Schurian by Corollary~\ref{c:schurian}. If $\ql(X)\in \{r-1,r\}$ and $X\in \wideR{k}$
then $\clu{X}$ is not Schurian by Lemma~\ref{l:belownotschurian}.
\end{proof}

\begin{corollary} \label{c:rigidschurian}
Let $Q$ be a quiver of finite or tame representation type and $\Lambda$
a cluster-tilted algebra arising from the cluster category of $Q$. Then
every indecomposable $\Lambda$-module which is rigid, but not $\tau$-rigid,
is Schurian.
\end{corollary}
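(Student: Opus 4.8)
The plan is to derive the corollary quickly from the classification in Theorem~\ref{t:classification} together with Corollary~\ref{c:schurian}, splitting into the finite and the tame representation type case.

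If $Q$ has finite representation type then $\field Q$-mod contains no tubes, so every indecomposable object of $\C=\C_Q$ is transjective: each such object is either an indecomposable $\field Q$-module---which, the module category consisting of a single finite component containing all the projectives, is both preprojective and preinjective---or the shift of an indecomposable projective. By the argument of Remark~\ref{r:transjective} (which applies verbatim here), $\clu{X}$ is therefore $\tau$-rigid for every indecomposable object $X$ of $\C$ not a summand of $\tau T$, and by Theorem~\ref{t:equivalence} these exhaust the indecomposable $\Lambda$-modules. Hence no indecomposable $\Lambda$-module is rigid without being $\tau$-rigid, and the statement holds vacuously in this case.

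Now suppose $Q$ has tame representation type, and let $\clu{X}$ be an indecomposable $\Lambda$-module, with $X$ an indecomposable object of $\C$ not a summand of $\tau T$ (Theorem~\ref{t:equivalence}). If $X$ is transjective then $\clu{X}$ is $\tau$-rigid by Theorem~\ref{t:classification}(a), so we may assume $X$ lies in a tube $\T$; after replacing $T$ by $\tau^{mr}T$ for a suitable $m$ (as at the start of the proof of Theorem~\ref{t:classification}, using that $\tau$ is an autoequivalence of $\C$) we may assume Assumption~\ref{a:preprojective} holds. By Theorem~\ref{t:classification}(a), $\clu{X}$ is $\tau$-rigid exactly when $X$ is regular of quasilength at most $r-1$, and these are precisely cases (i)--(ii) of Theorem~\ref{t:classification}(b). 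Therefore $\clu{X}$ is rigid but not $\tau$-rigid if and only if case (b)(iii) occurs, i.e.\ $X$ is regular and $X\in\bigcup_{k\in\mathbb{Z}_s}\HH_k\setminus\{\Top_k\}$; in particular $X\in\bigcup_{k\in\mathbb{Z}_s}\HH_k$, so $\clu{X}$ is Schurian by Corollary~\ref{c:schurian}. This completes the tame case.

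I do not expect a genuine obstacle, since all the substance is already contained in Theorem~\ref{t:classification} and Corollary~\ref{c:schurian}. The one point that needs care is the logical step identifying rigidity-without-$\tau$-rigidity with case (b)(iii): one must check that (b)(iii) cannot overlap with (b)(i) or (b)(ii). This is immediate, because every $X\in\HH_k$ has $\ql(X)\geq r$ by~\eqref{e:hk}, so an $X$ in case (b)(iii) is regular of quasilength at least $r$, hence neither transjective nor of quasilength at most $r-1$. The only thing easy to overlook is the finite type case, which has no tubes and so is not covered by Theorem~\ref{t:classification}; it must be dealt with separately via the vacuity argument above.
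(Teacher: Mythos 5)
Your proposal is correct and follows essentially the same route as the paper: dispose of the finite case by noting it is vacuous (every indecomposable $\Lambda$-module is $\tau$-rigid), then in the tame case use Theorem~\ref{t:classification}(b) to locate a rigid-but-not-$\tau$-rigid $\clu{X}$ in $\cup_k\HH_k\setminus\{\Top_k\}$ and conclude it is Schurian. The only cosmetic difference is that you cite Corollary~\ref{c:schurian} directly (hence the $\tau^{mr}T$ reduction you correctly include), whereas the paper splits into $\ql(X)=r$ and $\ql(X)\geq r+1$ and appeals to Theorem~\ref{t:classification}(c)(iii) and (c)(iv).
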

\begin{proof}
If $Q$ is of finite representation type, then it is known that every indecomposable
object in $D^b({\field}Q$) is rigid. Hence, by Theorem~\ref{c:taurigid}, every indecomposable
$\Lambda$-module is $\tau$-rigid and the statement is vacuous in this case.

Suppose that $Q$ is of tame representation type. Let $\Lambda=\End_{\C}(T)^{\opp}$,
where $T$ is a cluster-tilting object in the cluster category $\C$ of $Q$.
Let $X$ be an indecomposable object in $\C$ which is not a summand of $\tau T$.
If $\XX$ is rigid, but not $\tau$-rigid, then by Theorem~\ref{t:classification},
we have that $X$ is regular and
$X\in \cup_{k\in\mathbb{Z}_s} \HH_k\setminus \{\Top_k\}$. If $\ql(X)=r$,
then $\XX$ is Schurian by Theorem~\ref{t:classification}(c)(iii), since
$\cup_{k\in \mathbb{Z}_s}\HH_k\cap \cup_{k\in \mathbb{Z}_s}\wideR{k}$ is empty.
If $\ql(X)\geq r+1$, then $\XX$ is Schurian by Theorem~\ref{t:classification}(c)(iv).
\end{proof}

In Figure~\ref{f:tameAR}, we show part of the AR-quiver of $\Lambda$-mod for Example~\ref{ex:running}.
The part shown consists of modules coming from the tube in $\field Q$-mod shown in Figure~\ref{f:tamehereditaryAR}.
We give a $Q_{\Lambda}$-coloured quiver for each module,
where $Q_{\Lambda}$ is the quiver of $\Lambda$.
Note that we need to distinguish between the two arrows between vertices $1$ and $4$.
We do this by decorating the arrow which is involved in the relations with an asterisk.
Recall that this then has the following interpretation (see the text after Definition~\ref{d:colouredquiver}). Suppose that $\varphi$ is the linear map corresponding to the decorated (respectively, undecorated) arrow in $Q_{\Lambda}$.
Then the image of a basis element $b\in B_1$ (the basis of the vector space at the vertex $1$)
under $\varphi$ is the sum of the basis elements $c\in B_4$ which are
at the end of an arrow starting at $b$ labelled with (respectively, without) an asterisk.
The diagram on the right shows which of these modules are $\tau$-rigid, rigid and Schurian.

In Figure~\ref{f:schurianrigid}, we illustrate the $\tau$-rigid, rigid and Schurian $\Lambda$-modules given by
Theorem~\ref{t:classification} for the example in Figure~\ref{f:wings} (choosing specific indecomposable
summands of $T$ in the wings of the $T_i$).

\begin{figure}
$$
\begin{tikzpicture}[xscale=0.27,yscale=0.22,baseline=(bb.base),quivarrow/.style={black, -latex},translate/.style={black, dotted}]

\path (0,0) node (bb) {}; 

\draw (0,0) node (P2) { \small
 \begin{tikzpicture} [scale=0.7,yscale=1,ext/.style={black,shorten <=-1pt, shorten >=-1pt}]
  \draw (0,0) node (P23A) {$\scriptstyle 3$};
  \draw (0,0.5) node (P24) {$\scriptstyle 4$};
  \draw (0,1) node (P21) {$\scriptstyle 1$};
  \draw (0,1.5) node (P23B) {$\scriptstyle 3$};
  \draw (0,2) node (P22) {$\scriptstyle 2$};
\draw[ext] (P23A) -- (P24);
\draw[ext] (P24) -- (P21);
\draw[ext] (P21) -- (P23B);
\draw[ext] (P23B) -- (P22);
 \end{tikzpicture}
};

\draw (10,0) node (I2) { \small
 \begin{tikzpicture} [scale=1]
  \draw (0,0) node {$\scriptstyle 2$};
 \end{tikzpicture}
};

\draw (20,0) node { \small
 \begin{tikzpicture} [scale=1]
  \draw (0,0) node {$\scriptstyle \bullet$};
 \end{tikzpicture}
};

\draw (30,0) node (PP2) { \small
 \begin{tikzpicture} [scale=0.7,yscale=1,ext/.style={black,shorten <=-1pt, shorten >=-1pt}]
  \draw (0,0) node (PP23A) {$\scriptstyle 3$};
  \draw (0,0.5) node (PP24) {$\scriptstyle 4$};
  \draw (0,1) node (PP21) {$\scriptstyle 1$};
  \draw (0,1.5) node (PP23B) {$\scriptstyle 3$};
  \draw (0,2) node (PP22) {$\scriptstyle 2$};
\draw[ext] (PP23A) -- (PP24);
\draw[ext] (PP24) -- (PP21);
\draw[ext] (PP21) -- (PP23B);
\draw[ext] (PP23B) -- (PP22);
 \end{tikzpicture}
};

\draw (5,7) node (I3) { \small
 \begin{tikzpicture} [scale=0.7,yscale=1,ext/.style={black,shorten <=-1pt, shorten >=-1pt}]
  \draw (0,0) node (I33A) {$\scriptstyle 3$};
  \draw (0.25,0.5) node (I34) {$\scriptstyle 4$};
  \draw (0.25,1) node (I31) {$\scriptstyle 1$};
  \draw (0.25,1.5) node (I33B) {$\scriptstyle 3$};
  \draw (0.25,2) node (I32A) {$\scriptstyle 2$};
  \draw (-0.25,0.5) node (I32B) {$\scriptstyle 2$};
\draw[ext] (I33A) -- (I34);
\draw[ext] (I34) -- (I31);
\draw[ext] (I31) -- (I33B);
\draw[ext] (I33B) -- (I32A);
\draw[ext] (I32B) -- (I33A);
 \end{tikzpicture}
};

\draw (15,7) node { \small
 \begin{tikzpicture} [scale=1]
  \draw (0,0) node {$\scriptstyle \bullet$};
 \end{tikzpicture}
};

\draw (25,7) node (X1) { \small
 \begin{tikzpicture} [scale=0.7,yscale=1,ext/.style={black,shorten <=-1pt, shorten >=-1pt}]
  \draw (0,0) node (X13A) {$\scriptstyle 3$};
  \draw (0,0.5) node (X14) {$\scriptstyle 4$};
  \draw (0,1) node (X11) {$\scriptstyle 1$};
  \draw (0,1.5) node (X13B) {$\scriptstyle 3$};
\draw[ext] (X13A) -- (X14);
\draw[ext] (X14) -- (X11);
\draw[ext] (X11) -- (X13B);
 \end{tikzpicture}
};

\draw (0,14) node (X2) { \small
 \begin{tikzpicture} [scale=0.7,yscale=1,ext/.style={black,shorten <=-1pt, shorten >=-1pt}]
  \draw (0,0) node (X23A) {$\scriptstyle 3$};
  \draw (0.25,0.5) node (X24) {$\scriptstyle 4$};
  \draw (0.25,1) node (X21) {$\scriptstyle 1$};
  \draw (0.25,1.5) node (X23B) {$\scriptstyle 3$};
  \draw (-0.25,0.5) node (X22) {$\scriptstyle 2$};
\draw[ext] (X23A) -- (X24);
\draw[ext] (X24) -- (X21);
\draw[ext] (X21) -- (X23B);
 \draw[ext] (X22) -- (X23A);
 \end{tikzpicture}
};

\draw (10,14) node (X3) { \small
 \begin{tikzpicture} [scale=0.7,yscale=1,ext/.style={black,shorten <=-1pt, shorten >=-1pt}]
  \draw (0,0) node (X34) {$\scriptstyle 4$};
  \draw (0,0.5) node (X31) {$\scriptstyle 1$};
  \draw (0,1) node (X33) {$\scriptstyle 3$};
  \draw (0,1.5) node (X32) {$\scriptstyle 2$};
\draw[ext] (X34) -- (X31);
\draw[ext] (X31) -- (X33);
\draw[ext] (X33) -- (X32);
 \end{tikzpicture}
};

\draw (20,14) node (X4) { \small
 \begin{tikzpicture} [scale=0.7,yscale=1,ext/.style={black,shorten <=-1pt, shorten >=-1pt}]
  \draw (0,0) node (X43) {$\scriptstyle 3$};
  \draw (0,0.5) node (X44) {$\scriptstyle 4$};
  \draw (0,1) node (X41) {$\scriptstyle 1$};
\draw[ext] (X43) -- (X44);
\draw[ext] (X44) -- (X41);
 \end{tikzpicture}
};

\draw (30,14) node (XX2) { \small
 \begin{tikzpicture} [scale=0.7,yscale=1,ext/.style={black,shorten <=-1pt, shorten >=-1pt}]
  \draw (0,0) node (XX23A) {$\scriptstyle 3$};
  \draw (0.25,0.5) node (XX24) {$\scriptstyle 4$};
  \draw (0.25,1) node (XX21) {$\scriptstyle 1$};
  \draw (0.25,1.5) node (XX23B) {$\scriptstyle 3$};
  \draw (-0.25,0.5) node (XX22) {$\scriptstyle 2$};
\draw[ext] (XX23A) -- (XX24);
\draw[ext] (XX24) -- (XX21);
\draw[ext] (XX21) -- (XX23B);
 \draw[ext] (XX22) -- (XX23A);
 \end{tikzpicture}
};

\draw (5,21) node (X5) { \small
 \begin{tikzpicture} [scale=0.7,yscale=1,ext/.style={black,shorten <=-1pt, shorten >=-1pt}]
  \draw (0,0) node (X54) {$\scriptstyle 4$};
  \draw (0,0.5) node (X51) {$\scriptstyle 1$};
  \draw (0,1) node (X53) {$\scriptstyle 3$};
\draw[ext] (X54) -- (X51);
\draw[ext] (X51) -- (X53);
 \end{tikzpicture}
};

\draw (15,21) node (X6) { \small
 \begin{tikzpicture} [scale=0.7,yscale=1,ext/.style={black,shorten <=-1pt, shorten >=-1pt}]
  \draw (0,0) node (X64A) {$\scriptstyle 4$};
  \draw (0.06,0.26) node {$\scriptstyle \ast$}; 
  \draw (-0.25,0.5) node (X61A) {$\scriptstyle 1$};
  \draw (-0.25,1) node (X63A) {$\scriptstyle 3$};
  \draw (-0.25,1.5) node (X62) {$\scriptstyle 2$};
  \draw (0.25,0.5) node (X61B) {$\scriptstyle 1$};
  \draw (0.5,0) node (X64B) {$\scriptstyle 4$};
  \draw (0.5,-0.5) node (X63B) {$\scriptstyle 3$};
\draw[ext] (X62) -- (X63A);
\draw[ext] (X63A) -- (X61A);
\draw[ext] (X61A) -- (X64A);
\draw[ext] (X64A) -- (X61B);
\draw[ext] (X61B) -- (X64B);
\draw[ext] (X64B) -- (X63B);
 \end{tikzpicture}
};

\draw (25,21) node (X7) { \small
 \begin{tikzpicture} [scale=0.7,yscale=1,ext/.style={black,shorten <=-1pt, shorten >=-1pt}]
  \draw (0,0) node (X73) {$\scriptstyle 3$};
  \draw (0.25,0.5) node (X74) {$\scriptstyle 4$};
  \draw (0.25,1) node (X71) {$\scriptstyle 1$};
  \draw (-0.25,0.5) node (X72) {$\scriptstyle 2$};
\draw[ext] (X73) -- (X74);
\draw[ext] (X74) -- (X71);
\draw[ext] (X73) -- (X72);
 \end{tikzpicture}
};

\draw (0,28) node (X8) { \small
 \begin{tikzpicture} [scale=0.7,yscale=1,ext/.style={black,shorten <=-1pt, shorten >=-1pt}]
  \draw (0,0) node (X84) {$\scriptstyle 4$};
  \draw (0,0.5) node (X81) {$\scriptstyle 1$};
\draw[ext] (X84) -- (X81);
 \end{tikzpicture}
};

\draw (10,28) node (X9) { \small
 \begin{tikzpicture} [scale=0.7,yscale=1,ext/.style={black,shorten <=-1pt, shorten >=-1pt}]
  \draw (0,0) node (X94A) {$\scriptstyle 4$};
  \draw (0.06,0.26) node {$\scriptstyle \ast$}; 
  \draw (-0.25,0.5) node (X91A) {$\scriptstyle 1$};
  \draw (-0.25,1) node (X93A) {$\scriptstyle 3$};
  \draw (0.25,0.5) node (X91B) {$\scriptstyle 1$};
  \draw (0.5,0) node (X94B) {$\scriptstyle 4$};
  \draw (0.5,-0.5) node (X93B) {$\scriptstyle 3$};
\draw[ext] (X93A) -- (X91A);
\draw[ext] (X91A) -- (X94A);
\draw[ext] (X94A) -- (X91B);
\draw[ext] (X91B) -- (X94B);
\draw[ext] (X94B) -- (X93B);
 \end{tikzpicture}
};

\draw (20,28) node (X10) { \small
 \begin{tikzpicture} [scale=0.7,yscale=1,ext/.style={black,shorten <=-1pt, shorten >=-1pt}]
  \draw (0,0) node (X104A) {$\scriptstyle 4$};
  \draw (0.06,0.26) node {$\scriptstyle \ast$}; 
  \draw (-0.25,0.5) node (X101A) {$\scriptstyle 1$};
  \draw (-0.25,1) node (X103A) {$\scriptstyle 3$};
  \draw (-0.25,1.5) node (X102A) {$\scriptstyle 2$};
  \draw (0.25,0.5) node (X101B) {$\scriptstyle 1$};
  \draw (0.5,0) node (X104B) {$\scriptstyle 4$};
  \draw (0.75,-0.5) node (X103B) {$\scriptstyle 3$};
  \draw (1,0) node (X102B) {$\scriptstyle 2$};
\draw[ext] (X102A) -- (X103A);
\draw[ext] (X103A) -- (X101A);
\draw[ext] (X101A) -- (X104A);
\draw[ext] (X104A) -- (X101B);
\draw[ext] (X101B) -- (X104B);
\draw[ext] (X104B) -- (X103B);
\draw[ext] (X103B) -- (X102B);
 \end{tikzpicture}
};

\draw (30,28) node (XX8) { \small
 \begin{tikzpicture} [scale=0.7,yscale=1,ext/.style={black,shorten <=-1pt, shorten >=-1pt}]
  \draw (0,0) node (XX84) {$\scriptstyle 4$};
  \draw (0,0.5) node (XX81) {$\scriptstyle 1$};
\draw[ext] (XX84) -- (XX81);
 \end{tikzpicture}
};

\draw (P2.south west) node {$\scriptstyle P_2$};
\draw (PP2.south west) node {$\scriptstyle P_2$};
\draw (I2.south west) node {$\scriptstyle I_2$};
\draw (I3.south west) node {$\scriptstyle I_3$};
\draw (X1.south west) node {$\scriptstyle P_3$};

\draw[dashed] (P2.north) -- (X2.south);
\draw[dashed] (X2.north) -- (X8.south);
\draw[dashed] (X8.north) -- ($(X8.north)+(0,3)$);
\draw[dashed] (PP2.north) -- (XX2.south);
\draw[dashed] (XX2.north) -- (XX8.south);
\draw[dashed] (XX8.north) -- ($(XX8.north)+(0,3)$);

\draw[quivarrow] (P2.30) -- (I3.210);
\draw[quivarrow] (I3.330) -- (I2.150);
\draw[quivarrow] (X1.330) -- (PP2.150);
\draw[quivarrow] (X2.330) -- (I3.150);
\draw[quivarrow] (I3.30) -- (X3.210);
\draw[quivarrow] (X4.330) -- (X1.150);
\draw[quivarrow] (X1.30) -- (XX2.210);
\draw[quivarrow] (X2.30) -- (X5.210);
\draw[quivarrow] (X5.330) -- (X3.150);
\draw[quivarrow] (X3.30) -- (X6.210);
\draw[quivarrow] (X6.330) -- (X4.150);
\draw[quivarrow] (X4.30) -- (X7.210);
\draw[quivarrow] (X7.330) -- (XX2.150);
\draw[quivarrow] (X8.330) -- (X5.150);
\draw[quivarrow] (X5.30) -- (X9.210);
\draw[quivarrow] (X9.330) -- (X6.150);
\draw[quivarrow] (X6.30) -- (X10.210);
\draw[quivarrow] (X10.330) -- (X7.150);
\draw[quivarrow] (X7.30) -- (XX8.210);

\draw (0,7) node (R2) {};
\draw (30,7) node (S2) {};
\draw (0,21) node (R4) {};
\draw (30,21) node (S4) {};

\draw[translate] (P2.east) -- (I2.west);
\draw[translate] (R2) -- (I3.west);
\draw[translate] (X1.east) -- (S2);
\draw[translate] (X2.east) -- (X3.west);
\draw[translate] (X3.east) -- (X4.west);
\draw[translate] (X4.east) -- (XX2.west);
\draw[translate] (R4) -- (X5.west);
\draw[translate] (X5.east) -- (X6.west);
\draw[translate] (X6.east) -- (X7.west);
\draw[translate] (X7.east) -- (S4);
\draw[translate] (X8.east) -- (X9.west);
\draw[translate] (X9.east) -- (X10.west);
\draw[translate] (X10.east) -- (XX8.west);

\end{tikzpicture} \quad\quad
\begin{tikzpicture}[scale=0.33,baseline=-10ex,quivarrow/.style={black, -latex, shorten >=10pt,shorten <=10pt},translate/.style={black, dotted, shorten >=13pt, shorten <=13pt}]

\path (0,0) node (bb) {}; 

\draw (0,0) node {$\taurigid$};
\draw (0,2) node {$\notschuriannotrigid$};
\draw (0,4) node {$\schuriannotrigid$};

\draw (2,0) node {$\taurigid$};
\draw (4,0) node {$\qhole$};
\draw (6,0) node {$\taurigid$};
\draw (1,1) node {$\notschuriantaurigid$};
\draw (3,1) node {$\qhole$};
\draw (5,1) node {$\notschuriantaurigid$};
\draw (2,2) node {$\schurianrigidnottaurigid$};
\draw (4,2) node {$\schurianrigidnottaurigid$};
\draw (6,2) node {$\notschuriannotrigid$};
\draw (1,3) node {$\schurianrigidnottaurigid$};
\draw (3,3) node {$\notschuriannotrigid$};
\draw (5,3) node {$\schurianrigidnottaurigid$};
\draw (2,4) node {$\notschuriannotrigid$};
\draw (4,4) node {$\notschuriannotrigid$};
\draw (6,4) node {$\schuriannotrigid$};

\draw[dotted] (0,0) -- (0,6);
\draw[dotted] (6,0) -- (6,6);
\end{tikzpicture}
$$
\caption{The left hand diagram shows part of the AR-quiver of $\Lambda$-mod.
The right hand diagram shows the same objects. The symbol for a module
is circular if it is Schurian, filled-in with gray if it
is rigid but not $\tau$-rigid, and filled-in with black if it is $\tau$-rigid.
The symbol $\times$ represents a gap in the AR-quiver (corresponding to an
indecomposable direct summand of $\tau T$)}.
\label{f:tameAR}
\end{figure}
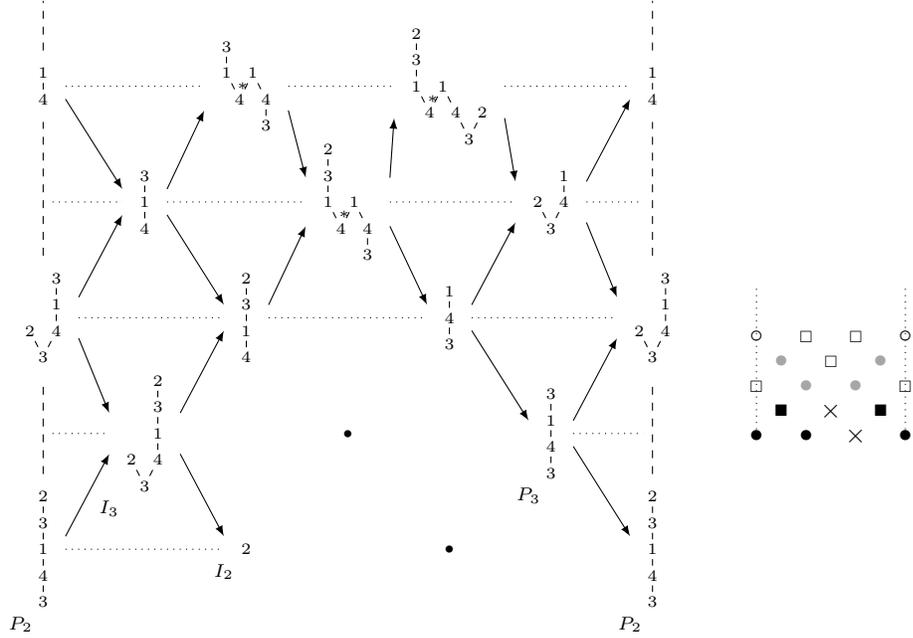

\begin{figure}
$$
\begin{tikzpicture}[xscale=0.3,yscale=0.4,baseline=(bb.base),quivarrow/.style={black, -latex, shorten >=10pt,shorten <=10pt},translate/.style={black, dotted, shorten >=13pt, shorten <=13pt}]

\newcommand{\rank}{11}
\newcommand{\height}{7}
\newcommand{\copies}{2}

\pgfmathparse{\rank-1}\let\rankm\pgfmathresult;
\pgfmathparse{\height-1}\let\heightm\pgfmathresult;

\pgfmathtruncatemacro{\taulimita}{(\rank-3)/2}
\pgfmathtruncatemacro{\taulimitb}{(\rank-4)/2}

\pgfmathparse{\rank*\copies}\let\totwidth\pgfmathresult;
\pgfmathparse{\rank*\copies-1}\let\totwidthm\pgfmathresult;

\path (0,0) node (bb) {}; 


\foreach \i in {0,...,\totwidth}{
\foreach \j in {2,...,\taulimita}{\draw (\i*2,\j*2) node[black] {$\taurigid$};}}

\foreach \i in {0,...,\totwidthm}{
\foreach \j in {2,...,\taulimitb}{\draw (\i*2+1,\j*2+1) node[black] {$\taurigid$};}}

\foreach \i/\j in {0,2,4,8,12,16,18,20}{
\draw (\i,0) node[black] {$\taurigid$};
\draw(\i+22,0) node[black] {$\taurigid$};}

\foreach \i/\j in {6,10,14}{
\draw (\i,0) node[black] {$\qhole$};
\draw (\i+22,0) node[black] {$\qhole$};}

\foreach \i/\j in {1,3,5,7,9,11,13,17,19,21}{
\draw (\i,1) node[black] {$\taurigid$};
\draw(\i+22,1) node[black] {$\taurigid$};}

\foreach \i/\j in {15}{
\draw (\i,1) node[black] {$\qhole$};
\draw (\i+22,1) node[black] {$\qhole$};}

\foreach \i/\j in {0,2,4,6,10,12,14,18,20}{
\draw (\i,2) node[black] {$\taurigid$};
\draw(\i+22,2) node[black] {$\taurigid$};}

\foreach \i/\j in {8,16}{
\draw (\i,2) node[black] {$\qhole$};
\draw (\i+22,2) node[black] {$\qhole$};}

\foreach \i/\j in {1,3,5,9,11,13,15,17,19,21}{
\draw (\i,3) node[black] {$\taurigid$};
\draw(\i+22,3) node[black] {$\taurigid$};}

\foreach \i/\j in {7}{
\draw (\i,3) node[black] {$\qhole$};
\draw (\i+22,3) node[black] {$\qhole$};}

\draw (44,0) node [black] {$\taurigid$};
\draw (44,2) node [black] {$\taurigid$};

\begin{scope}[on background layer]

\draw[dashed] (0,0) -- (0,2*\height+2);
\draw[dashed] (2*\rank,0) -- (2*\rank,2*\height+2);
\draw[dashed] (2*\totwidth,0) -- (2*\totwidth,2*\height+2);


\draw[draw=none,fill=gray!30,opacity=0.4] (4,0) -- (7,3) -- (10,0) -- (4,0);
\draw[draw=none,fill=gray!30,opacity=0.4] (14,0) -- (16,2) -- (18,0) -- (14,0);

\draw[draw=none,fill=gray!30,opacity=0.4] (4+2*\rank,0) -- (7+2*\rank,3) -- (10+2*\rank,0) -- (4+2*\rank,0);
\draw[draw=none,fill=gray!30,opacity=0.4] (14+2*\rank,0) -- (16+2*\rank,2) -- (18+2*\rank,0) -- (14+2*\rank,0);



\draw (1.5,8.5) -- (5,12) -- (8.5,8.5) -- (1.5,8.5);
\draw (2.5,9.5) -- (7.5,9.5);

\draw (23.5,8.5) -- (27,12) -- (30.5,8.5) -- (23.5,8.5);
\draw (24.5,9.5) -- (29.5,9.5);


\draw (13.5,8.5) -- (18,13) -- (22.5,8.5) -- (13.5,8.5);
\draw (14.5,9.5) -- (21.5,9.5);

\draw (35.5,8.5) -- (40,13) -- (44.5,8.5) -- (35.5,8.5);
\draw (36.5,9.5) -- (43.5,9.5);


\draw[dotted] (4,0) -- (18,14) -- (32,0);
\draw[dotted] (14,0) -- (27,13) -- (40,0);

\draw[dotted] (0,8) -- (5,13) -- (18,0);
\draw[dotted] (26,0) -- (40,14) -- (44,10);

\draw[dotted] (10,0) -- (0,10);
\draw[dotted] (36,0) -- (44,8);
\end{scope}


\draw (7,3.5) node[black] {$\scriptstyle \tau T_0$};
\draw (29,3.5) node[black] {$\scriptstyle \tau T_0$};

\draw (16,2.5) node[black] {$\scriptstyle \tau T_1$};
\draw (38,2.5) node[black] {$\scriptstyle \tau T_1$};

\draw (5,13.4) node[black] {$\scriptstyle \Top_1$};
\draw (27,13.4) node[black] {$\scriptstyle \Top_1$};

\draw (18,14.5) node[black] {$\scriptstyle \Top_0$};
\draw (40,14.5) node[black] {$\scriptstyle \Top_0$};


\foreach \i/\j in {1/9,9/9,11/9,13/9,23/9,31/9,33/9,35/9}
{\draw (\i,\j) node {$\taurigid$};}


\foreach \i/\j in {5/13, 18/14, 10/10, 12/10, 27/13, 40/14, 32/10, 34/10}
{\node [fill=white,inner sep=0pt] at (\i,\j) {$\schuriannotrigid$};}


\foreach \i/\j in {3/9,5/9,7/9,15/9,17/9,19/9,21/9,25/9,27/9,29/9,37/9,39/9,41/9,43/9}
{\draw (\i,\j) node {$\notschuriantaurigid$};}



\foreach \i/\j in {14/10,15/11,16/12,17/13,19/13,20/12,21/11,22/10}
{\draw (\i,\j) node {$\schurianrigidnottaurigid$};}


\foreach \i/\j in {0/10,36/10,37/11,38/12,39/13,41/13,42/12,43/11,44/10}
{\draw (\i,\j) node {$\schurianrigidnottaurigid$};}


\foreach \i/\j in {1/11,9/11,11/11,13/11,23/11,31/11,33/11,35/11}
{\draw (\i,\j) node {$\notschuriannotrigid$};}

\foreach \i/\j in {0/12,2/12,8/12,10/12,12/12,14/12,22/12,24/12,30/12,32/12,34/12,36/12,44/12}
\node [fill=white,inner sep=1pt] at (\i,\j) {$\notschuriannotrigid$};

\foreach \i/\j in {1/13,3/13,7/13,9/13,11/13,13/13,15/13,21/13,23/13,25/13,29/13,31/13,33/13,35/13,37/13,43/13}
{\draw (\i,\j) node {$\notschuriannotrigid$};}

\foreach \i/\j in {0/14,2/14,4/14,6/14,8/14,10/14,12/14,14/14,16/14,20/14,22/14,24/14,
26/14,28/14,30/14,32/14,34/14,36/14,38/14,42/14,44/14}
\node [fill=white,inner sep=0pt] at (\i,\j) {$\notschuriannotrigid$};

\foreach \i/\j in {4/10, 6/10, 5/11, 16/10, 18/10, 20/10, 17/11, 19/11, 18/12,
26/10,28/10,27/11,38/10,40/10,42/10,39/11,41/11,40/12}
{\draw (\i,\j) node {$\notschuriannotrigid$};}


\foreach \i/\j in {2/10,3/11,4/12,6/12,7/11,8/10}
{\draw (\i,\j) node {$\schurianrigidnottaurigid$};}


\foreach \i/\j in {24/10,25/11,26/12,28/12,29/11,30/10}
{\draw (\i,\j) node {$\schurianrigidnottaurigid$};}

\end{tikzpicture}
$$
\caption{Schurian and rigid $\Lambda$-modules for a particular choice of tilting module
$T$. The notation is as in Figure~\ref{f:tameAR}.}
\label{f:schurianrigid}
\end{figure}
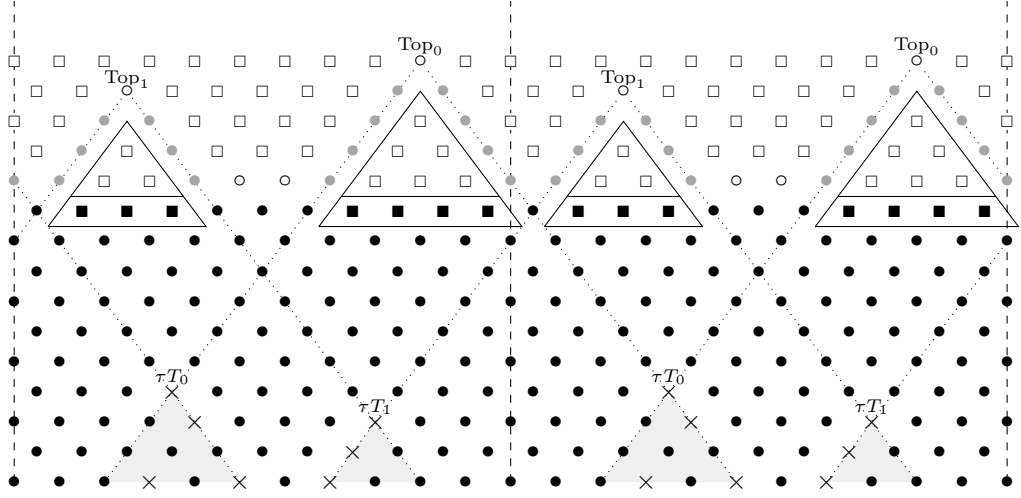

\section{Wild case}
\label{s:wild}
In this section we determine whether some modules are rigid or Schurian for a specific quiver of wild representation type. We will see that there are some similarities with the
tame case.

Let $Q$ be the quiver:
$$
\xymatrix{
0 \ar[r] & 1 \ar[r] \ar@/^1pc/[rrr] & 2 \ar[r] & 3 \ar[r] & 4 
}
$$
\vskip 0.2cm
\noindent and $\field Q$ the corresponding path algebra, of wild representation type.
Let $P_0,\ldots ,P_4$ be the indecomposable projective
$\field Q$-modules (with $Q$-coloured quivers as in~\eqref{e:hereditaryprojectives}),
and $S_0,\ldots ,S_4$ their simple tops.
The simple module $S_2$ is a quasisimple object in a regular component $\R$
of type $\Z A_{\infty}$ in the AR-quiver of $\field Q$-mod. Figure~\ref{f:hereditaryAR}
depicts part of this component.

\begin{equation}
P_0=
 \begin{tikzpicture} [baseline=3ex, xscale=0.7,yscale=1,ext/.style={black,shorten <=-1pt, shorten >=-1pt}]
  \draw (0,0) node (A4B) {$\scriptstyle 4$};
  \draw (0,0.5) node (A3) {$\scriptstyle 3$};
  \draw (0,1) node (A2) {$\scriptstyle 2$};
  \draw (-0.25,1.5) node (A1) {$\scriptstyle 1$};
  \draw (-0.25,2) node (A0) {$\scriptstyle 0$};
  \draw (-0.5,1) node (A4A) {$\scriptstyle 4$};
\draw[ext] (A1) -- (A4A);
\draw[ext] (A1) -- (A2);
\draw[ext] (A2) -- (A3);
\draw[ext] (A3) -- (A4B);
\draw[ext] (A1) -- (A0);
\end{tikzpicture}, \quad
P_1=
 \begin{tikzpicture} [baseline=2.5ex, xscale=0.7,yscale=1,ext/.style={black,shorten <=-1pt, shorten >=-1pt}]
  \draw (0,0) node (A4B) {$\scriptstyle 4$};
  \draw (0,0.5) node (A3) {$\scriptstyle 3$};
  \draw (0,1) node (A2) {$\scriptstyle 2$};
  \draw (-0.25,1.5) node (A1) {$\scriptstyle 1$};
  \draw (-0.5,1) node (A4A) {$\scriptstyle 4$};
\draw[ext] (A1) -- (A4A);
\draw[ext] (A1) -- (A2);
\draw[ext] (A2) -- (A3);
\draw[ext] (A3) -- (A4B);
\end{tikzpicture}, \quad
P_2=
 \begin{tikzpicture} [baseline=2ex, xscale=0.7,yscale=1,ext/.style={black,shorten <=-1pt, shorten >=-1pt}]
  \draw (0,0) node (A4B) {$\scriptstyle 4$};
  \draw (0,0.5) node (A3) {$\scriptstyle 3$};
  \draw (0,1) node (A2) {$\scriptstyle 2$};
\draw[ext] (A2) -- (A3);
\draw[ext] (A3) -- (A4B);
\end{tikzpicture}, \quad
P_3=
 \begin{tikzpicture} [baseline=1.2ex, xscale=0.7,yscale=1,ext/.style={black,shorten <=-1pt, shorten >=-1pt}]
  \draw (0,0) node (A4B) {$\scriptstyle 4$};
  \draw (0,0.5) node (A3) {$\scriptstyle 3$};
\draw[ext] (A3) -- (A4B);
\end{tikzpicture}, \quad
P_4=
 \begin{tikzpicture} [baseline=-0.4ex, xscale=0.7,yscale=1,ext/.style={black,shorten <=-1pt, shorten >=-1pt}]
  \draw (0,0) node (A4B) {$\scriptstyle 4$};
\end{tikzpicture}.
\label{e:hereditaryprojectives}
\end{equation}

\begin{figure}
$$
\begin{tikzpicture}[xscale=0.3,yscale=0.15,baseline=(bb.base),quivarrow/.style={black, -latex},ext/.style={black,shorten <=2pt, shorten >=2pt}, translate/.style={black, dotted}]

\path (0,0) node (bb) {}; 

\draw (10,0) node (B) { \small
 \begin{tikzpicture} [xscale=0.7,yscale=1]
  \draw (0,0) node {$\scriptstyle 3$};
 \end{tikzpicture}
};

\draw (20,0) node (C) { \small
 \begin{tikzpicture} [xscale=0.7,yscale=1]
  \draw (0,0) node {$\scriptstyle 2$};
 \end{tikzpicture}
};

\draw (30,0) node (D) { \small
 \begin{tikzpicture} [xscale=0.7,yscale=1,ext/.style={black,shorten <=-1pt, shorten >=-1pt}]
  \draw (0,0) node (D4) {$\scriptstyle 4$};
  \draw (0,0.5) node (D1) {$\scriptstyle 1$};
   \draw[ext] (D1) -- (D4);
 \end{tikzpicture}
};

\draw (5,7) node (E) { \small
 \begin{tikzpicture} [xscale=0.7,yscale=1,ext/.style={black,shorten <=-1pt, shorten >=-1pt}]
  \draw (0,0) node (E4) {$\scriptstyle 4$};
  \draw (-0.25,0.5) node (E1) {$\scriptstyle 1$};
  \draw (-0.25,1) node (E0) {$\scriptstyle 0$};
  \draw (0.25,0.5) node (E3) {$\scriptstyle 3$};
\draw[ext] (E0) -- (E1);
\draw[ext] (E1) -- (E4);
\draw[ext] (E3) -- (E4);
 \end{tikzpicture}
};

\draw (15,7) node (F) { \small
 \begin{tikzpicture} [xscale=0.7,yscale=1,ext/.style={black,shorten <=-1pt, shorten >=-1pt}]
  \draw (0,0) node (F3) {$\scriptstyle 3$};
  \draw (0,0.5) node (F2) {$\scriptstyle 2$};
  \draw[ext] (F2) -- (F3);
 \end{tikzpicture}
};

\draw (25,7) node (G) { \small
 \begin{tikzpicture} [xscale=0.7,yscale=1,ext/.style={black,shorten <=-1pt, shorten >=-1pt}]
  \draw (0,0) node (G4) {$\scriptstyle 4$};
  \draw (-0.25,0.5) node (G1) {$\scriptstyle 1$};
  \draw (-0.5,0) node (G2) {$\scriptstyle 2$};
\draw[ext] (G1) -- (G2);
\draw[ext] (G1) -- (G4);
 \end{tikzpicture}
};

\draw (10,14) node (H) { \small
 \begin{tikzpicture} [xscale=0.7,yscale=1,ext/.style={black,shorten <=-1pt, shorten >=-1pt}]
  \draw (0,0) node (H4) {$\scriptstyle 4$};
  \draw (-0.25,0.5) node (H1) {$\scriptstyle 1$};
  \draw (-0.25,1) node (H0) {$\scriptstyle 0$};
  \draw (0.25,0.5) node (H3) {$\scriptstyle 3$};
  \draw (0.25,1) node (H2) {$\scriptstyle 2$};
\draw[ext] (H0) -- (H1);
\draw[ext] (H1) -- (H4);
\draw[ext] (H2) -- (H3);
\draw[ext] (H3) -- (H4);
 \end{tikzpicture}
};

\draw (20,14) node (I) { \small
 \begin{tikzpicture} [xscale=0.7,yscale=1,ext/.style={black,shorten <=-1pt, shorten >=-1pt}]
  \draw (0,0) node (I3) {$\scriptstyle 3$};
  \draw (0,0.5) node (I2) {$\scriptstyle 2$};
  \draw (0.25,1) node (I1) {$\scriptstyle 1$};
  \draw (0.5,0.5) node (I4) {$\scriptstyle 4$};
\draw[ext] (I1) -- (I2);
\draw[ext] (I2) -- (I3);
\draw[ext] (I1) -- (I4);
 \end{tikzpicture}
};

\draw (31.7,0) node {$T_2$};
\draw (27.5,7) node {$T_3$};

\draw[quivarrow] (E.330) -- (B.150);
\draw[quivarrow] (G.330) -- (D.150);
\draw[quivarrow] (E.30) -- (H.210);
\draw[quivarrow] (I.330) -- (G.150);
\draw[quivarrow] (B.30) -- (F.210);
\draw[quivarrow] (F.330) -- (C.150);
\draw[quivarrow] (C.30) -- (G.210);
\draw[quivarrow] (H.330) -- (F.150);
\draw[quivarrow] (F.30) -- (I.210);

\draw[translate] (H.east) -- (I.west);
\draw[translate] (B.east) -- (C.west);
\draw[translate] (C.east) -- (D.west);
\draw[translate] (E.east) -- (F.west);
\draw[translate] (F.east) -- (G.west);

\end{tikzpicture}
$$
\caption{Part of the AR-quiver of $\field Q$-mod.}
\label{f:hereditaryAR}
\end{figure}

\begin{lemma} \label{l:hereditaryrigid}
Let $X$ be an indecomposable module in $\R$. Then $X$ is rigid if and only if it has quasilength
less than or equal to $2$.
\end{lemma}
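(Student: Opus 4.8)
The plan is to avoid working directly with morphisms in the (non-standard) component $\R$ and instead use the Euler form of $\field Q$. Since $\field Q$ is hereditary, $\langle \underline{\dim}\, M,\underline{\dim}\, N\rangle=\dim\Hom(M,N)-\dim\Ext^1(M,N)$ for all $\field Q$-modules $M,N$; in particular an indecomposable $X$ is rigid if and only if $\dim\End(X)=\langle\underline{\dim}\,X,\underline{\dim}\,X\rangle$. As $\R$ has type $\Z A_\infty$ its quasisimple modules form a single $\tau$-orbit $C_m:=\tau^{-m}S_2$ ($m\in\Z$), and if $M_{i,l}$ denotes the indecomposable of quasilength $l$ in $\R$ with quasisocle $C_i$, additivity of dimension vectors along the almost split sequences of $\R$ gives $\underline{\dim}\,M_{i,l}=\sum_{j=0}^{l-1}\underline{\dim}\,C_{i+j}$. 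Because the Euler form is invariant under the Coxeter transformation $\Phi$ (which satisfies $\Phi\,\underline{\dim}\,M=\underline{\dim}\,\tau M$ for $M$ indecomposable non-projective) and $\tau$ merely shifts $i$, the integer $q_l:=\langle\underline{\dim}\,M_{i,l},\underline{\dim}\,M_{i,l}\rangle$ depends only on $l$. Everything then reduces to computing $q_l$ and to knowing $\dim\End(M_{i,l})$ for small $l$.

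First I would treat $\ql(X)\le 2$, where I claim $X$ is a brick. For $\ql(X)=1$ this holds because $C_m$ is the image of the brick $S_2$ under the autoequivalence $\tau^{-m}$ of the stable category. For $\ql(X)=2$, apply $\Hom(C_i,-)$ and $\Hom(C_{i+1},-)$ to the non-split sequence $0\to C_i\to M_{i,2}\to C_{i+1}\to 0$: using the Auslander--Reiten formula and the vanishing $\Hom(S_2,\tau^{-1}S_2)=0$ (clear since $\underline{\dim}\,\tau^{-1}S_2$ has entry $0$ at vertex $2$) one gets $\Hom(C_i,M_{i,2})\cong\field$ and $\Hom(C_{i+1},M_{i,2})=0$, so restriction along $C_i\hookrightarrow M_{i,2}$ is injective on $\End(M_{i,2})$ and $M_{i,2}$ is a brick. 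A direct Euler-form evaluation gives $q_1=\langle\underline{\dim}\,S_2,\underline{\dim}\,S_2\rangle=1$ (no loop at vertex $2$) and $q_2=1$. Hence for $\ql(X)\le 2$ one has $\dim\End(X)=1=q_{\ql(X)}$, so $X$ is rigid.

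For the converse it is enough to prove $q_l=0$ for all $l\ge 3$: then $\dim\Ext^1(X,X)=\dim\End(X)-q_{\ql(X)}=\dim\End(X)\ge 1$ whenever $\ql(X)\ge 3$, so $X$ is not rigid (no brick statement needed here). To compute $q_l$ one computes $\underline{\dim}\,C_n=\Phi^{-n}\underline{\dim}\,S_2$ from the Coxeter matrix; a short manipulation with $\langle x,y\rangle=-\langle y,\Phi x\rangle$ and $\langle\Phi x,\Phi y\rangle=\langle x,y\rangle$ telescopes the increment to $q_l-q_{l-1}=\langle\underline{\dim}\,S_2,\underline{\dim}\,C_{l-1}\rangle$, which, since the minimal projective presentation of $S_2$ has the form $P_3\to P_2\to S_2\to 0$, equals $(\underline{\dim}\,C_{l-1})_2-(\underline{\dim}\,C_{l-1})_3$. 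One now checks that $(\underline{\dim}\,C_2)_2-(\underline{\dim}\,C_2)_3=-1$, so $q_3=q_2-1=0$, and that $(\underline{\dim}\,C_n)_2=(\underline{\dim}\,C_n)_3$ for all $n\ge 3$; the latter because the sequence $n\mapsto(\underline{\dim}\,C_n)_2-(\underline{\dim}\,C_n)_3$ satisfies the order-$5$ linear recurrence coming from the characteristic polynomial of $\Phi$, so once it vanishes at five consecutive values $n\ge 3$ it vanishes for all such $n$, whence $q_l=0$ for $l\ge 3$.

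The main obstacle is exactly this last step. For $\ql(X)\le 3$ the lemma falls out of a one-line Euler-form computation together with the fact that short-quasilength regular modules are bricks. But for $\ql(X)\ge 4$ the module $X$ need not be a brick, so $\dim\Ext^1(X,X)$ cannot be read off from $\langle\underline{\dim}\,X,\underline{\dim}\,X\rangle$ alone; one genuinely needs the stabilisation $q_l=0$, i.e. control of the growth of the dimension vectors $\underline{\dim}\,C_n$ in this particular wild component. (A more hands-on alternative would be to exhibit a nonzero map $M_{i,l}\to\tau M_{i,l}$ for $l\ge 4$ by composing the non-mesh homomorphisms between the quasilength-$3$ modules produced by $q_3=0$; but matching up the indices in $\R$ makes that route fiddlier than the dimension-vector count.)
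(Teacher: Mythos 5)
Your approach via the Euler form is genuinely different from the paper's, but it has a real gap in the converse direction. The paper disposes of quasilengths $\ge 4$ at a stroke by invoking Hoshino's theorem: every rigid module in a regular AR-component of a hereditary algebra has quasilength at most $n-2$, where $n$ is the number of simples (here $n-2 = 3$). Combined with $\tau$-invariance of $\Ext^1$ and a direct AR-formula check for one representative of each quasilength $1,2,3$, that finishes the lemma. You instead try to show $q_l := \langle \underline{\dim}\,M_{i,l}, \underline{\dim}\,M_{i,l}\rangle = 0$ for all $l\ge 3$, and this is where things break.

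Concretely, with the identity $q_l - q_{l-1} = b_{l-1}$ (which is correct) where $b_m = (\underline{\dim}\,C_m)_2 - (\underline{\dim}\,C_m)_3$, one finds $b_0=1$, $b_1=0$, $b_2=-1$, $b_3=0$, $b_4=0$ — but then $\underline{\dim}\,C_5 = (4,13,13,16,20)$, so $b_5 = 13-16 = -3 \ne 0$, giving $q_6 = -3$ rather than $0$ (and $q_7 = -7$, etc.). So the asserted equality $(\underline{\dim}\,C_n)_2 = (\underline{\dim}\,C_n)_3$ for all $n\ge 3$ fails already at $n=5$. Moreover the proposed mechanism for proving it cannot work: $\det\Phi = \pm 1$, so the order-$5$ recurrence satisfied by $(b_n)$ has nonzero leading coefficient in both directions, and five consecutive zeros would force the entire sequence to vanish, contradicting $b_0 = 1$. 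What you would actually need is the weaker inequality $q_l\le 0$ for $l\ge 3$ — that would still give $\dim\Ext^1(X,X) = \dim\End(X) - q_l \ge 1$ — and the numerics above are consistent with it, but that inequality is not proven by the argument you give, and establishing it looks no easier than citing Hoshino. As written, your proof only covers $\ql(X)\le 3$ (where the brick/Euler-form argument does work, since $q_3=0$), and leaves $\ql(X)\ge 4$ open.
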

\begin{proof}
By~\cite[Thm.\ 2.6]{hoshino84}, every rigid module in a
regular AR-component of a hereditary algebra has
quasilength at most $n-2$, where $n$ is the number of simple modules. In this case, there are $5$ simple modules, so
no indecomposable module in $\R$ with quasilength at least
$4$ is rigid.

Since $\field Q$ is hereditary and no module in $\R$ is
projective, we have
$$\Ext(M,N)\cong \Ext(\tau M,\tau N)$$
for all $M,N \in \R$. Hence an indecomposable module in $\R$ is rigid if and only if every module in
its $\tau$-orbit is rigid.

It is easy to check using the AR-formula that the modules
$S_2$, of quasilength $1$, and 
$\begin{tikzpicture}[baseline=1ex,xscale=0.7,yscale=1,ext/.style={black,shorten <=-1pt, shorten >=-1pt}]
\draw (0,0) node (A3) {$\scriptstyle 3$};
\draw (0,0.5) node (A2) {$\scriptstyle 2$};
\draw[ext] (A2) -- (A3);
\end{tikzpicture}
$, of quasilength $2$, are rigid, while
the module
$\begin{tikzpicture} [baseline=1.5ex,xscale=0.7,yscale=1,ext/.style={black,shorten <=-1pt, shorten >=-1pt}]
 \draw (0,0) node (A3) {$\scriptstyle 3$};
 \draw (0,0.5) node (A2) {$\scriptstyle 2$};
 \draw (0.25,1) node (A1) {$\scriptstyle 1$};
 \draw (0.5,0.5) node (A4) {$\scriptstyle 4$};
\draw[ext] (A3) -- (A2);
\draw[ext] (A2) -- (A1);
\draw[ext] (A1) -- (A4);
\end{tikzpicture}$, of quasilength $3$,
is not rigid. 
Hence every module in $\R$ of quasilength $1$ or $2$ is rigid, and no module in
$\R$ of quasilength $3$ is rigid, and we are done.
\end{proof}

We mutate (in the sense of~\cite{HU89,RS90})
the tilting module $\field Q$ at $P_2$, via the short exact sequence:
$$0\rightarrow P_2\rightarrow P_1\rightarrow T_2\rightarrow 0,$$
where $T_2=
\begin{tikzpicture} [baseline,xscale=0.7,yscale=1,ext/.style={black,shorten <=-1pt, shorten >=-1pt}]
  \draw (0,0.5) node (A1) {$\scriptstyle 1$};
  \draw (0,0) node (A4) {$\scriptstyle 4$};
\draw[ext] (A1) -- (A4);
 \end{tikzpicture}.$
We obtain the tilting module
$$P_0\oplus P_1\oplus T_2\oplus P_3\oplus P_4.$$
We mutate this tilting module at $P_3$, via the short exact sequence 
$$0\rightarrow P_3\rightarrow P_1\rightarrow T_3\rightarrow 0,$$
where
$T_3=\begin{tikzpicture} [baseline,xscale=0.7,yscale=1,ext/.style={black,shorten <=-1pt, shorten >=-1pt}]
  \draw (0,0) node (A2) {$\scriptstyle 2$};
  \draw (0.25,0.5) node (A1) {$\scriptstyle 1$};
  \draw (0.5,0) node (A4) {$\scriptstyle 4$};
\draw[ext] (A1) -- (A2);
\draw[ext] (A1) -- (A4);
 \end{tikzpicture}.$
This gives the tilting module
$$T=P_0\oplus P_1\oplus T_2\oplus T_3\oplus P_4,$$
which induces a cluster-tilting object in $\C$.

\begin{figure}
$$
\begin{tikzpicture}[scale=0.8,baseline=(bb.base),quivarrow/.style={black, -latex},translate/.style={black, dotted},relation/.style={black, dotted, thick=2pt}]

\path (0,0) node (bb) {}; 

\draw (0,0) node (X1) {\small $P_1$};
\draw (2,0) node (X3) {\small $T_3$};
\draw (0,1.5) node (X0) {\small $P_0$};
\draw (2,1.5) node (X2) {\small $T_2$};
\draw (1,-1.5) node (X4) {\small $P_4$};

\draw[quivarrow] (X1.north) -- (X0.south);
\draw[quivarrow] (X3.north) -- (X2.south);
\draw[quivarrow] (X1.east) -- (X3.west);
\draw[quivarrow] (X3.south west) -- (X4.north east);
\draw[quivarrow] (0.85,-1.3) -- (0.15,-0.2);
\draw[quivarrow] (0.7,-1.44) -- (0,-0.34);

\draw (-0.25,0.75) node {$\scriptstyle a$};
\draw (1,0.25) node {$\scriptstyle b$};
\draw (1.65,-0.85) node {$\scriptstyle c$};
\draw (0.66,-0.66) node {$\scriptstyle d$};
\draw (0.14,-0.94) node {$\scriptstyle e$};
\draw (2.3,0.75) node {$\scriptstyle f$};

\end{tikzpicture}
\quad\quad
\begin{tikzpicture}[scale=0.8,baseline=(bb.base),quivarrow/.style={black, -latex},translate/.style={black, dotted},relation/.style={black, dotted, thick=2pt}]

\path (0,0) node (bb) {}; 

\draw (0,0) node (X1) {\small $1$};
\draw (2,0) node (X3) {\small $3$};
\draw (0,1.5) node (X0) {\small $0$};
\draw (2,1.5) node (X2) {\small $2$};
\draw (1,-1.5) node (X4) {\small $4$};

\draw[quivarrow] (X0.south) -- (X1.north);
\draw[quivarrow] (X2.south) -- (X3.north);
\draw[quivarrow] (X3.west) -- (X1.east);
\draw[quivarrow] (X4.north east) -- (X3.south west);
\draw[quivarrow] (0.15,-0.2) -- (0.85,-1.3);
\draw[quivarrow] (0,-0.34) -- (0.7,-1.44);

\draw[relation] (0.7,-0.2) to[out=200,in=135] (0.6,-0.6);
\draw[relation] (1.2,-0.2) to[out=340,in=45] (1.4,-0.6);
\draw[relation] (0.8,-0.95) to[out=305,in=240] (1.32,-0.85);

\draw (0.7,-0.68) node {\small $\ast$};

\end{tikzpicture}
$$
\caption{Maps between indecomposable projective $\field Q$-modules and the quiver with relations of $\End_{\C}(T)^{\opp}$.}
\label{f:tiltedquiver}
\end{figure}
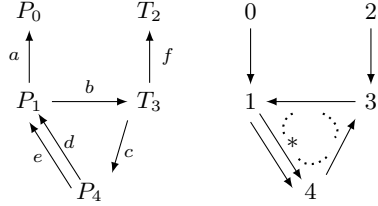

We define maps $a,b,c,d,e,f$ in $\C$ as follows (see Figure~\ref{f:tiltedquiver}).
Let $a$ be the embedding of $P_1$ into $P_0$,
$b$ a surjection of $P_1$ onto $T_3$.
We have $\Hom(T_3,P_4)=0$, while
$$\HomF{\C}(T_3,P_4)\cong \Dual\Hom(P_4,\tau^2 T_3)\cong \field,$$
since $\tau^2 T_3=
\begin{tikzpicture} [baseline=2ex,xscale=0.7,yscale=1,ext/.style={black,shorten <=-1pt, shorten >=-1pt}]
  \draw (0,0) node (A4) {$\scriptstyle 4$};
  \draw (0.25,0.5) node (A3) {$\scriptstyle 3$};
  \draw (-0.25,0.5) node (A1) {$\scriptstyle 1$};
  \draw (-0.25,1) node (A0) {$\scriptstyle 0$};
\draw[ext] (A4) -- (A3);
\draw[ext] (A4) -- (A1);
\draw[ext] (A1) -- (A0);
\end{tikzpicture}$ (see Figure~\ref{f:hereditaryAR}).
We take $c$ to be a non-zero element of $\HomF{\C}(T_3,P_4)$.
There are two embeddings of the simple module $P_4=S_4$ into $P_1$ (see~\eqref{e:hereditaryprojectives}).
We choose $d$ to be the map whose image is the lower $4$
in the $Q$-coloured quiver for $P_1$ in~\eqref{e:projectives},
and $e$ to be the map whose image is the
upper $4$. We take $f$ to be the map from $T_3$ to $T_2$
factoring out the simple $S_2$ in the socle of $T_3$.

Let $g:P_4\rightarrow P_1$ be equal to $d$ or $e$.
Applying Proposition~\ref{p:dual}(b) with $A=T_3$, $B=\tau^{-1}P_1$,
$C=\tau^{-1}P_4$, and $\beta=\tau^{-1}g$ we see that
$\Hom(T_3,\tau^{-1}g[1])=0$ if and only if
$\Hom(\tau^{-1}g,\tau T_3)=0$, which holds
if and only if the map
$$\Hom(g,\tau^2 T_3):\Hom(P_1,\tau^2 T_3) \rightarrow \Hom(P_4,\tau^2 T_3)$$
is zero. We have $\dim\Hom(P_1,\tau^2 T_3)=1$ (see Figure~\ref{f:hereditaryAR}),
so let $h:P_1\rightarrow \tau^2 T_3$ be a nonzero map.

From the explicit definition of the maps $d$ and $e$, we see that
$hd=0$, while $he\not=0$. Hence $\Hom(d,\tau^2 T_3)=0$ and $\Hom(e,\tau^2 T_3)\not=0$.
Therefore, $\Hom(T_3,\tau^{-1}d[1])=0$ and $\Hom(T_3,\tau^{-1}e[1])\not=0$.
Hence, $\Hom(T_3,\tau^{-1}d[1])(c)=(\tau^{-1}d[1])\circ c=0$, so $dc=0$ in $\C$.
Since the domain of $\Hom(T_3,\tau^{-1}e[1])$ is $\HomF{\C}(T_3,P_4)=\Hom(T_3,\tau^{-1}P_4[1])$,
which is spanned by $c$, we have $\Hom(T_3,\tau^{-1}e[1])(c)\not=0$, so $ec\not=0$ in $\C$.
Similarly, we can show that $cb=0$ and $dc=0$ and that the maps
$fb,ad,ae,be$,$bec,fbec$ and $aec$ are all nonzero in $\C$.

It follows that $\Lambda=\End_{\C}(T)^{\opp}$ is given by the
quiver $Q_{\Lambda}$ with the relations shown in
Figure~\ref{f:tiltedquiver} (where we have labelled the arrows with the
corresponding maps between indecomposable projectives in $\field Q$-mod --- note that these
go in the opposite direction).

As in the tame case (see Figure~\ref{f:tameAR}),
we shall draw modules for $\Lambda$ as $Q_{\Lambda}$-coloured quivers,
decorating the arrow between vertices $1$ and $4$ which is involved in the relations
(corresponding to the map $d$) with an asterisk.

Note that the AR-quiver of $\Lambda$-mod is the image of the AR-quiver of $\C$ under
$\Hom_{\C}(T,-)$ by~\cite[Prop.\ 3.2]{BMR07}, with the indecomposable summands of
$\tau T$ deleted; we will denote them by filled-in vertices.

Let $\PP_0,\ldots ,\PP_4$ denote the indecomposable projective
modules over $\Lambda$, $\SL_0,\ldots ,\SL_4$ their simple tops
and $\II_0,\ldots ,\II_4$ the corresponding indecomposable injective
modules. We have:
\begin{equation}
\PP_0=
 \begin{tikzpicture} [baseline=3ex,xscale=0.7,yscale=1,ext/.style={black,shorten <=-1pt, shorten >=-1pt}]
  \draw (0,0) node (P3) {$\scriptstyle 3$};
  \draw (0,0.5) node (P4A) {$\scriptstyle 4$};
\draw (0.44,0.8) node {$\scriptstyle \ast$};
  \draw (0.25,1) node (P1) {$\scriptstyle 1$};
  \draw (0.5,0.5) node (P4B) {$\scriptstyle 4$};
  \draw (0.25,1.5) node (P0) {$\scriptstyle 0$};
\draw[ext] (P0) -- (P1);
\draw[ext] (P1) -- (P4A);
\draw[ext] (P4A) -- (P3);
\draw[ext] (P1) -- (P4B);
\end{tikzpicture}, \quad
\PP_1=
 \begin{tikzpicture} [baseline=2.5ex, xscale=0.7,yscale=1,ext/.style={black,shorten <=-1pt, shorten >=-1pt}]
  \draw (0,0) node (P1) {$\scriptstyle 3$};
  \draw (0,0.5) node (P4A) {$\scriptstyle 4$};
\draw (0.44,0.8) node {$\scriptstyle \ast$}; 
  \draw (0.25,1) node (P1) {$\scriptstyle 1$};
  \draw (0.5,0.5) node (P4B) {$\scriptstyle 4$};
\draw[ext] (P1) -- (P4A);
\draw[ext] (P1) -- (P4B);
\draw[ext] (P4A) -- (P3);
 \end{tikzpicture}, \quad
\PP_2=
 \begin{tikzpicture} [baseline=3ex, xscale=0.7,yscale=1,ext/.style={black,shorten <=-1pt, shorten >=-1pt}]
  \draw (0,0)  node (P3A) {$\scriptstyle 3$};
  \draw (0,0.5) node (P4) {$\scriptstyle 4$};
  \draw (0,1) node (P1) {$\scriptstyle 1$};
  \draw (0,1.5) node (P3B) {$\scriptstyle 3$};
  \draw (0,2) node (P2) {$\scriptstyle 2$};
\draw[ext] (P2) -- (P3B);
\draw[ext] (P3B) -- (P1);
\draw[ext] (P1) -- (P4);
\draw[ext] (P4) -- (P3A);
 \end{tikzpicture}, \quad
\PP_3=
  \begin{tikzpicture} [baseline=2.5ex, xscale=0.7,yscale=1,ext/.style={black,shorten <=-1pt, shorten >=-1pt}]
  \draw (0,0) node (P3A) {$\scriptstyle 3$};
  \draw (0,0.5) node (P4) {$\scriptstyle 4$};
  \draw (0,1) node (P1) {$\scriptstyle 1$};
  \draw (0,1.5) node (P3B) {$\scriptstyle 3$};
\draw[ext] (P3A) -- (P4);
\draw[ext] (P4) -- (P1);
\draw[ext] (P1) -- (P3B);
 \end{tikzpicture}, \quad
\PP_4=
  \begin{tikzpicture} [baseline=0.6ex, xscale=0.7,yscale=1,ext/.style={black,shorten <=-1pt, shorten >=-1pt}]
  \draw (0,0) node (P3) {$\scriptstyle 3$};
  \draw (0,0.5) node (P4) {$\scriptstyle 4$};
\draw[ext] (P3) -- (P4);
 \end{tikzpicture}.
\label{e:projectives}
\end{equation}

\begin{equation}
\II_0=
 \begin{tikzpicture} [baseline=-0.5ex,xscale=0.7,yscale=1]
  \draw (0,0) node {$\scriptstyle 0$};
 \end{tikzpicture}, \quad
\II_1=
 \begin{tikzpicture} [baseline=1.5ex, xscale=0.7,yscale=1,ext/.style={black,shorten <=-1pt, shorten >=-1pt}]
  \draw (0,0) node (I1) {$\scriptstyle 1$};
  \draw (0.25,0.5) node (I0) {$\scriptstyle 0$};
  \draw (-0.25,0.5) node (I3) {$\scriptstyle 3$};
  \draw (-0.25,1) node (I2) {$\scriptstyle 2$};
\draw[ext] (I2) -- (I3);
\draw[ext] (I3) -- (I1);
\draw[ext] (I0) -- (I1);
 \end{tikzpicture}, \quad
\II_2=
 \begin{tikzpicture} [baseline=-0.5ex, xscale=0.7,yscale=1]
  \draw (0,0) node {$\scriptstyle 2$};
 \end{tikzpicture}, \quad
\II_3=
  \begin{tikzpicture} [baseline=3ex, xscale=0.7,yscale=1,ext/.style={black,shorten <=-1pt, shorten >=-1pt}]
  \draw (0,0) node (I3A) {$\scriptstyle 3$};
  \draw (-0.25,0.5) node (I4) {$\scriptstyle 4$};
  \draw (-0.25,1) node (I1) {$\scriptstyle 1$};
  \draw (-0.5,1.5) node (I3B) {$\scriptstyle 3$};
  \draw (-0.5,2) node (I2B) {$\scriptstyle 2$};
  \draw (0.25,0.5) node (I2A) {$\scriptstyle 2$};
  \draw (0,1.5) node (I0) {$\scriptstyle 0$};
\draw[ext] (I3A) -- (I2A);
\draw[ext] (I3A) -- (I4);
\draw[ext] (I4) -- (I1);
\draw[ext] (I1) -- (I3B);
\draw[ext] (I1) -- (I0);
\draw[ext] (I3B) -- (I2B);
 \end{tikzpicture}, \quad
\II_4=
  \begin{tikzpicture} [baseline=2.5ex, xscale=0.7,yscale=1,ext/.style={black,shorten <=-1pt, shorten >=-1pt}]
  \draw (0,0) node (I4) {$\scriptstyle 4$};
  \draw (-0.07,0.3) node {$\scriptstyle \ast$};
  \draw (0.25,0.5) node (I1B) {$\scriptstyle 1$};
  \draw (0.15,1) node (I3) {$\scriptstyle 3$};
  \draw (0.5,1) node (I0B) {$\scriptstyle 0$};
  \draw (0.15,1.5) node (I2) {$\scriptstyle 2$};
  \draw (-0.25,0.5) node (I1A) {$\scriptstyle 1$};
  \draw (-0.5,1) node (I0A) {$\scriptstyle 0$};
\draw[ext] (I4) -- (I1A);
\draw[ext] (I4) -- (I1B);
\draw[ext] (I1A) -- (I0A);
\draw[ext] (I1B) -- (I3);
\draw[ext] (I1B) -- (I0B);
\draw[ext] (I3) -- (I2);
 \end{tikzpicture}.
\end{equation}

\begin{lemma}
Figure~\ref{f:wildAR} illustrates part of the AR-quiver
of $\Lambda$-mod, including the image of the part of the AR-quiver
of $\C$ shown in Figure~\ref{f:hereditaryAR}. 
\end{lemma}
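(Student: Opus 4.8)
The plan is to compute the images under $\Hom_{\C}(T,-)$ of the indecomposable objects of $\C$ occurring in Figure~\ref{f:hereditaryAR} (together with the relevant transjective objects of $\C$), using that the AR-quiver of $\Lambda$-mod is the image of the AR-quiver of $\C$ under $\Hom_{\C}(T,-)$ with the indecomposable summands of $\tau T$ deleted, by~\cite[Prop.\ 3.2]{BMR07} together with Theorem~\ref{t:equivalence}. Since $T_2$ and $T_3$ are the only summands of $T$ lying in $\R$, the vertices of Figure~\ref{f:hereditaryAR} that get deleted are $\tau T_2$ and $\tau T_3$; these become the filled-in vertices of Figure~\ref{f:wildAR}, and the meshes of $\C$ passing through them become the truncated meshes appearing there.

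First I would compute the dimension vector of $\clu{X}=\Hom_{\C}(T,X)$ for each relevant $X$. Its coordinate at the summand $T_i$ of $T$ equals $\dim\Hom_{\C}(T_i,X)=\dim\Hom(T_i,X)+\dim\Hom(X,\tau^2 T_i)$, using $\Hom_{\C}(X,Y)=\HomH{\C}(X,Y)\oplus\HomF{\C}(X,Y)$ and the identifications in~\eqref{e:tau2}. Each of the two terms is extracted from the hom-space structure of $D^b(\field Q)$: in the regular component $\R$ of type $\Z A_{\infty}$ the hammocks are computed as in Section~\ref{s:tube}, and for transjective objects one uses the standard description of morphisms between preprojective or preinjective modules and shifts of projectives. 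All of this can be read off Figure~\ref{f:hereditaryAR} directly.

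Next I would determine the module structure of each $\clu{X}$, that is, its $Q_{\Lambda}$-coloured quiver, including which of the two parallel arrows $1\to 4$ carries the asterisk. Since $\Hom_{\C}(T,-)$ is a functor, the linear map by which an arrow of $Q_{\Lambda}$ acts on $\clu{X}$ is composition with the corresponding morphism among $a,b,c,d,e,f$ of Figure~\ref{f:tiltedquiver}; choosing for each $\Hom_{\C}(T_i,X)$ a basis of explicit maps in $\C$ and composing with $a,\dots,f$ (using~\eqref{e:tau2} and Lemma~\ref{l:Ccombine} for the $F$-map $c$) pins down the coloured quiver up to change of basis, and one then checks that the relations of $\Lambda$ displayed in Figure~\ref{f:tiltedquiver} hold, removing the remaining ambiguity. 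Finally, applying $\Hom_{\C}(T,-)$ to the mesh relations of $\C$ and contracting the meshes through $\tau T_2$ and $\tau T_3$ gives the almost split sequences in $\Lambda$-mod, hence the AR-quiver of Figure~\ref{f:wildAR}.

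The main obstacle is the bookkeeping in the last two steps: one must keep the two parallel arrows $1\to 4$ apart throughout, handle the $F$-maps so that the $\tau T$-related contributions enter via $D\Hom(-,\tau^2-)$ rather than via honest homomorphisms, and verify on the nose that all relations of $\Lambda$ are satisfied. Conceptually this is the same explicit manipulation of $Q$-coloured quivers used to produce Figure~\ref{f:tameAR} in the tame case, only carried out in a larger regular component and with projective-injective behaviour to track; no new idea is needed, just care.
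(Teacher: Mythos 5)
Your plan is correct, but it takes a genuinely different computational route from the paper's. Both start identically: cite~\cite[Prop.\ 3.2]{BMR07} (together with Theorem~\ref{t:equivalence}) to reduce to applying $\Hom_{\C}(T,-)$ to the AR-quiver of $\C$ and deleting the summands of $\tau T$. But from there the paper quickly drops down to $\Lambda$-mod and works almost entirely there: it identifies the first two rows as $\PP_i$, $\II_i$, then computes $X_1=\tau\SL_2$, $\tau X_6=X_5$ and $\tau^{-1}X_3\cong X_4$ by taking explicit projective presentations over $\Lambda$, passing to the dualized maps $\psi^*$ between injectives, and computing kernels and cokernels via the $Q_\Lambda$-coloured quiver bookkeeping of Remark~\ref{r:redraw}; the remaining almost split sequences are pinned down by $\Ext^1$ computations in $\Lambda$-mod using the AR-formula~\eqref{e:ARformula}. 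You instead propose to stay ``upstream'' in $\C$ throughout: compute $\dim\Hom_{\C}(T_i,X)=\dim\Hom(T_i,X)+\dim\Hom(X,\tau^2T_i)$ from the hammocks in $D^b(\field Q)$, determine the coloured quiver of $\clu X$ by composing a basis of $\Hom_{\C}(T_i,X)$ with the fixed morphisms $a,\dots,f$, and get the AR-structure for free from the functorial correspondence. This avoids the paper's explicit $\Lambda$-side kernel calculations (the most laborious part, e.g.\ $\tau X_6$), at the price of having to carry out equally delicate basis-tracking in $\C$, particularly keeping the two arrows $1\to 4$ and the $F$-map contributions separate, which you do flag. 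One small caveat: ``checking the relations of $\Lambda$ hold'' is automatic, since $\Hom_{\C}(T,-)$ lands in $\Lambda$-mod by construction; what it really buys you is a consistency check on the explicit coloured-quiver bases you chose, not a further constraint.
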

\begin{proof}
Firstly, note that $\Hom_{\C}(T,T_i)\cong \PP_i$ and $\Hom_{\C}(T,\tau^2 T_i)\cong \II_i$, so applying the functor
$\Hom_{\C}(T,-)$ to the first two rows in
Figure~\ref{f:hereditaryAR} gives the first two rows in Figure~\ref{f:wildAR}
except for $X_1$.

If $\alpha:P\rightarrow P'$ is a map between projective $\Lambda$-modules,
we denote by $\alpha^*$ the corresponding map between injective modules,
$\alpha^*:D\Hom_{\Lambda}(P,\Lambda)\rightarrow D\Hom_{\Lambda}(P',\Lambda)$.
A projective presentation of $\SL_2$ is:
$$\xymatrix{
\PP_3 \ar^{\alpha}[r] & \PP_2 \ar[r] & \SL_2 \ar[r] & 0,
}$$
where $\alpha$ is the embedding. So $\tau \SL_2$ is the kernel
of $\alpha^*:\II_3\rightarrow \II_2$.
Let $\beta$ be the nonzero map from $\PP_1$ to $\PP_3$. Since
$\alpha\beta\not=0$, we have $\alpha^*\beta^*\not=0$, so
$\alpha^*$ must be the map factoring out the lower $2$.
It follows that $\tau \SL_2=X_1$, completing the verification of the first two rows
in Figure~\ref{f:wildAR}.

The irreducible maps from $\II_3$ have targets given by the
indecomposable direct summands of $\II_3/\SL_3$, i.e.\ $\II_2$ and $X_3$.
The irreducible map with target $\PP_3$ must be the inclusion of its (indecomposable)
radical $X_3$.
We have:
$$\Ext(X_3,\tau X_3)\cong D\Hom(\tau X_3,\tau X_3)\cong \field,$$
so there is a unique non-split short exact sequence ending in
$X_3$, which must be as shown.

Next, we compute $\tau X_6$. From its $Q_{\Lambda}$-coloured quiver, we see that the
projective cover of $X_6$ is given by $\varphi:\PP_0\oplus \PP_1\oplus \PP_2\rightarrow X_6$.
We need to compute the kernel $\kerphi$ of $\varphi$.
Let $B=\cup_{i\in \{0,1,2,3,4\}} B_i$ be the basis of $\PP_0\oplus \PP_1\oplus \PP_2$ coming from the $Q_{\Lambda}$-coloured
quiver given by the disjoint union of the $Q_{\Lambda}$-coloured quivers of $\PP_1,\PP_2$ and $\PP_3$ 
in~\eqref{e:projectives}. As in Remark~\ref{r:redraw}, we will write the basis elements in $B_i$
as $b_{i1},b_{i2},\ldots $ (in an order taking first the basis elements for 
$\PP_0$, then $\PP_1$ and $\PP_2$). We shall also redraw each connected component of this $Q_{\Lambda}$-coloured quiver as in Remark~\ref{r:redraw}.
We do the same for $X_6$, using the notation $c_{ij}$.
The result is shown in Figure~\ref{f:modulequiver}.

\begin{figure}
$$
\begin{tikzpicture}[xscale=0.8,yscale=1,baseline=(bb.base),quivarrow/.style={black, -latex},translate/.style={black, dotted},relation/.style={black, dotted}]

\path (0,0) node (bb) {}; 

\draw (1,0.8) node (labelP0) {$\PP_0$};

\draw (0,0) node (b01) {$\scriptstyle b_{01}$};
\draw (0,-1) node (b11) {$\scriptstyle b_{11}$};
\draw (1,-2) node (b41) {$\scriptstyle b_{41}$};
\draw (1,-2.5) node (b42) {$\scriptstyle b_{42}$};
\draw (2,-1) node (b31) {$\scriptstyle b_{31}$};

\draw[quivarrow] (b01.south) -- (b11.north);
\draw[quivarrow] (b11.300) -- (b41.west);
\draw[quivarrow] (b11.south) to node[below] {$\scriptstyle \ast$} (b42.west);
\draw[quivarrow] (b41.north) -- (b31.south);

\begin{scope}[shift={(3,0)}]
\draw (1,0.8) node (labelP1) {$\PP_1$};

\draw (0,-1) node (b12) {$\scriptstyle b_{12}$};
\draw (1,-2) node (b43) {$\scriptstyle b_{43}$};
\draw (1,-2.5) node (b44) {$\scriptstyle b_{44}$};
\draw (2,-1) node (b32) {$\scriptstyle b_{32}$};

\draw[quivarrow] (b12.300) -- (b43.west);
\draw[quivarrow] (b12.south) to node[below] {$\scriptstyle \ast$} (b44.west);
\draw[quivarrow] (b43.north) -- (b32.south);
\end{scope}

\begin{scope}[shift={(6,0)}]
\draw (1,0.8) node (labelP2) {$\PP_2$};

\draw (0,-1) node (b13) {$\scriptstyle b_{13}$};
\draw (1,-2.5) node (b45) {$\scriptstyle b_{45}$};
\draw (2,-1) node (b33) {$\scriptstyle b_{33}$};
\draw (2,-1.5) node (b34) {$\scriptstyle b_{34}$};
\draw (2,0) node (b21) {$\scriptstyle b_{21}$};

\draw[quivarrow] (b13.300) -- (b45.west);
\draw[quivarrow] (b33.west) -- (b13.east);
\draw[quivarrow] (b45.north) -- (b34.south);
\draw[quivarrow] (b21.south) -- (b33.north);
\end{scope}

\begin{scope}[shift={(10,0)}]
\draw (1,0.8) node (labelE) {$X_6$};

\draw (0,0) node (c01) {$\scriptstyle c_{01}$};
\draw (0,-1) node (c11) {$\scriptstyle c_{11}$};
\draw (-0.3,-1.5) node (c12) {$\scriptstyle c_{12}$};
\draw (1,-2) node (c41) {$\scriptstyle c_{41}$};
\draw (1,-2.5) node (c42) {$\scriptstyle c_{42}$};
\draw (2,-1) node (c31) {$\scriptstyle c_{31}$};
\draw (2,-1.5) node (c32) {$\scriptstyle c_{32}$};
\draw (2,0) node (c21) {$\scriptstyle c_{21}$};

\draw[quivarrow] (c01.south) -- (c11.north);
\draw[quivarrow] (c11.330) -- (c41.north);
\draw[quivarrow] (c12.340) to node[above] {$\scriptstyle \ast$} (c41.160);
\draw[quivarrow] (c12.270) -- (c42.west);
\draw[quivarrow] (c42.east) -- (c32.south);
\draw[quivarrow] (c31.west) -- (c11.east);
\draw[quivarrow] (c21.south) -- (c31.north);
\end{scope}

\draw[->] (labelP2) -- node [above] {$\varphi$} (labelE); 
\draw (2.5,0.8) node {$\oplus$};
\draw (5.5,0.8) node {$\oplus$};

\end{tikzpicture}
$$
\caption{The projective cover of $E$}
\label{f:modulequiver}
\end{figure}

Let $\kerphi=\ker\varphi$, regarded as a representation with the
vector space $\kerphi_i$ at vertex $i$ of $Q_{\Lambda}$. We describe a basis for
each $\kerphi_i$ in the table in Figure~\ref{f:Ki}. This basis is carefully chosen to allow us to
give an explicit description of $\kerphi$ as a direct sum
of indecomposable modules.

\begin{figure}
\begin{center}
\begin{tabular}{|c|c|c|}
\hline
Vertex $i$ & Action of $\varphi$ & basis for $\kerphi_i$ \\
\hline
$0$ & $b_{01}\mapsto c_{01}$ & empty \\
\hline
    & $b_{11}\mapsto c_{11}$ &  \\
$1$ & $b_{12}\mapsto c_{12}$ & $b_{11}-b_{13}$ \\
    & $b_{13}\mapsto c_{11}$ & \\
\hline
$2$ & $b_{21}\mapsto c_{21}$ & empty \\
\hline
    & $b_{31}\mapsto$ 0 & \\
$3$ & $b_{32}\mapsto c_{32}$ & $b_{31}-b_{34}$, $b_{34}$ \\
    & $b_{33}\mapsto c_{31}$ & \\
    & $b_{34}\mapsto 0$ & \\
\hline
    & $b_{41}\mapsto c_{41}$ & \\
    & $b_{42}\mapsto 0$ & \\
$5$ & $b_{43}\mapsto c_{42}$ & $b_{41}-b_{45}$, $b_{42}$, $b_{44}-b_{45}$ \\
    & $b_{44}\mapsto c_{41}$ & \\
    & $b_{45}\mapsto c_{41}$ & \\
\hline
\end{tabular}
\end{center}
\caption{Computation of a basis for $\kerphi_i$, $i$ a vertex of $Q_{\Lambda}$.}
\label{f:Ki}
\end{figure}

Using Figure~\ref{f:modulequiver}, we can compute the restriction of the linear
maps defining $P$ to the submodule $\kerphi$ to get the description
of $\kerphi$ in Figure~\ref{f:kernelphi}. We obtain a $Q_{\Lambda}$-coloured quiver for this module,
and we obtain that $\kerphi=\ker\varphi\cong \PP_1\oplus \PP_4$.

\begin{figure}
$$
\begin{tikzpicture}[baseline=-11ex, xscale=0.8,yscale=1,quivarrow/.style={black, -latex},translate/.style={black, dotted},relation/.style={black, dotted}]

\draw (0,-1) node (k11) {$\scriptstyle b_{11}-b_{13}$};
\draw (1.5,-2) node (k42) {$\scriptstyle b_{41}-b_{45}$};
\draw (0.8,-2.5) node (k41) {$\scriptstyle b_{42}$};
\draw (2,-1) node (k31) {$\scriptstyle b_{31}-b_{34}$};

\draw[quivarrow] (k11.south) to node[below] {$\scriptstyle \ast$} (k41.north);
\draw[quivarrow] (k11.315) -- (k42.120);
\draw[quivarrow] (k42.north) -- (k31.south);

\begin{scope}[shift={(3,0)}]
\draw (1,-2) node (k43) {$\scriptstyle b_{44}-b_{45}$};
\draw (2,-1) node (k32) {$\scriptstyle b_{34}$};
\draw[quivarrow] (k43.north) -- (k32.south);
\end{scope}
\end{tikzpicture}
\quad \leftrightarrow \quad
\begin{tikzpicture}[baseline=2ex,xscale=0.7,yscale=1,ext/.style={black,shorten <=-1pt, shorten >=-1pt}]
  \draw (0,0) node (P3) {$\scriptstyle 3$};
  \draw (0,0.5) node (P4A) {$\scriptstyle 4$};
  \draw (0.44,0.8) node {$\scriptstyle \ast$}; 
  \draw (0.25,1) node (P1) {$\scriptstyle 1$};
  \draw (0.5,0.5) node (P4B) {$\scriptstyle 4$};
\draw[ext] (P1) -- (P4A);
\draw[ext] (P1) -- (P4B);
\draw[ext] (P4A) -- (P3);
\end{tikzpicture} \oplus
\begin{tikzpicture}[baseline=0.5ex, xscale=0.7,yscale=1,ext/.style={black,shorten <=-1pt, shorten >=-1pt}]
  \draw (0,0) node (P3) {$\scriptstyle 3$};
  \draw (0,0.5) node (P4) {$\scriptstyle 4$};
\draw[ext] (P3) -- (P4);
 \end{tikzpicture}.
$$
\caption{The kernel of the projective cover of $X_6$}
\label{f:kernelphi}
\end{figure}
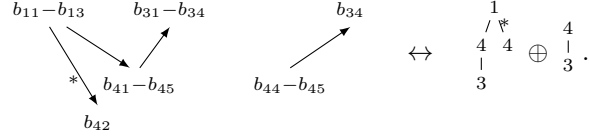

Let $\psi:\PP_1\oplus \PP_4\rightarrow \PP_0\oplus \PP_1\oplus \PP_2$ be the embedding
of $\ker\varphi$ into $\PP_0\oplus \PP_1\oplus \PP_2$. We can write $\psi$ as a
$3\times 2$ matrix $\psi=(\psi_{ij})$, and the components $\psi_{ij}$ can be
read off from the above explicit description of $\ker\varphi$. We have
$\psi^*=(\psi_{ij}^*):\II_1\oplus \II_4\rightarrow
\II_0\oplus \II_1\oplus \II_2$. Since $\psi_{11}:\PP_1\rightarrow \PP_0$ is nonzero,
$\psi_{11}^*$ is a surjection onto $\II_0\cong \SL_0$. Since $\psi_{21}:\PP_1\rightarrow \PP_1$
is the zero map, so is $\psi_{21}^*$. Since $\psi_{31}:\PP_1\rightarrow \PP_2$ is nonzero,
$\psi_{31}^*$ is a surjection onto $\II_2\cong \SL_2$. Since $\psi_{12}:\PP_4\rightarrow \PP_0$ is
the zero map, so is $\psi_{12}^*$.

Let $\gamma:\PP_1\rightarrow \PP_2$ be a nonzero map (unique up to a scalar). Then
$\gamma\psi_{22}=0$, so $\gamma^*\psi_{22}^*=0$. Hence $\psi_{22}^*$ is the map from
$\II_4$ to $\II_1$ whose image is the submodule $\begin{tikzpicture}[baseline=1ex,xscale=0.7,yscale=1,ext/.style={black,shorten <=-1pt, shorten >=-1pt}]
\draw (0,0) node (A1) {$\scriptstyle 1$};
\draw (0,0.5) node (A0) {$\scriptstyle 0$};
\draw[ext] (A1) -- (A0);
\end{tikzpicture}$.
Since $\psi_{32}\not=0$, so is $\psi_{32}^*:\II_4\rightarrow \II_2$, so it must be a surjection
onto $\II_2\cong \SL_2$. We thus have an explicit description of the map
$$\psi^*:\II_1\oplus \II_4\rightarrow \II_0\oplus \II_1\oplus \II_2.$$
Using a technique similar to the above, we can compute the kernel $\tau X_6$ of $\psi^*$
and verify that it is $X_5$.

A similar technique can be used to show that $\tau^{-1}X_3\cong X_4$.
We have
$$\Ext(X_4,X_3)\cong D\overline{\Hom}(X_3,\tau X_4)\cong D\overline{\Hom}(X_3,X_3)\cong \field.$$
Let $\varphi$ be the embedding of $X_3$ into $X_7$,
mapping it to the submodule of this form appearing on the right
hand side of the displayed $Q_{\Lambda}$-coloured quiver of this module. Then a computation similar to the above can
be done to show that $\coker\begin{pmatrix} \varphi \\ i \end{pmatrix}\cong X_4$,
where $i$ is the embedding of $X_3$ into $\PP_3$. This gives a non-split short exact sequence
$$0\rightarrow X_3\rightarrow X_7\oplus \PP_3\rightarrow X_4\rightarrow 0$$
which must be almost split.
This completes the proof.
\end{proof}

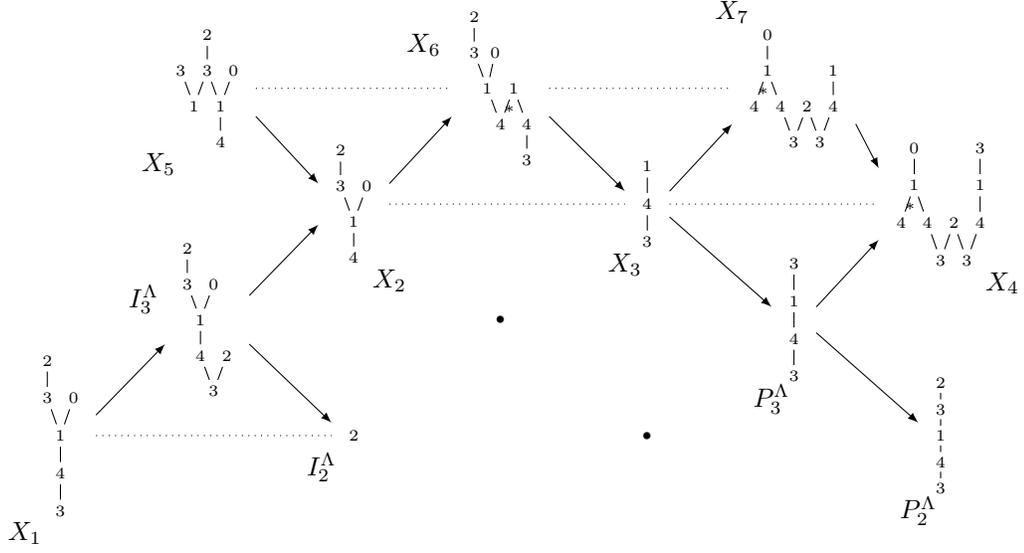
\begin{figure}
$$
\begin{tikzpicture}[xscale=0.39,yscale=0.22,baseline=(bb.base),quivarrow/.style={black, -latex},translate/.style={black, dotted}]

\path (0,0) node (bb) {}; 

\draw (0,0) node (X1) { \small
 \begin{tikzpicture} [xscale=0.7, yscale=1,ext/.style={black,shorten <=-1pt, shorten >=-1pt}]
  \draw (0,0) node (X13A) {$\scriptstyle 3$};
  \draw (0,0.5) node (X14) {$\scriptstyle 4$};
  \draw (0,1) node (X11) {$\scriptstyle 1$};
  \draw (-0.25,1.5) node (X13B) {$\scriptstyle 3$};
  \draw (0.25,1.5) node (X10) {$\scriptstyle 0$};
  \draw (-0.25,2) node (X12) {$\scriptstyle 2$};
\draw[ext] (X13A) -- (X14);
\draw[ext] (X14) -- (X11);
\draw[ext] (X11) -- (X13B);
\draw[ext] (X13B) -- (X12);
\draw[ext] (X11) -- (X10);
 \end{tikzpicture}
};

\draw (10,0) node (I2) { \small
 \begin{tikzpicture} [scale=1]
  \draw (0,0) node {$\scriptstyle 2$};
 \end{tikzpicture}
};

\draw (20,0) node { \small
 \begin{tikzpicture} [scale=1]
  \draw (0,0) node {$\scriptstyle \bullet$};
 \end{tikzpicture}
};

\draw (30,0) node (P2) { \small
 \begin{tikzpicture} [scale=0.7,yscale=1,ext/.style={black,shorten <=-1pt, shorten >=-1pt}]
  \draw (0,0) node (P23A) {$\scriptstyle 3$};
  \draw (0,0.5) node (P24) {$\scriptstyle 4$};
  \draw (0,1) node (P21) {$\scriptstyle 1$};
  \draw (0,1.5) node (P23B) {$\scriptstyle 3$};
  \draw (0,2) node (P22) {$\scriptstyle 2$};
\draw[ext] (P23A) -- (P24);
\draw[ext] (P24) -- (P21);
\draw[ext] (P21) -- (P23B);
\draw[ext] (P23B) -- (P22);
 \end{tikzpicture}
};

\draw (5,7) node (I3) { \small
 \begin{tikzpicture} [xscale=0.7,yscale=0.95,ext/.style={black,shorten <=-1pt, shorten >=-1pt}]
  \draw (0,0) node (I33A) {$\scriptstyle 3$};
  \draw (0.25,0.5) node (I32A) {$\scriptstyle 2$};
  \draw (-0.25,0.5) node (I34) {$\scriptstyle 4$};
  \draw (-0.25,1) node (I31) {$\scriptstyle 1$};
  \draw (-0.5,1.5) node (I33B) {$\scriptstyle 3$};
  \draw (0,1.5) node (I30) {$\scriptstyle 0$};
  \draw (-0.5,2) node (I32B) {$\scriptstyle 2$};
\draw[ext] (I33A) -- (I34);
\draw[ext] (I33A) -- (I32A);
\draw[ext] (I34) -- (I31);
\draw[ext] (I31) -- (I33B);
\draw[ext] (I31) -- (I30);
\draw[ext] (I33B) -- (I32B);
 \end{tikzpicture}
};

\draw (15,7) node { \small
 \begin{tikzpicture} [scale=1]
  \draw (0,0) node {$\scriptstyle \bullet$};
 \end{tikzpicture}
};

\draw (25,7) node (P3) { \small
 \begin{tikzpicture} [xscale=0.7,yscale=1,ext/.style={black,shorten <=-1pt, shorten >=-1pt}]
  \draw (0,0) node (P33A) {$\scriptstyle 3$};
  \draw (0,0.5) node (P34) {$\scriptstyle 4$};
  \draw (0,1) node (P31) {$\scriptstyle 1$};
  \draw (0,1.5) node (P33B) {$\scriptstyle 3$};
\draw[ext] (P33A) -- (P34);
\draw[ext] (P34) -- (P31);
\draw[ext] (P31) -- (P33B);
 \end{tikzpicture}
};

\draw (10,14) node (X2) { \small
 \begin{tikzpicture} [xscale=0.7,yscale=0.95,ext/.style={black,shorten <=-1pt, shorten >=-1pt}]
  \draw (-0.25,0) node (X24) {$\scriptstyle 4$};
  \draw (-0.25,0.5) node (X21) {$\scriptstyle 1$};
  \draw (-0.5,1) node (X23) {$\scriptstyle 3$};
  \draw (0,1) node (X20) {$\scriptstyle 0$};
  \draw (-0.5,1.5) node (X22) {$\scriptstyle 2$};
\draw[ext] (X24) -- (X21);
\draw[ext] (X21) -- (X23);
\draw[ext] (X23) -- (X22);
\draw[ext] (X21) -- (X20);
 \end{tikzpicture}
};

\draw (20,14) node (X3) { \small
 \begin{tikzpicture} [xscale=0.7,yscale=1,ext/.style={black,shorten <=-1pt, shorten >=-1pt}]
  \draw (0,0) node (X33) {$\scriptstyle 3$};
  \draw (0,0.5) node (X34) {$\scriptstyle 4$};
  \draw (0,1) node (X31) {$\scriptstyle 1$};
\draw[ext] (X33) -- (X34);
\draw[ext] (X34) -- (X31);
 \end{tikzpicture}
};

\draw (30,14) node (X4) {\small 
 \begin{tikzpicture} [xscale=0.7,yscale=1,ext/.style={black,shorten <=-1pt, shorten >=-1pt}]
  \draw (0,0) node (X43A) {$\scriptstyle 3$};
  \draw (-0.25,0.5) node (X44B) {$\scriptstyle 4$};
  \draw (-0.58,0.72) node {$\scriptstyle \ast$};
  \draw (-0.5,1) node (X41A) {$\scriptstyle 1$};
  \draw (-0.5,1.5) node (X40) {$\scriptstyle 0$};
  \draw (-0.75,0.5) node (X44A) {$\scriptstyle 4$};
  \draw (0.25,0.5) node (X42) {$\scriptstyle 2$};
  \draw (0.5,0) node (X43B) {$\scriptstyle 3$};
  \draw (0.75,0.5) node (X44C) {$\scriptstyle 4$}; 
  \draw (0.75,1) node (X41B) {$\scriptstyle 1$};
  \draw (0.75,1.5) node (X43C) {$\scriptstyle 3$};
\draw[ext] (X44A) -- (X41A);
\draw[ext] (X41A) -- (X40);
\draw[ext] (X44B) -- (X41A);
\draw[ext] (X43A) -- (X44B);
\draw[ext] (X43A) -- (X42);
\draw[ext] (X43B) -- (X42);
\draw[ext] (X43B) -- (X44C);
\draw[ext] (X44C) -- (X41B);
\draw[ext] (X41B) -- (X43C);
 \end{tikzpicture}
};

\draw (5,21) node (X5) { \small
 \begin{tikzpicture} [xscale=0.7,yscale=0.95,ext/.style={black,shorten <=-1pt, shorten >=-1pt}]
  \draw (0,0) node (X54) {$\scriptstyle 4$};
  \draw (0,0.5) node (X51B) {$\scriptstyle 1$};
  \draw (0.25,1) node (X50) {$\scriptstyle 0$};
  \draw (-0.25,1) node (X53B) {$\scriptstyle 3$};
  \draw (-0.25,1.5) node (X52) {$\scriptstyle 2$};
  \draw (-0.5,0.5) node (X51A) {$\scriptstyle 1$};
  \draw (-0.75,1) node (X53A) {$\scriptstyle 3$};
\draw[ext] (X54) -- (X51B);
\draw[ext] (X51B) -- (X50);
\draw[ext] (X51B) -- (X53B);
\draw[ext] (X53B) -- (X52);
\draw[ext] (X51A) -- (X53B);
\draw[ext] (X51A) -- (X53A);
 \end{tikzpicture}
};

\draw (15,21) node (X6) { \small
 \begin{tikzpicture} [xscale=0.7,yscale=0.95,ext/.style={black,shorten <=-1pt, shorten >=-1pt}]
  \draw (0,0) node (X63B) {$\scriptstyle 3$};
  \draw (0,0.5) node (X64B) {$\scriptstyle 4$};
  \draw (-0.33,0.72) node {\small $\scriptstyle \ast$}; 
  \draw (-0.25,1) node (X61B) {$\scriptstyle 1$};
  \draw (-0.5,0.5) node (X64A) {$\scriptstyle 4$};
  \draw (-0.75,1) node (X61A) {$\scriptstyle 1$};
  \draw (-0.6,1.5) node (X60) {$\scriptstyle 0$};
  \draw (-1,1.5) node (X63A) {$\scriptstyle 3$};
  \draw (-1,2) node (X62) {$\scriptstyle 2$};
\draw[ext] (X63B) -- (X64B);
\draw[ext] (X64B) -- (X61B);
\draw[ext] (X64A) -- (X61B);
\draw[ext] (X64A) -- (X61A);
\draw[ext] (X61A) -- (X63A);
\draw[ext] (X61A) -- (X60);
\draw[ext] (X63A) -- (X62);
 \end{tikzpicture}
};

\draw (25,21) node (X7) { \small
 \begin{tikzpicture} [xscale=0.7,yscale=0.95,ext/.style={black,shorten <=-1pt, shorten >=-1pt}]
  \draw (0,0) node (X73B) {$\scriptstyle 3$};
  \draw (0.25,0.5) node (X74C) {$\scriptstyle 4$};
  \draw (0.25,1) node (X71B) {$\scriptstyle 1$};
  \draw (-0.25,0.5) node (X72) {$\scriptstyle 2$};
  \draw (-0.5,0) node (X73A) {$\scriptstyle 3$};
  \draw (-0.75,0.5) node (X74B) {$\scriptstyle 4$};
  \draw (-1.08,0.72) node {\small $\scriptstyle \ast$};
  \draw (-1,1) node (X71A) {$\scriptstyle 1$};
  \draw (-1.25,0.5) node (X74A) {$\scriptstyle 4$};
  \draw (-1,1.5) node (X70) {$\scriptstyle 0$};
\draw[ext] (X74A) -- (X71A);
\draw[ext] (X71A) -- (X70);
\draw[ext] (X74B) -- (X71A);
\draw[ext] (X73A) -- (X74B);
\draw[ext] (X73A) -- (X72);
\draw[ext] (X73B) -- (X72);
\draw[ext] (X73B) -- (X74C);
\draw[ext] (X74C) -- (X71B);
 \end{tikzpicture}
};

\draw (X1.south west) node {$X_1$};
\draw ($(I2)+(240:2.2)$) node {$\II_2$};
\draw (P2.south west) node {$\PP_2$};
\draw ($(I3)+(150:2.5)$) node {$\II_3$};
\draw (P3.south west) node {$\PP_3$};
\draw (X2.south east) node {$X_2$};
\draw (X3.south west) node {$X_3$};
\draw (X4.south east) node {$X_4$};
\draw (X5.south west) node {$X_5$};
\draw ($(X6)+(135:3.7)$) node {$X_6$};
\draw (X7.north west) node {$X_{7}$};

\draw[quivarrow] (X1.30) -- (I3.210);
\draw[quivarrow] (I3.330) -- (I2.150);
\draw[quivarrow] (P3.330) -- (P2.150);
\draw[quivarrow] (I3.30) -- (X2.210);
\draw[quivarrow] (X3.330) -- (P3.150);
\draw[quivarrow] (P3.30) -- (X4.210);
\draw[quivarrow] (X5.330) -- (X2.150);
\draw[quivarrow] (X2.30) -- (X6.210);
\draw[quivarrow] (X6.330) -- (X3.150);
\draw[quivarrow] (X3.30) -- (X7.210);
\draw[quivarrow] (X7.330) -- (X4.150);

\draw[translate] (X1.east) -- (I2.west);
\draw[translate] (X2.east) -- (X3.west);
\draw[translate] (X3.east) -- (X4.west);
\draw[translate] (X5.east) -- (X6.west);
\draw[translate] (X6.east) -- (X7.west);

\end{tikzpicture}
$$
\caption{Part of the AR-quiver of $\Lambda$-mod}
\label{f:wildAR}
\end{figure}

Note that the modules in the $\tau^{\pm 1}$-orbits of $\II_2,\II_3,\PP_2,\PP_3$ are all $\tau$-rigid
(and hence rigid) by Lemma~\ref{l:hereditaryrigid} and Corollary~\ref{c:taurigid}.

\begin{proposition}
The $\Lambda$-modules $X_2,X_3,X_5$ and $X_7$ are all rigid, while $X_6$ is not rigid.
\end{proposition}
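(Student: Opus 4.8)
The plan is to use Proposition~\ref{p:rigid}(b) together with the part of the AR-quiver of $\Lambda$-mod determined above. Each of $X_2,X_3,X_5,X_7$ and $X_6$ is of the form $\clu{M}$ for an indecomposable regular $\field Q$-module $M$ lying in the component $\R$, and, comparing Figure~\ref{f:wildAR} with Figure~\ref{f:hereditaryAR}, these $M$ all have quasilength at least $3$. Hence, by Lemma~\ref{l:hereditaryrigid}, none of them is rigid in $\field Q$-mod, so by Corollary~\ref{c:taurigid} none of $X_2,X_3,X_5,X_7,X_6$ is $\tau$-rigid; the issue is purely whether the space $\Hom_{\C}(M,\tau M)$ becomes zero in $\C/\add(\tau T\oplus\tau^2 T)$. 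Here $\Hom_{\C}(M,\tau M)=\HomH{\C}(M,\tau M)\oplus\HomF{\C}(M,\tau M)$, with $\HomH{\C}(M,\tau M)=\Hom(M,\tau M)$ and, by~\eqref{e:tau2}, $\HomF{\C}(M,\tau M)\cong D\Hom(\tau M,\tau^2 M)\cong D\Hom(M,\tau M)$, so both pieces are nonzero.

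For the four rigid modules I would argue as in Lemma~\ref{l:hatrigid}. For the $H$-part: choosing a basis map $u\in\Hom(M,\tau M)$, one checks, using the explicit Loewy structure of the modules in $\R$ exactly as in Lemmas~\ref{l:onezero} and~\ref{l:hereditaryrigid}, that $u$ factors through $\add\bigl(\tau(T_2\oplus T_3)\oplus\tau^2(T_2\oplus T_3)\bigr)$; since $U=P_0\oplus P_1\oplus P_4$ is preprojective, an $H$-map $M\to\tau M$ through $\add(U)$ is controlled, and it follows that $u$ vanishes in $\HomH{\C/\add(\tau T\oplus\tau^2 T)}(M,\tau M)$. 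For the $F$-part: let $\varepsilon\colon M\to M[1]$ be a generator and take $\varphi=u$ in Lemma~\ref{l:sesargument}; for $M$ among the four, the image $\im(\varphi)$ of the $H$-generator is an indecomposable regular module lying outside the relevant wings of $\tau T_3$ (resp.\ $\tau^2 T_3$), so $\Hom(U,\im\varphi)\neq 0$ and Lemma~\ref{l:sesargument} shows $\varepsilon$ factors in $D^b(\field Q)$ through $U[1]$ or $\tau U[1]$; by Lemma~\ref{l:Ccombine} this means $\varepsilon$, viewed as a morphism of $\C$, factors through $\add(\tau U)$ or $\add(\tau^2 U)$, hence vanishes in $\HomF{\C/\add(\tau T\oplus\tau^2 T)}(M,\tau M)$. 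Combining the two, $\Hom_{\C/\add(\tau T\oplus\tau^2 T)}(M,\tau M)=0$, so $X_2,X_3,X_5,X_7$ are rigid by Proposition~\ref{p:rigid}(b).

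For $X_6$ I would run the argument of Lemma~\ref{l:belownotrigid}: $M_6$ occupies, relative to $\tau T_3$, the analogue of the tame-case region $\R_k$, and I expect a basis map $v$ of $\Hom_{\C}(M_6,\tau M_6)$ (say its $H$-part, with $\Hom(M_6,\tau M_6)\cong\field$) to factor in $D^b(\field Q)$ through no summand $V$ of $\tau T\oplus\tau^2 T$: for $V$ in $\R$ one uses Lemma~\ref{l:downwardarrows}, since such a factorization would force a path with too many downward arrows; for $V$ a shifted projective, Propositions~\ref{p:Cfactor} and~\ref{p:dual} together with $\Hom(M_6,\tau M_6)\cong\field$ rule it out; and for the preprojective summands one uses additivity of $\dim\Hom(U,-)$ as in Proposition~\ref{p:Hfactor}(b). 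By Lemma~\ref{l:Ccombine}, $v$ does not factor in $\C$ through $\add(\tau T\oplus\tau^2 T)$, so $\Hom_{\C/\add(\tau T\oplus\tau^2 T)}(M_6,\tau M_6)\neq 0$ and $X_6$ is not rigid. An equivalent, more computational route — closer to the way the AR-quiver was obtained — is to use the AR-formula $\Ext^1_\Lambda(X_j,X_j)\cong D\overline{\Hom}_\Lambda(X_j,\tau_\Lambda X_j)$ with $\tau_\Lambda X_3=X_2$, $\tau_\Lambda X_4=X_3$, $\tau_\Lambda X_6=X_5$, $\tau_\Lambda X_7=X_6$ (and $\tau_\Lambda^{-1}X_2=X_3$, $\tau_\Lambda^{-1}X_5=X_6$), and to compute the relevant injectively stable Hom-spaces directly from the $Q_\Lambda$-coloured quivers and the injectives $\II_0,\dots,\II_4$: the map $X_j\to\tau_\Lambda X_j$ is shown to factor through an injective in each rigid case and not to factor through an injective for $X_6$. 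The main obstacle throughout is that $\R$ is not a standard component, so the dimensions of $\Hom(M,\tau M)$ and of the Hom-spaces to the summands of $\tau T\oplus\tau^2 T$ cannot be read off from mesh relations and must be pinned down from the explicit structure of $\field Q$-mod; the decisive point is the difference between $X_6$ and the other four in their position relative to $\tau T_3$, which makes the relevant generator pass through (for $X_2,X_3,X_5,X_7$) or avoid (for $X_6$) the summands of $\tau T\oplus\tau^2 T$.
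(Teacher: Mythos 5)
Your second ``more computational route'' is essentially the paper's actual proof: the paper uses the AR-formula in the form $\Ext^1(X_j,X_j)\cong D\underline{\Hom}(\tau_{\Lambda}^{-1}X_j,X_j)$ or $D\overline{\Hom}(X_j,\tau_{\Lambda}X_j)$, reads off the AR-translates from the computed AR-quiver ($\tau X_3=X_2$, $\tau X_6=X_5$, $\tau X_7=X_6$, etc.), identifies explicit bases of the relevant Hom-spaces using the $Q_\Lambda$-coloured quivers via Remark~\ref{r:colouredquivermorphisms}, and checks by hand that the basis maps factor (for $X_2,X_3,X_5,X_7$) or do not factor (for $X_6$) through projectives/injectives. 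Everything you sketch there is on target, and this is the right way to do it.

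Your first route, however, has genuine gaps that you yourself flag at the end but do not resolve. It is not merely an inconvenience that $\R$ is a $\mathbb{Z}A_\infty$ component rather than a standard tube --- it undermines every step of the tube machinery you invoke. Lemma~\ref{l:onezero} is established from the tables for tame hereditary algebras and says nothing about wild $\R$. Proposition~\ref{p:outsidenonzero} is proved using Lemma~\ref{l:outsidehelp}, whose conclusion ($Y\in\bigcup_k\W_{\tau T_k}$) relies on every indecomposable regular module being inside the finitely many wings of a tube; in $\R$ the wings are finite triangles inside an infinite component, so the conclusion is simply false without a wild-specific argument (this is precisely why the section is phrased as an experiment and Kerner is thanked for conversations about wild hereditary algebras). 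Proposition~\ref{p:Hfactor} and Lemma~\ref{l:basis} use the mesh category identification, i.e.\ standardness, to produce bases of $\Hom$-spaces and to conclude that a given map does or does not factor; in a wild regular component you have no such control, and indeed $\dim\Hom(M,\tau M)$ need not be $1$. Your $F$-part argument in particular assumes you know $\im(\varphi)$ concretely and that ``lying outside the relevant wings'' implies $\Hom(U,-)\neq 0$ --- both of these are tube facts with no available wild analogue. So the first route, as written, does not prove the statement; the second route, which you present as an afterthought, is what one must actually do.
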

\begin{proof}
We will use Remark~\ref{r:colouredquivermorphisms} throughout.
We have
$$\Ext(X_2,X_2)\cong D\underline{\Hom}(\tau^{-1} X_2,X_2)\cong D\underline{\Hom}(X_3,X_2).$$
We have $\Hom(X_3,X_2)\cong \field$, and any nonzero map from $X_3$ to $X_2$ has image $\begin{tikzpicture}[baseline=1ex,xscale=0.7,yscale=1,ext/.style={black,shorten <=-1pt, shorten >=-1pt}]
\draw (0,0) node (A4) {$\scriptstyle 4$};
\draw (0,0.5) node (A1) {$\scriptstyle 1$};
\draw[ext] (A4) -- (A1);
\end{tikzpicture}$ and so factors through the embedding of $X_3$ into $\PP_2$.
See Figure~\ref{f:easyrigid}, where we highlight in bold the images of the
map from $X_3$ to $X_2$ and the map from $X_3$ to $\PP_2$.
It follows that $X_2$ is rigid.

We have
$$\Ext(X_3,X_3)\cong D\overline{\Hom}(X_3,\tau X_3)\cong D\overline{\Hom}(X_3,X_2).$$
In this case, any nonzero map from $X_3$ to $X_2$ factors through the embedding of
$X_3$ into $\II_3$ (see Figure~\ref{f:easyrigid}). It follows that $X_3$ is rigid.

\begin{figure}
$$
\begin{tikzpicture}[xscale=0.34,yscale=0.2,baseline=(bb.base),quivarrow/.style={black, -latex},translate/.style={black, dotted}]

\path (0,0) node (bb) {}; 

\draw (0,0) node (X3) { \small
 \begin{tikzpicture} [xscale=0.7,yscale=1,ext/.style={black,shorten <=-1pt, shorten >=-1pt}]
  \draw (0,0) node (X33) {$\scriptstyle 3$};
  \draw (0,0.5) node (X34) {$\scriptstyle 4$};
  \draw (0,1) node (X31) {$\scriptstyle 1$};
\draw[ext] (X33) -- (X34);
\draw[ext] (X34) -- (X31);
 \end{tikzpicture}
};

\draw (8,0) node (X2) { \small
 \begin{tikzpicture} [xscale=0.7,yscale=1,ext/.style={black,shorten <=-1pt, shorten >=-1pt}]
  \draw (-0.25,0) node (X24) {$\scriptstyle \mathbf{4}$};
  \draw (-0.25,0.5) node (X21) {$\scriptstyle \mathbf{1}$};
  \draw (-0.5,1) node (X23) {$\scriptstyle 3$};
  \draw (0,1) node (X20) {$\scriptstyle 0$};
  \draw (-0.5,1.5) node (X22) {$\scriptstyle 2$};
\draw[ext] (X24) -- (X21);
\draw[ext] (X21) -- (X23);
\draw[ext] (X21) -- (X20);
\draw[ext] (X23) -- (X22);
 \end{tikzpicture}
};

\draw (4,-9) node (P2) { \small
 \begin{tikzpicture} [xscale=0.7,yscale=1,ext/.style={black,shorten <=-1pt, shorten >=-1pt}]
  \draw (0,0) node (P23A) {$\scriptstyle \mathbf{3}$};
  \draw (0,0.5) node (P24) {$\scriptstyle \mathbf{4}$};
  \draw (0,1) node (P21) {$\scriptstyle \mathbf{1}$};
  \draw (0,1.5) node (P23B) {$\scriptstyle 3$};
  \draw (0,2) node (P22) {$\scriptstyle 2$};
\draw[ext] (P23A) -- (P24);
\draw[ext] (P24) -- (P21);
\draw[ext] (P21) -- (P23B);
\draw[ext] (P23B) -- (P22);
 \end{tikzpicture}
};

\draw[->] (X3.south east) -- (P2.west);
\draw[->] (P2.east) -- (X2.south west);
\draw[->] (X3.east) -- (X2.west);

\draw (X3.south west) node {$X_3$};
\draw (X2.south east) node {$X_2$};
\draw (P2.south west) node {$\PP_2$};

\begin{scope}[shift={(15,0)}]

\draw (0,0) node (X3B) { \small
 \begin{tikzpicture} [xscale=0.7,yscale=1,ext/.style={black,shorten <=-1pt, shorten >=-1pt}]
  \draw (0,0) node (X3B3) {$\scriptstyle 3$};
  \draw (0,0.5) node (X3B4) {$\scriptstyle 4$};
  \draw (0,1) node (X3B1) {$\scriptstyle 1$};
\draw[ext] (X3B3) -- (X3B4);
\draw[ext] (X3B4) -- (X3B1);
 \end{tikzpicture}
};

\draw (8,0) node (X2B) { \small
 \begin{tikzpicture} [xscale=0.7,yscale=1,ext/.style={black,shorten <=-1pt, shorten >=-1pt}]
  \draw (-0.25,0) node (X2B4) {$\scriptstyle \mathbf{4}$};
  \draw (-0.25,0.5) node (X2B1) {$\scriptstyle \mathbf{1}$};
  \draw (-0.5,1) node (X2B3) {$\scriptstyle 3$};
  \draw (0,1) node (X2B0) {$\scriptstyle 0$};
  \draw (-0.5,1.5) node (X2B2) {$\scriptstyle 2$};
\draw[ext] (X2B4) -- (X2B1);
\draw[ext] (X2B1) -- (X2B3);
\draw[ext] (X2B1) -- (X2B0);
\draw[ext] (X2B3) -- (X2B2);
 \end{tikzpicture}
};

\draw (4,-9) node (I3) { \small
  \begin{tikzpicture} [xscale=0.7,yscale=1,ext/.style={black,shorten <=-1pt, shorten >=-1pt}]
  \draw (0,0) node (I33B) {$\scriptstyle \mathbf{3}$};
  \draw (-0.25,0.5) node (I34) {$\scriptstyle \mathbf{4}$};
  \draw (-0.25,1) node (I31) {$\scriptstyle \mathbf{1}$};
  \draw (-0.5,1.5) node (I33A) {$\scriptstyle 3$};
  \draw (-0.5,2) node (I32A) {$\scriptstyle 2$};
  \draw (0.25,0.5) node (I32B) {$\scriptstyle 2$};
  \draw (0,1.5) node (I30) {$\scriptstyle 0$};
\draw[ext] (I33B) -- (I32B);
\draw[ext] (I33B) -- (I34);
\draw[ext] (I34) -- (I31);
\draw[ext] (I31) -- (I33A);
\draw[ext] (I31) -- (I30);
\draw[ext] (I33A) -- (I32A);
 \end{tikzpicture}
};

\draw[->] (X3B.south east) -- (I3.west);
\draw[->] (I3.east) -- (X2B.south west);
\draw[->] (X3B.east) -- (X2B.west);

\draw (X3B.south west) node {$X_3$};
\draw (X2B.south east) node {$X_2$};
\draw (I3.south west) node {$\II_3$};

\end{scope}
\end{tikzpicture}
$$
\caption{Rigidity of $X_2$ and $X_3$.}
\label{f:easyrigid}
\end{figure}
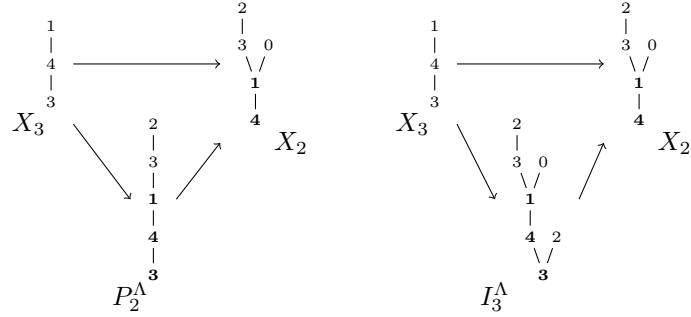

We have:
$$\Ext(X_5,X_5)\cong D\underline{\Hom}(\tau^{-1}X_5,X_5)\cong D\underline{\Hom}(X_6,X_5).$$
From the $Q_{\Lambda}$-coloured quivers of $X_5$ and $X_6$ in Figure~\ref{f:wildAR}, we see that
$\SL_1$ is a quotient of $X_6$ and is embedded into $X_5$.
Let $f_1:X_6\rightarrow X_5$ be the composition of these two maps.
From the $Q_{\Lambda}$-coloured quiver of $X_6$, we see that the module
$\begin{tikzpicture} [baseline=1ex,xscale=0.7,yscale=1,ext/.style={black,shorten <=-1pt, shorten >=-1pt}]
  \draw (0,0) node (A4) {$\scriptstyle 4$};
  \draw (0,0.5) node (A1) {$\scriptstyle 1$};
\draw[ext] (A1) -- (A4);
 \end{tikzpicture}$
is a quotient of $X_6$, and is embedded into $X_5$; let $f_2$ be the composition of the two maps.
Then it is easy to check that $\{f_1,f_2\}$ is a basis of $\Hom(X_6,X_5)$.

Furthermore, $f_1$ factors through $\PP_3$: we take the composition of the map from
$X_6$ to $\PP_3$ with image isomorphic to $X_3$ and the map from $\PP_3$ to $X_5$ whose image is the
submodule $\begin{tikzpicture}[baseline=1ex,xscale=0.7,yscale=1,ext/.style={black,shorten <=-1pt, shorten >=-1pt}]
\draw (0,0) node (A1) {$\scriptstyle 1$};
\draw (0,0.5) node (A3) {$\scriptstyle 3$};
\draw[ext] (A1) -- (A3);
\end{tikzpicture}$; see Figure~\ref{f:X5rigid}.

Note that the image of the map $f_1+f_2$ has basis given by the sum of the basis
elements of $X_5$ corresponding to the two copies of $1$ in the $Q_{\Lambda}$-coloured quiver of
$X_5$ and the basis element corresponding to the $4$; we indicate the basis elements involved
in the right hand diagram in Figure~\ref{f:X5rigid}.
The map $f_1+f_2$ factors through $\PP_2$: we take the composition of the map from $X_6$ to
$\PP_2$ with image isomorphic to $X_3$ and the map from $\PP_2$ to $X_5$ taking the basis
element corresponding to the $2$ in $\PP_2$ to the basis element corresponding to the $2$ in
$X_5$. See Figure~\ref{f:X5rigid}.
Since $\{f_1,f_1+f_2\}$ is a basis for $\Hom(X_6,X_5)$, it follows that $X_5$ is rigid. 

\begin{figure}
$$
\begin{tikzpicture}[xscale=0.34,yscale=0.2,baseline=(bb.base),quivarrow/.style={black, -latex},translate/.style={black, dotted}]

\path (0,0) node (bb) {}; 

\draw (0,0) node (X6) { \small
 \begin{tikzpicture} [xscale=0.7,yscale=1,ext/.style={black,shorten <=-1pt, shorten >=-1pt}]
  \draw (0,0) node (X63B) {$\scriptstyle 3$};
  \draw (0,0.5) node (X64B) {$\scriptstyle 4$};
  \draw (-0.33,0.72) node {\small $\scriptstyle \ast$}; 
  \draw (-0.25,1) node (X61B) {$\scriptstyle \mathbf{1}$};
  \draw (-0.5,0.5) node (X64A) {$\scriptstyle 4$};
  \draw (-0.75,1) node (X61A) {$\scriptstyle 1$};
  \draw (-0.6,1.5) node (X60) {$\scriptstyle 0$};
  \draw (-1,1.5) node (X63A) {$\scriptstyle 3$};
  \draw (-1,2) node (X62) {$\scriptstyle 2$};
\draw[ext] (X63B) -- (X64B);
\draw[ext] (X64B) -- (X61B);
\draw[ext] (X64A) -- (X61B);
\draw[ext] (X64A) -- (X61A);
\draw[ext] (X61A) -- (X60);
\draw[ext] (X61A) -- (X63A);
\draw[ext] (X63A) -- (X62);
 \end{tikzpicture}
};

\draw (8,0) node (X5) { \small
 \begin{tikzpicture} [xscale=0.7,yscale=1,ext/.style={black,shorten <=-1pt, shorten >=-1pt}]
  \draw (0,0) node (X54) {$\scriptstyle 4$};
  \draw (0,0.5) node (X51B) {$\scriptstyle 1$};
  \draw (0.25,1) node (X50) {$\scriptstyle 0$};
  \draw (-0.25,1) node (X53B) {$\scriptstyle 3$};
  \draw (-0.25,1.5) node (X52) {$\scriptstyle 2$};
  \draw (-0.5,0.5) node (X51A) {$\scriptstyle \mathbf{1}$};
  \draw (-0.75,1) node (X53A) {$\scriptstyle 3$};
\draw[ext] (X54) -- (X51B);
\draw[ext] (X51B) -- (X50);
\draw[ext] (X51B) -- (X53B);
\draw[ext] (X51A) -- (X53B);
\draw[ext] (X51A) -- (X53A);
\draw[ext] (X53B) -- (X52);
 \end{tikzpicture}
};

\draw (4,-9) node (P3) { \small
  \begin{tikzpicture} [xscale=0.7,yscale=1,ext/.style={black,shorten <=-1pt, shorten >=-1pt}]
  \draw (0,0) node (P33A) {$\scriptstyle \mathbf{3}$};
  \draw (0,0.5) node (P34) {$\scriptstyle \mathbf{4}$};
  \draw (0,1) node (P31) {$\scriptstyle \mathbf{1}$};
  \draw (0,1.5) node (P33B) {$\scriptstyle 3$};
\draw[ext] (P33A) -- (P34);
\draw[ext] (P34) -- (P31);
\draw[ext] (P31) -- (P33B);
 \end{tikzpicture}
};

\draw[->] (X6.south east) -- (P3.west);
\draw[->] (P3.east) -- (X5.south west);
\draw[->] (X6.east) to node[above] {$f_1$} (X5.west);

\draw (X6.south west) node {$X_6$};
\draw (X5.south east) node {$X_5$};
\draw (P3.south west) node {$\PP_3$};

\begin{scope}[shift={(16,0)}]

\draw (0,0) node (X6B) { \small
 \begin{tikzpicture} [xscale=0.7,yscale=1,ext/.style={black,shorten <=-1pt, shorten >=-1pt}]
  \draw (0,0) node (X6B3B) {$\scriptstyle 3$};
  \draw (0,0.5) node (X6B4B) {$\scriptstyle 4$};
  \draw (-0.33,0.72) node {\small $\scriptstyle \ast$}; 
  \draw (-0.25,1) node (X6B1B) {$\scriptstyle 1$};
  \draw (-0.5,0.5) node (X6B4A) {$\scriptstyle 4$};
  \draw (-0.75,1) node (X6B1A) {$\scriptstyle 1$};
  \draw (-0.6,1.5) node (X6B0) {$\scriptstyle 0$};
  \draw (-1,1.5) node (X6B3A) {$\scriptstyle 3$};
  \draw (-1,2) node (X6B2) {$\scriptstyle 2$};
\draw[ext] (X6B3B) -- (X6B4B);
\draw[ext] (X6B4B) -- (X6B1B);
\draw[ext] (X6B4A) -- (X6B1B);
\draw[ext] (X6B4A) -- (X6B1A);
\draw[ext] (X6B1A) -- (X6B3A);
\draw[ext] (X6B1A) -- (X6B0);
\draw[ext] (X6B3A) -- (X6B2);
 \end{tikzpicture}
};

\draw (8,0) node (X5B) { \small
 \begin{tikzpicture} [xscale=0.7,yscale=1,ext/.style={black,shorten <=-1pt, shorten >=-1pt}]
  \draw (0,0) node (X5B4) {$\scriptstyle \mathbf{4}$};
  \draw (0,0.5) node (X5B1B) {$\scriptstyle \mathbf{1}$};
  \draw (0.25,1) node (X5B0) {$\scriptstyle 0$};
  \draw (-0.25,1) node (X5B3B) {$\scriptstyle 3$};
  \draw (-0.25,1.5) node (X5B2) {$\scriptstyle 2$};
  \draw (-0.5,0.5) node (X5B1A) {$\scriptstyle \mathbf{1}$};
  \draw (-0.75,1) node (X5B3A) {$\scriptstyle 3$};
\draw[ext] (X5B4) -- (X5B1B);
\draw[ext] (X5B1B) -- (X5B0);
\draw[ext] (X5B1B) -- (X5B3B);
\draw[ext] (X5B1A) -- (X5B3B);
\draw[ext] (X5B1A) -- (X5B3A);
\draw[ext] (X5B3B) -- (X5B2);
 \end{tikzpicture}
};

\draw (4,-9) node (P2) { \small
 \begin{tikzpicture} [xscale=0.7,yscale=1,ext/.style={black,shorten <=-1pt, shorten >=-1pt}]
  \draw (0,0) node (P23A) {$\scriptstyle \mathbf{3}$};
  \draw (0,0.5) node (P24) {$\scriptstyle \mathbf{4}$};
  \draw (0,1) node (P21) {$\scriptstyle \mathbf{1}$};
  \draw (0,1.5) node (P23B) {$\scriptstyle 3$};
  \draw (0,2) node (P22) {$\scriptstyle 2$};
\draw[ext] (P23A) -- (P24);
\draw[ext] (P24) -- (P21);
\draw[ext] (P21) -- (P23B);
\draw[ext] (P23B) -- (P22);
 \end{tikzpicture}, \quad
};

\draw[->] (X6B.south east) -- (P2.west);
\draw[->] (P2.east) -- (X5B.south west);
\draw[->] (X6B.east) to node[above] {$f_1+f_2$} (X5B.west);

\draw (X6B.south west) node {$X_6$};
\draw (X5B.south east) node {$X_5$};
\draw (P2.south west) node {$\PP_2$};

\end{scope}
\end{tikzpicture}
$$
\caption{Rigidity of $X_5$.}
\label{f:X5rigid}
\end{figure}
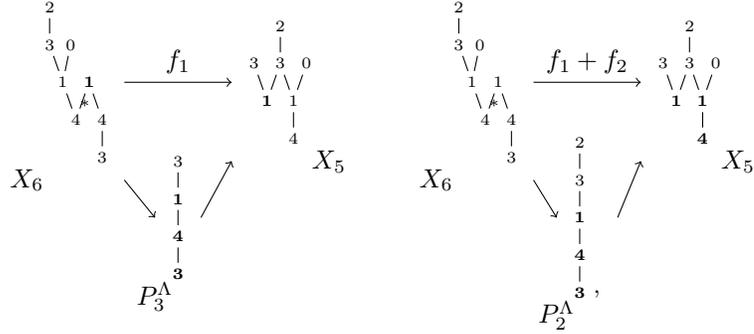

We have:
$$\Ext(X_7,X_7)\cong D\overline{\Hom}(X_7,\tau X_7)\cong D\overline{\Hom}(X_7,X_6).$$
From the $Q_{\Lambda}$-coloured quivers of $X_6$ and $X_7$ in Figure~\ref{f:wildAR}, we see that
each of the modules
$\begin{tikzpicture} [baseline=1ex,xscale=0.7,yscale=1,ext/.style={black,shorten <=-1pt, shorten >=-1pt}]
  \draw (0,0) node (A4) {$\scriptstyle 4$};
  \draw (0,0.5) node (A1) {$\scriptstyle 1$};
\draw[ext] (A1) -- (A4);
 \end{tikzpicture}$
and 
$\begin{tikzpicture} [baseline=1ex,xscale=0.7,yscale=1,ext/.style={black,shorten <=-1pt, shorten >=-1pt}]
  \draw (0,0) node (A4) {$\scriptstyle 4$};
  \draw (0,0.5) node (A1) {$\scriptstyle 1$};
  \draw (0,1) node (A0) {$\scriptstyle 0$};
\draw[ext] (A4) -- (A1);
\draw[ext] (A1) -- (A0);
\end{tikzpicture}$
is a quotient of $X_7$ and a submodule of $X_6$; we set $g_1$, $g_2$ to be the
maps from $X_7$ to $X_6$ given by the composition of the quotient map and the embedding in the
first and second case respectively. Then it is easy to check that $\{g_1,g_2\}$ is a basis of $\Hom(X_7,X_6)$.

Furthermore, $g_1$ factors through $\II_3$: we take the composition of the map from
$X_7$ to $\II_3$ with image
$\begin{tikzpicture} [baseline=2ex,xscale=0.7,yscale=1,ext/.style={black,shorten <=-1pt, shorten >=-1pt}]
  \draw (0,0) node (A3) {$\scriptstyle 3$};
  \draw (0.25,0.5) node (A4) {$\scriptstyle 4$};
  \draw (0.25,1) node (A1) {$\scriptstyle 1$};
  \draw (-0.25,0.5) node (A2) {$\scriptstyle 2$};
\draw[ext] (A1) -- (A4);
\draw[ext] (A4) -- (A3);
\draw[ext] (A2) -- (A3);
 \end{tikzpicture}$
and the map from $\II_3$ to $X_6$ with image isomorphic to $X_2$ (the composition of
the irreducible maps from $\II_3$ to $X_2$ and from $X_2$ to $X_6$);
see Figure~\ref{f:X6rigid}.

The map $g_2$ also factors through $\II_3$: we take the composition of the map from $X_7$ to
$\II_3$ with image
$\begin{tikzpicture} [baseline=2ex,xscale=0.7,yscale=1,ext/.style={black,shorten <=-1pt, shorten >=-1pt}]
  \draw (0,0) node (A3) {$\scriptstyle 3$};
  \draw (0.25,0.5) node (A4) {$\scriptstyle 4$};
  \draw (0.25,1) node (A1) {$\scriptstyle 1$};
  \draw (-0.25,0.5) node (A2) {$\scriptstyle 2$};
  \draw (0.25,1.5) node (A0) {$\scriptstyle 0$};
\draw[ext] (A0) -- (A1);
\draw[ext] (A1) -- (A4);
\draw[ext] (A4) -- (A3);
\draw[ext] (A2) -- (A3);
 \end{tikzpicture}$
and the map from $\II_3$ to $X_6$ with image isomorphic to $X_2$ considered above.
See Figure~\ref{f:X6rigid}.
Since $\{g_1,g_2\}$ is a basis for $\Hom(X_6,X_5)$, it follows that $X_6$ is rigid. 

\begin{figure}
$$
\begin{tikzpicture}[xscale=0.34,yscale=0.2,baseline=(bb.base),quivarrow/.style={black, -latex},translate/.style={black, dotted}]

\path (0,0) node (bb) {}; 

\draw (0,0) node (X7) { \small
 \begin{tikzpicture} [xscale=0.7,yscale=1,ext/.style={black,shorten <=-1pt, shorten >=-1pt}]
  \draw (0,0) node (X73B) {$\scriptstyle 3$};
  \draw (0.25,0.5) node (X74C) {$\scriptstyle 4$};
  \draw (0.25,1) node (X71B) {$\scriptstyle 1$};
  \draw (-0.25,0.5) node (X72) {$\scriptstyle 2$};
  \draw (-0.5,0) node (X73A) {$\scriptstyle 3$};
  \draw (-0.75,0.5) node (X74B) {$\scriptstyle 4$};
  \draw (-1.08,0.72) node {\small $\scriptstyle \ast$};
  \draw (-1,1) node (X71A) {$\scriptstyle 1$};
  \draw (-1.25,0.5) node (X74A) {$\scriptstyle 4$};
  \draw (-1,1.5) node (X70) {$\scriptstyle 0$};
\draw[ext] (X73B) -- (X74C);
\draw[ext] (X74C) -- (X71B);
\draw[ext] (X73B) -- (X72);
\draw[ext] (X73A) -- (X72);
\draw[ext] (X73A) -- (X74B);
\draw[ext] (X74B) -- (X71A);
\draw[ext] (X74A) -- (X71A);
\draw[ext] (X71A) -- (X70);
 \end{tikzpicture}
};

\draw (10,0) node (X6) { \small
 \begin{tikzpicture} [xscale=0.7,yscale=1,ext/.style={black,shorten <=-1pt, shorten >=-1pt}]
  \draw (0,0) node (X63B) {$\scriptstyle 3$};
  \draw (0,0.5) node (X64B) {$\scriptstyle 4$};
  \draw (-0.33,0.72) node {\small $\scriptstyle \ast$}; 
  \draw (-0.25,1) node (X61B) {$\scriptstyle 1$};
  \draw (-0.5,0.5) node (X64A) {$\scriptstyle \mathbf{4}$};
  \draw (-0.75,1) node (X61A) {$\scriptstyle \mathbf{1}$};
  \draw (-0.6,1.5) node (X60) {$\scriptstyle 0$};
  \draw (-1,1.5) node (X63A) {$\scriptstyle 3$};
  \draw (-1,2) node (X62) {$\scriptstyle 2$};
\draw[ext] (X63B) -- (X64B);
\draw[ext] (X64B) -- (X61B);
\draw[ext] (X64A) -- (X61A);
\draw[ext] (X64A) -- (X61B);
\draw[ext] (X61A) -- (X60);
\draw[ext] (X61A) -- (X63A);
\draw[ext] (X63A) -- (X62);
 \end{tikzpicture}
};

\draw (5,-10) node (I3) { \small
  \begin{tikzpicture} [xscale=0.7,yscale=1,ext/.style={black,shorten <=-1pt, shorten >=-1pt}]
  \draw (0,0) node (I33B) {$\scriptstyle \mathbf{3}$};
  \draw (-0.25,0.5) node (I34) {$\scriptstyle \mathbf{4}$};
  \draw (-0.25,1) node (I31) {$\scriptstyle \mathbf{1}$};
  \draw (-0.5,1.5) node (I33A) {$\scriptstyle 3$};
  \draw (-0.5,2) node (I32A) {$\scriptstyle 2$};
  \draw (0.25,0.5) node (I32B) {$\scriptstyle \mathbf{2}$};
  \draw (0,1.5) node (I30) {$\scriptstyle 0$};
\draw[ext] (I33B) -- (I32B);
\draw[ext] (I33B) -- (I34);
\draw[ext] (I34) -- (I31);
\draw[ext] (I31) -- (I33A);
\draw[ext] (I31) -- (I30);
\draw[ext] (I33A) -- (I32A);
 \end{tikzpicture}
};

\draw[->] (X7.south east) -- (I3.west);
\draw[->] (I3.east) -- (X6.south west);
\draw[->] (X7.east) to node[above] {$g_1$} (X6.west);

\draw (X7.south west) node {$X_7$};
\draw (X6.south east) node {$X_6$};
\draw (I3.south west) node {$\II_3$};

\begin{scope}[shift={(19,0)}]

\draw (0,0) node (X7B) { \small
 \begin{tikzpicture} [xscale=0.7,yscale=1,ext/.style={black,shorten <=-1pt, shorten >=-1pt}]
  \draw (0,0) node (X7B3B) {$\scriptstyle 3$};
  \draw (0.25,0.5) node (X7B4C) {$\scriptstyle 4$};
  \draw (0.25,1) node (X7B1B) {$\scriptstyle 1$};
  \draw (-0.25,0.5) node (X7B2) {$\scriptstyle 2$};
  \draw (-0.5,0) node (X7B3A) {$\scriptstyle 3$};
  \draw (-0.75,0.5) node (X7B4B) {$\scriptstyle 4$};
  \draw (-1.08,0.72) node {\small $\scriptstyle \ast$};
  \draw (-1,1) node (X7B1A) {$\scriptstyle 1$};
  \draw (-1.25,0.5) node (X7B4A) {$\scriptstyle 4$};
  \draw (-1,1.5) node (X7B0) {$\scriptstyle 0$};
\draw[ext] (X7B3B) -- (X7B4C);
\draw[ext] (X7B3B) -- (X7B2);
\draw[ext] (X7B3A) -- (X7B2);
\draw[ext] (X7B3A) -- (X7B4B);
\draw[ext] (X7B4C) -- (X7B1B);
\draw[ext] (X7B4B) -- (X7B1A);
\draw[ext] (X7B4A) -- (X7B1A);
\draw[ext] (X7B1A) -- (X7B0);
 \end{tikzpicture}
};

\draw (10,0) node (X6B) { \small
 \begin{tikzpicture} [xscale=0.7,yscale=1,ext/.style={black,shorten <=-1pt, shorten >=-1pt}]
  \draw (0,0) node (X6B3B) {$\scriptstyle 3$};
  \draw (0,0.5) node (X6B4B) {$\scriptstyle 4$};
  \draw (-0.33,0.72) node {\small $\scriptstyle \ast$}; 
  \draw (-0.25,1) node (X6B1B) {$\scriptstyle 1$};
  \draw (-0.5,0.5) node (X6B4A) {$\scriptstyle \mathbf{4}$};
  \draw (-0.75,1) node (X6B1A) {$\scriptstyle \mathbf{1}$};
  \draw (-0.6,1.5) node (X6B0) {$\scriptstyle \mathbf{0}$};
  \draw (-1,1.5) node (X6B3A) {$\scriptstyle 3$};
  \draw (-1,2) node (X6B2) {$\scriptstyle 2$};
\draw[ext] (X6B3B) -- (X6B4B);
\draw[ext] (X6B4B) -- (X6B1B);
\draw[ext] (X6B4A) -- (X6B1B);
\draw[ext] (X6B4A) -- (X6B1A);
\draw[ext] (X6B1A) -- (X6B0);
\draw[ext] (X6B1A) -- (X6B3A);
\draw[ext] (X6B3A) -- (X6B2);
 \end{tikzpicture}
};

\draw (5,-10) node (I3B) { \small
  \begin{tikzpicture} [xscale=0.7,yscale=1,ext/.style={black,shorten <=-1pt, shorten >=-1pt}]
  \draw (0,0) node (I3B3B) {$\scriptstyle \mathbf{3}$};
  \draw (-0.25,0.5) node (I3B4) {$\scriptstyle \mathbf{4}$};
  \draw (-0.25,1) node (I3B1) {$\scriptstyle \mathbf{1}$};
  \draw (-0.5,1.5) node (I3B3A) {$\scriptstyle 3$};
  \draw (-0.5,2) node (I3B2A) {$\scriptstyle 2$};
  \draw (0.25,0.5) node (I3B2B) {$\scriptstyle \mathbf{2}$};
  \draw (0,1.5) node (I3B0) {$\scriptstyle \mathbf{0}$};
\draw[ext] (I3B3B) -- (I3B2B);
\draw[ext] (I3B3B) -- (I3B4);
\draw[ext] (I3B4) -- (I3B1);
\draw[ext] (I3B1) -- (I3B3A);
\draw[ext] (I3B3A) -- (I3B2A);
\draw[ext] (I3B1) -- (I3B0);
 \end{tikzpicture}
};

\draw[->] (X7B.south east) -- (I3B.west);
\draw[->] (I3B.east) -- (X6B.south west);
\draw[->] (X7B.east) to node[above] {$g_2$} (X6B.west);

\draw (X7B.south west) node {$X_7$};
\draw (X6B.south east) node {$X_6$};
\draw (I3B.south west) node {$\II_3$};

\end{scope}
\end{tikzpicture}
$$
\caption{Rigidity of $X_6$.}
\label{f:X6rigid}
\end{figure}
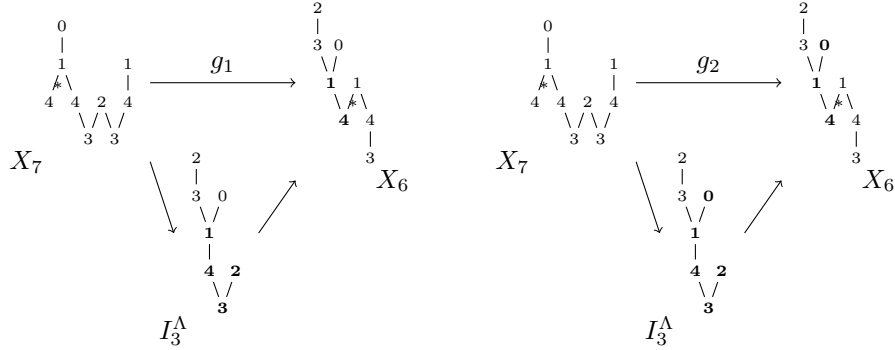

Finally, we have:
$$\Ext(X_6,X_6)\cong D\underline{\Hom}(\tau^{-1}X_6,X_6)\cong D\underline{\Hom}(X_7,X_6).$$
Consider the nonzero map $g_1:X_7\rightarrow X_6$.
The projective cover of $X_6$ is $P(X_6)\cong \PP_0\oplus \PP_1\oplus \PP_2$, so if $g_1$ factors through a
projective, it must factor through $P(X_6)$. It is easy to check directly that
$\Hom(X_7,\PP_0)=0$, $\Hom(X_7,\PP_1)=0$ and $\Hom(X_7,\PP_2)=0$, so $\Hom(X_7,P(X_6))=0$. Hence
$g_1$ does not factor through a projective and $\underline{\Hom}(X_7,X_6)\not=0$.
It follows that $X_6$ is not rigid.
\end{proof}

It is easy to check that $X_i$ is Schurian for
$i\in \{1,2,3,5,7\}$ and not Schurian for $i\in \{4,6\}$,
and that $\II_2$ and $\PP_2$ are Schurian, while $\II_3$ and
$\PP_3$ are not. This gives the picture of Schurian
and rigid modules shown on the left hand side of Figure~\ref{f:wildrigid} (using the same notation as in Figure~\ref{f:schurianrigid}),
corresponding to the modules in Figure~\ref{f:wildAR} (with $X_4$ omitted, as we have not checked if it is rigid).

In a tube of rank $3$, a module is rigid if and only if it has quasilength at most $2$, which is also the case in the
regular component $\R$. On the right hand side of Figure~\ref{f:wildrigid},
we show the pattern of $\tau$-rigid, rigid and Schurian
modules corresponding to the indecomposable objects in a tube of rank $3$.
This is from the tame case in Example~\ref{ex:running}, which was shown in Figure~\ref{f:tameAR}.

It is interesting to note the similarity of
the pattern of $\tau$-rigid, rigid and Schurian $\Lambda$-modules in these two cases, and to ask what the pattern
is for the whole of $\R$.

\begin{figure}
$$
\begin{tikzpicture}[scale=0.33,baseline=(bb.base),quivarrow/.style={black, -latex, shorten >=10pt,shorten <=10pt},translate/.style={black, dotted, shorten >=13pt, shorten <=13pt}]

\path (0,0) node (bb) {}; 

\draw (0,0) node {$\taurigid$};
\draw (2,0) node {$\taurigid$};
\draw (4,0) node {$\qhole$};
\draw (6,0) node {$\taurigid$};
\draw (1,1) node {$\notschuriantaurigid$};
\draw (3,1) node {$\qhole$};
\draw (5,1) node {$\notschuriantaurigid$};
\draw (2,2) node {$\schurianrigidnottaurigid$};
\draw (4,2) node {$\schurianrigidnottaurigid$};
\draw (1,3) node {$\schurianrigidnottaurigid$};
\draw (3,3) node {$\notschuriannotrigid$};
\draw (5,3) node {$\schurianrigidnottaurigid$};

\draw (3,-2) node {Wild case};

\begin{scope}[shift={(10,0)}]

\draw (3,-2) node {Tame case};

\draw (0,0) node {$\taurigid$};

\draw (2,0) node {$\taurigid$};
\draw (4,0) node {$\qhole$};
\draw (6,0) node {$\taurigid$};
\draw (1,1) node {$\notschuriantaurigid$};
\draw (3,1) node {$\qhole$};
\draw (5,1) node {$\notschuriantaurigid$};
\draw (2,2) node {$\schurianrigidnottaurigid$};
\draw (4,2) node {$\schurianrigidnottaurigid$};
\draw (1,3) node {$\schurianrigidnottaurigid$};
\draw (3,3) node {$\notschuriannotrigid$};
\draw (5,3) node {$\schurianrigidnottaurigid$};
\draw (2,4) node {$\notschuriannotrigid$};
\draw (4,4) node {$\notschuriannotrigid$};

\node [fill=white,inner sep=0pt] at (6,2) {$\notschuriannotrigid$};
\node [fill=white,inner sep=0pt] at (6,4) {$\schuriannotrigid$};
\node [fill=white,inner sep=0pt] at (0,2) {$\notschuriannotrigid$};
\node [fill=white,inner sep=0pt] at (0,4) {$\schuriannotrigid$};

\begin{scope}[on background layer]
\draw[dotted] (0,0) -- (0,6);
\draw[dotted] (6,0) -- (6,6);
\end{scope}

\end{scope}
\end{tikzpicture}
$$
\caption{$\tau$-rigid, rigid and Schurian $\Lambda$-modules in part of a wild example (left hand diagram).
In the right hand diagram we recall the $\tau$-rigid, rigid and Schurian modules from the tame
case in Example~\ref{ex:running} shown in Figure~\ref{f:tameAR}.}
\label{f:wildrigid}
\end{figure}
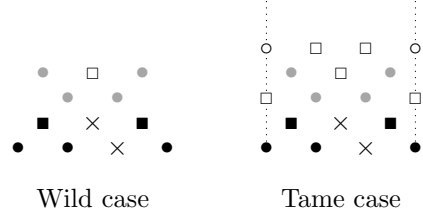

\section{A counter-example}
\label{s:counterexample}

In this section, we give the counter-example promised in the introduction.
This concerns the relationship with cluster algebras. For background on cluster
algebras, we refer to~\cite{FZ02,FZ07}. We fix a finite quiver $Q$ with no loops or $2$-cycles
and label its vertices $1,2,\ldots ,n$. Let $\mathbb{F}=\mathbb{Q}(x_1,\ldots, x_n)$ be the
field of rational functions in $n$ indeterminates over $\mathbb{Q}$. Then the associated cluster algebra
$\mathcal{A}(Q)$ is a subalgebra of $\mathbb{F}$. Here, cluster variables and clusters play a key role.
The initial cluster variables are $x_1,\ldots ,x_n$. The non-initial cluster variables can be
written in reduced form $f/m$, where $m$ is a monomial in the variables $x_1,\ldots ,x_n$, $f\in \mathbb{Q}[x_1,\ldots ,x_n]$
and $x_i\nmid f$ for all $i$.
Writing $m=x_1^{d_1}\cdots x_n^{d_n}$, where $d_i\geq 0$, we obtain a vector $(d_1,\ldots ,d_n)$, which
is called the \emph{$d$-vector} associated with the cluster variable $f/m$.

On the other hand, let $M$ be an indecomposable finite dimensional $\field Q$-module, and let $S_1,\ldots ,S_n$
be the nonisomorphic simple $\field Q$-modules. Then we have an associated \emph{dimension vector} $(d'_1,\ldots ,d'_n)$,
where $d'_i$ denotes the multiplicity of the simple module $S_i$ as a composition factor of $M$.

It was then of interest to investigate a possible relationship between the denominator vectors and the dimension
vectors of the indecomposable rigid $\field Q$-modules.
In the case where $Q$ is acyclic, the two sets coincide (see~\cite{BMRT07,BCKMRT07,CK06}).
When $Q$ is not acyclic, we do not have such a nice correspondence in general, but there are results in this
direction in~\cite{AD13,BM10,BMR09}. We have found the following example of a $d$-vector which is
not the dimension vector of an indecomposable $\field Q$-module.

\begin{example} \label{ex:counter}
Let $Q$ be the acyclic quiver from Example~\ref{ex:running}:
\vskip 0.05cm
\begin{equation}
\label{e:examplequiveragain}
\xymatrix{
1 \ar[r] \ar@/^1pc/[rrr] & 2 \ar[r] & 3 \ar[r] & 4.
}
\end{equation}
and let $\Lambda$ be the cluster-tilted algebra from this example. The quiver $Q_{\Lambda}$ of $\Lambda$ is
shown in Figure~\ref{f:tametiltedquiver}, and can be obtained from $Q$ by mutating at $2$ and then
at $3$. Recall that the AR-quiver for the largest tube in $\field Q$-mod (which has rank $3$) is shown in Figure~\ref{f:tamehereditaryAR} and the corresponding part of the AR-quiver for $\Lambda$-mod is shown in Figure~\ref{f:tameAR}. Let $M$ be the $\field Q$-module
$\begin{tikzpicture} [baseline=1ex,xscale=0.7,yscale=1,ext/.style={black,shorten <=-1pt, shorten >=-1pt}]
  \draw (0,0) node (A4) {$\scriptstyle 4$};
  \draw (-0.25,0.5) node (A1) {$\scriptstyle 1$};
  \draw (0.25,0.5) node (A3) {$\scriptstyle 3$};
\draw[ext] (A1) -- (A4);
\draw[ext] (A3) -- (A4);
 \end{tikzpicture}$, which is of quasilength $2=3-1$ in the tube
in Figure~\ref{f:tamehereditaryAR}. The corresponding $\Lambda$-module, $\clu{M}=\II_3=
\begin{tikzpicture} [baseline=3ex,scale=0.7,yscale=1,ext/.style={black,shorten <=-1pt, shorten >=-1pt}]
  \draw (0,0) node (I33A) {$\scriptstyle 3$};
  \draw (0.25,0.5) node (I34) {$\scriptstyle 4$};
  \draw (0.25,1) node (I31) {$\scriptstyle 1$};
  \draw (0.25,1.5) node (I33B) {$\scriptstyle 3$};
  \draw (0.25,2) node (I32A) {$\scriptstyle 2$};
  \draw (-0.25,0.5) node (I32B) {$\scriptstyle 2$};
\draw[ext] (I33A) -- (I34);
\draw[ext] (I34) -- (I31);
\draw[ext] (I31) -- (I33B);
\draw[ext] (I33B) -- (I32A);
\draw[ext] (I32B) -- (I33A);
 \end{tikzpicture},$
has dimension vector $(1,2,2,1)$. Then we know from~\cite[Thm.\ A]{BM10} that the denominator vector of the
corresponding cluster variable in the cluster algebra $\mathcal{A}(Q_{\Lambda})$ is $(1,2,1,1)=(1,2,2,1)-(0,0,1,0)$.
It is then easy to see that $(1,2,1,1)$ cannot occur as the dimension vector of any indecomposable
$\field Q_{\Lambda}$-module, by looking at an arbitrary representation with this dimension vector:
$$
\begin{tikzpicture}[scale=0.8,baseline=(bb.base),quivarrow/.style={black, -latex},translate/.style={black, dotted},relation/.style={black, dotted, thick=2pt}]

\path (0,0) node (bb) {}; 

\draw (0,0) node (X1) {\small $\field$};
\draw (2,0) node (X3) {\small $\field$};
\draw (2,1.5) node (X2) {\small $\field^2$};
\draw (1,-1.5) node (X4) {\small $\field$};

\draw[quivarrow] (X2.south) -- (X3.north);
\draw[quivarrow] (X3.west) -- (X1.east);
\draw[quivarrow] (X4.north east) -- (X3.south west);
\draw[quivarrow] (0.15,-0.2) -- (0.85,-1.3);
\draw[quivarrow] (0,-0.34) -- (0.7,-1.44);

\end{tikzpicture}
$$
Here, a nonzero summand of $\field^2$ has to split off, so that $M$ cannot be indecomposable. Hence we have
found a $d$-vector which is not the dimension vector of any indecomposable $\field Q_{\Lambda}$-module.
Note that it cannot be the dimension vector of any indecomposable $\Lambda$-module either, by
the same argument.
\end{example}

There is another interesting class of vectors occurring in the theory of cluster algebras, known as
the $c$-vectors. They were introduced in~\cite{FZ07} (see~\cite{FZ07} for the definition).
In the case of an acyclic quiver $Q$ it is known that the set of (positive) $c$-vectors coincides
with the set of real Schur roots (see~\cite{chavez,ST13}), that is, the dimension vectors of the
indecomposable rigid $\field Q$-modules.

But the relationship between $c$-vectors and $d$-vectors is not so nice in the general case.
It is known for any finite quiver $Q$ without loops or two-cycles that each positive $c$-vector of $Q$ is the dimension vector of a finite dimensional Schurian rigid module over an appropriate
Jacobian algebra with quiver $Q$ (\cite{nagao13}; see~\cite[Thm. 14]{chavez}). As pointed out
in~\cite{NS14}, this implies that every positive $c$-vector of $Q$ is a Schur root of $Q$, hence
a root of $Q$. Then we get the following:

\begin{proposition}
There is a finite quiver $Q$ without loops or $2$-cycles for which the set of $d$-vectors associated
to $\mathcal{A}(Q)$ is not contained in the set of positive $c$-vectors of $\mathcal{A}(Q)$.
\end{proposition}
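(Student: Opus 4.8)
The plan is to take $Q$ to be the quiver $Q_{\Lambda}$ of Figure~\ref{f:tametiltedquiver}, the cluster-tilted quiver from Example~\ref{ex:counter}, and to show that the $d$-vector $(1,2,1,1)$ of $\mathcal{A}(Q)$ produced there is not a positive $c$-vector of $\mathcal{A}(Q)$. Since the set of $d$-vectors of $\mathcal{A}(Q)$ contains $(1,2,1,1)$, this yields the proposition; and since $(1,2,1,1)$ has nonnegative entries, by sign-coherence of $c$-vectors it can be a $c$-vector at all only if it is a positive one, so it suffices to rule out its being a positive $c$-vector.

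First I would recall, from the discussion immediately preceding the statement, that by~\cite{nagao13} (see also~\cite[Thm.~14]{chavez} and~\cite{NS14}) every positive $c$-vector of $\mathcal{A}(Q)$ is a Schur root of $Q$, and in particular is the dimension vector of an indecomposable $\field Q$-module. On the other hand, Example~\ref{ex:counter} establishes that $(1,2,1,1)$ --- the $d$-vector of the cluster variable of $\mathcal{A}(Q_{\Lambda})$ attached to $\clu{M}=\II_3$ --- is \emph{not} the dimension vector of any indecomposable $\field Q_{\Lambda}$-module, since in any representation of $Q_{\Lambda}$ of this dimension vector a nonzero summand splits off at vertex $2$. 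Combining the two facts: were $(1,2,1,1)$ a positive $c$-vector of $\mathcal{A}(Q)$, it would be the dimension vector of an indecomposable $\field Q$-module, contradicting Example~\ref{ex:counter}. Hence $(1,2,1,1)$ is a $d$-vector of $\mathcal{A}(Q)$ which is not a positive $c$-vector, and the asserted inclusion of sets fails.

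There is no genuine obstacle here beyond careful bookkeeping: one must ensure that the $d$-vector and the putative $c$-vector are read off with respect to the \emph{same} initial seed, namely the one given by $Q_{\Lambda}$, which is exactly the convention under which the $d$-vector was computed in Example~\ref{ex:counter} via~\cite[Thm.~A]{BM10}; and one should invoke the chain~\cite{nagao13}, \cite{chavez}, \cite{NS14} precisely as assembled in the paragraph above the statement. The real content of the proposition lies entirely in Example~\ref{ex:counter} together with that cited chain, so the proof itself is a short deduction.
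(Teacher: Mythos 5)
Your proposal is correct and takes essentially the same route as the paper: both arguments select $Q=Q_{\Lambda}$ from Example~\ref{ex:counter}, note that the $d$-vector $(1,2,1,1)$ is not the dimension vector of any indecomposable $\field Q_{\Lambda}$-module, and invoke the chain \cite{nagao13}, \cite[Thm.~14]{chavez}, \cite{NS14} to conclude that every positive $c$-vector is such a dimension vector, yielding the desired contradiction. Your added remarks on sign-coherence and on the initial-seed convention are accurate but inessential refinements of the paper's presentation.
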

\begin{proof}
We consider the quiver $Q_{\Lambda}$ in Example~\ref{ex:counter}. In this case, the set of $d$-vectors
is not contained in the set of dimension vectors of the indecomposable $\field Q_{\Lambda}$-modules.
If the set of $d$-vectors of $Q_{\Lambda}$ was contained in the set of positive $c$-vectors of $Q_{\Lambda}$,
then we would have a contradiction, since, as we mentioned above, every positive $c$-vector of $Q_{\Lambda}$
is the dimension vector of an indecomposable $\field Q_{\Lambda}$-module.
\end{proof}

\section{Three dimension vectors}
\label{s:three}
We have seen in Section~\ref{s:counterexample} that there is a cluster-tilted algebra $\Lambda$ associated
to a quiver of tame representation type with the property that not every $d$-vector of $\mathcal{A}(Q_{\Lambda})$
is the dimension vector of an indecomposable $\Lambda$-module.
So we can ask if it is possible to express each such $d$-vector as a sum of a small number of such dimension vectors.
Our final result shows that, for a cluster-tilted algebra $\Lambda$ associated to a quiver of tame representation
type, it is always possible to write a $d$-vector for $\mathcal{A}(Q_{\Lambda})$ as the sum of at most three dimension vectors of indecomposable rigid $\Lambda$-modules.

We do not know whether it is possible to write every $d$-vector for $\mathcal{A}(Q_{\Lambda})$ as a sum of at most
two dimension vectors of indecomposable rigid $\Lambda$-modules. It would also be interesting to know whether
analogous results hold in the wild case.

As before, we fix a quiver $Q$ of tame representation type. We fix an arbitrary
cluster-tilting object $T$ in the corresponding cluster category, $\C$.
Suppose $M$ is an object in $\C$, with corresponding
$\Lambda$-module $\clu{M}$. The vertices of the quiver of $\Lambda=\End_{\C}(T)$
are indexed by the indecomposable direct summands $\ind(T)$ of $T$. The dimension
vector of $\clu{M}$ is given by the tuple $(\dimv_V(M))_V$, where $V$ varies
over the indecomposable direct summands of $T$. We have:
$$\dimv_V(M)=\dim \Hom_{\C}(V,M)=\dim \Hom(V,M)+\dim \Hom(M,\tau^2 V).$$
We shall also write $\dimv_V(\clu{M})$ for $\dimv_V(M)$.
Note that if $M$ lies in $\add(\tau T)$ then $\clu{M}=0$ and $\dimv_V(M)=0$ for all
$V\in \ind(T)$.

If $M$ is (induced by) an indecomposable module in $\T$, then there is a
mesh $\M_M$ in the AR-quiver of $\T$ corresponding to the almost split
sequence with last term $M$. This is displayed in Figure~\ref{f:mesh}, with the diagram on the left indicating the case when $M$ is on the border of $\T$. We denote the middle term whose
quasilength is greater (respectively, smaller) than that of $M$ by $M_U$
(respectively, $M_L$). Note that if $M$ is on the border of $\T$ then $M_L$ does not exist.

\begin{figure}
$$
\begin{tikzpicture}[scale=1,baseline=(bb.base),quivarrow/.style={black, -latex, shorten >=10pt,shorten <=10pt},translate/.style={black, dotted, shorten >=13pt, shorten <=13pt}]
  
\path (0,0) node (bb) {}; 

\draw (0,0) node {$\tau M$};
\draw (2,0) node {$M$};
\draw (1,1) node {$M_U$};

\draw[quivarrow] (0,0) -- (1,1);
\draw[quivarrow] (1,1) -- (2,0);

\draw[translate] (0,0) -- (2,0);

\end{tikzpicture}
\quad\quad\quad
\begin{tikzpicture}[scale=1,baseline=(bb.base),quivarrow/.style={black, -latex, shorten >=10pt,shorten <=10pt},translate/.style={black, dotted, shorten >=13pt, shorten <=13pt}]
  
\path (0,0) node (bb) {}; 

\draw (0,0) node {$\tau M$};
\draw (2,0) node {$M$};
\draw (1,1) node {$M_U$};
\draw (1,-1) node {$M_L$};

\draw[quivarrow] (0,0) -- (1,1);
\draw[quivarrow] (1,1) -- (2,0);
\draw[quivarrow] (0,0) -- (1,-1);
\draw[quivarrow] (1,-1) -- (2,0);

\draw[translate] (0,0) -- (2,0);

\end{tikzpicture}
$$
\caption{The mesh ending at an indecomposable object in $\T$.
The diagram on the left indicates the case where $M$ is on the border of $\T$.}
\label{f:mesh}
\end{figure}
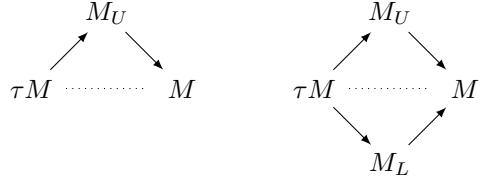

For objects $X,Y$ of $\C$ we shall write
$$\delta_{X,Y}=\begin{cases} 1, & \text{if $X\cong Y$;} \\ 0, & otherwise. \end{cases}$$

\begin{lemma} \label{l:mesh}
Let $M$ be an indecomposable object in $\T$ with mesh $\M_M$ as above.
Then:
$$\dimv_V(\clu{M})=\dimv_V(\clu{M_U})+\dimv_V(\clu{M_L})-\dimv_V(\tau \clu{M})+
\delta_{V,M},$$
where the terms involving $M_L$ do not appear if $M$ is on the border of $\T$.
\end{lemma}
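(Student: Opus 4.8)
The statement is an additivity formula for the dimension vector function $\dimv_V(-)=\dim\Hom_{\C}(V,-)$ along the almost split sequence ending in $M$. The plan is to exploit the fact that $\dimv_V(\clu{X})=\dim\Hom_{\C}(V,X)$, where $\Hom_{\C}(V,X)=\Hom(V,X)\oplus \Dual\Hom(X,\tau^2 V)$, and to show that each of the two functors $\Hom(V,-)$ and $\Dual\Hom(-,\tau^2 V)$ is \emph{almost} additive on the almost split sequence $0\to \tau M\to M_U\oplus M_L\to M\to 0$, the defect being exactly the term $\delta_{V,M}$.

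First I would recall the general principle (essentially the Auslander--Reiten/defect formula): for any finitely presented functor applied to an almost split sequence $0\to \tau M\to E\to M\to 0$ in $\field Q$-mod (here $E=M_U\oplus M_L$, with $M_L$ absent when $M$ is on the border), the sequence $0\to \Hom(V,\tau M)\to \Hom(V,E)\to \Hom(V,M)$ is exact, and the cokernel of the last map is $\underline{\Hom}(V,M)$, which is $\field$ if $V\cong M$ (the identity, which does not factor through a non-isomorphism, and since $M$ is not $\tau$-injective in this standard tube the almost split sequence is the relevant one) and $0$ otherwise; since every indecomposable object of $\T$ is a brick of endomorphism ring $\field$ up to the quasilength bound, and the only map $V\to M$ not factoring through $E$ is a scalar multiple of the identity when $V\cong M$. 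This gives
$$\dim\Hom(V,M)=\dim\Hom(V,M_U)+\dim\Hom(V,M_L)-\dim\Hom(V,\tau M)+\delta_{V,M}.$$
Dually, applying $\Hom(-,\tau^2 V)$ and taking $\field$-duals, one gets the exact sequence $0\to \Hom(M,\tau^2 V)\to \Hom(E,\tau^2 V)\to \Hom(\tau M,\tau^2 V)$, with cokernel $\overline{\Hom}(\tau M,\tau^2 V)$; by the AR-formula this cokernel is $\Dual\Ext^1(\tau M,\tau V)\cong\Dual\underline{\Hom}(M,\tau V)$... more directly, $\overline{\Hom}(\tau M,\tau^2 V)$ is $\field$ iff $\tau^2 V\cong\tau M$, i.e.\ $V\cong M$, and $0$ otherwise. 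Taking duals:
$$\dim\Hom(M,\tau^2 V)=\dim\Hom(M_U,\tau^2 V)+\dim\Hom(M_L,\tau^2 V)-\dim\Hom(\tau M,\tau^2 V)+\delta_{V,M}.$$

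Then I would add these two identities. The left-hand sides sum to $\dim\Hom_{\C}(V,M)=\dimv_V(\clu M)$; the first terms on the right sum to $\dimv_V(\clu{M_U})$, the second to $\dimv_V(\clu{M_L})$, the third to $\dimv_V(\tau\clu M)=\dim\Hom_{\C}(V,\tau M)$ with a minus sign, and the $\delta_{V,M}$ contributions sum to $2\delta_{V,M}$. That gives an extra $\delta_{V,M}$, so I must be careful: in fact exactly one of the two defect terms occurs, not both, because $V\cong M$ forces a split conflict between $V$ being a brick in $\T$ and $\tau^2 V\cong \tau M$; more precisely, when $V\cong M$, the first identity already captures the whole discrepancy via the identity map $M\to M$, while in the second, the relevant cokernel $\overline{\Hom}(\tau M,\tau^2 V)$ involves injective-stable maps and any nonzero map $\tau M\to \tau^2 V\cong\tau M$ is an iso, which \emph{does} factor through an injective only if $\tau M$ is injective --- which it is not. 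So I would instead handle the $V\cong M$ case separately by hand: compute $\dimv_V(\clu M)$, $\dimv_V(\clu{M_U})$, etc.\ directly using Lemma~\ref{l:dimhom} and the description of which objects $M_U,M_L,\tau M$ have nonzero $H$- and $F$-maps from $M$, and verify the formula with a single $\delta_{V,M}=1$; for $V\not\cong M$ the clean additivity of both summands gives the formula with $\delta_{V,M}=0$.

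\textbf{Main obstacle.} The delicate point is pinning down the defect term: the naive ``add the two exact sequences'' argument overcounts by $\delta_{V,M}$, so the real work is to show that precisely one unit of defect appears. I expect to resolve this by arguing that $\Hom_{\C}(V,-)$ is a half-exact (indeed the relevant sequence-exact) functor on $\C$ applied to the AR-triangle $\tau M\to M_U\oplus M_L\to M\to \tau M[1]$, whose connecting map $\Hom_{\C}(V,M)\to\Hom_{\C}(V,\tau M[1])=\Ext^1_{\C}(V,M)$ has image of dimension $\delta_{V,M}$ --- because $\Ext^1_{\C}(V,M)$ is dual to $\Ext^1_{\C}(M,V)$, and by the AR property of the triangle the boundary map is surjective onto the socle, which is one-dimensional exactly when $V\cong M$. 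Carrying this out in the orbit category $\C$ (rather than splitting into $H$- and $F$-parts and risking double counting) is the cleanest route, and is where I would spend the most care; alternatively the brute-force check in the single case $V\cong M$ using the explicit tube combinatorics of Lemma~\ref{l:dimhom} and Lemma~\ref{l:homlow} is a safe fallback.
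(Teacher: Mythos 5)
Your basic idea — split $\dimv_V(\clu{M})=\dim\Hom(V,M)+\dim\Hom(M,\tau^2 V)$ into $H$- and $F$-parts and compute the AR defect of each on the almost split sequence $0\to \tau M\to E\to M\to 0$ — is a legitimate alternative to what the paper does, but there is a concrete error in your second identity that then generates the spurious ``overcounting'' worry that occupies most of the proposal. The contravariant defect $\coker\bigl(\Hom(E,\tau^2V)\to\Hom(\tau M,\tau^2V)\bigr)$ is nonzero precisely when there is a map $\tau M\to \tau^2 V$ that is a split mono, i.e.\ when $\tau M\cong \tau^2 V$, which means $M\cong\tau V$, \emph{not} $V\cong M$. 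So the correct $F$-part identity is
$$\dim\Hom(M,\tau^2 V)=\dim\Hom(M_U,\tau^2 V)+\dim\Hom(M_L,\tau^2 V)-\dim\Hom(\tau M,\tau^2 V)+\delta_{M,\tau V},$$
and adding the two identities gives total correction $\delta_{V,M}+\delta_{M,\tau V}$. These two terms are never simultaneously nonzero: the first requires $M\in\add T$, the second $M\in\add\tau T$, and a cluster-tilting object shares no summands with its $\tau$-shift. Moreover $\delta_{M,\tau V}=1$ happens exactly when $\clu{M}=0$, and in that case the stated lemma is only valid with the reading $\tau\clu{M}=0$ (which is what the paper writes) rather than $\clu{\tau M}$; your extra $\delta_{M,\tau V}$ is exactly the discrepancy $\dimv_V(\clu{\tau M})-\dimv_V(\tau\clu M)$. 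So once you fix the defect term, no separate case analysis for $V\cong M$ is needed; in particular, in the situations where the lemma is applied ($M\in D$, so $M\notin\add\tau T$), the second defect vanishes and you land directly on the claimed formula. A side remark: the cokernels you want are not $\underline{\Hom}(V,M)$ and $\overline{\Hom}(\tau M,\tau^2V)$ — for tube modules the stable Homs coincide with the full Homs, so those identifications are wrong — they are the covariant and contravariant defects of the almost split sequence, which are concentrated at the end terms. Your ``more directly'' reasoning recovers the right answer on the covariant side but carries the $\tau$-shift error on the contravariant side.

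The paper's proof is much shorter and avoids the $H/F$ decomposition entirely: work in $\Lambda\text{-mod}$, where dimension vectors are exactly additive on short exact sequences. If $\clu{M}$ is not projective, the mesh ending at $\clu{M}$ is the image of the mesh in $\C$ (with $\tau T$-summands deleted), so additivity gives the formula with zero correction. If $\clu{M}$ is projective (i.e.\ $M\in\add T$), then $\rad\clu{M}\cong\clu{M_U}\oplus\clu{M_L}$ and $\tau\clu{M}=0$, and the $\delta_{V,M}$ is simply the contribution of the simple top $\clu{M}/\rad\clu{M}$ at position $V$. Your fallback of passing to the AR triangle in $\C$ also works, but you would still have to account for the kernel defect $\ker\bigl(\Hom_\C(V,\tau M)\to\Hom_\C(V,E)\bigr)$, which your sketch omits and which is again $\delta_{M,\tau V}$ by the $2$-Calabi--Yau property.
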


\begin{proof}
If $V\not\cong M$ then the mesh ending at $\clu{M}$ in
$\Lambda$-mod is the image under $\Hom_{\C}(T,-)$ of the mesh ending at $M$ in
$\C$ (deleting zero modules corresponding to summands of $\tau T$).
If $V\cong M$ then $\clu{M}$ is an indecomposable projective module,
so $\rad(\clu{M})\cong \clu{M_L}\oplus \clu{M_U}$.
\end{proof}

We assume for the rest of this section that there is an indecomposable
direct summand $T_0$ of $T_{\T}$ with the property that every indecomposable
direct summand of $T_{\T}$ lies in the wing $\W_{T_0}$.
(In the notation at the beginning of Section~\ref{s:tube}, we have $s=1$).

We assume further that the quasilength of $T_0$ (i.e.\ $l_0$) is equal to
$r-1$. We arrange the labelling, for simplicity, so that the quasisimple modules
in $\W_{\tau T}$ are the $Q_i$ with $i\in [0,r-2]$, so in the notation from
Section~\ref{s:tube}, $i_0=0$.
Let
\begin{equation}
\label{e:Ddef}
D=\{M_{i,l}\,:\,1\leq i\leq r-1,r-i\leq l\leq 2r-2-i\}.
\end{equation}
Note that $D$ can be formed from $\wideR{0}$ and its reflection in the line $L$ through
the modules of quasilength $r-1$. It is a diamond-shaped region, with leftmost corner
$T_0\cong M_{1,r-1}$ and rightmost corner $\tau^2 T_0\cong M_{r-1,r-1}$. The lowest point is the unique
quasisimple module $Q_{r-1}$ not in $\W_{\tau T_0}$ and the highest point is the same as
the highest point $M_{1,2r-3}$ of $\wideR{0}$, immediately below $\Top_0$; see
Figure~\ref{f:regionD}.

\begin{figure}
$$
\begin{tikzpicture}[xscale=0.30,yscale=0.35,baseline=(bb.base),quivarrow/.style={black, -latex, shorten >=10pt,shorten <=10pt},translate/.style={black, dotted, shorten >=13pt, shorten <=13pt}]

\newcommand{\rank}{11}
\newcommand{\copies}{2}

\pgfmathtruncatemacro{\height}{\rank-1};
\pgfmathtruncatemacro{\rankm}{\rank-1};
\pgfmathtruncatemacro{\rankmm}{\rank-2};
\pgfmathtruncatemacro{\heightm}{\height-1};
\pgfmathtruncatemacro{\heighth}{\height/2-1};

\pgfmathparse{\rank*\copies}\let\totwidth\pgfmathresult;
\pgfmathparse{\rank*\copies-1}\let\totwidthm\pgfmathresult;

\path (0,0) node (bb) {}; 

\foreach \i in {0,...,\totwidth}{
\foreach \j in {0,...,\heighth}{
\draw (\i*2,\j*2) node[black] {$\scriptstyle \circ$};}}

\foreach \i in {0,...,\totwidthm}{
\foreach \j in {0,...,\heighth}{
\draw (\i*2+1,\j*2+1) node[black] {$\scriptstyle \circ$};}}

\foreach \i/\j in
{0/10,2/10,4/10,6/10,8/10,14/10,16/10,18/10,20/10,22/10,24/10,26/10,28/10,30/10,
36/10,38/10,40/10,42/10,44/10,
1/11,3/11,5/11,7/11,11/11,15/11,17/11,19/11,21/11,23/11,25/11,27/11,29/11,33/11,
37/11,39/11,41/11,43/11,
0/12,2/12,4/12,6/12,10/12,12/12,16/12,18/12,20/12,22/12,24/12,26/12,28/12,32/12,34/12,
38/12,40/12,42/12,44/12,
1/13,3/13,5/13,9/13,11/13,13/13,17/13,19/13,21/13,23/13,25/13,27/13,31/13,33/13,35/13,
39/13,41/13,43/13,
0/14,2/14,4/14,8/14,10/14,12/14,14/14,18/14,20/14,22/14,24/14,26/14,30/14,32/14,34/14,
36/14,40/14,42/14,44/14,
1/15,3/15,7/15,9/15,11/15,13/15,15/15,19/15,21/15,23/15,25/15,29/15,31/15,33/15,35/15,
37/15,41/15,43/15,
0/16,2/16,6/16,8/16,10/16,12/16,14/16,16/16,20/16,22/16,24/16,28/16,30/16,32/16,34/16,
36/16,38/16,42/16,44/16,
1/17,5/17,7/17,9/17,11/17,13/17,15/17,17/17,21/17,23/17,27/17,29/17,31/17,33/17,35/17,
37/17,39/17,43/17,
0/18,4/18,6/18,8/18,10/18,12/18,14/18,16/18,18/18,22/18,26/18,28/18,30/18,32/18,34/18,
36/18,38/18,40/18,44/18,
3/19,5/19,7/19,9/19,11/19,13/19,15/19,17/19,19/19,25/19,27/19,29/19,31/19,33/19,35/19,
37/19,39/19,41/19,
2/20,4/20,6/20,8/20,10/20,12/20,14/20,16/20,18/20,20/20,24/20,26/20,28/20,30/20,32/20,34/20,
36/20,38/20,40/20,42/20}
{\draw (\i,\j) node {$\scriptstyle \circ$};}

\foreach \i/\j in
{12/10,13/11,14/12,15/13,16/14,17/15,18/16,19/17,20/18,21/19,22/20,
23/19,24/18,25/17,26/16,27/15,28/14,29/13,30/12,31/11,32/10,
34/10,35/11,36/12,37/13,38/14,39/15,40/16,41/17,42/18,43/19,44/20,
0/20,1/19,2/18,3/17,4/16,5/15,6/14,7/13,8/12,9/11,10/10}
{\draw (\i,\j) node {$\circledcirc$};}

\begin{scope}[on background layer]

\draw[dotted] (0,0) -- (0,2*\height+2);
\draw[dotted] (2*\rank,0) -- (2*\rank,2*\height+2);
\draw[dotted] (2*\totwidth,0) -- (2*\totwidth,2*\height+2);


\draw[dashed] (4,0) -- (\rank+2,\rank-2) -- (2*\rank,0);
\draw[dashed] (2*\rank+4,0) -- (2*\rank+\rank+2,\rank-2) -- (2*\rank+2*\rank,0);


\draw[dashed] (0,\rankmm) -- (\totwidth*2,\rankmm);

\draw[draw=none,fill=gray!30,opacity=0.4] (\rank+2,\rank-2) -- (2*\rank,0) -- (2*\rank+\rank-2,\rank-2) -- (2*\rank,2*\rank-4);

\draw[draw=none,fill=gray!30,opacity=0.4] (0,0) -- (\rank-2,\rank-2) -- (0,2*\rank-4);
\draw[draw=none,fill=gray!30,opacity=0.4] (4*\rank,0) -- (4*\rank-\rank+2,\rank-2) -- (4*\rank,2*\rank-4);

\end{scope}

\draw (\rank+2,\rank-2+0.55) node[black] {$\scriptstyle  T_0$};
\draw (3*\rank+2,\rank-2+0.55) node[black] {$\scriptstyle T_0$};

\draw (\rank,\rank-2+0.45) node[black] {$\scriptstyle  \tau T_0$};
\draw (3*\rank,\rank-2+0.45) node[black] {$\scriptstyle \tau T_0$};

\draw (0,2*\rank-2+0.6) node[black] {$\scriptstyle \Top_0$};
\draw (2*\rank,2*\rank-2+0.6) node[black] {$\scriptstyle \Top_0$};
\draw (4*\rank,2*\rank-2+0.6) node[black] {$\scriptstyle \Top_0$};

\draw (4.4,-0.9) node {$\scriptstyle M_{1,1}$};
\draw (\rank*2+4.4,-0.9) node {$\scriptstyle M_{1,1}$};
\draw (0,-0.9) node {$\scriptstyle M_{\rankm,1}$};
\draw (2*\rank,-0.9) node {$\scriptstyle M_{\rankm,1}$};
\draw (4*\rank,-0.9) node {$\scriptstyle M_{\rankm,1}$};


\draw (2.2,0.5+\rankmm) node {$\scriptstyle L$};

\newcommand{\Mi}{4}
\newcommand{\Ml}{9}
\pgfmathtruncatemacro{\XMl}{\rank-\Mi-1};
\pgfmathtruncatemacro{\YMl}{\Mi+\Ml};


\draw (\Mi*2+2+\Ml-1,\Ml-1) node (M) {};
\draw (\Mi*2+2+\Ml-1,\Ml-1+0.5) node {$\scriptstyle M$};


\draw (\Mi*2+2+\XMl-1,\XMl-1) node (XM) {};
\draw (\Mi*2+2+\XMl-1,\XMl-1-0.55) node {$\scriptstyle X_M$};


\draw (2+\YMl-1,\YMl-1) node (YM) {};
\draw (2+\YMl-1,\YMl-1+0.7) node {$\scriptstyle Y_M$};


\draw (19,3) node (XMP) {};
\draw (18.9,2.35) node {$\scriptstyle X'_M$};


\foreach \i/\j in {1/1,1/10,3/3,3/8,4/2,5/1,7/1,7/4,9/2,10/1}
{\draw (\i*2+\j+1,\j-1) node {$\bullet$};
\draw (\i*2+\j+1+\totwidth,\j-1) node {$\bullet$};}


\draw (28,4) node (ZM) {};
\draw (28,3.5) node {$\scriptstyle Z_M$};


\draw (24,8) node (ZMP) {};
\draw (24,8.5) node {$\scriptstyle Z'_M$};

\begin{scope}[on background layer]

\draw[dashed] ($(XM)$) -- ($(M)$);


\draw[dashed] ($(YM)$) -- ($(M)$);


\draw[dashed] ($(XMP)$) -- ($(ZMP)$);


\draw[dashed] ($(ZMP)$) -- ($(ZM)$);

\draw (23,0) -- (22.5,-0.5);
\draw (21.3,-0.3) -- (19.2,1.8);
\draw (18.2,2.8) -- (16.2,4.8) -- (17.2,5.8) -- (23,0);

\draw (15.8,5.2) -- (16.8,6.2) -- (13.5,9.5);
\draw (12.5,9.5) -- (12,9) -- (15.8,5.2);

\end{scope}

\end{tikzpicture}
$$
\caption{Here the rank of the tube is $11$. The region $D$ is indicated
by the shaded area. The filled-in circles indicate the indecomposable direct
summands of $T$, and the double circles indicate the elements of $\HH_0$.
The line $L$ divides the region $D$ into two and $\wideR{0}$ consists of
the vertices in $D$ on and above the line. The upper boxed region is $\I_M$
(see~\eqref{e:IM}) and the lower boxed region is $\I'_M$ (see~\eqref{e:IMP}).}
\label{f:regionD}
\end{figure}
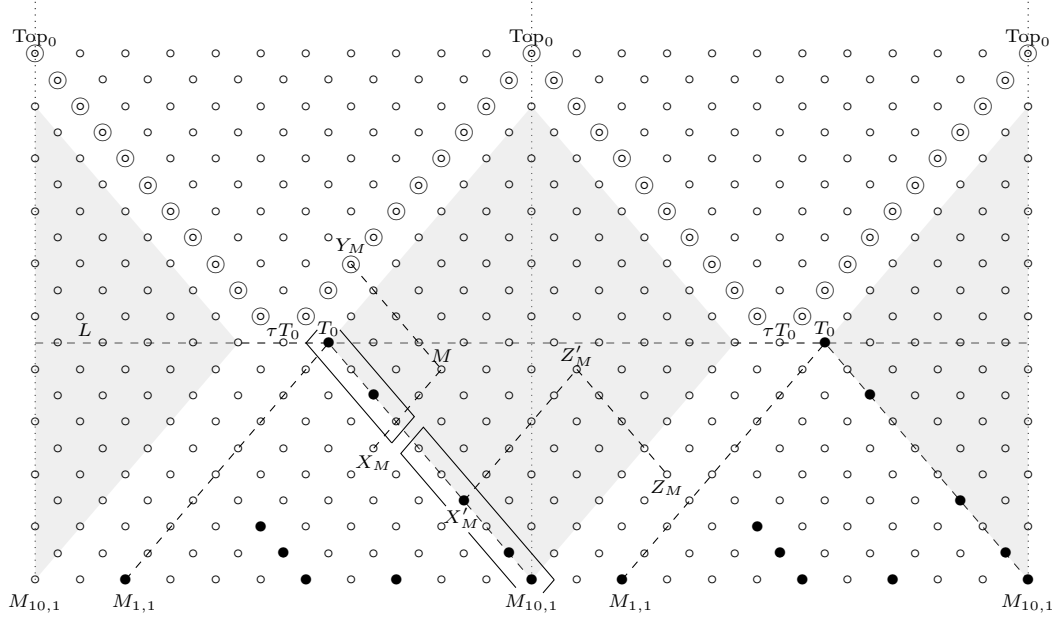

Given an indecomposable module $M=M_{i,l}\in D$, we define:
\begin{equation}
\label{e:IM}
\I_M=\{M_{j,r-j}\,:\,1\leq j\leq i\},
\end{equation}
i.e.\ the set of indecomposable modules which are injective in $\W_{T_0}$ and
lie above or on the (lowest) intersection point, $M_{i,r-i}$, between the ray through $M$ and the
coray through $T_0$. We also set
$$X_M=M_{i,r-i-1}, \quad\quad Y_M=M_{0,i+l}.$$
Note that $X_M$ is the object in the part of the ray through $M$ below $M$ which is of maximal quasilength subject to not lying in $D$.
Similarly, $Y_M$ is the nearest object to $M$ in the part of the coray through $M$ above $M$, which
is of minimal quasilength subject to not lying in $D$. See Figure~\ref{f:regionD}.

\begin{lemma} \label{l:dimvector}
Let $M\in D$ and let $V$ be an indecomposable summand of $T$. Then we have 
$$
\dimv_V(M)=
\begin{cases}
\dimv_V(X_M)+\dimv_V(Y_M)+1, & V\in \I_M; \\
\dimv_V(X_M)+\dimv_V(Y_M), & V\not\in \I_M.
\end{cases}
$$
\end{lemma}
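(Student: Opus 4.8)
The plan is to prove the identity by induction, the essential tool being the mesh recursion of Lemma~\ref{l:mesh}. As a preliminary I would record the combinatorics of the mesh $\M_M$ in the standard tube $\T$: for $M=M_{i,l}$ one has $\tau M=M_{i-1,l}$, and the two middle terms are $M_U=M_{i-1,l+1}$ (the one of larger quasilength) and $M_L=M_{i,l-1}$ (absent exactly when $l=1$). I would also note three bookkeeping facts. First, $\tau T_0\cong M_{0,r-1}$, so $\dimv_V(\tau T_0)=0$ for every $V\in\ind(T)$. Second, by the description of the wing $\W_{\tau T_0}$ (see~\eqref{e:wings}), no object of $D$ is a direct summand of $\tau T$, so Lemma~\ref{l:mesh} applies verbatim to every $M\in D$. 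Third, if $M=M_{i,l}$ has $l>r-i$ then $M$ is not a summand of $T$ (the summands of $T$ lying in $\T$ are in $\W_{T_0}$, which forces $l\le r-i$), so $\delta_{V,M}=0$ in that range. Finally I would note the elementary compatibilities $X_{M_L}=X_M$, $X_{M_U}=X_{\tau M}$, $Y_{M_U}=Y_M$, $Y_{M_L}=Y_{\tau M}$, together with the fact that $\I_N$ depends only on the first index of $N$, so that $\I_{M_L}=\I_M$ while $\I_{M_U}=\I_{\tau M}=\I_M\setminus\{M_{i,r-i}\}$; here $M_{i,r-i}$ is the unique element of $\I_M$ not already in $\I_{M_U}$. (Throughout, for a proposition $P$ I write $[P]$ for $1$ if $P$ holds and $0$ otherwise.)

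The induction would be organized with the outer variable the socle index $i\in\{1,\dots,r-1\}$ and, for fixed $i$, the inner variable the quasilength $l$ (which ranges over $r-i\le l\le 2r-2-i$). This order is forced, since in the mesh $M_U=M_{i-1,l+1}$ has larger quasilength but smaller first index, so only induction on $i$ reaches it. For the base $i=1$ I would argue directly. If $l=r-1$, then $M=M_{1,r-1}\cong T_0$; here $\tau M=M_{0,r-1}\cong\tau T_0$ contributes zero, $M_U=M_{0,r}=Y_M$, $M_L=M_{1,r-2}=X_M$, and $\delta_{V,T_0}=[V\in\I_M]$, so Lemma~\ref{l:mesh} gives the statement at once. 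If $l\ge r$, then $M_U=M_{0,l+1}=Y_M$ and $\tau M=M_{0,l}$, while $M_L=M_{1,l-1}\in D$; applying the inner inductive hypothesis to $M_L$, whose $Y$-object is precisely $M_{0,l}=\tau M$, produces a $+\dimv_V(\tau M)$ that cancels the $-\dimv_V(\tau M)$ supplied by the mesh, and what remains is exactly $\dimv_V(X_M)+\dimv_V(Y_M)+[V\in\I_M]$.

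For the inductive step $i\ge2$ I would split on whether $M$ lies on the lower-left edge of $D$. If $l=r-i$, then $M_L=M_{i,r-i-1}=X_M$ (interpreted, when $i=r-1$, as the zero object with absent $M_L$-term), while $M_U=M_{i-1,r-(i-1)}$ again lies in $D$, and the decisive identity is $X_{M_U}=M_{i-1,r-i}=\tau M$: applying the outer inductive hypothesis to $M_U$ and substituting into Lemma~\ref{l:mesh}, the $\dimv_V(\tau M)$-terms cancel and the ``new'' wing element enters through $[V\in\I_{M_U}]+\delta_{V,M_{i,r-i}}=[V\in\I_M]$. If instead $l>r-i$, then all of $M_U$, $M_L$, $\tau M$ lie in $D$ and $\delta_{V,M}=0$; applying the outer hypothesis to $M_U$ and $\tau M$ and the inner hypothesis to $M_L$, the compatibilities above make the two $X$-terms from $M_U$ and $\tau M$ cancel, the two $Y$-terms from $M_L$ and $\tau M$ cancel, and one is left with $\dimv_V(X_M)+\dimv_V(Y_M)+[V\in\I_M]$ once more. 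The border object $M=M_{r-1,1}$ is covered by the $l=r-i$ case under the stated convention.

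The point needing the most care is arranging the induction so that the mesh recursion never leaves $D$ in an uncontrolled way. The recursion exits $D$ only through the lower-left edge (where $M_L$ and $\tau M$ leave $D$) and through the edge $i=1$ (where $M_U$ and $\tau M$ leave $D$); both exits are rescued by the fortunate identities $M_U=Y_M$ on the left edge and $X_{M_U}=\tau M$ on the lower-left edge, which let one express the extra contribution without invoking the inductive hypothesis for objects outside $D$. A related subtlety worth flagging is that the mesh recursion carries an extra $+\delta_{V,M}$ whereas the right-hand side of the lemma, regarded as a function of $M$, satisfies a homogeneous mesh recursion inside $D$; this mismatch is harmless because $\delta_{V,M}$ can be nonzero only when $M=M_{i,r-i}$ is a summand of $T$, that is, exactly on the lower-left edge, which is precisely where the homogeneous recursion is not being used.
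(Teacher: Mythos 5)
Your proof is correct and takes essentially the same approach as the paper: induction via the mesh recursion of Lemma~\ref{l:mesh}, exploiting the compatibilities $X_{M_L}=X_M$, $X_{M_U}=X_{\tau M}$, $Y_{M_U}=Y_M$, $Y_{M_L}=Y_{\tau M}$, $\I_{M_L}=\I_M$, $\I_{M_U}=\I_{\tau M}=\I_M\setminus\{M_{i,r-i}\}$, with the exits from $D$ rescued by the identities $M_U=Y_M$ (left edge), $X_{M_U}=\tau M$ (lower-left edge), and $\dimv_V(\tau T_0)=0$. The only notable differences are organisational: the paper inducts on path length from $T_0$ (rather than lexicographically on $(i,l)$) with a case split into lower-left, upper-left, and interior meshes, and it disposes of preprojective $V$ separately and directly via additivity of $\dim\Hom(U,-)$ on $\T$ (Lemma~\ref{l:additive}, Proposition~\ref{p:leq1}), rather than folding that case into the mesh induction as you do.
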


\begin{proof}
Suppose first that $V$ is preprojective, i.e.\ $V$ is an indecomposable
direct summand of $U$. Since $X_M\in \W_{\tau T_0}$ we have $\Hom_{\C}(V,X_M)=0$.
Note that by Lemmas~\ref{l:additive} and~\ref{l:onezero},
$$\dim \Hom_{\C}(V,Q_i)=
\begin{cases} 0, & 0\leq i\leq r-2; \\
1, & i=r-1.
\end{cases}
$$
It follows from Lemma~\ref{l:additive} that $\dimv_V(X)=0$ for any module $X\in \W_{\tau T_k}$.
By Proposition~\ref{p:leq1}, $\dimv_V(Q_{r-1})=1$, noting that $Q_{r-1}$ is the unique
quasisimple in $\T$ not in $\W_{\tau T_k}$. Using additivity as in the proof of Lemma~\ref{l:additive}, we see that $\dimv_V(X)=1$ if $X\in D\cup \HH_0$. Since $X_M\in \W_{\tau T_0}$,
$Y_M\in \HH_0$ and $M\in D$, we have $\dimv_V(M)=1$, $\dimv_V(X_M)=0$ and $\dimv_V(Y_M)=1$, giving the
result in this case.

So we may assume that $V$ is an indecomposable direct summand of $T_{\T}$.
We prove the result in this case by induction on the minimal length of a
path in $\T$ from $T_0$ to $M$.
The base case is $M\cong T_0$. Then $\I_M=\{T_0\}$.
Since $\dimv_V(\tau M)=0$, the result in this case follows directly from Lemma~\ref{l:mesh}.

We assume that $M\not\cong T_0$ and that the result is proved in the case where
the minimal length of a path in $\T$ from $T_0$ to $M$ is smaller.
In particular, the result is assumed to be true for all modules in $\M_M\cap D$
other than $M$.

\noindent \textbf{Case I}:
If $M=M_{i,r-i}$, with $1\leq i\leq r-1$ lies on the lower left boundary of $D$ then
$\M_M\cap D=\{M_U,M\}$. Applying the inductive hypothesis to $M_U$ and noting that
$Y_{M_U}=Y_M$, we have:
\begin{equation}
\label{e:caseI}
\dimv_V(M_U)=
\begin{cases}
\dimv_V(X_{M_U})+\dimv_V(Y_M)+1, & V\in \I_{M_U}; \\
\dimv_V(X_{M_U})+\dimv_V(Y_M), & V\not\in \I_{M_U};
\end{cases}
\end{equation}
Note that $M_L=X_M$, $\tau M=X_{M_U}$, $Y_{M_U}=Y_M$ and $\I_M=\I_{M_U}\cup \{M\}$
(see Figure~\ref{f:dimvector}). By Lemma~\ref{l:mesh} and~\eqref{e:caseI}, we have:
\begin{align*}
\begin{split}
\dimv_V(M) &= \dimv_V(M_U)+\dimv_V(M_L)-\dimv_V(\tau M)+\delta_{V,M} \\
       &= \dimv_V(M_U)+\dimv_V(X_M)-\dimv_V(X_{M_U})+\delta_{V,M} \\
&= 
\begin{cases}
\dimv_V(X_{M_U})+\dimv_V(Y_M)+\dimv_V(X_M)-\dimv_V(X_{M_U})+\delta_{V,M}+1, & V\in \I_{M_U}; \\
\dimv_V(X_{M_U})+\dimv_V(Y_M)+\dimv_V(X_M)-\dimv_V(X_{M_U})+\delta_{V,M}, & V\not\in \I_{M_U};
\end{cases} \\
&=
\begin{cases}
\dimv_V(X_M)+\dimv_V(Y_M)+1, & V\in \I_M; \\
\dimv_V(X_M)+\dimv_V(Y_M), & V\not\in \I_M.
\end{cases}
\end{split}
\end{align*}

\noindent \textbf{Case II}: If $M=M_{1,l}$ where $r\leq l\leq 2r-3$ lies on the upper left boundary of $D$
then $\M_M\cap D=\{M_L,M\}$.
Applying the inductive hypothesis to $M_L$ and noting that $X_{M_L}=X_M$
(see Figure~\ref{f:dimvector}), we have:
\begin{equation}
\label{e:caseII}
\dimv_V(M_L)=
\begin{cases}
\dimv_V(X_M)+\dimv_V(Y_{M_L})+1, & V\in \I_{M_L}; \\
\dimv_V(X_M)+\dimv_V(Y_{M_L}), & V\not\in \I_{M_L}.
\end{cases}
\end{equation}
Note that $M_U=Y_M$, $\tau M=Y_{M_L}$, $\I_M=\I_{M_L}=\{T_0\}$ and $\delta_{V,M}=0$
(see Figure~\ref{f:dimvector}).
By Lemma~\ref{l:mesh} and~\eqref{e:caseII}, we have:
\begin{align*}
\begin{split}
\dimv_V(M) &= \dimv_V(M_U)+\dimv_V(M_L)-\dimv_V(\tau M)+\delta_{V,M} \\
       &= \dimv_V(Y_M)+\dimv_V(M_L)-\dimv_V(Y_{M_L})+\delta_{V,M} \\
&= 
\begin{cases}
\dimv_V(Y_M)+\dimv_V(X_M)+\dimv_V(Y_{M_L})-\dimv_V(Y_{M_L})+1, & V\in \I_{M_L}; \\
\dimv_V(Y_M)+\dimv_V(X_M)+\dimv_V(Y_{M_L})-\dimv_V(Y_{M_L}), & V\not\in \I_{M_L};
\end{cases} \\
&=
\begin{cases}
\dimv_V(X_M)+\dimv_V(Y_M)+1, & V\in \I_M; \\
\dimv_V(X_M)+\dimv_V(Y_M), & V\not\in \I_M.
\end{cases}
\end{split}
\end{align*}

\noindent \textbf{Case III}:
If $M=M_{i,l}$ with $1\leq i\leq r-1,r-i\leq l\leq 2r-2-i\}$, but is not in one
of the cases above, then $\M_M\cap D=\{M_L,M_U,\tau M,M\}$.
Note that $X_{M_U}=X_{\tau M}$, $Y_{M_U}=Y_M$, $X_{M_L}=X_M$, $Y_{M_L}=Y_{\tau M}$
and $\delta_{V,M}=0$. We also have that $\I_{M_U}=\I_{\tau M}$ and $\I_{M_L}=\I_M$.
Applying the inductive hypothesis to $M_L$, $M_U$ and $\tau M$, we have:
\begin{align}
\label{e:caseIIIa}
\dimv_V(M_U) &=
\begin{cases}
\dimv_V(X_{\tau M})+\dimv_V(Y_M)+1, & V\in \I_{\tau M}; \\
\dimv_V(X_{\tau M})+\dimv_V(Y_M), & V\not\in \I_{\tau M};
\end{cases} \\
\label{e:caseIIIb}
\dimv_V(M_L) &=
\begin{cases}
\dimv_V(X_M)+\dimv_V(Y_{\tau M})+1, & V\in \I_M; \\
\dimv_V(X_M)+\dimv_V(Y_{\tau M}), & V \not\in \I_M;
\end{cases} \\
\label{e:caseIIIc}
\dimv_V(\tau M) &=
\begin{cases}
\dimv_V(X_{\tau M})+\dimv_V(Y_{\tau M})+1, & V\in \I_{\tau M}; \\
\dimv_V(X_{\tau M})+\dimv_V(Y_{\tau M}), & V\not\in \I_{\tau M}.
\end{cases}
\end{align}
By Lemma~\ref{l:mesh} and~\eqref{e:caseIIIa}--\eqref{e:caseIIIc}, we obtain:
\begin{align*}
\begin{split}
\dimv_V(M) &= \dimv_V(M_U)+\dimv_V(M_L)-\dimv_V(\tau M) \\
&=\begin{cases}
\dimv_V(X_M)+\dimv_V(Y_M)+1, & V\in \I_M; \\
\dimv_V(X_M)+\dimv_V(Y_M), & V\not\in \I_M.
\end{cases}
\end{split}
\end{align*}
The result now follows by induction.
\end{proof}

\begin{figure}
$$
\begin{tikzpicture}[scale=0.5,baseline=(bb.base),quivarrow/.style={black, -latex,shorten <=-3pt, shorten >=-3pt},translate/.style={black, dotted, shorten >=13pt, shorten <=13pt}]
  
\path (0,0) node (bb) {}; 

\draw[fill=gray!30,opacity=0.4] (0,0) -- (3,3) -- (6,0) -- (3,-3) -- (0,0);

\draw (2,-2) node (M) {$\bullet$};
\draw ($(M)+(0:0.6)$) node {$\scriptstyle M$};

\draw (1,-1) node (MU) {$\bullet$};
\draw ($(MU)+(45:0.6)$) node {$\scriptstyle M_U$};

\draw (1,-3) node (ML) {$\bullet$};
\draw ($(ML)+(270:0.6)$) node {$\scriptstyle X_M=M_L$};

\draw (0,-2) node (tM) {$\bullet$};
\draw ($(tM)+(180:1.5)$) node {$\scriptstyle X_{M_U}=\tau M$};

\draw (-1,1) node (Y) {$\bullet$};

\draw (-1,1.4) node {$\scriptstyle Y_{M_U}=Y_{M}$};

\draw (0,0) node (O) {};

\draw[quivarrow] (tM) -- (MU);
\draw[quivarrow] (tM) -- (ML);
\draw[quivarrow] (ML) -- (M);
\draw[quivarrow] (MU) -- (M);
\draw[quivarrow] (Y) -- (O);

\draw (3,-5.5) node {Case I};

\end{tikzpicture}
\begin{tikzpicture}[scale=0.5,baseline=(bb.base),quivarrow/.style={black, -latex,shorten <=-3pt, shorten >=-3pt},translate/.style={black, dotted, shorten >=13pt, shorten <=13pt}]
  
\path (0,0) node (bb) {}; 

\draw[fill=gray!30,opacity=0.4] (0,0) -- (3,3) -- (6,0) -- (3,-3) -- (0,0);

\draw (2,2) node (M) {$\bullet$};
\draw ($(M)+(0:0.6)$) node {$\scriptstyle M$};

\draw (1,3) node (MU) {$\bullet$};
\draw ($(MU)+(45:0.6)$) node {$\scriptstyle Y_M=M_U$};

\draw (1,1) node (ML) {$\bullet$};
\draw ($(ML)+(270:0.6)$) node {$\scriptstyle M_L$};

\draw (0,2) node (tM) {$\bullet$};
\draw ($(tM)+(180:1.5)$) node {$\scriptstyle Y_{M_L}=\tau M$};

\draw (-1,-1) node (X) {$\bullet$};
\draw (-1,-1.5) node {$\scriptstyle X_{M_L}=X_M$};
\draw (0,0) node (O) {};

\draw[quivarrow] (tM) -- (MU);
\draw[quivarrow] (tM) -- (ML);
\draw[quivarrow] (ML) -- (M);
\draw[quivarrow] (MU) -- (M);
\draw[quivarrow] (X) -- (O);

\draw (3,-5.5) node {Case II};

\end{tikzpicture}
\begin{tikzpicture}[scale=0.5,baseline=(bb.base),quivarrow/.style={black, -latex,shorten <=-3pt, shorten >=-3pt},translate/.style={black, dotted, shorten >=13pt, shorten <=13pt}]
  
\path (0,0) node (bb) {}; 

\draw[fill=gray!30,opacity=0.4] (0,0) -- (3,3) -- (6,0) -- (3,-3) -- (0,0);

\draw (4,0) node (M) {$\bullet$};
\draw ($(M)+(0:0.6)$) node {$\scriptstyle M$};

\draw (3,1) node (MU) {$\bullet$};
\draw ($(MU)+(45:0.6)$) node {$\scriptstyle M_U$};

\draw (3,-1) node (ML) {$\bullet$};
\draw ($(ML)+(270:0.6)$) node {$\scriptstyle M_L$};

\draw (2,0) node (tM) {$\bullet$};
\draw ($(tM)+(180:0.8)$) node {$\scriptstyle \tau M$};

\draw (0,2) node (XtM) {$\bullet$};
\draw (1,3) node (XM) {$\bullet$};
\draw (0,-2) node (YtM) {$\bullet$};
\draw (1,-3) node (YM) {$\bullet$};

\draw (-1.6,2) node {$\scriptstyle Y_{\tau M}=Y_{M_L}$};
\draw (1,3.5) node {$\scriptstyle Y_{M_U}=Y_M$};
\draw (-1.68,-2) node {$\scriptstyle X_{\tau M}=X_{M_U}$};
\draw (1,-3.5) node {$\scriptstyle X_{M_L}=X_M$};

\draw (1,1) node (XtMB) {};
\draw (2,2) node (XMB) {};
\draw (1,-1) node (YtMB) {};
\draw (2,-2) node (YMB) {};

\draw[quivarrow] (tM) -- (MU);
\draw[quivarrow] (tM) -- (ML);
\draw[quivarrow] (ML) -- (M);
\draw[quivarrow] (MU) -- (M);

\draw[quivarrow] (XtM) -- (XtMB);
\draw[quivarrow] (YtM) -- (YtMB);
\draw[quivarrow] (XM) -- (XMB);
\draw[quivarrow] (YM) -- (YMB);

\draw (3,-5.5) node {Case III};

\end{tikzpicture}
$$

\caption{Proof of Lemma~\ref{l:dimvector}. The shaded region is the region $D$.}
\label{f:dimvector}
\end{figure}
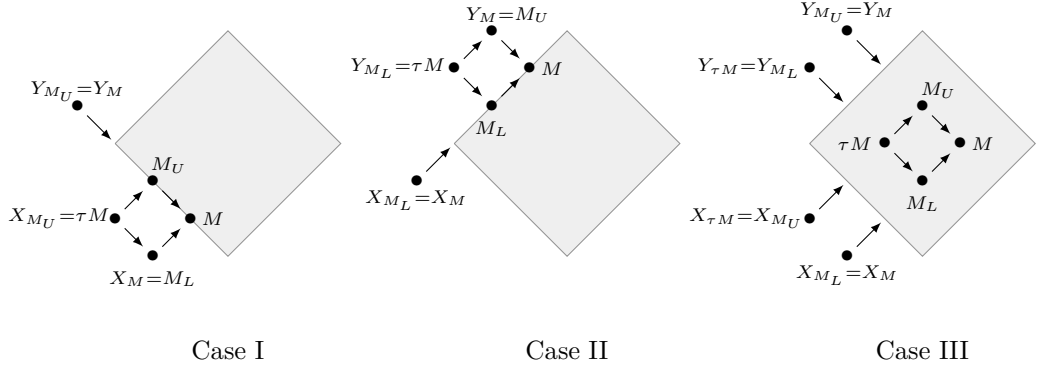

Let $\I$ denote the set of all injective objects in $\W_{T_0}$ and
set
\begin{equation}
\label{e:IMP}
\I'_M=\I\setminus \I_M,
\end{equation}
i.e.\ the set of objects in
the coray through $T_0$ which are on or below the lowest intersection point
with the ray through $M$.
Suppose that there is an indecomposable direct summand of $T$ in
$\I'_M$. Let $X'_M\cong M_{j,r-j}$ be such a summand with maximal quasilength
and set $Z_M=M_{0,j-2}$. Otherwise, we set $Z_M=M_{0,r-2}$.

\begin{remark}
In the first case above, the object $Z_M$ can be constructed geometrically as follows.
Let $Z'_M=M_{j,r-2}$ be the unique object in the ray through $X'_M$ of quasilength $r-2$.
Then $Z_M=M_{0,j-2}$ is the unique object in the coray through $Z'_M$ which is a projective in
$\W_{\tau T_0}$. See Figure~\ref{f:regionD}.
\end{remark}

\begin{lemma} \label{l:dimZ}
Let $V$ be an indecomposable direct summand of $T$.
Then
$$\dimv_V(Z_M)=
\begin{cases}
1, & V\in \I_M\setminus {T_0}; \\
0, & \text{otherwise}.
\end{cases}
$$
\end{lemma}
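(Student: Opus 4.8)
The plan is to compute $\dimv_V(Z_M) = \dim\Hom_{\C}(V,Z_M)$ directly, using the fact that $Z_M = M_{0,j-2}$ (or $M_{0,r-2}$) is a projective object in the wing $\W_{\tau T_0}$, together with the hammock description of $\Hom$-spaces in a standard tube from Lemma~\ref{l:homlow} and Corollary~\ref{c:wingzero}.

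First I would dispose of the case where $V$ is a summand of the preprojective part $U$, or lies in a tube different from $\T$. If $V$ lies in another tube, then $\Hom_{\C}(V,Z_M)=0$ since $\Hom(V,Z_M)=0$ and $\Hom(Z_M,\tau^2 V)=0$ (as $Z_M$ and $\tau^2 V$ lie in different tubes). If $V$ is a summand of $U$, then since $Z_M\in \W_{\tau T_0}$ we have $\Hom(U,Z_M)=0$ by Lemma~\ref{l:additive}, and $\Hom(Z_M,\tau^2 V)=0$ since $\tau^2 V$ is preprojective and $Z_M$ is regular; hence $\dimv_V(Z_M)=0$. In both cases the claimed value is $0$, consistent with the statement (these $V$ are not in $\I_M\setminus\{T_0\}\subseteq \T$). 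So I may assume $V$ is an indecomposable summand of $T_{\T}$, hence $V\in \W_{T_0}$.

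Next, for such $V$, I would observe that $\dim\Hom(Z_M,\tau^2 V)=0$: the quasitop of $Z_M$ is $Q_{j-3}$ (resp. $Q_{r-3}$), which one checks does not lie in $\W_{\tau^2 T_0}$ (using the explicit description of $\W_{\tau^2 T_0}$ above~\eqref{e:wings} and the choice of $X'_M$ as the summand of maximal quasilength in $\I'_M$ — this is exactly where the precise definition of $Z_M$ is engineered), so Corollary~\ref{c:wingzero}(b) applies. Therefore $\dimv_V(Z_M)=\dim\Hom(V,Z_M)$, and I reduce to a pure tube computation. Now $V\in \W_{T_0}$ has quasilength at most $r-1$, $Z_M = M_{0,l'}$ for appropriate $l'$, and by Lemma~\ref{l:homlow}(b) one reads off that $\Hom(V,Z_M)\cong\field$ precisely when the quasitop of $Z_M$, namely $Q_{l'-1}$, is congruent to a member of the appropriate interval determined by $V=M_{i,l}$, and is $0$ otherwise. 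I would then translate this condition geometrically: $\dim\Hom(V,Z_M)=1$ exactly when $V$ lies on or below the intersection of the ray through $Z_M$ and the coray through $V$ in a suitable sense — concretely, when $V$ is an injective object of $\W_{T_0}$ lying strictly above the lowest intersection point of the coray through $T_0$ with the ray through $M$, i.e. $V\in \I_M\setminus\{T_0\}$. The boundary cases (why $T_0$ itself gives $0$, and why injectives in $\I'_M$ give $0$) are handled by the construction of $Z_M$: the coray through $Z_M$ was chosen to stop just short of $T_0$.

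The main obstacle I expect is purely bookkeeping: matching up the two definitions of $Z_M$ (the algebraic one $Z_M=M_{0,j-2}$ and the geometric one via $Z'_M$) with the hammock intervals of Lemma~\ref{l:homlow}, and verifying the index congruences mod $r$ correctly at the two extreme corners so that the answer is exactly $\I_M\setminus\{T_0\}$ and nothing more or less. There is also a minor subtlety in the "otherwise" branch: I must confirm that when $V\in\I'_M$ (an injective of $\W_{T_0}$ not in $\I_M$), the relevant $\Hom$ genuinely vanishes, which again is forced by the maximal-quasilength choice of $X'_M$ defining $Z_M$ — this is the one place where the hypothesis on $X'_M$ is actually used. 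Once these index computations are pinned down, the rest is immediate from Corollary~\ref{c:wingzero} and Lemma~\ref{l:homlow}.
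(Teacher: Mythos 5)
Your reduction to the case $V\in\W_{T_0}$ is fine, but the key step after that is backwards, and this is not a bookkeeping slip: it makes the argument fail. You claim that the quasitop of $Z_M$, namely $Q_{j-3}$ (or $Q_{r-3}$), does not lie in $\W_{\tau^2 T_0}$, so that $\Hom(Z_M,\tau^2 V)=0$ and all the information sits in $\Hom(V,Z_M)$. This is false. In the situation at hand ($i_0=0$, $l_0=r-1$) the wing $\W_{\tau^2 T_0}$ contains every quasisimple $Q_i$ except $Q_{r-2}$; and since $2\le j\le r-1$ one has $j-3\not\equiv r-2\pmod r$, so $Q_{j-3}$ (and likewise $Q_{r-3}$) always does lie in $\W_{\tau^2 T_0}$. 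Concretely, if $r=6$, $M=M_{2,5}$, $X'_M=M_{3,3}$, so $Z_M=M_{0,1}$, and $V=M_{2,4}\in\I_M\setminus\{T_0\}$, then $\Hom(Z_M,\tau^2 V)=\Hom(M_{0,1},M_{0,4})\cong\field$, which contradicts your claimed vanishing and is precisely the $1$ the lemma requires.

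What is actually true is the opposite split: the quasisocle of $Z_M$ is $Q_0$, which does not lie in $\W_{T_0}$, so Corollary~\ref{c:wingzero}(a) gives $\Hom(V,Z_M)=0$ for all $V\in\W_{T_0}$. Hence $\dimv_V(Z_M)=\dim\Hom(Z_M,\tau^2 V)$, and your strategy of computing $\Hom(V,Z_M)$ by Lemma~\ref{l:homlow}(b) can only ever produce $0$, so it cannot recover the nonzero values on $\I_M\setminus\{T_0\}$. The genuine content of the lemma is the computation of $\Hom(Z_M,\tau^2 V)$: one rewrites it as $\Hom(\tau^{-2}Z_M,V)$, uses Lemma~\ref{l:homlow} to see this is nonzero exactly when $V$ lies in an explicit rectangle, then cuts that rectangle down using the rigidity relation $\Hom(V,\tau X'_M)=0$ (both being summands of $T$) together with the maximality of $\ql(X'_M)$ among summands of $T$ in $\I'_M$. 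None of that appears in your proposal, so as written the argument does not go through.
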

\begin{proof}
If $V$ is preprojective (i.e.\ an indecomposable direct summand of $U$)
then, since $Z_M\in \W_{\tau T_0}$, we have $\dimv_V(Z_M)=0$ by Lemma~\ref{l:additive}.

Suppose that $V$ is an indecomposable direct summand of $T_{\T}$.
The quasisocle of $Z_M$ is $Q_0$, which does not lie in $\W_{T_0}$.
Since $V\in \W_{T_0}$, it follows from Corollary~\ref{c:wingzero} that
$\Hom(V,Z_M)=0$. Hence (using~\eqref{e:tau2}), we have:
$$\dimv_V(Z_M)=\dim \HomF{\C}(V,Z_M)=\dim \Hom(Z_M,\tau^2 V).$$

Consider first the case where there is an indecomposable direct summand
of $T$ in $\I'_M$, so $X'_M$ is defined.
We have $\Hom(Z_M,\tau^2 V)\not=0$ if and only if $\Hom(\tau^{-2} Z_M,V)\not=0$.
By Lemma~\ref{l:homlow} and the fact that $V\in \W_{T_0}$, this holds if and only if $V$
lies in the rectangle with corners
$\tau^{-2} Z_M=M_{2,j-2}$, $M_{2,r-2}$, $M_{j-1,1}$ and $M_{j-1,r-j+1}$
In this case, $\dim\Hom(Z_M,\tau^2 V)=1$.

As $V$ and $X'_M$ are indecomposable direct summands of $T$, we have that
$\Hom(V,\tau X'_M)=0$. So, again using Lemma~\ref{l:homlow}, $V$ cannot lie
in the rectangle with corners $M_{1,j-1}$, $M_{j-1,1}$, $M_{j-1,r-j}$ and
$M_{1,r-2}$. Combining this fact with the statement in the previous paragraph, we
see that
$\Hom(Z_M,\tau^2 V)\not=0$ if and only if $V\in \I$, $V\not\cong T_0$ and
$V$ has quasilength greater than $\ql(X'_M)=r-j$.

However, by the definition of $X'_M$, there are no indecomposable direct summands
of $V$ in $\I'_M$ with quasilength greater than $\ql(X'_M)$. Hence
$\Hom(Z_M,\tau^2 V)\not=0$ if and only if $V\in \I_M\setminus \{	T_0\}$.

If there is no indecomposable direct summand of $T$ in $\I'_M$, then $Z_M=M_{0,r-2}$.
By Lemma~\ref{l:homlow} and the fact that $V\in \W_{T_0}$, we have
that
$$\dim\Hom(Z_M,\tau^2 V)=\dim\Hom(\tau^{-2} Z_M,V)$$
is $1$ if and only if
$V$ lies in the coray through $\tau^{-2} Z_M=M_{2,r-2}$, i.e.\ if and only if
$V\in \I\setminus \{T_0\}$. Since there is no indecomposable summand of $T$ in
$\I'_M$, this holds if and only if $V\in \I_M\setminus \{T_0\}$, and the proof is complete.
\end{proof}

\begin{figure}
$$
\begin{tikzpicture}[scale=0.5,baseline=(bb.base),quivarrow/.style={black, -latex,shorten <=-3pt, shorten >=-3pt},translate/.style={black, dotted, shorten >=13pt, shorten <=13pt}]
  
\path (0,0) node (bb) {}; 

\foreach \i in {0,...,9}{
\foreach \j in {0,...,\i}{
\node [fill=white,inner sep=-1pt] at (\j*2+10-\i,10-\i) {$\circ$};}}

\foreach \i in {0,1,2,3,4,5,6,7,8,9}
{\node [fill=white,inner sep=-1pt] at (21-\i,\i+1) {$\circ$};}

\node at (16,8) {$\circ$};
\node at (17,9) {$\circ$};

\draw (8,8) node (A) {};
\draw (9,9) node (B) {};
\draw (13,3) node (C) {};
\draw (14,4) node (D) {};

\draw (10,10.5) node {$\scriptstyle \tau T_0$};
\draw (12.1,10.5) node {$\scriptstyle T_0$};
\draw (1,0.5) node {$\scriptstyle M_{0,1}$};
\draw (3,0.5) node {$\scriptstyle M_{1,1}$};
\draw (13,0.5) node {$\scriptstyle M_{j-1,1}$};
\draw (19.8,4) node {$\scriptstyle X'_M=M_{j,r-j}$};
\draw (18.8,5) node {$\scriptstyle M_{j-1,r-j+1}$};
\draw (3.2,5) node {$\scriptstyle Z_M=M_{0,j-2}$};
\draw (8,5.7) node {$\scriptstyle M_{1,j-1}$};
\draw (9,4.7) node {$\scriptstyle M_{2,j-2}$};
\draw (11,9.5) node {$\scriptstyle M_{1,r-2}$};
\draw (17,3.5) node {$\scriptstyle M_{j-1,r-j}$};
\draw (17,9.5) node {$\scriptstyle M$};

\draw[dotted] (15.7,7.3) -- (14.7,6.3) -- (11,10) -- (12,11) -- (15.7,7.3);
\draw[dotted] (16.3,6.7) -- (15.3,5.7) -- (21,0) -- (22,1) -- (16.3,6.7);
\draw (14.5,9.5) node {$\I_M$};
\draw (21.8,2) node {$\I'_M$};

\begin{scope}[on background layer]
\draw[fill=gray!30,opacity=0.4,draw=none] (9,5) -- (13,9) -- (17,5) -- (13,1) -- (9,5);
\draw (9,5) -- (13,9) -- (17,5) -- (16,4);
\draw[dashed] (13,1) -- (16,4) -- (11,9) -- (8,6) -- (13,1);
\end{scope}


\foreach \i/\j in {1/1,1/10,3/3,3/8,4/2,5/1,7/1,7/4,9/2,10/1}
{\draw (\i*2+\j,\j) node {$\bullet$};}

\end{tikzpicture}
$$
\caption{Part of the proof of Lemma~\ref{l:dimZ} in the case $r=11$, $j=7$.
The dotted rectangles show $\I_M$ and $\I'_M$.
Since $V\in\W_{T_0}$, we have $\Hom(\tau^{-2}Z_M,V)\not=0$ if and only
if $V$ lies in the shaded rectangle. Since $\Ext(V,X'_M)=0$, $V$ cannot lie
in the dashed rectangle. By the definition of $X'_M$, $V$ cannot lie in
the part of $\I'_M$ above $X'_M$. Hence $\Hom(Z_M,\tau^2 V)\not=0$ if and only
if $V\in \I_M\setminus \{T_0\}$.}
\label{f:dimZ}
\end{figure}
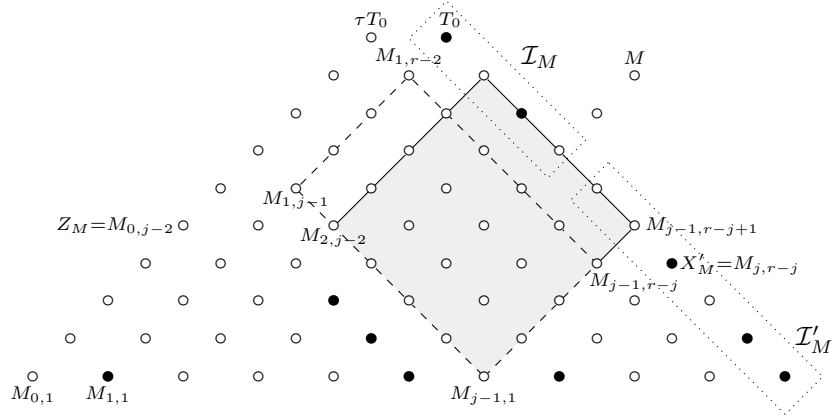

\begin{proposition} \label{p:three}
Let $M\in D$ and let $V$ be an indecomposable summand of $T$. Then we have:
$$\dimv_V(M)=\dimv_V(X_M)+\dimv_V(Y_M)+\dimv_V(Z_M)+\delta_{V,T_0}.$$
\end{proposition}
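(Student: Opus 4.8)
The plan is to obtain Proposition~\ref{p:three} as an immediate consequence of Lemmas~\ref{l:dimvector} and~\ref{l:dimZ}, via a short case analysis on the position of $V$ relative to $\I_M$. First I would record the elementary observation that $T_0\in \I_M$ for every $M\in D$: writing $M=M_{i,l}$ with $1\leq i\leq r-1$, the set $\I_M=\{M_{j,r-j}\,:\,1\leq j\leq i\}$ always contains $M_{1,r-1}\cong T_0$. Consequently $\I_M$ is the disjoint union of $\{T_0\}$ and $\I_M\setminus\{T_0\}$; in particular any $V\notin \I_M$ satisfies $V\not\cong T_0$, so that $\delta_{V,T_0}=0$ for such $V$.

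I would then split into three cases. Suppose first that $V\in \I_M\setminus\{T_0\}$. Then Lemma~\ref{l:dimvector} gives $\dimv_V(M)=\dimv_V(X_M)+\dimv_V(Y_M)+1$, Lemma~\ref{l:dimZ} gives $\dimv_V(Z_M)=1$, and $\delta_{V,T_0}=0$, so the two sides of the claimed identity agree. Suppose next that $V\cong T_0$. Then $V\in \I_M$, so Lemma~\ref{l:dimvector} still contributes the $+1$ term to $\dimv_V(M)$, while Lemma~\ref{l:dimZ} gives $\dimv_V(Z_M)=0$ and $\delta_{V,T_0}=1$; again the sides agree. Finally, suppose $V\notin \I_M$. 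Then Lemma~\ref{l:dimvector} gives $\dimv_V(M)=\dimv_V(X_M)+\dimv_V(Y_M)$, whereas $\dimv_V(Z_M)=0$ by Lemma~\ref{l:dimZ} and $\delta_{V,T_0}=0$ by the observation above, so once more the identity holds. Combining the three cases completes the proof.

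I do not anticipate any genuine difficulty here: all the substantive content has already been packaged into Lemmas~\ref{l:dimvector} and~\ref{l:dimZ}, and the only point requiring care is bookkeeping — the exceptional ``$T_0$'' contribution to the right-hand side is supplied by $\delta_{V,T_0}$ exactly when $V\cong T_0$, and by $\dimv_V(Z_M)$ exactly when $V\in\I_M\setminus\{T_0\}$, these two possibilities being mutually exclusive and together accounting for precisely the set $\I_M$ on which Lemma~\ref{l:dimvector} produces its $+1$. I would present the argument as the three-case verification above, preceded by the remark that $T_0\in\I_M$.
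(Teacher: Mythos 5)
Your proof is correct and is exactly the argument the paper has in mind: the paper's proof of Proposition~\ref{p:three} simply states that it follows from Lemmas~\ref{l:dimvector} and~\ref{l:dimZ}, and your three-case verification (together with the observation that $T_0\cong M_{1,r-1}\in\I_M$ for every $M\in D$) is precisely the bookkeeping required to combine those two lemmas.
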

\begin{proof}
This follows from Lemmas~\ref{l:dimvector} and~\ref{l:dimZ}.
\end{proof}

\begin{theorem}
Let $Q$ be a quiver of tame representation type and let $\C$ be the corresponding cluster
category. Let $T$ be a cluster-tilting object in $\C$ and $\Lambda=\End_{\C}(T)^{\opp}$ the corresponding cluster-tilted algebra. Let $Q_{\Lambda}$ be the quiver of $\Lambda$
and $\mathcal{A}(Q_{\Lambda})$ the corresponding cluster algebra. Then
any $d$-vector of $\mathcal{A}(Q_{\Lambda})$ can be written as a sum of at most three dimension
vectors of indecomposable rigid $\Lambda$-modules.
\end{theorem}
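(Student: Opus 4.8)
The plan is to describe each $d$-vector of $\mathcal{A}(Q_{\Lambda})$ explicitly in terms of the cluster category $\C$ and then to decompose it. I would begin from the standard dictionary (\cite{CK06,BMRT07}, in the cluster-tilted form used in~\cite{BM10}): the non-initial cluster variables of $\mathcal{A}(Q_{\Lambda})$ correspond bijectively to the indecomposable rigid objects $M$ of $\C$ not lying in $\add(\tau T)$; for such an $M$ the $\Lambda$-module $\clu M$ is nonzero and, by Theorem~\ref{t:taurigid}, $\tau$-rigid, hence rigid; and by~\cite[Thm.\ A]{BM10} the $d$-vector $d_M$ of the corresponding cluster variable is the dimension vector of $\clu M$ minus an explicit correction term $c_M$ which vanishes unless $M$ is regular and of maximal rigid quasilength in a tube carrying a summand of $T$ of that quasilength. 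As in the proof of Theorem~\ref{t:classification}, replacing $T$ by $\tau^{mr}T$ for suitable $m$ --- which changes neither $Q_{\Lambda}$, nor $\mathcal{A}(Q_{\Lambda})$, nor its set of $d$-vectors --- we may assume Assumption~\ref{a:preprojective} holds, and we may relabel the quasisimple modules of each tube cyclically.

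I would then split on whether $c_M=0$. If $c_M=0$ --- which holds in particular whenever $M$ is transjective (Remark~\ref{r:transjective}) or $M$ is regular of quasilength at most $r-2$ in its tube $\T$ of rank $r$ (Lemma~\ref{l:lowquasilength}) --- then $d_M$ is the dimension vector of the indecomposable rigid module $\clu M$, and a single dimension vector suffices. If $c_M\ne0$, then, since a regular object of $\C$ is rigid precisely when its quasilength is at most $r-1$ (Lemma~\ref{l:dimhom}), $M$ is regular of quasilength $r-1$; moreover $c_M\ne0$ forces the tube containing $M$ to carry a summand $T_0$ of $T$ of quasilength $r-1$, so $s=1$ for that tube, and after relabelling with $i_0=0$ and $T_0\cong M_{1,r-1}$ we are exactly in the situation assumed in the second half of Section~\ref{s:three}. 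One then checks that every such $M$ lies in the region $D$ of~\eqref{e:Ddef} and that $c_M=(\delta_{V,T_0})_V$.

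Granting this, Proposition~\ref{p:three} gives, for every indecomposable summand $V$ of $T$,
$$(d_M)_V=\dimv_V(M)-\delta_{V,T_0}=\dimv_V(X_M)+\dimv_V(Y_M)+\dimv_V(Z_M),$$
so that $d_M$ is the sum of the dimension vectors of the $\Lambda$-modules $\clu{X_M}$, $\clu{Y_M}$ and $\clu{Z_M}$. It then remains to note that each of these is an indecomposable rigid $\Lambda$-module, or zero (in which case that summand is simply omitted): $X_M=M_{i,r-i-1}$ and $Z_M$ (which is $M_{0,j-2}$ or $M_{0,r-2}$) have quasilength at most $r-2$ and so are rigid by Lemma~\ref{l:lowquasilength}, while $Y_M=M_{0,i+l}$ has quasilength $i+l$ with $r\le i+l\le 2r-2$, so $Y_M\in\HH_0\setminus\{\Top_0\}$ and $\clu{Y_M}$ is rigid by Lemma~\ref{l:hatrigid}. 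Hence $d_M$ is a sum of at most three dimension vectors of indecomposable rigid $\Lambda$-modules, completing the argument.

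The main obstacle is the second paragraph: extracting from~\cite[Thm.\ A]{BM10} the precise shape of the correction term $c_M$ in every configuration of $T$ relative to the tubes, checking that it never exceeds $(\delta_{V,T_0})_V$ and that it equals it precisely in the ``maximal'' case treated in Section~\ref{s:three} (so that Proposition~\ref{p:three} yields three summands rather than four), and confirming that an $M$ with $c_M\ne0$ really lands inside $D$ rather than merely near it. The remaining low-quasilength and transjective bookkeeping is routine.
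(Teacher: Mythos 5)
The proposal follows essentially the same route as the paper's proof: use \cite[Thm.\ A]{BM10} to express the $d$-vector as the dimension vector of $\clu{M}$ minus a correction term $c_M$, dispose of the cases where the correction vanishes, and apply Proposition~\ref{p:three} otherwise. However, your characterization of when $c_M$ vanishes is wrong. You claim $c_M=0$ whenever $M$ is regular of quasilength at most $r-2$, and that $c_M\neq 0$ forces $\ql(M)=r-1$. In fact, as the paper reads \cite[Thm.\ A]{BM10}, the correction $\delta_{V,T_0}$ appears for \emph{every} rigid indecomposable $M$ in a tube containing a summand $T_0$ of $T$ of quasilength $r-1$ which does \emph{not} lie in $\W_{\tau T_0}$; these are precisely the rigid objects of $D$, which include modules of quasilength as small as $1$ (e.g.\ the quasisimple $M_{r-1,1}$). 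Your citation of Lemma~\ref{l:lowquasilength} here is also misplaced, since that lemma is about $\tau$-rigidity and Schurianness, not denominator vectors. As written, the proposal would apply a single-summand decomposition to those small-quasilength $M\in D$ and produce the wrong $d$-vector. The repair is straightforward, since Proposition~\ref{p:three} and the rigidity of $\clu{X_M},\clu{Y_M},\clu{Z_M}$ hold for every $M\in D$: the correct dichotomy is ``$M$ transjective, or in a tube with no summand of quasilength $r-1$, or $M\in\W_{\tau T_0}$'' versus ``$M\in D$''. You flag the exact form of $c_M$ as the main obstacle --- correctly so --- but the answer you guess is not the one \cite[Thm.\ A]{BM10} gives.
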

\begin{proof}
Let $M$ be a rigid indecomposable object in $\C$ which is not an indecomposable direct summand of
$\tau T$ and $x_M$ the corresponding non-initial cluster variable of $\mathcal{A}(Q_{\Lambda})$.
By~\cite[Thm.\ A]{BM10}, if $M$ is transjective or in a tube of rank $r$ containing no indecomposable
direct summand of $T$ of quasilength $r-1$ then the $d$-vector of $x_M$ coincides with the dimension
vector of the $\Lambda$-module $\clu{M}$.

Suppose that $M$ lies in a tube which contains an indecomposable direct summand $T_0$
of $T$ of quasilength $r-1$. If $M$ is contained in the wing $\W_{\tau T_0}$ then the
$d$-vector of $x_M$ again coincides with the dimension vector of $\clu{M}$. If not, then
$M$ must lie in the region $D$ defined in~\eqref{e:Ddef} (after Lemma~\ref{l:mesh}) (note that
in addition it must have quasilength at most $r-1$, but we don't need that here).
By construction, the quasilengths of $X_M$ and $Z_M$ are both less than or equal to $r-1$,
so they are $\tau$-rigid $\Lambda$-modules by Lemma~\ref{l:lowquasilength}.
Since $Y_M$ lies in $\HH_0$, it follows from Theorem~\ref{t:classification} that $Y_M$ is a rigid $\Lambda$-module. 
By~\cite[Thm.\ A]{BM10}, the
$d$-vector $(d_V)_{V\in \ind(T)}$ of $x_M$ satisfies 
$$d_V(x_M)=\dimv_V(M)-\delta_{V,T_0}.$$
The result now follows from Proposition~\ref{p:three}.
\end{proof}

\noindent \textbf{Acknowledgements}
Both authors would like to thank the referee for very helpful comments
and would like to thank the MSRI, Berkeley for
kind hospitality during a semester on cluster algebras
in Autumn 2012. RJM was Guest Professor at the Department of 
Mathematical Sciences, NTNU, Trondheim, Norway, during the 
autumn semester of 2014 and would like to thank the 
Department for their kind hospitality.

\end{document}